\newcommand{\beq}[1]{\begin{equation}\label{#1}}
\newcommand{\enq}[0]{\end{equation}}
\newcommand{\eps}{\epsilon}
\newcommand{\gl}{\lambda}
\newcommand{\nin}[0]{\noindent}
\newcommand{\sub}[0]{\subseteq}
\newcommand{\E}{\mathbb{E}}
\newcommand{\ra}{\rightarrow}
\newcommand{\Bin}{{\rm{Bin}}}
\newcommand{\Int}{{\rm{Int}}}
\newcommand{\gO}{\Omega}
\newcommand{\cA}{\mathcal{A} }
\newcommand{\cB}{\mathcal{B} }
\newcommand{\cC}{\mathcal{C} }
\newcommand{\cD}{\mathcal{D} }
\newcommand{\cE}{\mathcal{E} }
\newcommand{\cF}{\mathcal{F} }
\newcommand{\cG}{\mathcal{G} }
\newcommand{\cH}{\mathcal{H} }
\newcommand{\cI}{\mathcal{I} }
\newcommand{\cL}{\mathcal{L} }
\newcommand{\cP}{\mathcal{P} }
\newcommand{\cU}{\mathcal{U} }
\newcommand{\cV}{\mathcal{V} }
\newcommand{\lam}{\lambda}
\newcommand{\Lam}{\Lambda}
\renewcommand{\P}{\mathbb{P}}
\newtheorem{theorem}{Theorem}[section]
\newtheorem{conjecture}[theorem]{Conjecture}
\newtheorem{lemma}[theorem]{Lemma}
\newtheorem*{claim*}{Claim}
\newtheorem{claim}[theorem]{Claim}
\newtheorem{remark}[theorem]{Remark}
\newtheorem*{remark*}{Remark}
\newtheorem{proposition}[theorem]{Proposition}
\newtheorem{defn}[theorem]{Definition}
\newtheorem{corollary}[theorem]{Corollary}
\newtheorem{fact}[theorem]{Fact}
\numberwithin{equation}{section}
\newenvironment{subproof}[1][\proofname]{
  
  \begin{proof}[#1]
}{
  \end{proof}
}
\newcommand{\Pp}{\mathcal{P}}
\Crefname{lemma}{Lemma}{Lemmas}
\Crefname{equation}{Equation}{Equations}
\title{On Dedekind's problem, a sparse version of Sperner's theorem, and antichains of a given size in the Boolean lattice}
\author{Matthew Jenssen\thanks{Department of Mathematics, King's College London. E-mail: {\tt matthew.jenssen@kcl.ac.uk}} \and Alexandru Malekshahian\thanks{Department of Mathematics, King's College London. E-mail: {\tt alexandru.malekshahian@kcl.ac.uk}} \and Jinyoung Park\thanks{Department of Mathematics, Courant Institute of Mathematical Sciences, NYU. E-mail: {\tt jinyoungpark@nyu.edu}}}
\date{}
\begin{document}

\maketitle
\begin{abstract}
Dedekind's problem, dating back to 1897, asks for the total number $\psi(n)$ of antichains contained in the Boolean lattice $B_n$ on $n$ elements. We study Dedekind's problem using a recently developed method based on the cluster expansion from statistical physics and as a result, obtain several new results on the number and typical structure of antichains in $B_n$. We obtain detailed estimates for both $\psi(n)$ and the number of antichains of size $\beta \binom{n}{\lfloor n/2 \rfloor}$ for any fixed $\beta>0$. We also establish  
a sparse version of Sperner's theorem: we determine the sharp threshold and scaling window for the property that almost every antichain of size $m$ is contained in a middle layer of $B_n$.
\end{abstract}

\section{Introduction}\label{introduction}
An \textit{antichain} of a poset $P=(X, \le)$ is a subset of $X$ in which any two elements are incomparable in the partial order $\le$. Dedekind's problem \cite{dedekind}, dating back to 1897, asks for the total number $\psi(n)$ of antichains contained in the $n$-dimensional Boolean lattice $B_n=(2^{[n]}, \subseteq)$ where $[n]:=\{1,2, \ldots, n\}$. Equivalently, $\psi(n)$ is the number of elements of the free distributive lattice on $n$ generators, or the number of monotone Boolean functions $f:\{0,1\}^n\to \{0,1\}$. The exact value of $\psi(n)$ is known only for $n\leq 9$: the case $n=9$ recently came within reach of modern computing power \cite{1dedekind9,2dedekind9}
 more than 30 years after the determination of $\psi(8)$ \cite{wiedemann1991computation}. 
 
 Over the past several decades, a large amount of research has gone into establishing bounds for $\psi(n)$ for large $n$. Dedekind's problem is closely related to the classical extremal problem of finding the \emph{largest} antichain in $B_n$. Sperner's theorem from 1928~\cite{sperner}, a cornerstone of extremal combinatorics, solves this problem:  the largest antichain in $B_n$ has size $\binom{n}{\lfloor n/2 \rfloor}$. Moreover, the only antichains to achieve this bound are the middle layer(s) of $B_n$: the families consisting of all sets of size $\lfloor n/2 \rfloor$ and of all sets of size $\lceil n/2 \rceil$, respectively. Taking all subsets of such an antichain gives the simple lower bound $\log_2 \psi(n)\geq \binom{n}{\lfloor n/2 \rfloor}$.

Kleitman~\cite{kleitman} -- following bounds of Gilbert~\cite{gilbert1954lattice}, Korobkov~\cite{korobkov1965monotone} and Hansel~\cite{hansel1966nombre} -- was the first to estimate the logarithm of $\psi(n)$ up to a $(1+o(1))$ factor, showing that $\log_2 \psi(n)=\left(1+O\left(\frac{\log n}{\sqrt{n}}\right)\right)\binom{n}{\lfloor n/2 \rfloor}.$ The error term was subsequently improved to $O\left(\frac{\log n}{n}\right)$ by Kleitman and Markowsky \cite{km}. A few years later Korshunov \cite{korshunov} determined $\psi(n)$ itself up to a $(1+o(1))$ factor, thereby `asymptotically solving' Dedekind's problem. A shorter proof was later given by Sapozhenko \cite{sapo}. Henceforth, we let $N=\binom{n}{\lfloor n/2 \rfloor}$.

\begin{theorem}[Korshunov \cite{korshunov}, Sapozhenko \cite{sapo}]
\label{sapo}
    As $n\rightarrow \infty$, for $n$ even we have
    \[ \psi(n)=(1+o(1))2^{N}\exp\left[\binom{n}{n/2+1}\left( 2^{-n/2}+\frac{n^2-2n-16}{32}\cdot2^{-n}\right) \right],  \]
    while for $n$ odd
     \begin{center}
     $\displaystyle\psi(n)=(2+o(1))2^{N}\times $ \\\vspace{0.7 em} $ \exp\left[\displaystyle\binom{n}{(n+1)/2} \left(2^{-(n+1)/2} + \frac{3n^2-19}{32}\cdot 2^{-(n+1)}\right)+\binom{n}{(n+3)/2}\left(\frac{1}{2}\cdot 2^{-(n+1)/2} + \frac{n+5}{8}\cdot 2^{-(n+1)}\right)\right].$
     \end{center}
\end{theorem}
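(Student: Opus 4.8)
The plan is to reprove \Cref{sapo} by the cluster‑expansion method; I describe the even case and indicate the extra features of the odd case at the end. Write $L_k=\binom{[n]}{k}$, and for a family $\mathcal F$ contained in a single layer let $N^{-}(\mathcal F)$, $N^{+}(\mathcal F)$ denote its lower, resp.\ upper, shadow \emph{into the layer $L_{\lfloor n/2\rfloor}$}. The first step is localisation. An antichain meeting $L_{n/2+j}$ with $|j|\ge 2$ must have all $\binom{n/2+|j|}{2}=\Omega(n^{2})$ of the corresponding sets of $L_{n/2}$ absent from it, so there are at most $2^{N}\cdot 2^{\,n-\Omega(n^{2})}$ such antichains: negligible. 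A more delicate argument of Kruskal--Katona / graph‑container type (Sapozhenko) is then needed to discard the antichains $A$ whose defect $D:=L_{n/2}\setminus A$ is atypically large. After these reductions an antichain is determined by $S=A\cap L_{n/2}$ together with $T^{+}=A\cap L_{n/2+1}$ and $T^{-}=A\cap L_{n/2-1}$, and is an antichain exactly when $T^{+}\cup T^{-}$ is itself an antichain and $N^{-}(T^{+})\cup N^{+}(T^{-})\subseteq D$. Summing over $S$ first,
\[
\psi(n)=(1+o(1))\,2^{N}Z,\qquad Z:=\sum_{\substack{T^{+}\subseteq L_{n/2+1},\ T^{-}\subseteq L_{n/2-1}\\ T^{+}\cup T^{-}\ \text{an antichain}}}2^{-\lvert N^{-}(T^{+})\cup N^{+}(T^{-})\rvert}.
\]

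Two sets of $L_{n/2+1}$ have intersecting lower shadows iff they are at Hamming distance $2$ (same inside $L_{n/2-1}$), while $N^{-}(x)$ meets $N^{+}(y)$ iff $y\subset x$; hence, grouping $T^{+}\cup T^{-}$ into its connected pieces, $Z$ is exactly the partition function of a polymer model whose polymers are the nonempty connected subsets of the distance‑$2$ graph on $L_{n/2+1}$ or on $L_{n/2-1}$, with weight $\zeta(\gamma)=2^{-|N^{-}(\gamma)|}$ or $2^{-|N^{+}(\gamma)|}$ according as $\gamma$ lies above or below $L_{n/2}$, two polymers being incompatible iff they intersect, are joined by a distance‑$2$ edge, or (lying in opposite layers) contain a comparable pair. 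The cluster expansion then expresses $\log Z$ as an absolutely convergent sum over clusters $\Gamma$ of $\tfrac{1}{|\Gamma|!}\,\varphi(\Gamma)\prod_{\gamma\in\Gamma}\zeta(\gamma)$, provided the Kotecký--Preiss criterion holds; the input for this is precisely a Sapozhenko‑type estimate bounding the number of polymers with a given shadow size $g$ by $2^{(1-\Omega(1))g}$, which, since $|N^{\mp}(\gamma)|\ge|\gamma|$ and the smallest shadow has size $n/2+1$, gives the required smallness with room to spare and simultaneously subsumes the large‑defect reduction above.

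Each cluster is bounded by its product of weights, and $\binom{n}{n/2+1}2^{-3n/2}\,\mathrm{poly}(n)=o(1)$, so only clusters whose weights multiply to $2^{-n/2}$ or $2^{-n}$ affect $\log Z$ to within an additive $o(1)$; these are: single‑vertex polymers $\{x\}$, $x\in L_{n/2\pm1}$, contributing $2\binom{n}{n/2+1}2^{-(n/2+1)}=\binom{n}{n/2+1}2^{-n/2}$; single polymers of size $2$, i.e.\ distance‑$2$ pairs inside $L_{n/2+1}$ or inside $L_{n/2-1}$, each of weight $2^{-(n+1)}$; and two‑polymer clusters $\{\{u\},\{v\}\}$ with $u,v$ incompatible—distance‑$2$ pairs in one layer, comparable pairs across the two layers, or the diagonal case $u=v$—each with Ursell coefficient $-1$. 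Tallying each configuration with its exact multiplicity in $B_n$, the size‑$2$ polymers contribute $\binom{n}{n/2+1}\frac{4n^{2}-16}{32}2^{-n}$ and the two‑polymer clusters $-\binom{n}{n/2+1}\frac{3n^{2}+2n}{32}2^{-n}$ (the diagonal clusters supplying the $-16$, the comparable cross‑layer pairs part of the rest), so $\log Z=\binom{n}{n/2+1}\bigl(2^{-n/2}+\frac{n^{2}-2n-16}{32}2^{-n}\bigr)+o(1)$, which with the reduction step is exactly the even‑$n$ formula. For odd $n$ one partitions the antichains into those near $L_{(n-1)/2}$ and those near $L_{(n+1)/2}$; the complementation map $x\mapsto[n]\setminus x$ is a bijection between the two families and yields the global factor $2+o(1)$, and in each family the middle layer is flanked by two layers casting shadows of size $(n+1)/2$ and $(n+3)/2$ into it, so the single‑vertex polymers already give $\binom{n}{(n+1)/2}2^{-(n+1)/2}+\binom{n}{(n+3)/2}\cdot\tfrac12\,2^{-(n+1)/2}$, matching the two leading terms; an analogous enumeration of clusters of size at most $2$, carried out with care about which binomial coefficient is factored out (the identity $\binom{n}{(n+3)/2}=\tfrac{n-1}{n+3}\binom{n}{(n+1)/2}$ shifts $n^{2}$‑order contributions into lower‑order ones), produces the two second‑order coefficients $\frac{3n^{2}-19}{32}$ and $\frac{n+5}{8}$.

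The main obstacle is the convergence step: one must establish a Sapozhenko‑type container/isoperimetry estimate for shadows of set families in a middle layer of $B_n$ strong enough to beat the polymer entropy $n^{O(|\gamma|)}$ (the crude bound $|N^{\mp}(\gamma)|\ge|\gamma|$ is nowhere near sufficient), and this same estimate is what lets one throw away antichains with a large defect. Any slack here breaks the Kotecký--Preiss criterion. The second, easier but fiddly, difficulty is the exact bookkeeping for the $2^{-n}$‑coefficient: one must list every cluster type of that order with its precise multiplicity and Ursell sign, and it is tempting to overlook the diagonal clusters $u=v$ (which yield the constant $-16$, resp.\ $-19$) and the comparable cross‑layer pairs. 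Granted these two ingredients, no idea beyond the cluster expansion is needed, which is the appeal of this route to \Cref{sapo}.
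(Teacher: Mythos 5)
Your architecture is the same one this paper develops to recover and refine Theorem~\ref{sapo} (Theorems~\ref{trunc}, \ref{saporefined}, \ref{clusterexpansion}, \ref{saporefined2}): truncate to the central layers, encode the trace of an antichain on the two flanking layers as a polymer model with weights $2^{-|\partial(\gamma)|}$ — your $Z$ is exactly $\Xi_C$ at $\lambda=1$, cf.\ Lemma~\ref{lempolymeq} — verify Koteck\'y--Preiss with a Sapozhenko-type container input, and read off the clusters with $\|\Gamma\|\le 2$. Your even-$n$ bookkeeping is correct: $2\binom{n}{n/2+1}2^{-(n/2+1)}$ from single vertices, $+\tfrac{4n^2-16}{32}$ from size-two polymers (shadow $n+1$, count $\tfrac{n^2-4}{4}$ over both layers), and $-\tfrac{3n^2+2n}{32}$ from two-polymer clusters (same-layer distance-$2$ pairs, comparable cross-layer pairs, and the diagonal clusters with Ursell factor $-\tfrac12$), summing to $\tfrac{n^2-2n-16}{32}$; your parenthetical attributing the $-16$ to the diagonal clusters is off (they contribute $-8/32$; the $-16$ comes from the $n^2-4$ pair count), but the totals are right. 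The odd-$n$ leading terms and the factor $2$ via complementation are also right, modulo showing the two families overlap negligibly (handled here by Lemmas~\ref{lem:cap}--\ref{lem:cap2}).

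There are, however, two genuine gaps. First, the localisation is not the easy count you claim: the assertion that at most $2^N\cdot 2^{\,n-\Omega(n^2)}$ antichains meet a layer at distance at least $2$ from the middle does not follow from the fact that such an antichain avoids $\Omega(n^2)$ elements of $L_{n/2}$ — an antichain is not a subset of the middle layer, and every a priori bound on $\psi(n)$ available at that stage (e.g.\ Kleitman--Markowsky) carries slack $2^{\Theta(N\log n/n)}\gg 2^{n^2}$, so you cannot charge the forbidden elements against $2^N$ without already knowing the structure you are trying to prove. In this paper, as in Sapozhenko's original argument, the reduction to $C_n$ is an iterated three-layers-to-two-layers step (Lemma~\ref{truncstep1}, reduced to the middle via the embedding trick and Lemma~\ref{trunclemma1}), each iteration of which consumes the container lemma; it is not subsumed by the single shadow-counting estimate you invoke for convergence, nor by your "large defect" reduction inside the top three layers. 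Second, that container estimate itself — a bound on $2$-linked sets with prescribed closure and shadow sizes strong enough to beat the $n^{O(|\gamma|)}$ polymer entropy (here Theorem~\ref{MT} at $\lambda=1$, supplemented by Proposition~\ref{isoper} for small polymers) — is exactly what you defer as ``the main obstacle''. Since both the truncation and the Koteck\'y--Preiss verification rest on it, what you have is a correct outline of the known route, with correct low-order cluster computations, rather than a proof.
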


Dedekind's problem remains a testbed for modern tools and techniques in combinatorics. Kahn~\cite{kahn2002entropy} showed that the Kleitman-Markowsky bound \cite{km} can be established using entropy techniques and Balogh, Treglown and Wagner~\cite{balogh2016applications} gave a short proof of the same bound with the container method (see also Pippenger~\cite{pippenger1999entropy} for a short proof of a slightly weaker bound using entropy). Recently, the generalization of Dedekind's problem to the \emph{hypergrid} $[t]^n$ for $t\geq 2$ has also attracted significant interest -- see e.g. \cite{PST23,FRT23}.

In this paper we approach Dedekind's problem using a recently developed method based on tools from statistical physics (see Section~\ref{secRelatedWork}). We obtain several new results on the number and typical structure of antichains in $B_n$, including:
\begin{enumerate}
\item improved asymptotics for Dedekind's problem  (Theorem~\ref{saporefined2});
\item precise asymptotics for the number of antichains of size $\beta N$ for any fixed $\beta>0$ (Theorem~\ref{thm5});
\item a `sparse version' of Sperner's theorem: we determine the sharp threshold and scaling window for the property that a typical antichain of size $m$ is contained in a middle layer of $B_n$ (Theorem~\ref{dualthm1}).  
\end{enumerate}

We now discuss each of these results in more detail, starting with our sparse analogue of Sperner's theorem. 
\begin{theorem}
\label{dualthm1}
  Let $m\in [N]$ such that 
  \[
  \frac{m}{N}=\frac{3}{4}-\frac{\ln(n)}{4n} + \frac{h(n)}{n}\, .
  \]
The proportion of antichains in $B_n$ of size $m$ that are contained in a middle layer tends to
\[
\left\{
    \begin{array}{ll}
    1,  &\text{if $h(n)\to\infty$};\\
     0,  &\text{if $h(n)\to -\infty$};\\
    \exp\left\{\frac{-e^{-2c}}{\sqrt{2\pi}}\left(3\cdot \mathbf 1_{\{\text{$n$ even}\}} + \frac{15}{4}\cdot\mathbf 1_{\{\text{$n$ odd}\}}\right)\right\},    &\text{if $h(n)\to c$, $c\in\mathbb{R}$.}
    \end{array}
    \right.
\]
\end{theorem}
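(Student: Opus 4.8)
The plan is to count antichains of size $m$ in $B_n$ in two pieces: those that lie inside a single middle layer, and those that do not, and then show that the ratio of the first count to the total undergoes the stated transition as $h(n)$ varies. Write $\psi_m$ for the total number of antichains of size $m$, and $\psi_m^{\mathrm{mid}}$ for the number contained in a middle layer. The quantity we want is $\psi_m^{\mathrm{mid}}/\psi_m$. The easy part is $\psi_m^{\mathrm{mid}}$: an antichain contained in the layer $\binom{[n]}{\lfloor n/2\rfloor}$ is just an $m$-subset of that layer, so $\psi_m^{\mathrm{mid}} = \binom{N}{m}$ when $n$ is even, and $\psi_m^{\mathrm{mid}} = 2\binom{N}{m} - \binom{N'}{m}$ (by inclusion–exclusion over the two middle layers, where $N'$ counts antichains of size $m$ using only comparable-free pairs across layers — actually one must be careful: an $m$-set of $\lceil n/2\rceil$-sets is automatically an antichain, and a family using sets from both middle layers is an antichain only if no containment occurs, so the cross terms require a separate, smaller-order estimate) for $n$ odd. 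In all cases $\log\psi_m^{\mathrm{mid}} = \log\binom{N}{m} + O(1)$ with the odd-case constant contributing the factor distinguishing the two cases in the statement.

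The harder part is to estimate $\psi_m$ itself, and here I would invoke the machinery the paper has already set up for Theorem~\ref{thm5} (precise asymptotics for the number of antichains of size $\beta N$). The idea is that a random antichain of density $\beta = m/N$ close to $3/4$ typically consists of almost all of one middle layer together with a sparse scattering of sets in the adjacent layers, where the ``defects'' (sets present just below or above the middle layer, together with the holes they force in the middle layer) behave like a low-intensity hard-core-type point process. Using the cluster expansion / polymer model from the physics approach, one obtains an expression of the form $\psi_m = \binom{N}{m}\exp\big(\Xi(m,n) + o(1)\big)$ where $\Xi(m,n)$ is a sum over small defect configurations whose total weight scales like $\binom{n}{n/2+1}\cdot q^{\,n/2+1-k}$-type terms for the leading contributions; when $m/N = 3/4 - \ln n/(4n) + h(n)/n$, the dominant defect — a single set in an adjacent layer whose down-set (or up-set) of size roughly $n/2$ must be avoided — has expected count asymptotic to $\frac{e^{-2c}}{\sqrt{2\pi}}\cdot(\text{const})$, with the constant being $3$ for $n$ even and $\tfrac{15}{4}$ for $n$ odd (the discrepancy coming from there being two relevant adjacent layers in the odd case with different binomial ratios). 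Thus $\psi_m/\psi_m^{\mathrm{mid}} \to \exp\big(\tfrac{e^{-2c}}{\sqrt{2\pi}}(3\cdot\mathbf 1_{\text{even}} + \tfrac{15}{4}\cdot\mathbf 1_{\text{odd}})\big)$, and taking reciprocals gives the theorem; the $h\to\infty$ and $h\to-\infty$ cases follow because the same defect count tends to $0$ or $\infty$ respectively.

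Concretely, the steps I would carry out are: (i) compute $\psi_m^{\mathrm{mid}}$ exactly (even case) or up to the relevant $O(1)$ factor (odd case), pinning down the odd-case constant via a careful inclusion–exclusion between the two middle layers; (ii) set up the polymer/defect model for antichains of size exactly $m$, controlling the size constraint by a tilting parameter (a Lagrange-multiplier / generating-function argument converting the fixed-size count to a weighted count over the grand-canonical ensemble, then applying a local central limit theorem to transfer back — this is presumably already done in the proof of Theorem~\ref{thm5}); (iii) identify, within the cluster expansion for $\log\psi_m$, exactly which defect terms survive in the limit when $m/N$ sits in the critical window $3/4 - \ln n/(4n) + h(n)/n$, discarding the rest as $o(1)$; (iv) evaluate the surviving term's asymptotic value, carefully tracking the combinatorial constant ($3$ vs $\tfrac{15}{4}$) and the Gaussian factor $e^{-2c}/\sqrt{2\pi}$ that arises from the binomial $\binom{N}{m}$ ratios around the critical density; (v) assemble the three cases. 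I expect step (iii)–(iv) to be the main obstacle: one must show that precisely the ``one isolated set in an adjacent layer'' defects matter — larger or clustered defects are lower order because their weight carries extra powers of the small parameter $q = 2^{-n/2}$-ish — and then nail down the constant without off-by-a-factor errors, which requires being scrupulous about whether a defect set of size $n/2-1$, $n/2+1$ (even case) or $n/2$, $(n\pm 1)/2 \mp 1$ (odd case) is being counted, how many middle-layer neighbours it blocks, and how the fixed-size conditioning reweights single-defect configurations relative to the empty one. Everything else is either a direct binomial computation or a citation to the general cluster-expansion estimates established earlier in the paper.
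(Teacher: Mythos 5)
Your overall route for the $1$-statement and the scaling window is the same as the paper's: tilt to the grand-canonical ensemble at $\lam=\beta/(1-\beta)$, use the cluster-expansion formula behind Theorem~\ref{thm5} together with a local CLT to get $\psi(n,m)=(1+o(1))\binom{N}{m}\exp(\sum_{\|\Gamma\|=1}w_C(\Gamma))$ (with a $(2+o(1))$ prefactor for $n$ odd), and show that only the size-one clusters (single defect vertices in the layers adjacent to the chosen middle layer) survive in the window, giving $(1+o(1))\frac{e^{-2h(n)}}{\sqrt{2\pi}}\cdot 3$ resp.\ $\tfrac{15}{4}$. Two corrections to how you set this up. First, for $n$ odd the two middle layers are disjoint as vertex sets, so for $m\geq 1$ the number of antichains contained in a middle layer is exactly $2\binom{N}{m}$; your inclusion--exclusion term $\binom{N'}{m}$ counting ``cross-layer'' families is vacuous (such families are not contained in a middle layer at all). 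Second, this factor $2$ simply cancels against the $(2+o(1))$ in $\psi(n,m)$ coming from the two ground states; the constants $3$ versus $\tfrac{15}{4}$ that distinguish the parities come \emph{entirely} from the size-one cluster sums (in the odd case $\binom{n}{k-1}\lam(1+\lam)^{-k}+\binom{n}{k+1}\lam(1+\lam)^{-(k+1)}$ with $k=(n+1)/2$, versus $2\binom{n}{n/2+1}\lam(1+\lam)^{-(n+2)/2}$ in the even case), not from the middle-layer count as your earlier sentence asserts; your later parenthetical gets this right, but the two statements are inconsistent and only the latter is correct.

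The genuine gap is in the $0$-statement. You claim $h(n)\to-\infty$ follows ``because the same defect count tends to $\infty$,'' but this argument is only available where the fixed-size asymptotics of Theorem~\ref{thm5} hold, i.e.\ for $\beta=m/N$ bounded away from $0$ (and the underlying container result, Theorem~\ref{MT}, already fails below $\lam=\tilde\Omega(1/\sqrt n)$). Since $h(n)\to-\infty$ is unrestricted, $\beta$ may be $o(1)$ or even $m=O(1)$, and there the cluster-expansion formula is simply not established; indeed for very small $m$ the typical antichain is not close to a middle layer for entirely different reasons. The paper closes this with a separate elementary argument: for $\omega(1)=m=o(N)$ it compares $\binom{n}{k-1}\binom{N-k-1}{m-1}$ (one element one layer down, the rest in the middle layer) with $\binom{N}{m}$ and shows the ratio is $\omega(1)$, and for $m=O(1)$ it shows that $m$ uniformly random elements of $B_n$ form an antichain whp while lying in the middle layer with probability $o(1)$. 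Without some such argument covering the small-$\beta$ regime, your proof of the $0$-statement is incomplete.
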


Proving sparse analogues of classical extremal theorems has been a recent and fruitful trend in combinatorics. This line of research has led to several breakthroughs and the development of powerful and general combinatorial techniques such as the `transference principle' of Conlon and Gowers~\cite{ConG16} and Schacht~\cite{Sch16}, and the `hypergraph container method' of Balogh, Morris and Samotij~\cite{balogh2015independent} and Saxton and Thomason~\cite{saxton2015hypergraph}.

Theorem~\ref{dualthm1} is similar in spirit to the sparse analogue of Mantel's Theorem due to Osthus, Pr\"{o}mel, and Taraz~\cite{OstPT03} which determines the sharp threshold in $m$ for the property that almost every triangle-free graph on $n$ vertices and $m$ edges is bipartite. More generally, given an extremal problem that seeks to maximize some parameter $f(G)$ over all $G$ in some family of combinatorial objects $\mathcal{C}$, one can ask: what is the threshold in $m$ for the property that almost all $G\in \mathcal{C}$ with $f(G)=m$ have a similar structure to the solutions of the extremal problem? See e.g.~\cite{jenssen2023evolution,alon2014counting, alon2014refinement,balogh2016typical,luczak2000triangle, engelberg2021typical,morris2024asymmetric, luczak2000triangle} for further examples of the study of such questions.

Another way in which an extremal problem can be `sparsified' is to solve the extremal problem relative to some sparse random set. The literature on such problems is vast and so we give an example most relevant to our setting and refer the reader to the excellent survey~\cite{conlon2014combinatorial} for more. Consider the following sparse analogue of the Boolean lattice $B_n$: for $p\in[0,1]$, let $B_{n,p}$ denote the sublattice of $B_n$ obtained by choosing each element of $B_n$ independently with probability $p$. One can then ask for the typical structure of the largest antichain in $B_{n,p}$. This has been studied by several authors~\cite{BalK22, collares2016maximum, embed, kohayakawa2002width, BalMT14, osthus2000maximum, renyi1961random}. Most recently, Balogh and Krueger \cite{BalK22} showed that there is a sharp threshold at $p=3/4$ for the property that the largest antichain in $B_{n,p}$ is contained in a middle layer. Theorem~\ref{dualthm1} is a natural `dual' version of this result. 

Theorem~\ref{dualthm1} will in fact come as a corollary to a detailed understanding of the number of and structure of antichains of a fixed size in $B_n$. Let $\psi(n,m)$ denote the number of antichains in $B_n$ of size $m$. We note that $\psi(n,m)$ has another natural combinatorial interpretation: it is the number of monotone Boolean functions $f:\{0,1\}^n\to\{0,1\}$ with $m$ conjunctions in its minimal DNF representation (see e.g.~\cite{korshunov2002distribution}). Theorem~\ref{dualthm1} shows that if $m/N>3/4+\eps$ for some $\eps>0$, then $\psi(n,m)=(1+o(1))\binom{N}{m}$, and that this is no longer true if $m/N<3/4-\eps$. Our next theorem provides an asymptotic formula for $\psi(n,m)$ whenever $m/N$ is bounded away from $0$. 

\begin{theorem}
\label{thm5}
    There exist sequences of rational functions $R_j^{0}(n, \beta)$ and $R_j^1(n, \beta)$, $j\in\mathbb{N}$, such that 
    the following holds. If $\beta:=m/N$ is bounded away from $0$ then, taking $t=\lceil \log_{\frac{1}{1-\beta}}(4)\rceil$, we have
    \[
    \psi(n,m)=(1+o(1))\binom{N}{m}\exp \left[ N\sum _{j=1}^{t-1} R_j^{0}\left(n, \beta \right) \cdot \left(1-\beta\right)^{\frac{jn}{2}} \right]
    \]
    as $n\rightarrow \infty$ and $n$ is even, and
    \[
    \psi(n,m)=(2+o(1))\binom{N}{m}\exp \left[ N\sum _{j=1}^{t-1} R_j^{1}\left(n, \beta \right) \cdot \left(1-\beta\right)^{j\frac{n+1}{2}} \right]
    \]
    as $n\rightarrow \infty$ and $n$ is odd.
    Moreover, the coefficients of $R_j^{0}$ and $R_j^1$ can be computed in time $e^{O(j\log j)}.$
 \end{theorem}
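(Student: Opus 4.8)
The strategy is to encode antichains of size $m$ by a polymer model, apply the cluster expansion, and read off the series $N\sum_j R_j(n,\beta)(1-\beta)^{jn/2}$ from the resulting convergent sum; the starting point is the classical principle, already implicit in Theorem~\ref{sapo}, that almost every antichain of size $m$ is concentrated on the middle layer(s) of $B_n$. I describe the even case, writing $\beta=m/N$ and letting $\Omega=\binom{[n]}{n/2}$ (so $|\Omega|=N$), and indicate the odd modifications at the end. Let $\psi^{\mathrm{nm}}(n,m)$ count the antichains $\mathcal{A}$ of size $m$ whose defect $\mathcal{A}\setminus\Omega$ lies in the two layers $\binom{[n]}{n/2\pm1}$ adjacent to $\Omega$ and decomposes into connected pieces of at most some slowly growing size. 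The first task -- and, I expect, the crux -- is to show $\psi(n,m)=(1+o(1))\psi^{\mathrm{nm}}(n,m)$, which I would do by adapting Sapozhenko's graph-container method to the fixed-size regime: one bounds, for each possible amount of defect, the number of antichains of size exactly $m$ realising it, accurately enough that the discarded antichains are negligible even compared with the (possibly exponentially large) main term obtained below. The new feature relative to the Korshunov--Sapozhenko argument is that the comparison is with $\binom{N}{m}$ rather than $2^{N}$, so one tracks the number of \emph{available} middle-layer sets rather than their on/off entropy; the hypothesis that $\beta$ is bounded away from $0$ is used precisely here, as for small $m$ a typical antichain of size $m$ is not concentrated near $\Omega$.

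For the polymer model, note that a near-middle antichain is determined by its defect $\mathcal{D}:=\mathcal{A}\setminus\Omega$, a family of pairwise incomparable sets in $\binom{[n]}{n/2\pm1}$, together with $\mathcal{A}\cap\Omega$, which may be any subset of $\Omega$ of size $m-|\mathcal{D}|$ avoiding the $d(\mathcal{D})$ middle-layer sets comparable to a member of $\mathcal{D}$. Thus
\[
\psi^{\mathrm{nm}}(n,m)=\sum_{\mathcal{D}}\binom{N-d(\mathcal{D})}{m-|\mathcal{D}|},\qquad\text{while}\qquad\frac{\binom{N-d}{m-k}}{\binom{N}{m}}=\beta^{k}(1-\beta)^{d-k}\bigl(1+O(d^{2}/N)\bigr).
\]
Decomposing each $\mathcal{D}$ into its connected components -- ``polymers'' $\gamma$, which within a fixed $\mathcal{D}$ have pairwise disjoint blocked regions, so that $d(\mathcal{D})=\sum_i d(\gamma_i)$ -- and noting that every polymer that must be tracked blocks at least $n/2$ middle-layer sets and so carries weight of order $(1-\beta)^{\Theta(n)}$, exponentially small, so that the $O(d^{2}/N)$ corrections multiply quantities that are $o(1)$ after summation, one obtains
\[
\frac{\psi^{\mathrm{nm}}(n,m)}{\binom{N}{m}}=(1+o(1))\,\Xi,\qquad\Xi:=\sum_{\{\gamma_i\}\text{ compatible}}\ \prod_i w(\gamma_i),\qquad w(\gamma):=\beta^{|\gamma|}(1-\beta)^{d(\gamma)-|\gamma|},
\]
a hard-core polymer partition function (two polymers being compatible when their blocked regions are disjoint, equivalently when they are jointly realisable in an antichain) in which the constraint defining $\psi(n,m)$ has been absorbed into the factors $\beta$ and $1-\beta$; passing to logarithms turns the outer $(1+o(1))$ into a harmless additive $o(1)$.

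Since $w(\gamma)$ is exponentially small in $n$ while only polynomially many polymers meet any fixed vertex of $\Omega$, the Koteck\'y--Preiss condition holds, so $\log\Xi=\sum_\Gamma\phi(\Gamma)\prod_{\gamma\in\Gamma}w(\gamma)$ converges absolutely over clusters $\Gamma$, with $\phi$ the Ursell number. Every relevant cluster $\Gamma$ contributes $\phi(\Gamma)\,\beta^{j}(1-\beta)^{jn/2+c(\Gamma)}$, where $j:=\sum_{\gamma\in\Gamma}|\gamma|$, $\phi(\Gamma)\in\mathbb{Q}$, and $c(\Gamma)\in\mathbb{Z}$ is bounded independently of $n$; summing the clusters of a fixed order $j$ yields $N(1-\beta)^{jn/2}R_j^{0}(n,\beta)$, where $R_j^{0}(n,\beta)$ is a finite sum, over the combinatorial shapes $\sigma$ of order-$j$ clusters, of $\phi(\sigma)$ times a rational function of $n$ (the number of placements of $\sigma$ divided by $N$, built from ratios $\binom{n}{n/2+i}/\binom{n}{n/2}$) times a power of $\beta$ times the Laurent monomial $(1-\beta)^{c(\sigma)}$ -- hence $R_j^{0}$ is a rational function of $n$ and $\beta$. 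Comparing magnitudes, $N(1-\beta)^{jn/2}$ equals $(2(1-\beta)^{j/2})^{n}$ up to polynomial factors and so tends to $0$ exactly when $2(1-\beta)^{j/2}<1$, i.e.\ when $j>\log_{1/(1-\beta)}4$; hence the terms with $j\ge t:=\lceil\log_{1/(1-\beta)}4\rceil$ contribute $o(1)$ in total and can be dropped, giving $\log\bigl(\psi(n,m)/\binom{N}{m}\bigr)=N\sum_{j=1}^{t-1}R_j^{0}(n,\beta)(1-\beta)^{jn/2}+o(1)$. Enumerating the order-$j$ cluster shapes together with their Ursell numbers takes $e^{O(j\log j)}$ time, which bounds the cost of computing the coefficients of $R_j^{0}$.

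For $n$ odd one runs the same argument with the two middle layers $L^{\pm}=\binom{[n]}{(n\pm1)/2}$, each of size $N$: typical near-middle antichains split into two symmetric phases according to whether they lie mostly in $L^{-}$ or mostly in $L^{+}$ (the transitional antichains being negligible), which produces the overall factor $2+o(1)$; inside each phase a single defect set in the nearest layer blocks $(n+1)/2$ sets, contributing weight of order $(1-\beta)^{(n+1)/2}$, which gives the exponent $(1-\beta)^{j(n+1)/2}$ and the functions $R_j^{1}$. As flagged above, I expect the main difficulty to be the reduction to near-middle antichains: a container-type bound on antichains of a \emph{prescribed} size, sharp enough that the discarded antichains are negligible against a main term whose logarithm may be exponentially large in $n$, and uniformly so over all $\beta$ bounded away from $0$ (so that $t$ may be an arbitrary constant). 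The polymer and cluster-expansion steps, while technical, follow the now-standard template.
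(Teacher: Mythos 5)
There is a genuine gap, and it sits at the heart of your argument: the factorization
$\psi^{\mathrm{nm}}(n,m)/\binom{N}{m}=(1+o(1))\,\Xi$ with weights $w(\gamma)=\beta^{|\gamma|}(1-\beta)^{d(\gamma)-|\gamma|}$ is not justified, and at the precision needed it is false as stated. The fixed-size constraint couples the polymers: the exact contribution of a defect family $\mathcal D$ is $\binom{N-d(\mathcal D)}{m-|\mathcal D|}/\binom{N}{m}$, and replacing it by $\beta^{|\mathcal D|}(1-\beta)^{d(\mathcal D)-|\mathcal D|}$ incurs a multiplicative error of size roughly $\exp\bigl(\Theta(d(\mathcal D)^2/N)\bigr)$. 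A typical antichain of density $\beta$ bounded away from $0$ and below $3/4$ has exponentially many defect vertices (of order $N(1-\beta)^{n/2}$, e.g.\ about $2^{n/2}/\sqrt n$ at $\beta=1/2$), so $d(\mathcal D)^2/N$ is polynomially large in $n$; these corrections are not ``$o(1)$ after summation'' but alter the exponent by an amount of order $n^{O(1)}N(1-\beta)^{n}$, i.e.\ the same order as the $j\geq 2$ cluster terms you are trying to compute. (It is telling that your recipe reproduces $R_1^0=\tfrac{2\beta n}{n+2}$ correctly but would give wrong $R_j$ for $j\geq 2$.) The paper avoids this by never working in the fixed-size (canonical) ensemble directly: it studies the activity-$\lam$ partition function, where the polymer weights $\lam^{|A|}(1+\lam)^{-|\partial(A)|}$ factorize exactly, proves convergence of that cluster expansion (\Cref{clusterexpansion}), and then recovers antichains of size exactly $m$ via a local central limit theorem driven by cumulant bounds from the cluster expansion (\Cref{clt,lem16}), with $\lam$ tuned so that $\E|\mathbf A|=m+o(\sqrt{\lam N})$ (\Cref{lem17}). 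The canonical corrections you discard are exactly the terms $\ln\tfrac{1+\lam}{1+\lam_0}-\beta\ln\tfrac{\lam}{\lam_0}$ and the expansion of $\lam$ around $\lam_0=\beta/(1-\beta)$ that enter the paper's $R_j$'s in the proof of \Cref{thm5pre}.

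A second, acknowledged gap is the reduction to near-middle antichains of a prescribed size. This is not a routine adaptation of Sapozhenko: the paper needs a new weighted container theorem valid down to $\lam=\tilde\Omega(n^{-1/2})$ (\Cref{MT}), the truncation \Cref{trunc} with error $o(1/\sqrt N)$, and the transference step (\Cref{transference}), whose hypothesis is supplied precisely by the local-CLT lower bound $i_m(C_n)\lam^m/Z(C_n,\lam)=\Omega(1/\sqrt{\lam N})$ — so even this step is entangled with the grand-canonical machinery you bypass. Your sketch of the odd case (two phases, factor $2$) is directionally right but likewise needs the mixture-of-measures analysis of Section 7 (the measures $\hat\mu_\lam$, $\mu'_\lam$ and the capture counts) to control the overlap between the two ground states.
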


\nin If $\beta=1$ then we understand $\log_{\frac{1}{1-\beta}}(4)$ to equal $0$ (\Cref{thm5} is of course trivial in this case) and  when $t\in\{0,1\}$ we understand the sums in \Cref{thm5} to equal zero. We highlight that for fixed $j$, $R_j^0$ and $R_j^1$ can be computed in finite time.
For example, one can readily compute (see Remarks~\ref{rem:R01} and~\ref{rem:R11} below)
\[ R_1^0(n,\beta) = \frac{2\beta n}{n+2}\]
and
\[ R_1^1(n, \beta)= \frac{\beta}{1-\beta}+\frac{n-1}{n+3}\beta\, .\]

Theorem~\ref{thm5} significantly generalises a result of Korshunov and Shmulevich~\cite{korshunov2002distribution} who obtained asymptotics for $\psi(n,m)$ for certain sequences $m=m(n)$ satisfying $m/N\to 1/2$. Asymptotics for $\log\psi(n,m)$ were obtained by Balogh, Mycroft and Treglown~\cite{BalMT14} in certain regimes where $m=o(N)$: they showed that if $t\in \mathbb{N}$ and $m=o(N/n^{t-1})$ and $m=\omega(N/n^t)$ then $\log \psi(n,m)=(1+o(1))\binom{N}{m}$. Understanding the typical structure of antichains of size $m$ in these regimes remains an interesting open problem -- see the discussion in Section~\ref{conclusion}. 

Finally, let us return to Dedekind's problem itself. Our next result determines $\psi(n)$ up to a multiplicative factor of $\left(1+2^{-(t n/2)}\right)$ for any fixed $t\geq 1$. 

\begin{theorem}
\label{saporefined2}
     There exist sequences of polynomials $P_j^0 (n),$ $P_j^1 (n)$ and $P_j^2(n)$, $j\in\mathbb N$, such that for any fixed $t\geq 1,$ as $n\rightarrow \infty$, for $n$ even 
    \[ \psi(n)=\left(1+o\left(2^{-tn/2} n^{O(t)}\right)\right) 2^{N} \exp \left[ \binom{n}{n/2+1} \sum_{j=1}^{t+1} P_j^0(n) \cdot 2^{-jn/2}\right], \]
    while for odd $n$
    \[ \psi(n)=\left(2+o\left(2^{-tn/2} n^{O(t)}\right)\right) 2^{N} \exp \left[ \sum_{j=1}^{t+1}\left(\binom{n}{(n+1)/2}P_j^1(n)+\binom{n}{(n+3)/2} P_j^2(n)\right) \cdot2^{-j(n+1)/2}\right]. \]
    Moreover, the coefficients of $P_j^0, P_j^1$ and $P_j^2$ can be computed in time $e^{O(j \log j)}$, and each $P_j^0, P_j^1, P_j^2$ has degree at most $2j$. 
\end{theorem}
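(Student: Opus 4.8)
The plan is to set up a convergent polymer model whose partition function equals, up to a negligible multiplicative error, $\psi(n)/2^{N}$ (for $n$ even; $\psi(n)/(2\cdot 2^{N})$ for $n$ odd), apply the cluster expansion to its logarithm, and truncate the resulting series at order $2^{-(t+1)n/2}$. This is the same circle of ideas that yields \Cref{thm5}, and indeed \Cref{saporefined2} can alternatively be derived from a refinement of \Cref{thm5} summed over the antichain size $m$ via the Laplace method. For $n$ even the model is built around the single middle layer $L_{n/2}$; for $n$ odd it is ``doubled'' over the two middle layers $L_{(n-1)/2}$ and $L_{(n+1)/2}$, which is the source of the factor $2$ and of the two binomial coefficients $\binom{n}{(n+1)/2}$ (defects reaching the sister middle layer) and $\binom{n}{(n+3)/2}$ (defects reaching the layer two steps out).

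The first step is the structural reduction, via Sapozhenko-style graph-container estimates: all but a $\bigl(1+O(2^{-\omega(n)})\bigr)$-fraction of antichains have the form ``an arbitrary trace $A\subseteq L_{n/2}$ together with a collection of local defects'', where each defect is a connected configuration of sets in the layers $n/2\pm1,n/2\pm2,\dots$ anchored on a bounded region of $L_{n/2}$, and defects with far-apart anchors do not interact. Summing over $A$ then gives
\[
\psi(n) = \bigl(1+O(2^{-\omega(n)})\bigr)\,2^{N}\,\Xi, \qquad \Xi = \sum_{\Gamma}\ \prod_{\gamma\in\Gamma} w(\gamma),
\]
the partition function of an abstract polymer model in which $\gamma$ runs over local defect configurations, $\Gamma$ over families with pairwise non-adjacent anchors, and $w(\gamma)$ --- the average over $A$ of the indicator that the defect is compatible with $A$ --- is a rational function of $n$ times $2^{-c(\gamma)n/2}$ with $c(\gamma)\ge1$ a size parameter. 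Since through a fixed site there are at most $n^{O(c)}$ polymers of size $c$, each of weight $O(2^{-cn/2}\operatorname{poly}(n))$, the Koteck\'y--Preiss criterion $\sum_{\gamma\ni v}|w(\gamma)|\,e^{|\gamma|}=O\bigl(n^{O(1)}2^{-n/2}\bigr)\to 0$ holds, and the cluster expansion $\log\Xi=\sum_{\Gamma}\phi^{T}(\Gamma)\,w(\Gamma)$ converges absolutely.

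The second step is to group clusters by total size and evaluate each group. A cluster of total size $j$ contributes a term of the shape $\binom{n}{n/2+1}\cdot P_j^0(n)\cdot 2^{-jn/2}$: the Ursell coefficients $\phi^{T}$ are absolute rationals, the product of polymer weights contributes a rational function of $n$, and the number of ways to embed a fixed cluster shape into $B_n$ equals, by the $S_n$-symmetry of the Boolean lattice, $\binom{n}{n/2+1}$ times a polynomial in $n$ of degree at most $2j$ --- each of the $O(j)$ interactions in a size-$j$ cluster being governed by a shared-neighbourhood count that is a polynomial in $n$ of degree at most $2$, with the rational weight-denominators cancelling against the placement polynomials to leave an honest polynomial $P_j^0$. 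Truncating at total size $t+1$, the tail of the series is $o\bigl(2^{-tn/2}n^{O(t)}\bigr)$ (using $\binom{n}{n/2+1}=O(2^n/\sqrt n)$ and $\deg P_j^0\le 2j$), which also absorbs the container error, yielding the stated formula; the computability claim follows because there are only $e^{O(j\log j)}$ cluster shapes of size $\le j$ and each is evaluated in time $\operatorname{poly}(j)$.

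For $n$ odd the analysis is the same, except the ground states are the subsets of $L_{(n-1)/2}$ \emph{or} of $L_{(n+1)/2}$, giving the factor $2$, the two length scales of defects giving the two families of terms with coefficients $\binom{n}{(n+1)/2}$ and $\binom{n}{(n+3)/2}$, and one further subtracts the over-counted antichains simultaneously ``near both'' middle layers --- a negligible contribution by the container bounds. I expect the main obstacle to be the bookkeeping of the second step: proving that the per-order cluster sums collapse to genuine polynomials $P_j^0,P_j^1,P_j^2$ of degree \emph{at most} $2j$, rather than mere rational functions of uncontrolled degree, and that the embedding counts factor precisely through the stated binomial coefficients. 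This requires an explicit $S_n$-orbit classification of the small local configurations that can occur near a middle layer, together with a careful cancellation argument; by contrast, the container reduction and the Koteck\'y--Preiss estimate are by now standard in this setting.
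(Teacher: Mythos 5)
Your proposal follows essentially the same route as the paper: a container-based reduction showing that all but a super-exponentially small weighted fraction of antichains live near the middle layer(s), a polymer model for the defects with Koteck\'y--Preiss convergence, a cluster expansion grouped by cluster size whose order-$j$ terms collapse (via vertex-transitivity and active-coordinate counting) to $\binom{n}{n/2+1}P_j^0(n)2^{-jn/2}$ (resp.\ the two binomial families for odd $n$, with the factor $2$ from the two ground states and the overlap shown negligible), truncated at order $t+1$, with computability from enumerating the $e^{O(j\log j)}$ cluster shapes. This is exactly how the paper proceeds (Theorem~\ref{saporefined} plus Theorem~\ref{clusterexpansion} at $\lambda=1$, and Corollary~\ref{defapprox2} for odd $n$), and the bookkeeping you flag as the main obstacle is resolved there by the active-coordinate argument; note that at $\lambda=1$ the polymer weights are pure powers of $1/2$, so no rational-function cancellation is actually needed.
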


 For $j\in\{1,2\}$, $P_j^0, P_j^1$ and $P_j^2$ can be read directly from Theorem \ref{sapo}. The algorithms we give to compute $P_j^0, P_j^1, P_j^2$ and also $R_j^0, R_j^1$ from~\Cref{thm5} are feasible to implement, and in the Appendix we present a few more values of the $P_j$'s and $R_j$'s obtained using computer code written by Mauricio Collares.

We also make progress towards a `dual version' of \cite[Conjecture 3]{BalK22}. We call $\binom{[n]}{k}$ the $k^{\text{th}}$ $\emph{layer}$ of $B_n$.
We say that $\binom{[n]}{k-1}, \binom{[n]}{k}, \binom{[n]}{k+1}$ are three \emph{central layers} if $k=\lfloor n/2 \rfloor$ or $k= \lceil n/2 \rceil$.

\begin{theorem}
\label{sparsesperner3}
    There exists an absolute constant $C$ such that, for $\frac{C\log^2 n}{\sqrt{n}}<\beta\leq 1,$ almost all antichains of size $\lfloor\beta N\rfloor$ in $B_n$ are contained in three central layers.
\end{theorem}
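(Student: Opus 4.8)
Write $\psi_3(n,m)$ for the number of antichains of size $m$ lying in three central layers and $\mathcal B$ for the set of remaining antichains of size $m$, so $\psi(n,m)=\psi_3(n,m)+|\mathcal B|$. Since every $m$-element subset of $L_{\lfloor n/2\rfloor}$ is an antichain contained in three central layers, $\psi_3(n,m)\ge\binom{N}{m}\ge1$, and it suffices to prove $|\mathcal B|=o(\psi(n,m))$. For $\beta$ bounded away from $0$ and $1$ this is essentially implicit in the cluster-expansion analysis underlying \Cref{thm5} — the defects reaching beyond the central layers are a lower-order correction — and $\beta\to1$ is the stability regime of Sperner's theorem; so the real content is the window $C\log^2 n/\sqrt n<\beta\le\delta$ for a small constant $\delta$, which I would handle by a direct count.

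Every $\mathcal A\in\mathcal B$ contains a set $R$ that is comparable to $\Omega(n^2)$ sets of one of the middle layers (and hence forces all of them out of $\mathcal A$). Indeed, either $\mathcal A$ meets a layer $L_k$ with $k\le\lfloor n/2\rfloor-2$ or $k\ge\lceil n/2\rceil+2$ — in which case, using the order-reversing automorphism $A\mapsto[n]\setminus A$, we may take $R=S$ with $|S|=\lfloor n/2\rfloor-d$, $d\ge2$, forcing out the $\binom{\lceil n/2\rceil+d}{d}\ge\Omega(n^2)$ supersets of $S$ of size $\lfloor n/2\rfloor$ — or (possible only for odd $n$) $\mathcal A$ lies in the band $L_{(n-3)/2}\cup\cdots\cup L_{(n+3)/2}$ yet meets both edge layers, in which case we take $R=T\in L_{(n+3)/2}$, forcing out the $\binom{(n+3)/2}{2}=\Omega(n^2)$ subsets of $T$ of size $(n-1)/2$. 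Writing $q_R$ for the number of sets thus forced out, there are at most $2^n$ possibilities for $R$ across all cases, each with $q_R=\Omega(n^2)$.

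The crux is the quantitative claim that a set $R$ whose inclusion forces $q$ sets of a middle layer out of the antichain is itself exponentially unlikely to appear: under the uniform measure $\Pr$ on antichains of size $m$,
\[
\Pr[R\in\mathcal A]\le(1-\beta)^{cq}
\]
for some not-too-small $c=c(n)$. Heuristically $c$ should be $\Omega(1)$, since conditioning on $R\in\mathcal A$ deletes $q$ sets from the middle layer — each present with probability $\approx\beta$ — at a cost $\approx(1-\beta)^{q}$, while the few non-middle sets of $\mathcal A$ (in expectation $O(m(1-\beta)^{n/2})=o(m)$ of them) perturb this only lightly. I expect this to be the main obstacle: the bound must be uniform over $R$, and one has to genuinely account for the layers $L_{\lfloor n/2\rfloor\pm1}$, which for $\beta<3/4$ carry many of the antichain's sets. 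The natural tools are a negative-association or FKG-type inequality for the uniform measure on $m$-antichains, or a suitable injection from antichains containing $R$ into antichains that instead use several of the sets $R$ forces out.

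Granting the key estimate, a union bound gives
\[
|\mathcal B|\le\sum_R\Pr[R\in\mathcal A]\cdot\psi(n,m)\le 2^n\cdot(1-\beta)^{\Omega(cn^2)}\cdot\psi(n,m)=\exp\bigl(O(n)-\Omega(c\,\beta\, n^2)\bigr)\cdot\psi(n,m),
\]
which is $o(\psi(n,m))$ as soon as $c\,\beta\,n^2\gg n$, i.e. $\beta\gg1/(cn)$. If $c=\Omega(1)$ this needs only $\beta\gg1/n$; even if a hands-on proof of the key estimate delivers $c$ merely of order $1/\sqrt n$ (a natural loss when transferring antichain counts between adjacent layers), it needs $\beta\gg1/\sqrt n$ — and in either case the hypothesis $\beta>C\log^2 n/\sqrt n$ supplies the margin, which completes the proof.
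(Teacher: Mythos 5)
There is a genuine gap, and it sits exactly where you flag it: the estimate $\Pr[R\in\mathcal{A}]\le(1-\beta)^{cq}$, uniformly over the $2^n$ choices of $R$, under the uniform measure on antichains of size exactly $m$. Your justification is circular: it assumes that, even after conditioning on $R\in\mathcal{A}$ and on $|\mathcal{A}|=m$, the middle-layer elements behave like (nearly) independent $\beta$-biased bits and that only $o(m)$ elements of $\mathcal{A}$ lie outside the middle layer(s) — but that is precisely the typical-structure statement the theorem asserts. Neither of the tools you gesture at closes this. Fixed-size conditioning destroys the product structure needed for FKG/negative-association arguments, and no such correlation inequality for uniform $m$-antichains is available; a switching/injection argument would need uniform control of the antichains containing a fixed low element, which is exactly what Sapozhenko-type graph containers are built to supply. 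Indeed, in the paper this estimate is, in effect, the content of the new container theorem (\Cref{MT}) feeding into the truncation result (\Cref{trunc}), the cluster-expansion convergence, the local CLT input (\Cref{lem16} together with the continuity choice of $\lam$ so that $\E|\mathbf A|=m$), and the transference lemma (\Cref{transference}). Note also that this machinery is what makes the range $\lam,\beta\gtrsim \log^2 n/\sqrt n$ accessible at all: your proposed ``direct count'' window $C\log^2 n/\sqrt n<\beta\le\delta$ is the hardest regime of the theorem, not an easy residual case, and the exponent-budget arithmetic at the end only works once the unproven key estimate (with $c$ at least of order $1/\sqrt n$) is granted.

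A secondary problem is the reduction of the middle range to \Cref{thm5}: in the paper the logical order is the reverse — \Cref{sparsesperner3} is proved first (via \Cref{trunc}, \Cref{MT}, \Cref{lem16}, \Cref{transference} in \Cref{seceven} and \Cref{secodd}) and is then used to deduce \Cref{thm5} — so ``essentially implicit in the cluster-expansion analysis underlying \Cref{thm5}'' cannot be invoked without redoing that analysis, which again runs through the container theorem. Your combinatorial skeleton is fine (every antichain outside three central layers contains an element comparable to $\Omega(n^2)$ elements of a middle layer, and one may union-bound over at most $2^n$ such elements; the $\beta$ close to $1$ case is indeed easy, as in the paper), but as it stands the argument only converts the theorem into the unproven probabilistic estimate rather than proving it.
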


\nin We conjecture that the range of $\beta$ in the above result can be improved to $\beta=\tilde \Omega(1/n)$ (see~\Cref{conj1} and the surrounding discussion).

\subsection{Methods and related work}\label{secRelatedWork}
Here we give an overview of the proofs of the results from the previous section. The intuition for our approach comes from statistical physics.  We build on tools from~\cite{sapo, jenssen2023homomorphisms,JKP2,hypercube,struct, jenssen2023evolution}, and develop some new tools along the way.

Let $\cA=\cA_n$ denote the set of all antichains in $B_n$. 
Our starting point is to write a partition function for antichains: 
\[
Z(\lam)=\sum_{A\in \cA}\lam^{|A|}\, 
\]
along with the corresponding (Gibbs) measure $\mu_\lam$ on $\cA$ given by
\begin{align*}
\mu_\lam(A)=\frac{\lam^{|A|}}{Z(\lam)}\, .
\end{align*}
We note in particular, that $Z(1)$ is equal to $\psi(n)$, the total number of antichains in $B_n$ and $\mu_1$ is the uniform distribution on antichains. For us, the combinatorial relevance of studying $Z(\lam)$ and $\mu_\lam$ at general $\lam>0$ comes from the following identity: for $m\in\mathbb{N}$,
\begin{align}\label{eqLocalId}
    \mu_\lam(|A|=m)=\frac{\lam^m \psi(n,m)}{Z(\lam)}\, .
\end{align}
If $\lam$ is chosen carefully so that the expected size of an antichain sampled from $\mu_\lam$ is close to $m$, then we can hope to estimate $\mu_\lam(|A|=m)$ via a local limit theorem. By the identity~\eqref{eqLocalId}, estimating $\psi(n,m)$ then reduces to the problem of estimating the partition function $Z(\lam)$.  

 Classical statistical physics is largely concerned with understanding partition functions like $Z(\lam)$ and their associated Gibbs measures. From this perspective, a central motivating question is the following: how does the typical structure of a sample from $\mu_\lam$ change with $\lam$? As $\lam$ increases, the measure $\mu_\lam$ becomes more biased toward larger antichains, and so we might expect  samples to increasingly `correlate' with a maximum antichain (a middle layer). Many powerful methods have been developed to rigorously study such `emergence of structure' or `phase transition' phenomena. The tool most relevant to us is the cluster expansion~\cite{ruelle1963correlation,penrose1963convergence,brydges1984short,kp} (see Section~\ref{secPrelims}). Recently, the cluster expansion and related tools have been applied outside the context of classical statistical physics, to algorithmic and combinatorial problems of enumeration and sampling~\cite{barvinok2015computing,regts2015zero,patel2016deterministic,barvinok2017combinatorics,HelmuthAlgorithmic2,jenssen2023homomorphisms,JKP2,hypercube,struct, jenssen2023evolution, li2023number, balogh2021independent, jenssen2024sampling}. Particularly relevant to our current context are the papers of the first author and Perkins~\cite{hypercube}, and the first author, Perkins and Potukuchi~\cite{struct}: here the cluster expansion is applied to study in detail the number and typical structure of independent sets in the $d$-dimensional hypercube $Q_d$.
 
 In our context, we use the cluster expansion to understand the measure $\mu_\lam$ as a perturbation of the `ground state' measure which selects a middle layer uniformly at random and then takes a $\lam/(1+\lam)$-random subset of that layer. This allows for a very precise understanding of the measure $\mu_\lam$ and its associated partition function $Z(\lam)$. In particular, this heuristic shows that for a given value of $\lam$, the expected size of an antichain chosen from $\mu_\lam$ should be close to $\frac{\lam}{1+\lam}N$. Returning to~\eqref{eqLocalId}, we see that in order to estimate $\psi(n,m)$, where $m=\beta N$,  we should expect the suitable choice of $\lam$ to be close to $\frac{\beta}{1-\beta}$.

It is natural to rephrase the problem of counting antichains in terms of counting independent sets in a graph. Abusing notation slightly, we let $B_n$ denote the graph whose vertex set is $2^{[n]}$, where $u$ is adjacent to $v$ if and only if $u\subset v$ or $v\subset u$. An antichain in $B_n$ is then simply an independent set in this graph. 

The \emph{container method} has proven to be an essential tool in the study of independent sets in graphs and hypergraphs. In the graph setting, its roots can be traced back to work of Kleitman and Winston~\cite{kleitman1982number} and Sapozhenko~\cite{sapozhenko1987number}. These tools were generalized to the context of hypergraphs and developed into a powerful method in the seminal works of Balogh, Morris and Samotij~\cite{balogh2015independent} and Saxton and Thomason~\cite{saxton2015hypergraph}. The container method has also proven to be an essential tool in applications of the cluster expansion to problems in combinatorics~\cite{jenssen2023homomorphisms,JKP2,hypercube,struct, jenssen2023evolution, li2023number, balogh2021independent}. Very broadly speaking, the container method allows one to efficiently bound the probability of seeing large deviations from a ground state. This in turn allows one to prove the convergence of cluster expansions of partition functions which encode these perturbations.

In this paper we build on Sapozhenko's approach to Dedekind's problem~\cite{sapo}, another early application of the (graph) container method. Following a recent development of the authors \cite{lambda}, we prove a new container result (Theorem~\ref{MT}) for antichains in consecutive layers of the Boolean lattice. This container result will be precisely what we need to analyze antichains in detail via cluster expansion.

\subsection{Proof overview and structure of the remainder of the paper}
For $k\in [n]$, let $L_k$ denote the $k^{\text{th}}$ layer $\binom{[n]}{k}$ of $B_n$. For $S\subseteq [n]$ we let $L_S=\bigcup_{s\in S}L_s$. Let $C_n\coloneqq L_{[ \lfloor n/2 \rfloor-1, \lceil n/2 \rceil +1 ]}$, the union of the $3$ or $4$ (depending on the parity of $n$) most central layers of $B_n$. 
Our first step is to show that most of the contribution to $Z(\lam)$ comes from antichains contained in $C_n$.
Given $U\subseteq B_n$, let
\[
Z(U, \lam)= \sum_{A \in \cA, \ A \subseteq U}\lam^{|A|}\, ,
\]
along with the corresponding Gibbs measure
\begin{align}\label{eqLocalId0}
\mu_{U,\lam}(A)=\frac{\lam^{|A|}}{Z(U,\lam)}\, .
\end{align}
In particular, $Z(\lam)= Z(B_n, \lam)$ and $\mu_\lam=\mu_{B_n,\lam}$.

\begin{theorem}
\label{trunc}
There exists $C>0$ so that if $\lambda\geq C \log^2 n/\sqrt{n}$, then 
\[
Z(\lam) = \left(1+o\left(\frac{1}{\sqrt{N}} \right)\right)  Z( C_n, \lam)\, .
\]
\end{theorem}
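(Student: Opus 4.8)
The plan is to show that antichains not contained in $C_n$ contribute negligibly to $Z(\lambda)$. Write $Z(\lambda) = Z(C_n,\lambda) + \sum_{A \in \mathcal{A},\, A \not\subseteq C_n} \lambda^{|A|}$, so it suffices to prove that the error sum is $o(N^{-1/2}) \cdot Z(C_n,\lambda)$. For the lower bound on $Z(C_n,\lambda)$, I would simply restrict to antichains contained in a single middle layer $L_{\lfloor n/2 \rfloor}$: these contribute $(1+\lambda)^N$, so $Z(C_n,\lambda) \geq (1+\lambda)^N$. Thus the goal reduces to showing
\[
\sum_{A \in \mathcal{A},\, A \not\subseteq C_n} \lambda^{|A|} = o\!\left(\frac{(1+\lambda)^N}{\sqrt{N}}\right).
\]

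The heart of the argument is to bound, for each layer $k$ outside the central band, the total $\lambda$-weight of antichains meeting $L_k$. Any antichain $A$ with an element $v \in L_k$ for some $k < \lfloor n/2 \rfloor - 1$ (the case $k > \lceil n/2 \rceil + 1$ being symmetric by the complementation bijection $S \mapsto [n]\setminus S$) is constrained: the presence of $v$ forbids all of $v$'s comparable neighbours from $A$. The key quantitative point is that a set $v$ of size $k$ far from the middle has a large up-shadow into the middle layers — each element of $L_k$ is comparable to many sets in $L_{\lfloor n/2\rfloor}$ — so choosing $v$ into the antichain ``costs'' us a multiplicative $(1+\lambda)^{-\Omega(\cdot)}$ factor relative to the free sum over the middle layer. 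I would organise this as follows: (i) group antichains by the topmost ``bad'' layer they touch, or more efficiently by a single bad vertex $v$; (ii) for fixed $v \in L_k$, bound $\sum_{A \ni v} \lambda^{|A|}$ by $\lambda \cdot (1+\lambda)^{|B_n \setminus N[v]|}$ using the trivial observation that the rest of $A$ is an arbitrary subset of the non-neighbours, but this crude bound is not enough on its own; (iii) the real gain comes from restricting attention to the interaction with the central layers and comparing with $(1+\lambda)^N$: one shows $\sum_{A \ni v}\lambda^{|A|} / (1+\lambda)^N \leq \lambda \cdot (1+\lambda)^{-d(v)}$ where $d(v)$ is the number of central-layer sets comparable to $v$, which is at least $\binom{n-k}{\lfloor n/2\rfloor - k}$ or so; (iv) sum over all $v$ in layer $k$ (there are $\binom{n}{k}$ of them) and over all bad layers $k$, using that $\binom{n}{k} (1+\lambda)^{-d(v)}$ decays super-polynomially in $N$ once $\lambda \geq C\log^2 n/\sqrt{n}$ because $d(v)$ grows and $\lambda \gg \log n / \sqrt n$ forces $(1+\lambda)^{-d(v)}$ to beat $\binom{n}{k} \leq 2^n$.

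The step I expect to be the main obstacle is (iii)–(iv): getting a clean enough lower bound on the ``shadow cost'' $d(v)$ that survives the union bound over all $\binom{n}{k}$ choices of $v$ and all outer layers, uniformly down to the threshold $\lambda = \Theta(\log^2 n/\sqrt n)$. Near the boundary layers $k = \lfloor n/2\rfloor - 2$ the shadow into the central layers is only linear in $n$, so the naive bound $(1+\lambda)^{-cn}$ against $\binom{n}{\lfloor n/2\rfloor - 2} \approx N$ gives roughly $N \cdot e^{-c n \lambda} = N \cdot e^{-c'\sqrt n \log^2 n}$, which is comfortably $o(N^{-1/2})$ — so in fact the crude neighbourhood bound does suffice here, and the calculation is genuinely short. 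The subtlety is bookkeeping: ensuring we do not double-count antichains that touch several bad layers (handled by conditioning on a single bad vertex and multiplying the per-vertex bounds by $n$ for the number of bad layers and by $\binom{n}{k}$ within a layer), and verifying the constant $C$ works across the whole regime. I would also double-check the parity cases separately since $C_n$ has three layers when $n$ is odd and four when $n$ is even, but this affects only which $k$ count as ``bad'' and not the estimates themselves. Once this union bound is in place, the theorem follows immediately by dividing through by $Z(C_n,\lambda) \geq (1+\lambda)^N$.
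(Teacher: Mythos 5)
Your reduction to bounding $\sum_{A\not\subseteq C_n}\lambda^{|A|}$ against $(1+\lambda)^N$ is the right starting point, but the pivotal inequality in step (iii),
\[
\sum_{A \ni v}\lambda^{|A|} \;\leq\; \lambda\,(1+\lambda)^{N-d(v)},
\]
is asserted, not proved, and it is not something one "shows" by restricting attention to the central layers. Writing an antichain containing $v$ as $\{v\}\cup A'$ with $A'$ an antichain in $B_n\setminus N[v]$, your claim is exactly $Z(B_n\setminus N[v],\lambda)\leq (1+\lambda)^{N-d(v)}$, i.e.\ that in the subposet $B_n\setminus N[v]$ the weighted antichain count is dominated (up to nothing at all, in your formulation) by the contribution of its largest antichain. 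That is a statement of the same type and difficulty as the theorem you are trying to prove: a priori, antichains in $B_n\setminus N[v]$ are free to place many elements in $L_{\lfloor n/2\rfloor\pm1}$ and beyond, and when $\lambda<1$ each such "defect" costs only $\lambda(1+\lambda)^{-\Theta(n)}$ while the entropy of choosing where to put defects is enormous; controlling that trade-off at $\lambda=\Theta(\log^2 n/\sqrt n)$ is precisely why the paper needs its new container theorem (Theorem~\ref{MT}) together with the polymer-model/Kotecký--Preiss analysis. The genuinely "trivial" bound available to you is $\sum_{A\ni v}\lambda^{|A|}\leq \lambda(1+\lambda)^{2^n-|N[v]|-1}$, which is astronomically larger than $(1+\lambda)^N$ (since $2^n\sim N\sqrt{\pi n/2}$), so the union bound you describe in (iv) cannot be closed from what you have established. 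In short, the proposal is circular at its core step rather than short.

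For comparison, the paper does not remove all outer layers in one union bound over bad vertices. It peels off one layer at a time: Lemma~\ref{truncstep1} shows that, for any frozen configuration $X$ above, antichains in three consecutive layers $L^X_{[k-2,k]}$ have essentially the same total weight as those in the top two layers $L^X_{[k-1,k]}$, and iterating this $\approx n$ times (with error $o(1/(n\sqrt N))$ per step) yields Theorem~\ref{trunc}. Each single-step comparison is itself nontrivial for $\lambda<1$: it is proved by encoding the bottom-layer contribution into polymer weights, verifying the Kotecký--Preiss condition via the container estimate of Theorem~\ref{MT} (whose proof occupies Section~\ref{secproofcont}), and comparing the two cluster expansions. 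Your plan, if repaired, would also need some such container or graph-container input to control the weighted sum over antichains meeting a non-central layer; the crude neighbourhood count alone does not suffice anywhere in the regime $\lambda=\tilde{\Theta}(1/\sqrt n)$, and even the bookkeeping you flag (parities, double counting) is minor compared to this missing ingredient. (A small additional slip: for $v\in L_{\lfloor n/2\rfloor-2}$ the up-shadow into $L_{\lfloor n/2\rfloor}$ has size $\approx n^2/8$, quadratic rather than linear in $n$, but that does not affect the main issue.)
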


In the special case where $\lam=1$, Theorem~\ref{trunc} was established by Sapozhenko~\cite{sapo} (with a weaker error term\footnote{A closer look at the proof shows that it gives  $Z(1)=(1+e^{-\Omega(\log ^2 n)})Z( C_n, 1).$}). Sapozhenko's approach was to show that the total number of antichains contained in three consecutive layers $L_{k-2}, L_{k-1}, L_{k}$ ($k\leq\lfloor n/2 \rfloor$) of $B_n$ is asymptotically equal to the number of antichains in the larger of the two layers. A careful iteration of this argument then yields the desired result. We follow the same broad strategy; however, significant complications arise when dealing with the case $\lam<1$. In particular, bounding the total weight of antichains containing an element from the smallest of the three layers becomes considerably more challenging. To deal with this, we prove a new container lemma for antichains contained in three consecutive layers of $B_n$ (Theorem~\ref{MT} below). We believe that Theorem~\ref{trunc} holds all the way down to $\lam=\tilde\Omega(1/n)$, but to prove this one would need a sharper container lemma. 

The explicit error term $1+o(1/\sqrt{N})$ above is necessary when applying our results to study antichains of a given size - see Lemma~\ref{transference} later. We note that a closer analysis of our proof of Theorem \ref{trunc} shows that we may easily replace this error term with the sharper $1+e^{-O(n\log n)}.$ In the case where $\lam=1$ we can prove something sharper still.

\begin{theorem}
\label{saporefined}
     As $n \rightarrow \infty$, we have
    \[ \psi(n)=\left( 1+e^{-\Omega(n^2)}\right) Z(C_n, 1).\]
\end{theorem}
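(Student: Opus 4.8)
The plan is to follow the same iterative scheme that underlies Theorem~\ref{trunc}, but to track the error terms much more carefully in the special case $\lam=1$, where we have the extra luxury of working with the uniform measure on antichains and where the relevant containers behave more benignly. Recall that $C_n$ is the union of the $3$ or $4$ most central layers. The idea is to peel off the layers $L_k$ for $k$ small (or, by symmetry, $k$ close to $n$), one (or two) at a time, showing at each step that the number of antichains meeting $L_{k-1}$ but contained in $L_{[k-1,k+1]}$ is a tiny fraction of the number contained in $L_{[k,k+1]}$ (or just $L_k$ when we reach the relevant layer). Summing these relative errors over all layers $k\le \lfloor n/2\rfloor -1$ must then yield a total error of $e^{-\Omega(n^2)}$.

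The first step is to set up the comparison at a single "step". Fix $k\le \lfloor n/2\rfloor -1$ and let $\mathcal{A}_k$ be the set of antichains contained in $L_{[k-1,k]}$ (the two-layer case; the three-layer case is analogous with an extra shadow argument). We want to show
\[
\frac{\#\{A\in\mathcal{A}_k : A\cap L_{k-1}\ne\emptyset\}}{\#\{A\in\mathcal{A}_k : A\subseteq L_k\}}\le e^{-\Omega(n\log n)\cdot g(k)}
\]
for a suitable gain function $g(k)$ that grows as $k$ decreases away from $n/2$ (for instance proportional to the ratio $\binom{n}{k}/\binom{n}{k-1}$, or to $n/k$). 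The point is that a single vertex $v\in L_{k-1}$ that lies in an antichain $A$ forbids all of its up-neighbours in $L_k$ (there are $n-k+1$ of them) from being in $A$, so the map that deletes $v$ and is free to re-add those forbidden elements already gains a factor roughly $2^{n-k+1}$, while only $\binom{n}{k-1}$ choices of $v$ and the cost of the container structure on $L_{k-1}$ need to be paid for. Since $\binom{n}{k-1}=e^{O(n)}$ and $2^{n-k+1}\ge 2^{n/2}$ throughout the range, each step already contributes a relative error that is at most $e^{-\Omega(n)}$; to get $e^{-\Omega(n^2)}$ one uses that there are $\Theta(n)$ layers to peel and that the per-layer gain actually compounds, i.e. removing the bottom $k$ layers costs $e^{-\Omega(n^2)}$ rather than $e^{-\Omega(n)}$, because the surviving antichain must avoid a shadow whose complement in $L_k$ is of size $\Omega(N)$.

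The main technical engine is the container lemma for antichains in three consecutive layers (Theorem~\ref{MT}), which provides a small family of "containers" $U$ such that every antichain $A$ with a large footprint in $L_{k-1}$ is contained in some $U$ with $|U\cap L_k|$ substantially smaller than $\binom{n}{k}$. Given such a container, the number of antichains inside it contained in $L_{[k-1,k]}$ is at most $2^{|U\cap L_{k-1}|}\cdot 2^{|U\cap L_k|}$, and one compares this to $2^{\binom{n}{k}}$ (a lower bound on the number of antichains inside $L_k$ alone, namely all subsets of $L_k$). The number of containers is subexponential in $N$, so it does not overwhelm the saving provided the container guarantees $|U\cap L_{k-1}|+|U\cap L_k| \le \binom{n}{k}-\Omega(N)$. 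Verifying this container bound — in particular that isoperimetry in the middle layers forces the shadow of any non-negligible subset of $L_{k-1}$ to be of size linear in $N$, uniformly over the range of $k$ we need — is where the real work lies, and is exactly the step where the special value $\lam=1$ lets us be cruder than in the proof of Theorem~\ref{trunc}: we do not need to weight antichains, only count them, so a clean set-counting container suffices.

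I expect the main obstacle to be handling the layers $L_k$ with $k$ close to $\lfloor n/2\rfloor$, where the gain per step degenerates (the ratio $\binom{n}{k}/\binom{n}{k-1}$ is only $1+\Theta(1/\sqrt n)$) and a single naive step no longer gives a super-polynomial saving. Here one must argue more globally: combine the contributions of several adjacent near-central layers, or invoke the full strength of the container lemma to show that an antichain straying even one layer below $C_n$ must sacrifice a constant fraction of the available space in the central layers, which costs $e^{-\Omega(N)}=e^{-\Omega(2^n/\sqrt n)}$ and hence in particular $e^{-\Omega(n^2)}$. Once the boundary layers of $C_n$ are controlled this way and the far layers are controlled by the compounding argument above, summing all the error terms and using that $\psi(n)=Z(1)\ge Z(C_n,1)\ge 2^{N}$ gives the claimed multiplicative error $1+e^{-\Omega(n^2)}$.
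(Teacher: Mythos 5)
There is a genuine gap: your error accounting does not produce $e^{-\Omega(n^2)}$. First, the per-step engine is too weak. In the comparison of three consecutive layers with the top two, deleting a single vertex $v\in L_{k-1}$ only frees its $n-k+1$ up-neighbours, a gain of $2^{n-k+1}\approx 2^{n/2}$, while the entropy of choosing $v$ is $\binom{n}{k-1}$, which for $k$ near $n/2$ is about $2^n/\sqrt n \gg 2^{n/2}$; so this bookkeeping gives no saving at all near the middle. The actual saving comes from the $\Omega(n^2)$ vertices \emph{two} layers up that a bottom-layer vertex blocks (a vertex of $L_{k-2}$ excludes $\binom{n-k+2}{2}=\Theta(n^2)$ elements of $L_k$), and controlling the joint structure of many such vertices is precisely what the container/polymer machinery (Theorem~\ref{MT} and the Koteck\'y--Preiss bounds) is needed for. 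Second, even granting a per-step relative error of $e^{-\Omega(n)}$, iterating over $\Theta(n)$ layers yields a total error of order $n\,e^{-\Omega(n)}=e^{-\Omega(n)}$, not $e^{-\Omega(n^2)}$: in a telescoping product of partition functions the relative errors add to first order, they do not ``compound'' downwards, so each individual step must already be accurate to $e^{-\Omega(n^2)}$. Third, your proposed fix for the near-central layers is based on a false claim: an antichain straying one layer below $C_n$ does \emph{not} sacrifice $\Omega(N)$ of the middle layer, only $\Theta(n^2)$ of it, so the cost is $e^{-\Theta(n^2)}$ rather than $e^{-\Omega(N)}$; indeed this boundary contribution is the dominant one and is exactly why the theorem's error term is tight (see the remark following the theorem). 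Your plan thus misidentifies where the $n^2$ in the exponent comes from.

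For comparison, the paper's proof requires no new counting at all: it reuses the already-established cluster expansion for the polymer models behind Lemma~\ref{trunclemma2}. The partition functions over three layers and over two layers differ only through clusters of size at least $n/2$ (smaller polymers have empty interior), and at $\lambda=1$ the weight function $g$ of \eqref{eq:fgdef} satisfies $g(\Gamma)=\Omega(n^2)$ for every such cluster, so the Koteck\'y--Preiss tail bound \eqref{kp32bound} gives a per-step multiplicative error $1+e^{-\Omega(n^2)}$; iterating $O(n)$ times exactly as in the proof of Theorem~\ref{trunc} then gives $\psi(n)=(1+e^{-\Omega(n^2)})Z(C_n,1)$.
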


We remark that this estimate is tight in the following sense: by counting antichains that contain a fixed element $v\in L_{\lceil n/2 \rceil +2}$ (so in particular $v\notin C_n$), one can show that \(\psi(n)=\left(1+e^{-O(n^2)}\right) Z(C_n, 1). \)

We establish some preliminaries in \Cref{secPrelims}. We prove Theorems \ref{trunc} and \ref{saporefined} in Section \ref{truncation}. 
With these theorems in hand, our next task is to estimate $Z(C_n, \lam)$ by appealing to the cluster expansion. This is the main goal of \Cref{secthreelayer}.
Following the approach used by the first author, Perkins and Potukuchi \cite{struct} for independent sets in the hypercube, we give asymptotics for the number of antichains of a given size in $C_n$ in \Cref{givensize}, and use Theorem \ref{trunc} to deduce information about antichains of a given size in $B_n$. We finalize the proofs of \Cref{dualthm1}-\ref{sparsesperner3} in \Cref{seceven} (for $n$ even) and \Cref{secodd} (for $n$ odd). In Section \ref{secproofcont} we prove our main container result that underpins our proof of the convergence of the cluster expansion. Finally, we conclude with some remarks and open problems in Section \ref{conclusion}. In the appendix we provide a few more values of the functions $P_j^r$ and $R_j^r$. These values were obtained by computer calculation, using code written by Mauricio Collares.  At the end of the paper, we provide an index for the various polymer models (\textit{cf.} \Cref{sec:polymer}) and measures used in the proofs for ease of reference.

\subsection{A note on asymptotic notation}
All asymptotic notation is to be understood with respect to the limit $n \to \infty$. All implicit constants in the asymptotic notation $O, \Omega$ etc. will be absolute constants unless specified otherwise.  For two functions $f,g:\mathbb{N}\to \mathbb{R}$ we write $f(n)\sim g(n)$ to denote that $\lim_{n \to \infty} f(n)/g(n)=1$, and $f(n)\ll g(n)$ to denote that $\lim_{n\to\infty}f(n)/g(n)=0$. We write $f(n)= \tilde O(g(n))$ if there exists $C>0$ such that $|f(n)|\leq (\log n)^C |g(n)|$ for $n$ sufficiently large. We use $\tilde \Omega$ analogously. 
We write that $f(n)=O(g(n))$ \emph{for $n$ even (odd)} if $f(2k)=O(g(2k))$  ($f(2k+1)=O(g(2k+1))$) as $k\to \infty$ respectively. We extend this convention to other asymptotic notation analogously.  

For two functions $f,g:\mathbb{N}\to \mathbb{R}$ we understand $f(n)\leq g(n)$ to mean that the inequality holds for $n$ sufficiently large.

We say a sequence of events $A_n$ holds `with high probability' (abbreviated `whp') if $\mathbb P(A_n) = 1-o(1)$.

\section{Preliminaries}\label{secPrelims}

We let $\mathbb N:=\{1, 2, \ldots\}$ and $\mathbb N_0:=\mathbb N \cup\{0\}$. Recall that we let $B_n$ denote the Boolean lattice on the ground set $[n]$. Abusing notation slightly, we also let $B_n$ denote the graph with vertex set $2^{[n]}$ where two sets are joined by an edge if and only if one is strictly contained in the other (the comparability graph of the poset $B_n$). Similarly if $U\subseteq B_n$, we identify $U$ with the subgraph of this comparability graph induced by the elements of $U$. We note that an antichain $A\subseteq B_n$ is precisely an \emph{independent set} in the comparability graph. For $U\subseteq B_n$, the measure $\mu_{U,\lam}$ introduced at~\eqref{eqLocalId0} can therefore be viewed as the \emph{hard-core measure at activity $\lam$} on independent sets in the graph $U$.  

Throughout this paper, we will use lowercase letters to denote vertices of $B_n$, uppercase letters to denote sets of vertices, and cursive letters to denote collections of sets. 

Given a graph $G$ and a vertex $v\in G$ we write $d(v)$ for the degree of $v$ and $N(v)$ for the neighbourhood of $v$. Given $U\subseteq V(G)$, we write $G[U]$ for the induced subgraph of $G$ on vertex set $U$ and $N_U(v)$ for $N(v)\cap U$. We define the \emph{square graph} $G^2$ to have the same vertex set as $G$ and setting $u$ and $v$ to be adjacent in $G^2$ if and only if the distance between $u$ and $v$ in $G$ is at most 2. We call $U\subseteq V(G)$ \emph{$2$-linked} if $G^2[U]$ is connected.

The following useful result is standard in the literature - see e.g. \cite[Lemma 2.1]{GalK04}. 
\begin{lemma}
    \label{2linked}
    Given a graph $G$ of maximum degree $\Delta$, the number of $2$-linked subsets of $G$ of size $t$ that contain a given vertex $v$ is at most $(e\Delta^2)^{t-1}$.
\end{lemma}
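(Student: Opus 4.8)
The final statement to prove is \Cref{2linked}: in a graph $G$ of maximum degree $\Delta$, the number of $2$-linked sets of size $t$ containing a fixed vertex $v$ is at most $(e\Delta^2)^{t-1}$.

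\medskip

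\textbf{Proof proposal.} The plan is to reduce counting $2$-linked subsets of $G$ to counting connected subtrees in the square graph $G^2$, and then to bound the number of such subtrees by a standard tree-counting argument. First I would observe that if $U\subseteq V(G)$ is $2$-linked then, by definition, $G^2[U]$ is connected, so $G^2[U]$ contains a spanning tree $T$ on vertex set $U$. Thus every $2$-linked set $U$ of size $t$ containing $v$ gives rise to at least one tree on $t$ vertices in $G^2$ that contains $v$; hence it suffices to bound the number of (vertex sets of) subtrees of $G^2$ of size $t$ containing $v$. Since $G$ has maximum degree $\Delta$, the square graph $G^2$ has maximum degree at most $\Delta + \Delta(\Delta-1) = \Delta^2$ (a vertex has at most $\Delta$ neighbours at distance $1$ and at most $\Delta(\Delta-1)$ further vertices at distance exactly $2$).

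\medskip

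Next I would invoke the classical bound on the number of subtrees of a bounded-degree graph through a fixed vertex: in any graph of maximum degree $D$, the number of subtrees on $t$ vertices containing a given vertex is at most $(eD)^{t-1}$. One clean way to see this is via the standard ``tree encoding'' / depth-first exploration argument: order the children at each vertex, perform a DFS of the tree starting at $v$, and record at each of the $2(t-1)$ steps of the walk whether we move to a new child (and which one, among at most $D$ options) or backtrack; a generating-function or direct counting estimate then yields the bound $\binom{2(t-1)}{t-1} D^{t-1} \le (4D)^{t-1}$, and a more careful version of the same argument (e.g.\ the one in \cite[Lemma 2.1]{GalK04}, or Bollob\'as-type arguments) improves the constant to give at most $(eD)^{t-1}$. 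Applying this with $D = \Delta^2$ gives at most $(e\Delta^2)^{t-1}$ subtrees of $G^2$ on $t$ vertices through $v$, and therefore at most $(e\Delta^2)^{t-1}$ $2$-linked subsets of $G$ of size $t$ containing $v$, as claimed.

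\medskip

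The only genuinely delicate point is getting the constant $e$ (rather than, say, $4$) in the subtree count; this is where one must use the sharp form of the tree-counting lemma rather than the crude binomial bound. Since this is precisely the content of the cited \cite[Lemma 2.1]{GalK04}, I would simply quote it, noting that $G^2$ has maximum degree at most $\Delta^2$. Everything else — the passage from $2$-linked sets to spanning trees in $G^2$, and the degree bound for $G^2$ — is routine.
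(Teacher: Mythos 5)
Your argument is correct and is essentially the paper's: the paper gives no proof of \Cref{2linked}, simply citing \cite[Lemma 2.1]{GalK04}, and your reduction (a $2$-linked set is a connected vertex set in $G^2$, which has maximum degree at most $\Delta^2$, so pick a spanning tree and invoke the standard $(eD)^{t-1}$ bound on $t$-vertex subtrees through a fixed vertex in a graph of maximum degree $D$) is exactly the standard derivation behind that citation. The only caveat is the one you already flag: the constant $e$ requires the sharp tree-counting estimate (e.g.\ via the Catalan-type bound $\tbinom{Dt}{t}/((D-1)t+1)\le (eD)^{t-1}$) rather than the crude DFS bound, and quoting the cited lemma for this is entirely in the spirit of the paper.
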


Throughout the paper, we write $\log$ for $\log_2$ and $\ln$ for $\log_e$.  We employ a common abuse of notation by often omitting floor and ceiling symbols for notational convenience.

\subsection{Polymer models and cluster expansion}\label{sec:polymer}
We will also require the notion of polymer model and cluster expansion from statistical physics which we introduce now.

Let $\mathcal{P}$ be a finite set, called the set of \emph{polymers}. Let $w: \mathcal{P} \rightarrow \mathbb{C}$ a weight function and $\sim$ a symmetric and antireflexive relation on $\mathcal{P}$ which we refer to as the \emph{compatibility} relation. We also say two polymers $\gamma_1$ and $\gamma_2$ are incompatible if $\gamma_1 \nsim \gamma_2$. We refer to the triple $(\mathcal{P}, \sim, w)$ as a \emph{polymer model}.

Let $\Omega=\Omega(\mathcal{P},\sim)$ be the collection of all sets consisting of pairwise compatible polymers, including the empty set of polymers. We define the \emph{partition function} $\Xi=\Xi(\cP, \sim, w)$ as

\begin{equation}\label{defpolypartfun}
    \Xi= \sum\limits_{\Lam \in \Omega} \prod\limits_{\gamma \in \Lam} w(\gamma),
\end{equation}
where by convention, we take the contribution from the empty set to be $1$.

Let $\Gamma=(\gamma_1, \ldots, \gamma_k)$ be an (ordered) tuple of polymers. The \emph{incompatibility graph} of $\Gamma$ is the graph $G_\Gamma$ on vertex set $[k]$ where $i$ is adjacent to $j$ if and only if $\gamma_i\nsim \gamma_j$.
We call $\Gamma$ a \emph{cluster} if $G_\Gamma$ is connected.
We define the \emph{size} of $\Gamma$ to be $\|\Gamma\|\coloneqq \sum_{i=1}^k |\gamma_i|$. We define the weight of a cluster $\Gamma$ to be 
\begin{align}\label{eq:wclusterdef}
    w(\Gamma)=\phi(G_\Gamma) \prod\limits_{{\gamma} \in \Gamma} w(\gamma),
\end{align}
where $\phi$ is the Ursell function of a graph $G=(V,E)$, defined as
\begin{align*}
    \phi(G):=\frac{1}{|V|!} \sum\limits_{\substack{A \subseteq E: \\ (V, A) \text{ connected}}} (-1)^{|A|}.
\end{align*}

Let $\mathcal{C}=\mathcal{C}(\mathcal{P},\sim)$ be the set of all clusters. Then the \emph{cluster expansion} of $\ln \Xi$ is the formal power series
\begin{equation}\label{eqclusterexp}
   \ln \Xi =\sum\limits_{\Gamma \in \mathcal{C}} w(\Gamma).
\end{equation}
This turns out to be the multivariate Taylor series of $\ln \Xi$ - see \cite{dobrushin, ScoS05}. A powerful tool that gives a sufficient condition for the convergence of the cluster expansion, as well as tail bounds, is the following:

\begin{theorem} [Koteck\'y and Preiss \cite{kp}]\label{thm.KP}
Let $f,$ $g: \Pp \rightarrow [0, \infty)$ be two functions. If for all ${\gamma} \in \Pp$ we have

\begin{equation}\label{KPbound1}
    \sum\limits_{{\gamma'} \nsim {\gamma}} |w({\gamma}')|e^{f({\gamma}')+g({\gamma}')} \leq f({\gamma}),
\end{equation}
then the cluster expansion~\eqref{eqclusterexp} converges absolutely.

Moreover, setting $g(\Gamma):= \sum_{{\gamma}\in \Gamma} g({\gamma})$ and writing $\Gamma \nsim {\gamma}$ if there exists some ${\gamma}' \in \Gamma$ such that $\gamma' \nsim {\gamma}$, we then have that, for all polymers ${\gamma}$,
\beq{kpbounds}    \sum\limits_{\substack{\Gamma \in \mathcal{C} \\ \Gamma \nsim {\gamma}}} |w(\Gamma)|e^{g(\Gamma)}\leq f({\gamma}). \enq

\end{theorem}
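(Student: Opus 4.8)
The final statement to prove is the Kotecký--Preiss theorem (Theorem~\ref{thm.KP}). Let me sketch a proof plan.

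\medskip

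The plan is to establish both the convergence of the cluster expansion and the tail bound~\eqref{kpbounds} by a single inductive argument that controls the truncated sums of cluster weights. First I would set up the notation: for each polymer $\gamma$ and each integer $m \geq 1$, consider
\[
a_m(\gamma) = \sum_{\substack{\Gamma \in \mathcal{C},\, \Gamma \nsim \gamma \\ |\Gamma| \leq m}} |w(\Gamma)| e^{g(\Gamma)}\, ,
\]
where $|\Gamma|$ denotes the number of polymers in the cluster. The goal is to show by induction on $m$ that $a_m(\gamma) \leq f(\gamma)$ for every $\gamma$; taking $m \to \infty$ then yields both absolute convergence of~\eqref{eqclusterexp} (restricted to clusters incompatible with a fixed auxiliary polymer, and the full sum by a standard trick of adjoining a dummy polymer compatible with nothing) and the bound~\eqref{kpbounds}.

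\medskip

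The key step is the inductive estimate. A cluster $\Gamma = (\gamma_1, \dots, \gamma_k)$ incompatible with $\gamma$ has, say, $\gamma_1 \nsim \gamma$ (one can fix the ``root'' to be incompatible with $\gamma$, paying attention to the symmetry factors coming from the Ursell function, which is exactly what makes this work — this is the combinatorial heart and where I expect the main technical care to be needed). The standard route is to use the identity/inequality expressing $|\phi(G_\Gamma)|$ in terms of a sum over spanning trees of the incompatibility graph (the tree-graph bound of Penrose, or Rota--style identities), which lets one write the sum over clusters rooted at $\gamma_1$ as a sum over rooted trees, and then recursively: each child subtree hanging off the root contributes a factor bounded by $\sum_{\gamma' \nsim \gamma_1} |w(\gamma')| e^{g(\gamma')} a_{m-1}(\gamma')$, and by the induction hypothesis $a_{m-1}(\gamma') \leq f(\gamma')$, so each child contributes at most $\sum_{\gamma' \nsim \gamma_1} |w(\gamma')| e^{f(\gamma') + g(\gamma')} \leq f(\gamma_1)$ by hypothesis~\eqref{KPbound1}. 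Summing over the number of children and exponentiating (the $1/(\text{deg})!$ factors in the tree expansion turn the sum over numbers of children into an exponential series) collapses the per-root contribution to $|w(\gamma_1)| e^{g(\gamma_1)} \cdot e^{f(\gamma_1)}$, and finally summing over the choice of root $\gamma_1 \nsim \gamma$ gives $\sum_{\gamma_1 \nsim \gamma} |w(\gamma_1)| e^{f(\gamma_1) + g(\gamma_1)} \leq f(\gamma)$, closing the induction. The base case $m=1$ is immediate: $a_1(\gamma) = \sum_{\gamma' \nsim \gamma} |w(\gamma')| e^{g(\gamma')} \leq \sum_{\gamma' \nsim \gamma} |w(\gamma')| e^{f(\gamma') + g(\gamma')} \leq f(\gamma)$.

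\medskip

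The main obstacle is getting the bookkeeping of the Ursell/tree-graph bound exactly right: one must verify the tree-graph inequality $|\phi(G_\Gamma)| \leq \frac{1}{|\Gamma|!}\big|\{\text{spanning trees of } G_\Gamma\}\big|$ (or the sharper Penrose partition-scheme version), handle the conversion between labelled clusters and rooted tree structures with the correct symmetry factors, and ensure the recursion genuinely decreases the size parameter so the induction is well-founded. Since this is a cited result (Kotecký--Preiss), in the paper I would either reproduce this standard argument compactly or simply cite \cite{kp} and the expositions in \cite{dobrushin, ScoS05}; for the purposes of this paper only the statement~\eqref{kpbounds} is used downstream, so a pointer to the literature suffices, but the inductive proof above is the self-contained route if one wishes to include it.
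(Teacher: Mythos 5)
The paper does not prove this statement at all: Theorem~\ref{thm.KP} is quoted as a black-box result of Koteck\'y and Preiss, with the citation \cite{kp} (and pointers to \cite{dobrushin, ScoS05} for the cluster expansion itself), so there is no in-paper argument to compare yours against. Your sketch is the standard self-contained route: bound $|\phi(G_\Gamma)|$ by the Penrose/tree-graph inequality, organize the sum over clusters incompatible with a fixed $\gamma$ as a sum over rooted trees whose root is incompatible with $\gamma$, and induct on a truncation parameter, using the hypothesis~\eqref{KPbound1} to close the recursion; the factor $e^{g(\Gamma)}$ rides along harmlessly, and the base case is immediate. This is correct in outline and is indeed how KP-type criteria are usually proved; what the citation buys the paper is brevity, and what your argument buys is self-containment (at the cost of the tree-versus-cluster bookkeeping you rightly flag as the technical heart). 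One local slip to fix if you write it out: the contribution of a child hanging off the root $\gamma_1$ should be the \emph{rooted-subtree sum} at a polymer $\gamma'\nsim\gamma_1$, which by the induction hypothesis is at most $|w(\gamma')|e^{f(\gamma')+g(\gamma')}$ summed over such $\gamma'$, hence at most $f(\gamma_1)$ by~\eqref{KPbound1}; as written you multiply $|w(\gamma')|e^{g(\gamma')}$ by $a_{m-1}(\gamma')$, which conflates the truncated cluster sum with the subtree sum — the induction should be run on the rooted-tree quantity (with the recursion $T_m(\gamma')\leq |w(\gamma')|e^{g(\gamma')}\exp\bigl(\sum_{\gamma''\nsim\gamma'}T_{m-1}(\gamma'')\bigr)$) and only afterwards converted back to cluster sums via the tree-graph bound. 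Also note that since $\cP$ is finite here, absolute convergence of~\eqref{eqclusterexp} follows simply by summing the bound~\eqref{kpbounds} over all $\gamma\in\cP$ (or by your dummy-polymer device), so that part needs no extra work.
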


\subsection{Antichains of a given size}

We will make repeated use of the following general lemma which will be useful for dealing with antichains of a given size.  

\begin{lemma}[\textit{cf.} {\cite[Lemma 15]{struct}}]
    \label{transference}  Let $\lam=\lam(n)>0$.   Let $\cF_1$ and $\cF_2$ be families of independent sets in a graph $G=G_n$ with $\cF_1 \subseteq \cF_2$. For $i \in \{1, 2\}$, let $Z_i=\sum_{A\in \cF_i} \lambda^{|A|}$, and let $\mu_i$ be the probability measure given by
\[\mu_i(A)=\begin{cases} \lambda^{|A|}/Z_i & \text{ if $A \in \cF_i$,} \\ 0 & \text{otherwise.} \end{cases}\]
If $Z_2=(1+f(n))Z_1$ where $f(n)=o(1)$, then
\beq{TVD.bd} \left\| \mu_1-\mu_2 \right\|_{TV}\leq f(n)\, . \enq
    Moreover, with $i_m(\cF_i)$ the number of independent sets of size $m$ contained in $\cF_i$, if $\lambda$ and $m$ are such that
    \beq{lambda.cond} i_m(\cF_1)\lambda^m/Z_1\gg f(n),\enq
    then
    \[ i_m(\cF_2)=(1+o(1))i_m(\cF_1)\, .\]
\end{lemma}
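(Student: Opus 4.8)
\textbf{Proof plan for Lemma~\ref{transference}.}
The plan is to treat the two halves separately; both are elementary but the second requires a little care about which events have negligible mass. First I would prove the total variation bound. Since $\cF_1\subseteq\cF_2$, we have $Z_1\le Z_2$ and for every $A\in\cF_1$, $\mu_1(A)-\mu_2(A)=\lambda^{|A|}(1/Z_1-1/Z_2)\ge 0$, while for $A\in\cF_2\setminus\cF_1$ we have $\mu_1(A)-\mu_2(A)=-\mu_2(A)\le 0$. Hence the positive part of $\mu_1-\mu_2$ is supported on $\cF_1$, and
\[
\|\mu_1-\mu_2\|_{TV}=\sum_{A\in\cF_1}\big(\mu_1(A)-\mu_2(A)\big)=1-\frac{Z_1}{Z_2}=1-\frac{1}{1+f(n)}=\frac{f(n)}{1+f(n)}\le f(n),
\]
using $f(n)=o(1)\ge 0$ for the last step (if $f(n)$ could be negative a trivial sign adjustment handles it, but in all our applications $\cF_1\subseteq\cF_2$ forces $f\ge0$). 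This gives~\eqref{TVD.bd}.

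For the second part, observe that by~\eqref{eqLocalId}-style reasoning, $\mu_i(|A|=m)=i_m(\cF_i)\lambda^m/Z_i$. Writing $q_i:=\mu_i(|A|=m)$, the hypothesis~\eqref{lambda.cond} says $q_1\gg f(n)$. Since the event $\{|A|=m\}$ is a single event, $|q_1-q_2|\le\|\mu_1-\mu_2\|_{TV}\le f(n)$, so $q_2=q_1(1+O(f(n)/q_1))=(1+o(1))q_1$. Now expand:
\[
\frac{i_m(\cF_2)}{i_m(\cF_1)}=\frac{q_2 Z_2/\lambda^m}{q_1 Z_1/\lambda^m}=\frac{q_2}{q_1}\cdot\frac{Z_2}{Z_1}=(1+o(1))\cdot(1+f(n))=1+o(1),
\]
which is exactly the claimed conclusion $i_m(\cF_2)=(1+o(1))i_m(\cF_1)$.

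I do not expect any real obstacle here; the lemma is a soft consequence of the definitions of total variation distance and the partition functions. The only point that needs a moment's thought is the direction of the inequality in the first part — one must use $\cF_1\subseteq\cF_2$ to locate the positive part of $\mu_1-\mu_2$ on $\cF_1$, which is what makes $\|\mu_1-\mu_2\|_{TV}$ exactly $1-Z_1/Z_2$ rather than merely bounded by it. The slightly delicate bookkeeping in the second part is ensuring that dividing the $O(f(n))$ error on $q_2-q_1$ by $q_1$ genuinely yields $o(1)$, which is precisely the role of hypothesis~\eqref{lambda.cond}; everything else is algebra. Since this mirrors \cite[Lemma 15]{struct} I would keep the write-up short and refer there for the original argument.
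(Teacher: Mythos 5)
Your proposal is correct and follows essentially the same route as the paper: the same computation locating the positive part of $\mu_1-\mu_2$ on $\cF_1$ to get the total variation bound, and then the same comparison of the size-$m$ probabilities under $\mu_1$ and $\mu_2$ (the paper phrases this as $\mu_1(\cI_1)=\mu_1(\cI_2)$ and works with the difference $|i_m(\cF_1)-(1+o(1))i_m(\cF_2)|$, while you take the ratio, but the content is identical). No gaps.
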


\begin{proof} Recall that 
\[
\left\|\mu_1-\mu_2\right\|_{TV}=\sum_{A \subseteq V(G),\,  \mu_1(A)>\mu_2(A)} \mu_1(A)-\mu_2(A)\, . \]
We may assume that $\cF_1 \subsetneq \cF_2$, in which case $\mu_1(A)>\mu_2(A)$ if and only if $A \in \cF_1$. We therefore have
    \begin{equation*}
       \left\|\mu_1-\mu_2\right\|_{TV}=  \sum\limits_{A\in \cF_1} \frac{\lambda^{|A|}}{Z_1} - \frac{\lambda^{|A|}}{Z_2}=\frac{f(n)}{1+f(n)} \times\sum\limits_{A\in \cF_1} \frac{\lambda^{|\cA|}}{Z_1} \leq f(n)\, .
    \end{equation*}

    For the second part, let $\cI_i$ be the collection of independent sets of size $m$ contained in $\cF_i$. Note that $\mu_1(\cI_2)=\mu_1(\cI_1)$ (since $\mu_1(A)=0$ if $A \notin \cF_1$), therefore,
    \begin{align}\label{eq:mu1mu2f}
    |\mu_1(\cI_1)-\mu_2(\cI_2)|=|\mu_1(\cI_2)-\mu_2(\cI_2)| \le \|\mu_1-\mu_2\|_{TV}\stackrel{\eqref{TVD.bd}}{\leq}f(n)\, .
    \end{align}
On the other hand,
    \[ \left|\mu_1(\cI_1)-\mu_2(\cI_2)\right|=\left| \frac{i_m(\cF_1)\lambda^m}{Z_1} - \frac{i_m(\cF_2)\lambda^m}{Z_2} \right|=\frac{\gl^m}{Z_1}|i_m(\cF_1)-(1+o(1))i_m(\cF_2)|\, .\]
    The result now follows from~\eqref{lambda.cond} and \eqref{eq:mu1mu2f}.
\end{proof}

\subsection{Isoperimetry}
Recall that for $i\in [n]$ we let $L_i$ denote the $i^{\text{th}}$ layer $\binom{[n]}{i}$ of $B_n$. For $i\in \{0,\ldots,n-1\}$ and $v\in L_i$, we write $N^+(v)$ for the set of neighbours of $v$ in $B_n$ that belong to the layer $L_{i+1}$.
 We extend this notation to $N^+(S)$ for a set $S\subseteq B_n$ by setting $N^+(S):=\cup_{v \in S} N^+(v)$. For $i\in \{1, \dots n\}$ and $v\in L_i$ we similarly define $N^-(v)$ to be the set of neighbours of $v$ in $B_n$ that lie in $L_{i-1}$, and we extend this notation to $N^-(S)$ in the obvious way.

We will make frequent use of the following result that is an easy consequence of the classical Kruskal-Katona theorem~\cite{katona, kruskal}.

\begin{proposition}\label{isoper} If $i\leq \lceil n/2 \rceil$ and $S \sub  L_{i-1}$, then:

\begin{enumerate}
    \item if $|S|\leq n/10$, then $|N^+(S)|\geq n|S|/2-|S|^2$;
    \item if $|S|\leq n^4$, then $|N^+(S)|\geq n|S|/50$;
    \item furthermore, if $i\leq\lfloor n/2\rfloor$, then $|N^+(S)|\geq(1+1/n)|S|$; and
    \item if $i=\lceil n/2 \rceil$ and $|S|\leq |L_{\lfloor n/2 \rfloor}|/2,$ then $|N^+(S)|\geq (1+1/n)|S|$.
\end{enumerate}

\end{proposition}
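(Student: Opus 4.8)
The plan is to derive all four parts from just two tools: an elementary Bonferroni / double-counting estimate for the regime of small $|S|$, and the Kruskal--Katona theorem, which I will use in Lov\'asz's form --- if $\cF\subseteq\binom{[n]}{k}$ with $|\cF|=\binom{x}{k}$ for some real $x\ge k$, then the lower shadow satisfies $|\partial\cF|\ge\binom{x}{k-1}$, where $\binom{x}{k}:=\frac{x(x-1)\cdots(x-k+1)}{k!}$. Two simple observations drive everything. First, since $i\le\lceil n/2\rceil$, every $v\in S\subseteq L_{i-1}$ satisfies $|N^+(v)|=n-(i-1)\ge n/2$. Second, passing to complements turns upper shadows into lower ones: setting $m:=n-i+1\ge n/2$ and $T:=\{[n]\setminus v:v\in S\}\subseteq\binom{[n]}{m}$, the map $w\mapsto [n]\setminus w$ is a bijection $N^+(S)\to\partial T$, so $|N^+(S)|=|\partial T|$. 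We may assume $S\neq\emptyset$ throughout.

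For part (1), I would bound $|N^+(S)|\ge\sum_{v\in S}|N^+(v)|-\sum_{\{u,v\}\subseteq S}|N^+(u)\cap N^+(v)|$ and observe that distinct $u,v\in L_{i-1}$ have at most one common upper neighbour (any such $w$ obeys $u\cup v\subseteq w$ and $|w|=i$, forcing $w=u\cup v$); with the first observation this yields $|N^+(S)|\ge\frac n2|S|-\binom{|S|}{2}\ge\frac n2|S|-|S|^2$. For part (3), I would double count the pairs $(v,w)$ with $v\in S$ and $w\in N^+(v)$: there are exactly $|S|(n-i+1)$ of them, while each $w\in N^+(S)$ is counted at most $|w|=i$ times, so $|N^+(S)|\ge\frac{n-i+1}{i}|S|$; since $\frac{n-i+1}{i}$ is decreasing in $i$, substituting $i=\lfloor n/2\rfloor$ and checking the two parities gives the factor $1+1/n$.

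For part (2), I would apply Kruskal--Katona via the complementation observation. Writing $|S|=|T|=\binom{x}{m}$ with $x\ge m\ge n/2$, we get $|N^+(S)|=|\partial T|\ge\binom{x}{m-1}=\binom{x}{m}\cdot\frac{m}{x-m+1}=|S|\cdot\frac{m}{x-m+1}$. The key point is that $|S|\le n^4$ forces $x-m$ to be bounded by an absolute constant, since $x\ge m+9$ would give $\binom{x}{m}\ge\binom{m+9}{m}=\binom{m+9}{9}>(n/2)^9/9!>n^4$ for $n$ large; hence $\frac{m}{x-m+1}>m/10\ge n/20>n/50$. (When $|S|\le n/10$ the conclusion of (2) also follows directly from (1), but the Kruskal--Katona argument handles the whole range $|S|\le n^4$ uniformly.)

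Part (4) is where the real work lies, and I expect it to be the main obstacle: when $n$ is odd the two central layers $L_{(n-1)/2}$ and $L_{(n+1)/2}$ have equal size, so the double counting of (3) only yields $|N^+(S)|\ge|S|$, and one must genuinely exploit the hypothesis $|S|\le|L_{\lfloor n/2\rfloor}|/2$. When $n$ is even, $i=\lfloor n/2\rfloor$ and part (3) already applies, so assume $n$ odd, whence $i=m=(n+1)/2$ and $|S|\le\binom{n}{m}/2$. As in (2) we have $|N^+(S)|\ge|S|\cdot\frac{m}{x-m+1}$ with $\binom{x}{m}=|S|$, so it suffices to prove $x<n-\tfrac12$; by monotonicity of $y\mapsto\binom{y}{m}$ on $[m,\infty)$ this reduces to checking $\binom{n-1/2}{m}>\binom{n}{m}/2$, which follows from the elementary estimate $\binom{n-1/2}{m}/\binom{n}{m}=\prod_{j=0}^{m-1}\bigl(1-\tfrac{1/2}{n-j}\bigr)\ge\bigl(1-\tfrac{1}{n+1}\bigr)^{(n+1)/2}>\tfrac12$ for $n$ large. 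Then $x-m+1<n-\tfrac12-\tfrac{n+1}{2}+1=n/2$, so $\frac{m}{x-m+1}>\frac{(n+1)/2}{n/2}=1+1/n$, as needed. The one point requiring care throughout is applying the real-parameter Kruskal--Katona bound in the correct layer with the correct orientation, which is precisely the role of the complementation observation.
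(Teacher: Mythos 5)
Your proposal is correct: the paper gives no proof of this proposition, stating only that it is an easy consequence of the Kruskal--Katona theorem, and your argument is a sound instantiation of exactly that route --- parts (1) and (3) by elementary counting (pairwise upper neighbourhoods meeting in at most one set, and double counting edges between consecutive layers), and parts (2) and (4) by the Lov\'asz form of Kruskal--Katona applied to the complemented family, with the complementation correctly converting the upper shadow into a lower shadow. The only caveats are harmless and consistent with the paper's conventions: the numerical estimates (e.g.\ $(n/2)^9/9!>n^4$ and $\bigl(1-\tfrac{1}{n+1}\bigr)^{(n+1)/2}>\tfrac12$) are claimed for $n$ sufficiently large, which is the regime in which the paper works.
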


\section{Reducing to central layers}
\label{truncation}
In this section we prove Theorem~\ref{trunc}, showing that the dominant contribution to the partition function $Z(\lam)$ comes from antichains contained in the central layers $C_n\subseteq B_n$.

 We prove Theorem~\ref{trunc} by iterating an argument which says that the weighted number of antichains in three consecutive layers $L_{[k-2,k]}$, $k\leq \lfloor n/2 \rfloor$, of $B_n$ is very close to the weighted number in $L_{[k-1,k]}$, the larger two of the three layers. Given $X, Y\subseteq B_n$, let $Y^X=\{v\in Y  \mid v \not\subset w \ \forall w\in X \}$.

\begin{lemma}
\label{truncstep1}
There exists $C>0$ such that if $\lambda\geq C\log^2 n/\sqrt{n}$ then the following holds.
    For any $2\leq k\leq \lfloor n/2 \rfloor$ and $X\subseteq  L_{[k+1, n]}$, we have 
    \beq{3to2}
    Z(L^{X}_{[k-2,k]}, \gl) = \left(1+o\left(\frac{1}{n\sqrt{N}} \right)\right)  Z(L^{X}_{[k-1,k]}, \gl)\, .
    \enq
\end{lemma}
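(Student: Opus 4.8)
\textbf{Proof plan for Lemma~\ref{truncstep1}.}

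The plan is to bound the difference $Z(L^X_{[k-2,k]},\lam) - Z(L^X_{[k-1,k]},\lam)$, which is exactly the total weight of antichains $A \subseteq L^X_{[k-2,k]}$ that use at least one vertex of the bottom layer $L_{k-2}$. Write $L_{k-2}^X \cap A = S \neq \emptyset$; since $A$ is an antichain, all of $A \cap L_{[k-1,k]}$ must avoid the ``shadow up'' of $S$, i.e.\ must lie in $\left(L^X_{[k-1,k]}\right)^S$. So I would first record the identity
\[
Z(L^X_{[k-2,k]},\lam) = \sum_{S \subseteq L^X_{k-2}\ \text{antichain (automatic)}} \lam^{|S|}\, Z\!\left(\left(L^X_{[k-1,k]}\right)^{S},\ \lam\right),
\]
the $S = \emptyset$ term being $Z(L^X_{[k-1,k]},\lam)$. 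Dividing through, it suffices to show that
\[
\sum_{\emptyset \neq S \subseteq L^X_{k-2}} \lam^{|S|}\,\frac{Z\!\left(\left(L^X_{[k-1,k]}\right)^{S},\ \lam\right)}{Z\!\left(L^X_{[k-1,k]},\ \lam\right)} = o\!\left(\frac{1}{n\sqrt N}\right).
\]
The key point is that removing $N^+(S)$ from the available vertices in layer $L_{k-1}$ (and the corresponding vertices in $L_k$) can only decrease the partition function, and in fact decreases it by a controllable factor because the hard-core measure $\mu_{L^X_{[k-1,k]},\lam}$ is unlikely to be disjoint from a large vertex set.

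The main work is to show that for every nonempty $S \subseteq L^X_{k-2}$,
\[
\frac{Z\!\left(\left(L^X_{[k-1,k]}\right)^{S},\ \lam\right)}{Z\!\left(L^X_{[k-1,k]},\ \lam\right)} \le (1+\lam)^{-c\,n|S|}
\]
for a suitable constant $c>0$ (when $\lam$ is small one should really think of $\ln(1+\lam) \approx \lam$, so this is roughly $e^{-c\lam n |S|}$). To get this, I would use the standard fact that the ratio on the left equals the probability, under $\mu_{L^X_{[k-1,k]},\lam}$, that a sampled antichain avoids $N^+(S) \cap L^X_{k-1}$ entirely, together with $|N^+(S)| \ge n|S|/50$ from Proposition~\ref{isoper}(2) when $|S| \le n^4$ (for $|S|$ larger this is even easier, using part (1) or a cruder bound and the fact that $\lam \gg \log^2 n/\sqrt n$ makes the penalty enormous). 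The cleanest route to turning an isoperimetric lower bound on $|N^+(S)|$ into an upper bound on this avoidance probability is a container-type / entropy-compression argument, or more simply the observation that each vertex $u \in N^+(S) \cap L^X_{k-1}$ is ``blocked'' and one can inject antichains avoiding $N^+(S)$ into antichains that are free to use those vertices, gaining a factor $(1+\lam)$ per blocked vertex — here one must be slightly careful because using $u \in L_{k-1}$ forbids its own up-neighbours in $L_k$, but since $\lam < 1$ the loss in layer $L_k$ is dominated by the gain, and a direct injection/weight-shifting argument of the type used in Sapozhenko's original proof (and refined in \cite{lambda}) makes this rigorous. This yields, with $d := \ln(1+\lam) \gg \log^2 n/\sqrt n$,
\[
\sum_{\emptyset \neq S \subseteq L^X_{k-2}} \lam^{|S|} e^{-c\, d\, n |S|} \le \sum_{t\ge 1} |L_{k-2}|^t \lam^t e^{-c\,d\,n t} \le \sum_{t \ge 1}\left(2^n \lam\, e^{-c\,d\,n}\right)^t,
\]
and since $d \gg \log^2 n/\sqrt n$ we have $e^{-c\,d\,n} \le e^{-c'\sqrt n \log^2 n} \ll 2^{-n}/(n\sqrt N)$, so the geometric sum is $o(1/(n\sqrt N))$ as required.

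The step I expect to be the main obstacle is establishing the single-$S$ ratio bound $Z\big((L^X_{[k-1,k]})^S,\lam\big)/Z\big(L^X_{[k-1,k]},\lam\big) \le (1+\lam)^{-cn|S|}$ uniformly over all nonempty $S$ and all $2 \le k \le \lfloor n/2\rfloor$ and all $X$. The difficulty is precisely the one flagged in the surrounding text: for small $\lam$ a naive union bound over the $\approx n|S|/50$ forbidden vertices only gives a factor $(1 - \lam/(1+\lam))^{\Theta(n|S|)}$, which is fine, \emph{but} one has to verify that the presence of layer $L_k$ above does not spoil the gain — i.e.\ that conditioning on avoiding $N^+(S)\cap L_{k-1}$ does not make the $L_k$-part of the antichain so much larger as to overwhelm things. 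This is where I would invoke the new container result (Theorem~\ref{MT}) for antichains in three consecutive layers: it lets one bound the number (and weight) of antichains deviating from the ``all mass in the larger layer'' ground state, and hence control $Z\big((L^X_{[k-1,k]})^S,\lam\big)$ from above without loss. Once the per-$S$ bound is in hand, the summation above is routine, and a final short iteration of \eqref{3to2} over $k$ from $2$ up to $\lfloor n/2\rfloor$ (together with the symmetric argument on the top half) gives Theorem~\ref{trunc}; but that iteration is deferred to the proof of Theorem~\ref{trunc} itself and is not needed here.
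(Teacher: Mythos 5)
Your overall decomposition (sum over $S=A\cap L_{k-2}$ and penalize each $S$) is a reasonable first instinct, but the quantitative core of the plan fails in exactly the regime the lemma is about. With $\lam=\Theta(\log^2 n/\sqrt n)$, a per-element penalty of $(1+\lam)^{-cn}=e^{-\Theta(\sqrt n\log^2 n)}$ cannot beat the entropy of choosing vertices in $L_{k-2}$: your final inequality $e^{-c'\sqrt n\log^2 n}\ll 2^{-n}/(n\sqrt N)$ is false (the left side is far \emph{larger} than $2^{-n}$), so the geometric series you write down diverges already at $t=1$, since $|L_{k-2}|\lam e^{-c\lam n}\to\infty$. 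Moreover the heuristic behind the per-$S$ bound is off: under $\mu_{L^X_{[k-1,k]},\lam}$ almost all of the antichain sits in the top layer $L_k$, and a vertex of $L_{k-1}$ is occupied with probability only about $\lam(1+\lam)^{-\Theta(n)}$, so forbidding $N^+(S)\cap L_{k-1}$ is nearly free and yields no factor $(1+\lam)^{-|N^+(S)|}$. The true gain per vertex $v\in L_{k-2}$ comes two levels up: occupying $v$ forbids the $\Theta(n^2)$ vertices of $L_k$ above it (equivalently, forces the $L_{k-1}$-footprint $N^+(v)$, of size about $n/2$, into the "defect"), giving a penalty like $(1+\lam)^{-\Omega(n^2)}$ per vertex, which is what actually beats $2^n$. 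Even granting that, a union bound over all $S$ still breaks down for large $S$, where the expansion in \Cref{isoper} degrades to $(1+1/n)$; and the "direct injection/weight-shifting" you propose there is precisely what is unavailable for $\lam<1$ — this is the difficulty the paper flags and the reason \Cref{MT} exists. Finally, your plan to invoke \Cref{MT} for every $2\le k\le\lfloor n/2\rfloor$ does not parse as stated, since \Cref{MT} is only proved for $r\in\{\lfloor n/2\rfloor,\lceil n/2\rceil\}$.

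For comparison, the paper's route is structurally different. First an embedding trick (adding dummy coordinates, as in Hamm--Kahn and Balogh--Krueger) reduces general $k$ to the central case $k=\lfloor n/2\rfloor$ (\Cref{trunclemma1}/\Cref{trunclemma2}), which is why \Cref{MT} only needs to be proved at the middle. Second, instead of summing over $S\subseteq L_{k-2}$, both partition functions are rewritten as polymer-model partition functions $\Xi$ and $\Xi'$ over $2$-linked subsets of $L_{k-1}$ (\Cref{lem:polymer-antichain}), whose weights differ only through the interior sum $\sum_{B\subseteq\Int(A)}\lam^{|B|-|N^+(B)|}$; crucially $\Int(A)=\emptyset$ unless $|A|\ge n/2$, because a vertex of $L_{k-2}$ contributes only when its entire up-neighbourhood lies in a single polymer. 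The Koteck\'y--Preiss condition is verified using \Cref{MT} (for large polymers) and \Cref{isoper} (for small ones), and then $|\ln\Xi-\ln\Xi'|$ is bounded by the cluster-expansion tail over clusters of size at least $n/2$, which is $o(1/(n\sqrt N))$. If you want to salvage your approach, you would at minimum need the two-level blocking estimate and a container-type count of the pairs (defect in $L_{k-2}$, its footprint in $L_{k-1}$) exactly as in \Cref{MT}; at that point you have essentially reconstructed the paper's argument in a less convenient form.
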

We need the factor of $n$ in the denominator of the error term since we will iterate this lemma approximately $n$ times. Before turning to the proof of Lemma~\ref{truncstep1}, we show how it implies Theorem \ref{trunc}. This step also illuminates the need for the set $X$ in the statement of Lemma~\ref{truncstep1}. 

\begin{proof}[Proof of Theorem \ref{trunc} assuming Lemma~\ref{truncstep1}]
Let $1\le k \le \lfloor n/2 \rfloor-1$. Partitioning antichains according to their intersection with $L_{[k+2,n]}$ we see that
\begin{equation*}\begin{split}
    Z(L_{[k-1,n]},\lam)
    =\sum_{\substack{X\subseteq L_{[k+2,n]} \\ X\in \cA}} \lambda^{|X|} \cdot  Z(L^{X}_{[k-1,k+1]},\lam) &\stackrel{\eqref{3to2}}{=}\left(1+o\left(\frac{1}{n\sqrt{N}} \right)\right) \sum_{\substack{X\subseteq L_{[k+2,n]} \\ X\in \cA}} \lambda^{|X|} \cdot  Z(L^{X}_{[k,k+1]},\lam)   \\ &\ = \left(1+o\left(\frac{1}{n\sqrt{N}} \right)\right) Z(L_{[k, n]}, \lam)\, .
\end{split} \end{equation*}
Iterating the above we conclude that
\begin{equation*}
    Z(\lam)= \left(1+o\left(\frac{1}{n\sqrt{N}} \right)\right) ^{ \lfloor n/2 \rfloor-1}
   Z(L_{[\lfloor n/2 \rfloor-1, n]}, \lam)\, .
    \end{equation*}
Observe that by symmetry $Z(L_{[\lfloor n/2 \rfloor-1, n]}, \lam)=Z(L_{[0, \lceil n/2 \rceil+1]}, \lam)$, and again for any $1\le k \le \lfloor n/2 \rfloor-1$,
\begin{equation*}\begin{split}
    Z(L_{[k-1,\lceil n/2 \rceil +1]},\lam)
    &\ =\sum_{\substack{X\subseteq L_{[k+2,\lceil n/2 \rceil +1]} \\ X\in \cA}} \lambda^{|X|} \cdot  Z(L^{X}_{[k-1,k+1]},\lam) \\&\stackrel{\eqref{3to2}}{=}\left(1+o\left(\frac{1}{n\sqrt{N}} \right)\right) \sum_{\substack{X\subseteq L_{[k+2,\lceil n/2 \rceil +1]} \\ X\in \cA}} \lambda^{|X|} \cdot  Z(L^{X}_{[k,k+1]},\lam)   \\  &\ = \left(1+o\left(\frac{1}{n\sqrt{N}} \right)\right) Z(L_{[k, \lceil n/2 \rceil +1]}, \lam).
\end{split} \end{equation*}
Therefore,
\begin{align*}
    Z(\lam)
    = \left(1+o\left(\frac{1}{n\sqrt{N}} \right)\right) ^{\lfloor n/2 \rfloor -1}Z(L_{[0,\lceil n/2 \rceil +1]},\gl)
    &\le \left(1+o\left(\frac{1}{n\sqrt{N}} \right)\right)^{n}
Z(L_{[\lfloor n/2 \rfloor-1, \lceil n/2 \rceil +1 ]}, \lam)\\
&=\left(1+o\left(\frac{1}{\sqrt{N}} \right)\right)
Z(C_n, \lam)\, .
    \end{align*}
\end{proof}

In order to prove Lemma~\ref{truncstep1}, we shall make use of an `embedding trick' that appears in the work of Hamm and Kahn~\cite{embed} and later in the work of Balogh and Krueger~\cite{BalK22}. This trick allows us to reduce the proof of Lemma~\ref{truncstep1} to the special case $k= \lfloor n/2 \rfloor$ below:

\begin{lemma}
    \label{trunclemma1}
   There exists $C>0$ such that if $\lambda\geq C\log^2 n/\sqrt{n}$ then the following holds.
    For any $X\subseteq  L_{[\lfloor n/2 \rfloor+1, n]}$, we have 
 \[
    Z(L^{X}_{[\lfloor n/2 \rfloor-2,\lfloor n/2 \rfloor]},\lam) = \left(1+o\left(\frac{1}{n\sqrt{N}} \right)\right)  Z(L^{X}_{[\lfloor n/2 \rfloor-1,\lfloor n/2 \rfloor]},\lam)\, .
    \]

\end{lemma}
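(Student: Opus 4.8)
\textbf{Proof proposal for Lemma~\ref{trunclemma1}.}

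The plan is to bound the total $\lam$-weight of antichains $A \subseteq L^X_{[\lfloor n/2\rfloor - 2, \lfloor n/2\rfloor]}$ that \emph{use} a vertex from the bottom layer $L_{\lfloor n/2\rfloor - 2}$, and show this is a $o(1/(n\sqrt N))$-fraction of $Z(L^X_{[\lfloor n/2\rfloor - 1, \lfloor n/2\rfloor]}, \lam)$. Write $k = \lfloor n/2\rfloor$ and abbreviate $M = L_{k-2}$, and let $\cA^\bullet$ denote the antichains in $L^X_{[k-2,k]}$ with $A \cap M \neq \emptyset$. Since $Z(L^X_{[k-2,k]},\lam) = Z(L^X_{[k-1,k]},\lam) + \sum_{A \in \cA^\bullet} \lam^{|A|}$, it suffices to prove $\sum_{A \in \cA^\bullet}\lam^{|A|} = o(1/(n\sqrt N)) \cdot Z(L^X_{[k-1,k]},\lam)$. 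The natural lower bound for the denominator is $Z(L^X_{[k-1,k]},\lam) \geq Z(L^X_k, \lam) = (1+\lam)^{|L^X_k|}$, since any subset of the top layer $L^X_k$ is an antichain; and one should expect $|L^X_k|$ to be comparable to $N$ (the embedding trick in Lemma~\ref{truncstep1} is what reduces the general $X$ case to here, so within this lemma one works with whatever $L^X_k$ is given, tracking its size).

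The core of the argument is a container-type bound for antichains meeting $M$. Given an antichain $A$ with $S := A \cap M \neq \emptyset$, the sets in $S$ force their up-neighbourhoods $N^+(S) \subseteq L_{k-1}$ to be absent from $A$, and likewise any vertex of $L_k$ lying above some element of $S$ is forbidden. Proposition~\ref{isoper} gives strong expansion: if $|S|$ is not too large then $|N^+(S)| \geq n|S|/2 - |S|^2$, and iterating up, the second up-neighbourhood $N^+(N^+(S)) \subseteq L_k$ also has size $\gtrsim$ a large multiple of $|S|$. The strategy is to group antichains in $\cA^\bullet$ by the $2$-linked components of their trace on $M \cup L_{k-1}$ (in the square graph), enumerate possible "containers" for these components using Lemma~\ref{2linked} (the number of $2$-linked sets of size $t$ containing a fixed vertex is $(e\Delta^2)^{t-1}$, with $\Delta = \text{poly}(n)$), and for each container bound the weighted number of antichains it can host. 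The key gain: each element placed in $M$ "costs" a factor $\lam$ in the weight but "saves" a factor $(1+\lam)^{-|N^+(\cdot)|}$ from the top layer, and since $\lam \geq C\log^2 n/\sqrt n$ and $|N^+(S)| \gtrsim n|S|$, we have $\lam \cdot (1+\lam)^{-cn} \leq \lam e^{-cn\lam/2} \leq \lam e^{-(c/2)C\sqrt n \log^2 n}$, which beats the entropy factor $(e\Delta^2)$ from the container enumeration by a wide margin. Summing the resulting geometric-type series over component sizes $t \geq 1$ yields the bound $e^{-\Omega(\sqrt n \log^2 n)}$, comfortably $o(1/(n\sqrt N)) = 2^{-\Theta(n)}/n$.

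In executing this, I would first handle the "small" components (say $|S| \leq n/10$ so the strong isoperimetry of Proposition~\ref{isoper}(1) applies), where the expansion $|N^+(S)| \geq n|S|/2 - |S|^2 \geq 2n|S|/5$ gives the cleanest bound, and then separately dispatch large $S$ — there the trivial bound $|N^+(S)| \geq (1+1/n)|S|$ from Proposition~\ref{isoper}(3)--(4) is too weak on its own, but for $|S|$ large one can instead observe that the number of \emph{vertices of $L_k$ forbidden} is itself large (again by iterated expansion, or by noting $|N^+(N^+(S))|$ is large whenever $|N^+(S)| \leq |L_{\lfloor n/2\rfloor}|/2$), so the $(1+\lam)^{-(\text{forbidden})}$ saving still dominates; and if $N^+(S)$ is so large that it covers most of $L_{k-1}$, then $L_k$ is almost entirely forbidden and the weight is doubly-exponentially small. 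One must also be slightly careful that restricting to $L^X$ only \emph{removes} vertices, which only helps in the "forbidden" direction but could in principle shrink $|L^X_k|$; this is exactly why the lemma is stated for general $X$ and why the error term carries through the iteration — here I would just carry $|L^X_k|$ as a parameter and note the bound is vacuous / trivial unless it is $\Omega(N/n)$, the relevant regime after the embedding reduction.

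\textbf{Main obstacle.} The delicate point is the regime of \emph{intermediate} $|S|$ — large enough that the quadratic correction in Proposition~\ref{isoper}(1) has kicked in or that $n/10 < |S| \leq n^4$ forces us onto the weaker bound $|N^+(S)| \geq n|S|/50$ (Proposition~\ref{isoper}(2)), yet not large enough for crude "almost all of $L_k$ is forbidden" reasoning. Throughout this range one needs the saving $\lam(1+\lam)^{-n|S|/50}$ per unit of $|S|$ to still beat the container entropy $e\Delta^2 = e\,\text{poly}(n)$, which it does precisely because $\lam \gg \log^2 n/\sqrt n$ makes $n\lam/50 \gg \sqrt n \log^2 n \gg \log n$; making this uniform over all component sizes and over the choice of container, while correctly accounting for the interaction between different $2$-linked components and the free choice of $A \cap L_k$ outside all forbidden regions, is the bookkeeping-heavy heart of the proof. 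This is also exactly the place where the hypothesis $\lam \geq C\log^2 n/\sqrt n$ (rather than the conjectured $\tilde\Omega(1/n)$) is used, matching the remark after Theorem~\ref{trunc}.
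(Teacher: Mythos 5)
There is a genuine gap, and it is exactly the one the paper flags when it says that bounding the weight of antichains meeting the smallest layer becomes "considerably more challenging" for $\lam<1$. Your accounting compares the excess weight against the lower bound $(1+\lam)^{|L^X_k|}$ and claims that each defect vertex costs $\lam$ while saving $(1+\lam)^{-\Omega(n)}$, so that the naive $2$-linked enumeration of Lemma~\ref{2linked} (entropy $e^{O(\log n)}$ per vertex) is beaten. But the linear-in-$n$ expansion you invoke (Proposition~\ref{isoper}(1)--(2)) only holds for sets of size at most $n^4$. A $2$-linked set $T\subseteq L_{k-1}$ (and likewise a large set in $L_{k-2}$) can have $|N^+(T)|$ as small as $(1+1/n)|T|$ --- e.g.\ all $(k-1)$-sets containing a fixed element --- so beyond size $n^4$ the per-vertex saving is only about $\lam(1+\lam)^{-1-o(1)}=e^{-\Theta(\log n)}$ while the per-vertex entropy from Lemma~\ref{2linked} is $e^{\Theta(\log n)}$ with a larger constant, and the sum over such components diverges (this already fails at $\lam=1$; it is why Sapozhenko-type containers, whose entropy is $\exp[O(t\log^2 n/n)]$ in the boundary deficiency $t=g-a$ rather than $\exp[O(a\log n)]$ in the size, are indispensable). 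Your "iterated expansion / most of $L_k$ is forbidden" remark does not cover the abundant sets whose expansion ratio lies strictly between $1+1/n$ and, say, $\sqrt n$. Relatedly, the middle-layer part of the antichain must itself be controlled against your denominator: the estimate $Z(L^X_{[k-1,k]}\setminus W)\le (1+\lam)^{-\Omega(|W|)}Z(L^X_{[k-1,k]})$ for prescribed $W\subseteq L_k$ (equivalently, exponential decay of the avoidance probability under the two-layer hard-core measure) is a container-strength statement, not a consequence of Proposition~\ref{isoper} and Lemma~\ref{2linked}.

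There is also a $\lam<1$-specific flaw in the cost bookkeeping: an antichain can realize a large blocked footprint $A\subseteq L_{k-1}$ while actually occupying only a much smaller bottom set $B$ with $N^+(B)$ covering most of $A$, paying $\lam^{|B|}$ rather than $\lam^{|A|}$; for $\lam\le 1$ the factor $\lam^{|B|-|N^+(B)|}$ is at least $1$ (cf.\ the interior sum in \eqref{eqwDef} and the bound \eqref{small.A.bd}, which gives up the entire factor $\lam^{|A|}$). So bottom and middle layers cannot be charged separately, which is precisely why the paper proves a container theorem for \emph{pairs} $(A,B)$ (Theorem~\ref{MT}) and then proves the lemma by a different route: it encodes three-layer antichains by a polymer model whose weight $w$ agrees with the two-layer weight $w'$ on every polymer of size less than $n/2$ (a bottom vertex needs its whole up-neighbourhood, of size about $n/2$, inside the polymer), verifies the Koteck\'y--Preiss condition using Theorem~\ref{MT} for large polymers and Proposition~\ref{isoper} plus Lemma~\ref{2linked} only for polymers of size at most $n^4$, and then bounds $|\ln\Xi-\ln\Xi'|$ by the cluster-expansion tail over clusters of size at least $n/2$, which is $o(1/(n\sqrt N))$. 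Your proposal contains neither the container input nor any substitute for it, so the central estimate does not close.
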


\begin{proof}[Proof of Lemma~\ref{truncstep1} assuming Lemma~\ref{trunclemma1}]
    For $k= \lfloor n/2 \rfloor$, this is precisely Lemma~\ref{trunclemma1}. Fix $k < \lfloor n/2 \rfloor $ and $X\subseteq L_{[k+1,n]}$.
    Given $w\in B_n$, let $w^{+}=w\cup[n+1,2n-2k]$ considered as an element of the larger lattice $B_{2n-2k}$.
    Given $S\subseteq B_n$, let $S^{+}=\{w^{+}: w\in S\}\subseteq B_{2n-2k}$. For $i\in [2n-2k]$, write $\overline{L}_i\coloneqq \binom{[2n-2k]}{i}$, the $i^\text{th}$ layer in $B_{2n-2k}$, and for $S\subseteq [2n-2k]$ let $\overline{L}_S=\cup_{i\in S}\overline{L}_i$.  Let $X'=X^{+} \cup \bigcup\limits_{j=n+1}^{2n-2k} ( [2n-2k]\backslash\{j\} )$.
    Observe that if $v\in \overline{L}_{[n-k-1,n-k]}^{X'}$ then $[n+1, 2n-2k]\subseteq v$ and $v=w^+$ for some $w\in  {L}_{[k-1,k]}^{X}$.
    Moreover, $A\subseteq L_{[k-1,k]}^{X}$ satisfies  $|A^+|=|A|$ and $A$ is an antichain if and only if $A^+\subseteq \overline{L}_{[n-k-1,n-k]}^{X'}$ is an antichain. It follows that $Z\left( \overline{L}_{[n-k-1,n-k]}^{X'},\lam\right)=Z\left( L_{[k-1,k]}^{X},\lam\right) $ . Similarly $Z\left( \overline{L}_{[n-k-2,n-k]}^{X'},\lam\right)=Z\left( L_{[k-2,k]}^{X},\lam\right) $. 
    Finally, by Lemma~\ref{trunclemma1},
    $Z\left( \overline{L}_{[n-k-2,n-k]}^{X'},\lam\right)=\left(1+o\left(\frac{1}{n\sqrt{N}} \right)\right)Z\left( \overline{L}_{[n-k-1,n-k]}^{X'},\lam\right)$.
\end{proof}

We will in fact prove a result a little more general than~\Cref{trunclemma1}. We state the result first and then explain the motivation for it. We first need a little notation.  

First let $r\in \{\lfloor n/2 \rfloor, \lceil n/2 \rceil\}$ and let $\cA_r$ denote the set of all antichains $S\subseteq B_n$ such that each $2$-linked component $A$, where the 2-linkedness is defined with respect to the comparability graph of $B_n$ induced on $L_{r-1} \cup L_r$, of $S\cap L_{r-1}$ satisfies $|N^{+}(A)|\geq (1+1/n)|A|$.

For $U\subseteq B_n$, let \index{$Z_r(U, \lam)$}
\[
Z_r(U, \lam)= \sum_{A \in \cA_r, \ A \subseteq U}\lam^{|A|}\, .
\] 

\begin{lemma}
    \label{trunclemma2}
   There exists $C>0$ such that if $\lambda\geq C\log^2 n/\sqrt{n}$ then the following holds. If  $r\in \{\lfloor n/2 \rfloor, \lceil n/2 \rceil\}$
    and $X\subseteq  L_{[r+1, n]}$, we have 
 \[
    Z_r(L^{X}_{[r-2,r]},\lam) = \left(1+o\left(\frac{1}{n\sqrt{N}} \right)\right)  Z_r(L^{X}_{[r-1,r]},\lam)\, .
    \]

\end{lemma}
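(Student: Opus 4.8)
\textbf{Proof plan for Lemma~\ref{trunclemma2}.}

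The plan is to split the weighted count of antichains in $L^X_{[r-2,r]}$ according to the trace on the bottom layer $L_{r-2}$. Writing $Z_r(L^X_{[r-2,r]},\lam) = \sum_{B} \lam^{|B|} Z_r\bigl( (L^X_{[r-2,r]})^B \cap L_{[r-1,r]}, \lam\bigr)$, where $B$ ranges over 2-linked-component-wise admissible antichains in $L_{r-2}$ (each 2-linked component $A$ of $B$ having $|N^+(A)|\ge(1+1/n)|A|$ in the appropriate comparability graph), the $B=\emptyset$ term is exactly $Z_r(L^X_{[r-1,r]},\lam)$. So it suffices to show that the total weight of the terms with $B\ne\emptyset$ is a $o(1/(n\sqrt N))$ fraction of $Z_r(L^X_{[r-1,r]},\lam)$. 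First I would reduce to $r=\lfloor n/2\rfloor$, or handle both parities in parallel, observing that $L_{r-2}$ is below the middle layer so Proposition~\ref{isoper}(3) (or (4)) applies to sets in $L_{r-2}$ and their upward neighbourhoods into $L_{r-1}$; this is where the restriction to $\cA_r$ does its work, guaranteeing the $(1+1/n)$ expansion for every 2-linked component of the part of the antichain living in $L_{r-2}$ as well.

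The core estimate is a container-plus-entropy argument of Sapozhenko type, which I expect to be packaged by the forthcoming container result (Theorem~\ref{MT}), so the main task is to feed it the right weights. Fix a nonempty 2-linked set $A\subseteq L_{r-2}$ of size $a$. The antichains counted by the $B\ni A$-terms must avoid all of $N^+(A)\subseteq L_{r-1}$; removing these $|N^+(A)|\ge(1+1/n)a$ vertices from $L_{r-1}$ costs a factor of roughly $(1+\lam)^{-|N^+(A)|}$ relative to the free count on $L_{[r-1,r]}$ (here one compares $Z_r$ of the restricted graph with $Z_r(L^X_{[r-1,r]},\lam)$ by a standard "deletion" inequality for the hard-core model, using that $L_{r}$-vertices are largely unaffected), while including $A$ itself gains a factor $\lam^{a}$. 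So each fixed $A$ contributes at most about $\lam^{a}(1+\lam)^{-(1+1/n)a}$; crucially, since $\lam = \tilde O(1)$ but $\lam \ge C\log^2 n/\sqrt n$, for small $a$ the dominant loss comes not from the ratio $\lam/(1+\lam)$ but from the extra $(1+\lam)^{-a/n}$ and, more importantly, from the sheer number of choices of $A$. Summing over all 2-linked $A$ containing a fixed vertex $v$ — at most $(e\Delta^2)^{a-1}$ of them by Lemma~\ref{2linked}, with $\Delta = O(n)$ — and then over $v\in L_{r-2}$ (so a factor $|L_{r-2}| \le N$), and finally over all ways to choose a general admissible $B$ as a disjoint union of such components, one gets a geometric-type series. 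The exponential savings per unit of $|N^+(A)|$ must beat the $(en^2)$ branching factor; for $a$ bounded this is where the precise form of the container lemma, controlling the number of antichains with a given "footprint" on $L_{r-1}$, is needed, exactly as in Sapozhenko's original argument and in the refinement of~\cite{lambda}.

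The delicate regime — and the main obstacle — is small $a$ (say $a = O(1)$ or $a = o(n)$), where the crude bound $\lam^a (1+\lam)^{-(1+1/n)a}$ only gives a saving of order $(\lam/(1+\lam))^a \cdot (1+\lam)^{-a/n}$ per component, which for $\lam$ of order $\log^2 n/\sqrt n$ is about $\lam^a$, i.e.\ only $\tilde O(n^{-a/2})$; to beat the $|L_{r-2}|\le N = 2^{(1+o(1))n}$ worth of placements one genuinely needs the two-layer structure, i.e.\ the fact that forbidding $N^+(A)$ in $L_{r-1}$ simultaneously forbids a large $N^+(N^+(A))$-type "shadow" in $L_r$ is \emph{not} what helps (wrong direction); rather one exploits that the vertices of $L_{r-1}\setminus N^+(A)$ that remain must still form an antichain with whatever is chosen in $L_r$, and a finer container/codegree count (the content of Theorem~\ref{MT}) shows the effective loss is more like $(1+\lam)^{-cn\cdot a}$ for the relevant "expander" part — comfortably beating the entropy of choosing $A$. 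For larger $a$ (say $a \ge n^4$ or $a\ge \eps N$) Proposition~\ref{isoper}(1)--(2) give much stronger expansion $|N^+(A)| \ge n a/50$ or quadratic-type gains, so the loss $(1+\lam)^{-na/50}$ trivially dominates any $(en^2)^a$ branching and the sum is doubly-exponentially small. Assembling the three ranges of $a$, summing the resulting geometric series, and checking the total is $o(1/(n\sqrt N))$ (which it is, with room to spare — in fact $e^{-\Omega(n\log n)}$) completes the proof. Throughout, the role of $X$ is purely bookkeeping: it is carried along unchanged, since deleting the down-set of $X$ commutes with all the operations above, and the isoperimetric bounds are applied to sets in $L_{r-2}, L_{r-1}$ which are unaffected by whether we are in $L^X$ or $L$.
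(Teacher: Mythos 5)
There is a genuine gap, and it lies in your core estimate. You decompose by the trace $B$ on $L_{r-2}$ and claim that fixing a $2$-linked $A\subseteq L_{r-2}$ costs a factor of roughly $\lam^{|A|}(1+\lam)^{-|N^+(A)|}$ with $N^+(A)\subseteq L_{r-1}$, via a ``standard deletion inequality,'' while asserting that the forbidden shadow in $L_r$ is ``not what helps.'' This is backwards. Deleting vertices of $L_{r-1}$ from the two-layer partition function yields essentially no gain: in $\mu_{L^X_{[r-1,r]},\lam}$ a vertex $u\in L_{r-1}$ has $\approx n/2$ up-neighbours in $L_r$, each occupied with probability $\approx\lam/(1+\lam)$, so $\Pr[u\in S]=O\bigl(\lam(1+\lam)^{-n/2}\bigr)$ and $Z(G-u)/Z(G)=1-\Pr[u\in S]\approx 1$; there is no factor $(1+\lam)^{-1}$ per deleted middle-layer vertex to be had. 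The real source of the gain is exactly the part you discard: an antichain containing $v\in L_{r-2}$ must leave empty the $\Theta(n^2)$ supersets of $v$ in $L_r$, vertices that are otherwise free to be occupied, and this is what produces the $(1+\lam)^{-|N^+(A)|}$ factor (with $N^+(A)\subseteq L_r$) in the weights the paper actually uses. In addition, your fallback for small components misreads \Cref{MT}: that theorem concerns pairs $(A,B)\in 2^{L_{r-1}}\times 2^{L_{r-2}}$ with $N^+(B)\subseteq A$ and requires $|[A]|\geq n^2$, so it is the tool for \emph{large} polymers in $L_{r-1}$ with weak $(1+1/n)$ expansion; small sets are handled by the strong isoperimetry of \Cref{isoper} (expansion $\geq n|A|/50$ into the layer above), not by containers, and its conclusion saves $(1+\lam)^{-\gamma(t+t')}$ in the expansion excess $t=g-a$, not $(1+\lam)^{-cn|A|}$. (A smaller point: the restriction to $\cA_r$ constrains the $2$-linked components of $S\cap L_{r-1}$, not $S\cap L_{r-2}$; expansion from $L_{r-2}$ into $L_{r-1}$ is automatic from \Cref{isoper}.)

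More structurally, your route is the Sapozhenko-style comparison ``two layers plus an error from antichains meeting the bottom layer,'' which the paper deliberately avoids because bounding that error directly against $Z_r(L^X_{[r-1,r]},\lam)$ is exactly what becomes hard when $\lam<1$ (note that inserting $v\in L_{r-2}$ forces its entire up-neighbourhood $N^+(v)$ out of $L_{r-1}$, and for $\lam<1$ the naive exchange rate $\lam^{1-|N^+(v)|}$ is enormous). The paper instead writes both $Z_r(L^X_{[r-2,r]},\lam)$ and $Z_r(L^X_{[r-1,r]},\lam)$ as $(1+\lam)^{M}$ times polymer partition functions $\Xi,\Xi'$ over the \emph{same} polymer set in $L_{r-1}$ (\Cref{lem:polymer-antichain}), where the weights differ only through the interior sum over $B\subseteq\Int(A)$; since a polymer with nonempty interior must contain a full up-neighbourhood and hence has size at least about $n/2$, the difference $\ln\Xi-\ln\Xi'$ is supported on clusters of size $\geq n/2$, and the Koteck\'y--Preiss tail bound (verified via \Cref{isoper} for small polymers and \Cref{MT} for large ones) gives the $o(1/(n\sqrt N))$ error. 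Your proposal neither establishes a correct per-component gain nor controls the placement entropy (the factor $|L_{r-2}|\approx N$ times the $(en^2)^{|A|}$ branching of \Cref{2linked}) with the quantities you introduce, so as written the argument does not close.
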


We note that by Proposition~\ref{isoper}, if $r=\lfloor n/2 \rfloor$ then $|N^{+}(A)|\geq (1+1/n)|A|$ for all subsets $A\subseteq L_{r-1}$ and so $\cA_r=\cA$ and so $Z_r\equiv Z$. Thus, \Cref{trunclemma1} is precisely the $r=\lfloor n/2 \rfloor$ case of \Cref{trunclemma2}. 

We now attempt to motivate the $r=\lceil n/2 \rceil$ case of \Cref{trunclemma2}. When $n$ is odd and $r=\lceil n/2 \rceil$, \Cref{trunc} shows that $Z(\lam)$ is very close to $Z(C_n,\lam)$ where $C_n= L_{[ r-2, r +1 ]}$, the union of four central layers.
Roughly speaking, we will show that the dominant contribution to $Z(C_n,\lam)$ comes from antichains that are  `close' to a subset of \emph{one} of the middle layers $L_r$, $L_{r-1}$. We may therefore separate these contributions and estimate them individually. If $X\subseteq L_{r+1}$, then the expression $\lam^{|X|}Z_r(L^X_{[r-2,r]},\lam)$ captures the contribution from antichains $A\subseteq C_n$ that are close to a subset of $L_{r}$ and which satisfy $A\cap L_{r+1}=X$. As we will see, the expansion property in the definition of $\cA_r$ allows us to approximate   $Z_r(L^X_{[r-2,r]},\lam)$ by $Z_r(L^X_{[r-1,r]},\lam)$ in the spirit of \Cref{trunclemma1}.

\subsection{Overview of the proof of Lemma~\ref{trunclemma2}}\label{secOutlineTrunc}
For the remainder of this section, fix $r\in \{\lfloor n/2 \rfloor, \lceil n/2 \rceil\}$
    and $X\subseteq  L_{[r+1, n]}$. 

 Our strategy will be to interpret the two partition functions in the statement of Lemma~\ref{trunclemma2} as the partition functions of two related polymer models which we introduce now.

 Recall that for $i\in \{0,\ldots,n-1\}$ and $A\subseteq L_i$ we let $N^{+}(A)$ denote the set of neighbours of $A$ in the layer $L_{i+1}$. Let $\cP_X=\cP_{X,r}$ denote the set of all $2$-linked (again, in the graph  $L_{r-1} \cup L_r$)
 subsets $A\subseteq L^X_{r-1}$ such that $|N^{+}(A)|\geq (1+1/n)|A|$. We call $\cP_X$ the set of polymers. Note that if $A\subseteq L^X_{r-1}$, then  $N^+(A)\subseteq L^X_{r}$.
Two polymers $A_1$, $A_2$ are \textit{compatible}, written $A_1\sim A_2$,  if and only if the union $A_1 \cup A_2$ is \emph{not} $2$-linked. For a set $A\sub L^X_{r -1}$, we write $\Int(A)$ for $\{ v\in L^X_{r -2}: N^+(v)\subseteq A\}$. Define the weight of $A$ by 
\begin{align}\label{eqwDef}
w(A)=\lambda^{|A|} (1+\lambda)^{-|N^+(A)|}\sum_{B\subseteq \Int(A)} \lambda^{|B|-|N^+(B)|}\, .
\end{align}
The polymer model $(\cP_X, \sim, w)$\index{$(\cP_{X}, \sim, w)$ or $(\cP_{X, r}, \sim, w)$} comes with its associated partition function  $\Xi= \sum\limits_{\Lam \in \Omega} \prod\limits_{A \in \Lam} w(A)$ where, as before, $\Omega$ denotes the collection of all sets of pairwise compatible polymers. Recall that $\cA_r$ denotes the set of all antichains $S\subseteq B_n$ such that each $2$-linked component $A$ of $S\cap L_{r-1}$ satisfies $|N^{+}(A)|\geq (1+1/n)|A|$. The set $\Omega$ is therefore in one-to-one correspondence with the set $\{S\cap L^X_{r-1}: S\in \cA_r\}$.

The following lemma motivates the study of this polymer model in the context of Lemma~\ref{trunclemma2}.

\begin{lemma}\label{lem:polymer-antichain}
\[
(1+\lam)^{M}\cdot \Xi=  Z_r(L^X_{[r-2,r]},\lam)\, ,
\]
where $M=|L^X_{r}|$\, .
\end{lemma}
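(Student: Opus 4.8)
The identity is a purely algebraic rearrangement: we expand $Z_r(L^X_{[r-2,r]},\lam)$ by conditioning on the structure of an antichain in the three layers and match it term-by-term against $(1+\lam)^M \Xi$. First I would fix notation: an antichain $S \in \cA_r$ with $S \subseteq L^X_{[r-2,r]}$ is determined by the triple $(S\cap L^X_{r-2},\, S\cap L^X_{r-1},\, S\cap L^X_{r})$, subject to the constraint that it is an antichain and that each $2$-linked component of $S\cap L^X_{r-1}$ expands by a factor $(1+1/n)$. Write $A = S \cap L^X_{r-1}$. Since $A \subseteq L^X_{r-1}$ and polymers are exactly the $2$-linked expanding subsets of $L^X_{r-1}$, the set $A$ decomposes uniquely into its $2$-linked components $A_1,\dots,A_k$, and the defining condition of $\cA_r$ says precisely that each $A_i$ is a polymer; moreover distinct components are pairwise at distance $\geq 3$ in $L_{r-1}\cup L_r$, hence pairwise compatible. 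So $\{A_1,\dots,A_k\}$ ranges over exactly $\Omega$, and we have reduced to showing that, for a fixed admissible $A$ with component set $\Lambda = \{A_1,\dots,A_k\}$, the total weight $\sum \lam^{|S|}$ over all antichains $S \in \cA_r$ with $S \cap L^X_{r-1} = A$ equals $(1+\lam)^M \prod_{i} w(A_i)$.

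\textbf{The factorisation step.} For fixed $A$, I would sum over the choices of $S \cap L^X_{r}$ and $S \cap L^X_{r-2}$ independently. For the top layer: a vertex $v \in L^X_r$ may be included in $S$ iff $v \notin N^+(A)$ (else $v$ is comparable to something in $A$); the vertices outside $N^+(A)$ each contribute a free factor $(1+\lam)$, giving $(1+\lam)^{M - |N^+(A)|}$. For the bottom layer: a vertex $u \in L^X_{r-2}$ can be added to $S$ only if $N^+(u) \cap A = \emptyset$; but once we also require compatibility with the top layer we must be careful, and this is where the set $\Int(A)$ enters — actually the cleanest route is to note that when we have \emph{already committed} to taking all of $L^X_r \setminus N^+(A)$ as candidates, a vertex $u\in L^X_{r-2}$ forces constraints. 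Re-examining: the standard trick (as in the hypercube setting of \cite{struct}) is that one should not fix $S\cap L^X_r$ first but rather observe that $B := S\cap L^X_{r-2}$ can be any subset of $\Int(A)$, and then the contribution of the top layer is $(1+\lam)^{-|N^+(A)| - |N^+(B)|}$ relative to the full factor $(1+\lam)^M$, with $\lam^{|B|}$ from $B$ itself; summing over $B\subseteq\Int(A)$ and over the free top-layer vertices yields exactly $(1+\lam)^{M}\lam^{|A|}(1+\lam)^{-|N^+(A)|}\sum_{B\subseteq \Int(A)}\lam^{|B|}(1+\lam)^{-|N^+(B)|}$. Multiplying by $\lam^{|A|} = \prod_i \lam^{|A_i|}$ and distributing $N^+(A) = \bigsqcup_i N^+(A_i)$, $\Int(A) = \bigsqcup_i \Int(A_i)$ (disjointness because the components are far apart) gives $(1+\lam)^M\prod_i w(A_i)$ exactly as in \eqref{eqwDef}.

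\textbf{Putting it together.} Summing over all $\Lambda \in \Omega$ then gives $Z_r(L^X_{[r-2,r]},\lam) = (1+\lam)^M \sum_{\Lambda\in\Omega}\prod_{A\in\Lambda} w(A) = (1+\lam)^M \Xi$, which is the claim.

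\textbf{Expected obstacle.} The one genuinely delicate point is verifying the disjointness claims — that $N^+(A_i)$ are pairwise disjoint, that $\Int(A_i)$ are pairwise disjoint, and crucially that the constraint on $B = S\cap L^X_{r-2}$ being an antichain compatible with everything really is exactly ``$B \subseteq \Int(A)$ arbitrary'' with no further interaction with $S\cap L^X_r$ beyond the $(1+\lam)^{-|N^+(B)|}$ bookkeeping. This requires checking that $2$-linked components of $A$ are at graph-distance $\geq 3$ in $L_{r-1}\cup L_r$ (so their up-neighbourhoods in $L_r$ are disjoint), and that a vertex $u\in L^X_{r-2}$ with $N^+(u)\subseteq A$ automatically has $N^+(u)$ contained in a \emph{single} component $A_i$ (since $N^+(u)$ is itself $2$-linked in $L_{r-2}\cup L_{r-1}$, hence connected in the relevant square graph), so that the $\Int$ decomposition indeed factorises over components. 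Everything else is routine bookkeeping of geometric series of the form $\sum_{v}(1+\lam^{[v\in S]}) = 1+\lam$ over independent coordinates.
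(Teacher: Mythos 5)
Your parametrisation is not the one the identity is built on, and the key ``factorisation step'' fails as stated. The paper does not set $A=S\cap L^X_{r-1}$: it partitions antichains according to $\rho(S)=(S\cap L^X_{r-1})\cup N^{+}(S\cap L^X_{r-2})$, so the polymer $A$ contains the upper shadow of the bottom-layer part. With that choice the constraint on $B=S\cap L^X_{r-2}$ really is $N^{+}(B)\subseteq A$, i.e.\ $B\subseteq \Int(A)$; the middle-layer part of $S$ is then $A\setminus N^{+}(B)$, which is exactly where the factor $\lam^{|B|-|N^{+}(B)|}$ in \eqref{eqwDef} comes from (a power of $\lam$, trading the elements $N^{+}(B)\subseteq A$ for the elements of $B$); and the top layer only has to avoid $N^{+}(A)$, because everything in $L^X_r$ lying above $B$ is contained in $N^{+}(N^{+}(B))\subseteq N^{+}(A)$. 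In your setup, where $A$ is literally $S\cap L^X_{r-1}$, the antichain condition forces $N^{+}(B)\cap A=\emptyset$ --- the opposite of $B\subseteq\Int(A)$ --- as you yourself note at the start of that paragraph before asserting the contrary; moreover the top layer must then avoid $N^{+}(A)$ together with the second upper shadow of $B$, which is neither contained in $N^{+}(A)$ nor of size $|N^{+}(B)|$, so the bookkeeping factor $(1+\lam)^{-|N^{+}(B)|}$ is also wrong. Note in addition that the expression you claim to recover, $\lam^{|B|}(1+\lam)^{-|N^{+}(B)|}$, is not the weight in \eqref{eqwDef}, which has $\lam^{|B|-|N^{+}(B)|}$; since $\lam^{-x}>(1+\lam)^{-x}$ for $x>0$, your total is genuinely different, not just differently written.

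A concrete check: take $A=\emptyset$. Your fixed-$A$ claim says the total $\lam$-weight of antichains $S\in\cA_r$, $S\subseteq L^X_{[r-2,r]}$, with $S\cap L^X_{r-1}=\emptyset$ equals $(1+\lam)^{M}$. But for any $u\in L^X_{r-2}$, the antichains $\{u\}\cup T$ with $T\subseteq L^X_r$ avoiding the supersets of $u$ also satisfy the (vacuous) $\cA_r$-condition and carry positive weight, so the true total strictly exceeds $(1+\lam)^{M}$. Under the paper's parametrisation such $S$ are assigned to the non-empty polymer $N^{+}(u)$, and it is precisely the interior sum in \eqref{eqwDef} that accounts for them. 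So the error is structural: with $A=S\cap L^X_{r-1}$ the fibres of your decomposition do not match the polymer configurations in $\Omega$, and summing your fixed-$A$ contributions does not reproduce $Z_r(L^X_{[r-2,r]},\lam)$. The repair is exactly the paper's route: define the polymer configuration via $\rho(S)$, check that the fibre over $A$ is parametrised freely by $B\subseteq\Int(A)$ and $T\subseteq L^X_r\setminus N^{+}(A)$ with $S\cap L^X_{r-1}=A\setminus N^{+}(B)$, and only then invoke the component-wise disjointness facts you list (which are fine, and include the point that $N^{+}(u)$ is $2$-linked so $\Int(A)$ splits over the components of $A$).
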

\begin{proof}
Given an antichain $S\subseteq L^X_{[r-2,r]}$, let 
\[
\rho(S)= (S\cap L^X_{r-1})  \cup N^+(S\cap L^X_{r-2})\, .
\]
We partition the collection of antichains $S$ in  $L^X_{[r-2,r]}$ according to the value of $\rho(S)$. Note that if $\rho(S)=A$, then $S\cap L^X_{r}$ can be any subset of $L^X_{r}\backslash N^+(A)$. Moreover, $S\cap L^X_{r-2}$ can be any subset of $\Int(A)$ and if $S\cap L^X_{r-2}=B$ then $S\cap L^X_{r-1}= A\backslash N^+(B)$. We conclude that 
\begin{align*}
Z_r(L^X_{[r-2,r]},\lam) 
&=
\sum_{\substack{A\subseteq L^X_{r-1}\\ A\in\cA_r}}\sum_{\substack{S\subseteq L^X_{[r-2,r]}\\S\in \mathcal A_r,\,  \rho(S)=A}} \lam^{|S|}\\
&=\sum_{\substack{A\subseteq L^X_{r-1}\\ A\in\cA_r}} (1+\lam)^{M-|N^{+}(A)|}\sum_{B\subseteq \Int(A)}\lam^{|B|}\cdot \lam^{|A|-|N^{+}(B)|}\\
&=
(1+\lam)^M\sum_{\substack{A\subseteq L^X_{r-1}\\ A\in\cA_r}} w(A)\, .
\end{align*}
The result follows by noting that if $A$ decomposes into the collection of mutually compatible polymers $\{A_1, \ldots, A_\ell\}$, then $w(A)=\prod_{i=1}^\ell w(A_i)$.
\end{proof}

We also consider a polymer model $(\cP_X,\sim,w')$\index{$(\cP_X,\sim,w')$, $\Xi'$} where $w'$ is the simplified weight \[w'(A)=\lambda^{|A|} (1+\lambda)^{-|N^+(A)|}.\] We let $\Xi'= \sum_{\Lam \in \Omega} \prod_{A \in \Lam} w'(A)$ the associated partition function and note that (a simplified version of) the proof of Lemma~\ref{lem:polymer-antichain} shows that
\[(1+\lambda)^M\cdot \Xi'= Z_r(L^{X}_{[r-1,r]},\lam)\, ,
\]
where again $M=|L^X_{r}|$.
Therefore, to prove Lemma~\ref{trunclemma2}, it suffices to show that
\beq{xi.comparison} \Xi/\Xi'=1+o(1/(n\sqrt N))\, .\enq
\begin{remark}
\label{remark}
For all polymers $A\in\cP_X$ we have
   $0<w'(A)\leq w(A)$. Therefore, for any cluster $\Gamma\in \cC(\cP_X,\sim)$ we have that $w(\Gamma)$ and $w'(\Gamma)$ (defined in~\eqref{eq:wclusterdef}) have the same sign and that $|w'(\Gamma)|\leq |w(\Gamma)|.$

\end{remark}

We will use the Koteck\'y-Preiss criterion (Theorem~\ref{thm.KP}) to prove that the cluster expansion of $\ln \Xi$ converges absolutely. This gives automatic convergence for the cluster expansion of $\ln \Xi'$ as well, by the above remark (since the summands in the cluster expansion of $\ln \Xi'$ are termwise dominated by those for $\ln \Xi$). Once we have established cluster expansion convergence, \eqref{xi.comparison} will follow quickly.

To verify the Koteck\'y-Preiss condition, we require a container lemma.

For $i\in [n]$ and a set $A\subseteq L_i$, we denote the \emph{upwards closure} of $A$ by $[A]\coloneqq \{ v\in L_i: N^+(v)\subseteq N^+(A)\}$.
For $2\leq r \leq n$ set
\begin{center}
$\cH_r(a,b,g,h):=$ \\ $\{(A,B)\in 2^{L_{r-1}} \times 2^{L_{r-2}}: A \text{ $2$-linked}, |[A]|=a, |[B]|=b, |N^+(A)|=g, |N^+(B)|=h, N^+(B) \subseteq A\}$
\end{center}
and let $t=g-a$ and $t'=h-b$.

We prove the following result in Section~\ref{secproofcont}.

\begin{theorem}\label{MT} There exist $C, \ C'>0$ such that the following holds: for $r\in \{\lfloor n/2 \rfloor, \lceil n/2 \rceil\}$, any $a\geq n^2,$ $g\geq (1+1/n)a, \ b\geq0, \ h\geq0$ and $\lambda \geq C \log^2 n/\sqrt{n}$, setting $\gamma =C'/\sqrt{n}$ we have
\beq{container.ineq}\sum_{(A,B) \in \cH_r(a,b,g,h)} \lambda^{|A|+|B|-h}\leq(1+\lambda)^{g-\gamma(t+ t')}.\enq
\end{theorem}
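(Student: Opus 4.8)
The plan is to bound the left-hand side of~\eqref{container.ineq} by summing over "container profiles." First I would dispose of the $B$-part: for a fixed $2$-linked $A\subseteq L_{r-1}$ with $|[A]|=a$, $|N^+(A)|=g$, observe that any $B\subseteq L_{r-2}$ with $N^+(B)\subseteq A$ and $|[B]|=b$, $|N^+(B)|=h$ contributes $\lambda^{|B|-h}$, and since $|B|\le|[B]|=b$ and, by Kruskal–Katona (Proposition~\ref{isoper}, items 1–2, applied in $L_{r-2}$), $h=|N^+(B)|\ge(1+1/n)b$ — so $t'=h-b\ge b/n$ — we get a factor like $(1+\lambda)^{-\gamma t'}$ by the same mechanism we will use for $A$; more precisely $\sum_{B}\lambda^{|B|-h}\le\sum_{B\subseteq A}\lambda^{|B|-|N^+(B)|}$, and one shows this inner sum over $B\subseteq \operatorname{Int}(A)$-type sets is at most $(1+\lambda)^{-\gamma t'}(1+\lambda)^{\text{(something absorbed)}}$; the cleanest route is to note $\sum_{B}\lambda^{|B|-|N^+(B)|}\le \prod(\ldots)$ and bound it via the $\lambda\gtrsim\log^2 n/\sqrt n$ hypothesis exactly as in the $A$-analysis. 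Then the problem reduces to showing $\sum_{A}\lambda^{|A|-a}\cdot(\text{contribution of }[A]) \le (1+\lambda)^{g-a-\gamma t}$ where the sum is over $2$-linked $A$ with the prescribed parameters, i.e. the genuinely new content is a container lemma for $2$-linked sets in a single bipartite slice $L_{r-1}\cup L_r$.

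For that core estimate I would invoke the container machinery from the authors' companion paper~\cite{lambda} (referenced in the introduction as the source of the new container result): the key point is that a $2$-linked $A\subseteq L_{r-1}$ with $|N^+(A)|=g$ and $g\ge(1+1/n)a$ lies inside a small number of containers, where a container is determined by a "fingerprint" $F\subseteq A$ of size $O(g/n \cdot \log n)=\tilde O(t)$ together with the data of $[A]$; given the fingerprint, $A$ is confined to a set of size roughly $a+O(t)$. The number of fingerprints is at most $\binom{g}{\tilde O(t)}\le e^{\tilde O(t)}$, and crucially the slack comes from comparing $\lambda^{|A|}$ against $(1+\lambda)^{|N^+(A)|}=(1+\lambda)^g$: writing $g-a=t$, one has $(1+\lambda)^g = (1+\lambda)^{a}(1+\lambda)^t$, and since $\lambda\ge C\log^2n/\sqrt n$ the factor $(1+\lambda)^t$ is at least $e^{c t\log^2 n/\sqrt n}$, which beats the $e^{\tilde O(t)}$ entropy of fingerprints and leaves a residual $(1+\lambda)^{-\gamma t}$ with $\gamma=C'/\sqrt n$. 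Summing $\lambda^{|A|}(1+\lambda)^{-g}$ over all $A$ compatible with a given container and then over containers yields the bound.

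Concretely the steps are: (i) use Kruskal–Katona / Proposition~\ref{isoper} to record that $t=g-a\ge a/n$ and $t'=h-b\ge b/n$, so both "excess" parameters are genuinely positive and proportional to the set sizes; (ii) prove or cite the single-slice container statement from~\cite{lambda}: there is a family of $\exp(\tilde O(t))$ containers $(S,[A])$ with $|S|=a+O(t)$ such that every relevant $A$ satisfies $[A]\subseteq S$ and $A\subseteq S$; (iii) for a fixed container, bound $\sum_{A}\lambda^{|A|}$ by $(1+\lambda)^{|S|}\le(1+\lambda)^{a+O(t)}$ (sum over subsets of $S$), and similarly handle the $B$-sum to pick up $(1+\lambda)^{O(t')}$ against $\lambda^{-h}$; (iv) multiply by the number of containers $\exp(\tilde O(t+t'))$ and by $\lambda^{-a}$ (to match the normalization on the right-hand side where we want $(1+\lambda)^{g}$ not $(1+\lambda)^a\lambda^a$), and check that the $(1+\lambda)^t$ and $(1+\lambda)^{t'}$ surpluses absorb all the $\exp(\tilde O(\cdot))$ factors with room to spare for the $\gamma(t+t')$ loss, using $\log(1+\lambda)\gtrsim \log^2 n/\sqrt n \gg 1/\sqrt n\gg \log(\text{container count})/t$.

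The main obstacle is step (ii): obtaining a container lemma whose fingerprint size is $\tilde O(t)$ rather than $\tilde O(t\sqrt n)$ or worse, since only a fingerprint of size $\tilde O(t)$ makes the entropy $e^{\tilde O(t)}$ small enough to be beaten by $(1+\lambda)^t$ when $\lambda$ is as small as $\log^2 n/\sqrt n$. This is exactly the quantitative refinement over Sapozhenko's original argument that the authors flag is needed, and it is what forces the hypothesis $\lambda\ge C\log^2 n/\sqrt n$ (a cruder container would force $\lambda$ bounded away from $0$). The secondary technical nuisance is bookkeeping the interaction between the $A$-container and the $B$-sum — in particular that $N^+(B)\subseteq A$ constrains $B$ to lie "below" the container for $A$ — but this only helps and can be handled by the same subset-sum bound applied inside $\operatorname{Int}$-type sets. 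The constants $C$ (threshold for $\lambda$), $C'$ (in $\gamma$), and the implicit constant in $\tilde O$ must be chosen in the right order: first fix the container constant, then $C$ large enough that $(1+\lambda)^t$ dominates, then $C'$ small enough that the leftover still gives $(1+\lambda)^{-\gamma(t+t')}$.
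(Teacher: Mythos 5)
Your high-level strategy (approximate $A$ and $B$ by containers and trade container entropy against the $(1+\lambda)$-weight) is in the right spirit, but two load-bearing steps fail as stated. First, the entropy/savings arithmetic on the $A$-side is wrong: with $\lambda=\tilde\Theta(n^{-1/2})$ the surplus $(1+\lambda)^{t}$ is only $\exp\bigl(O(t\log^{2}n/\sqrt n)\bigr)=e^{o(t)}$, so it cannot absorb a container family of size $e^{\tilde O(t)}$ (your step (ii): fingerprints of size $\tilde O(t)$ drawn from a ground set of size $g$); the comparison in your step (iv) goes the wrong way, and your closing chain, which needs $\log(\text{container count})\ll t/\sqrt n$, contradicts the $e^{\tilde O(t)}$ count you posit in (ii). What is actually needed, and what the paper arranges, is an approximation scheme whose cost per unit of $t$ is $O(\log^{2}n/n)$: the two-stage construction ($\varphi$-approximations via \Cref{lem1} and \Cref{lem7}, then $(\psi_X,\psi_Y)$-approximating pairs via \Cref{lem2}) has total cost $O(t^{\max}\log^{2}n/n)$, see \eqref{cost1}--\eqref{total.cost}, while the per-container reconstruction saves, in the worst (slack) cases, only $(1+\lambda)^{\Omega(t^{\max}/\sqrt n)}$ rather than $(1+\lambda)^{\Omega(t)}$; the theorem closes because both exponents are of order $t^{\max}\log^{2}n/n$ and $C$ is taken large. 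A container lemma with fingerprint size $\tilde O(t)$ is therefore not sufficient for $\lambda$ this small, no matter how small the containers are.

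Second, the $B$-part cannot be ``disposed of'' for a fixed $A$. For $\lambda<1$ every admissible $B$ satisfies $|B|\le b$, so each term of your inner sum is $\lambda^{|B|-h}\ge\lambda^{-t'}>1$; hence $\sum_{B}\lambda^{|B|-|N^{+}(B)|}$ is certainly not at most $(1+\lambda)^{-\gamma t'}$ times an absorbable factor, and bounding the multiplicity of $B\subseteq\Int(A)$ by a subset sum costs $(1+\lambda)^{|\Int(A)|}$, which is far too lossy. The factor $\lambda^{-h}$ is neutralized only by pairing it with the factor $\lambda^{|N^{+}(B)|}$ hidden inside $\lambda^{|A|}$ (possible precisely because $N^{+}(B)\subseteq A$), which forces a joint analysis of the pair. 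Moreover $B$ is not assumed $2$-linked, so the single-slice container lemma you plan to cite does not apply to it; the paper's substitute, \Cref{lem7}, controls the entropy of $B$'s approximations by anchoring all fingerprints inside the already-constructed container $S\supseteq[A]$, whose size is at most $2g$ by \Cref{SF.size}, and the $t'$-savings then emerges only from the tight/slack case analysis in the joint reconstruction (for instance the exponent $q+s-h$ with $h-q=\Omega(t'/\sqrt n)$ when both approximations are slack), not from any standalone bound on the $B$-sum. So the interaction you set aside as a ``secondary technical nuisance'' is exactly where the $\lambda<1$ difficulty resides.
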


\subsection{Proof of Lemma~\ref{trunclemma2} assuming Theorem~\ref{MT}}
\label{KPsection}
Throughout this section we assume $\lam\geq C \log^2 n/\sqrt{n}$ where $C$ is a sufficiently large absolute constant. 
 Recall that we have fixed $r\in \{\lfloor n/2 \rfloor, \lceil n/2 \rceil\}$ and $X\subseteq L_{[r+1, n]}$ and recall the definition of the polymer model $(\cP_X,\sim,w)$ from the previous section. 
 
We use Theorem \ref{MT} to verify the Koteck\'y-Preiss condition \eqref{KPbound1} for this polymer model, that is, we show that there exist $f,$ $g: \cP_X \rightarrow [0, \infty)$ such that for all $A \in \cP_X$ we have

\begin{equation}\label{KPbound.r}
    \sum\limits_{A' \nsim A} w(A')e^{f(A')+g(A')} \leq f(A)\, .
\end{equation}

For $A\subseteq B_n$, set
\begin{equation}\label{eq:fgdef}
f(A)=|A|/n^3 \text{\,  and \, }
g(A)=\left\{
\begin{array}{cc}
     \left(\frac{n-2}{2}|A|-|A|^2\right)\ln(1+\lam) - 100|A|\ln n & |A|\leq n/10\\
     \frac{1}{100}n|A|\ln(1+\lam) & n/10<|A|\leq n^4 \\
     \frac{|A|}{n^2}\ln(1+\lam) & |A|> n^4 . 
\end{array} \right.
\end{equation}

Because of the restriction on $a$ in Theorem~\ref{MT}, we need to treat small polymers separately. We use the fact that small polymers have good expansion properties by Proposition~\ref{isoper}.

Let $v\in L_{r-1}$. We first bound the sum
\begin{align*}
    \sum\limits_{A\in \cP_X: A\ni v} w(A)e^{f(A)+g(A)}\, .
\end{align*}
It will be useful to observe that for $A\in \cP_X$ we have $\gl^{|A|+|B|-|N^+(B)|}\le \max\{1, \gl^{|A|}\}$ for all $B\sub \Int(A)$ and $\gl>0$; indeed, if $\lam\leq 1$ then $\lambda^{|A|+|B|-|N^+(B)|}\leq 1$, while if $\lam> 1$ then $\lambda^{|A|+|B|-|N^+(B)|}\leq \lam^{|A|}$. Therefore,
\beq{small.A.bd}\sum_{B \subseteq  \Int(A)} \lambda^{|A|+|B|-|N^+(B)|} \le \max\{1, \gl^{|A|}\} \cdot 2^{|\Int(A)|} \leq (1+\lambda)^{|A|} 2^{|\Int(A)|}\, .\enq

\nin \textbf{Case 1:} $|[A]|\leq n/10$.

For polymers $A\in\cP_X$ with $|A|\leq n/10$, we have $|N^+(A)|\geq \frac{n}{2}|A|-|A|^2$ and $| \Int(A)|\leq 50|A|/n$ by Proposition~\ref{isoper}. 
By~\eqref{small.A.bd}, we then have
\begin{align}
    \sum_{\substack{A \ni v \\ |[A]|\leq n/10}} w(A)e^{f(A)+g(A)} &\leq \sum\limits_{\substack{A\ni v \\ |A|\leq n/10}} (1+\lam)^{|A|} 2^{50|A|/n}(1+\lambda)^{-\frac{n}{2}|A|+|A|^2} e^{|A|}(1+\lam)^{\frac{n-2}{2}|A|-|A|^2} n^{-100|A|}\nonumber  \\ &\leq \sum\limits_{t=1}^{\infty} (e^2n^2  n^{-100}2^{50/n})^t \leq 1/(6n^5), \label{eq:case1}
\end{align}
where for the second inequality we used Lemma~\ref{2linked} to bound the number of $A$'s that contain $v$.

\nin \textbf{Case 2:} $n/10<|[A]|\leq n^4$. 

For polymers $A\in \cP_X$ with $|A|\leq n^4$, we have $|N^+(A)|\geq n|A|/50$ and $| \Int(A)|\leq 50|A|/n$ by Proposition~\ref{isoper}. 
Using the bound \eqref{small.A.bd}, we have
\begin{align}
    \sum\limits_{\substack{A \ni v \\ \frac{n}{10}<|[A]| \leq n^{4}}} w(A)e^{f(A)+g(A)}\nonumber & \leq \sum\limits_{A\ni v, |A|\leq n^4} (1+\lam)^{|A|} 2^{50|A|/n} (1+\lambda)^{-n|A|/50}(e(1+\lam)^{n/100})^{|A|} \\
    & \leq \sum\limits_{t=1}^{\infty} (e^2n^2(1+\lam)^{1-n/100} 2^{50/n})^t \leq 1/(6n^5), \label{eq:case2}
    \end{align}

where, again, the second inequality uses Lemma~\ref{2linked}, and the last inequality holds by our assumption  $\lam\geq C \log^2 n/\sqrt{n}$ (in fact we only need $\lam\geq C \log n/n$ here).

\nin \textbf{Case 3:} $|[A]|> n^4$. 

In this regime we use Theorem~\ref{MT}. Observe, first, that $\cP_X\subseteq \cP_\emptyset$, so that every such $A$ we consider here does indeed appear as the first entry of an element $(A, B)$ of $\cH_r(a,g, b, h)$ for some $b$ and $h$. With $\gamma=C'/\sqrt{n}$ as in Theorem \ref{MT}, set $x=e^{1/n^3}(1+\lambda)^{-\gamma/2n}.$ Note that $x\leq e^{1/n^3} (1+ \log n/ n)^{-\gamma/2n} \leq e^{1/n^3}e^{- \gamma \log n/ 4n^2}  \leq e^{-1/n^{2.5}} <1$. We have
\[\begin{split}&\sum\limits_{A\ni v, |[A]| \geq n^4} w(A)e^{f(A)+g(A)} \\ &\ \leq \sum\limits_{a\geq n^4} \sum\limits_{g\geq a(1+\frac1n)} \sum\limits_{b} \sum\limits_{h\geq b(1+\frac1n)} \sum\limits_{(A, B)\in \mathcal{H}_r(a, b, g, h)} e^{|A|/n^3} (1+\lam)^{|A|/n^2} \lambda^{|A|+|B|-|N^+(B)|} (1+\lambda)^{-|N^+(A)|} \\ &\stackrel{\eqref{container.ineq}}{\leq} \left( \sum\limits_{a\geq n^4} e^{a/n^3} (1+\lam)^{a/n^2} \times \sum\limits_{t\geq a/n} (1+\lambda)^{-\gamma t} \right) \times \left(\sum\limits_{b} \sum\limits_{t'\geq b/n} (1+\lambda)^{-\gamma t'} \right).\end{split}\]
The rightmost bracket above is at most
\begin{equation*}
    \sum\limits_{b \ge 0} (1+\lambda)^{-\gamma b/n}/ (1-(1+\lambda)^{-\gamma}) \leq (1-(1+\lambda)^{-\gamma /n})^{-2}\leq (1-x)^{-2}.
\end{equation*}
The leftmost bracket is at most
    \[\begin{split} \sum\limits_{a\geq n^4} e^{a/n^3} (1+\lam)^{a/n^2}(1+\lambda)^{-\gamma a/n}(1-x)^{-1} & \leq \sum\limits_{a\geq n^4} e^{a/n^3} (1+\lambda)^{-\gamma a/2n}(1-x)^{-1}\\
    &\le e^{n} (1+\lambda)^{-\gamma n^3/2}(1-x)^{-2}\leq (1+\lambda)^{-\gamma n^3/3}(1-x)^{-2}.\end{split}\]
We conclude that
\begin{equation}\label{eq:case3}
    \sum\limits_{A\ni v, |[A]| \geq n^4} w(A)e^{f(A)+g(A)} \le (1+\lambda)^{-\gamma n^3/3}(1-x)^{-4}.
\end{equation}
Noting that $(1-x)^{-1}\leq (1-e^{-n^{-2.5}})^{-1} \leq 2n^{2.5}$, and recalling that $\lambda \ge C\log^2n/\sqrt n$ and $\gamma=C'/\sqrt{n}$, we obtain that the right-hand side of~\eqref{eq:case3} is at most $1/(6n^5).$ Combining with~\eqref{eq:case1} and~\eqref{eq:case2} we conclude that
\begin{align}\label{eq:case1-3}
     \sum\limits_{A\ni v} w(A)e^{f(A)+g(A)} \leq 1/(2n^5)\, .
\end{align}

Now we verify \eqref{KPbound.r}. For any polymer $A$, set $N^2(A)\coloneqq N(N(A))$ and note that $|N^2(A)|\leq n^2|A|$. Using~\eqref{eq:case1-3}, we have
\begin{equation}
\label{KP}
    \sum\limits_{A'\nsim A} w(A')e^{f(A')+g(A')} \leq \sum\limits_{v\in N^2(A)} \sum\limits_{A' \ni v} w(A')e^{f(A')+g(A')} \leq n^2|A| /(2n^5) = f(A)/2\, .
\end{equation}
Therefore, by Theorem \ref{thm.KP} and Remark \ref{remark}, the cluster expansions of $\ln\Xi$ and $\ln\Xi'$ converge absolutely.

Finally, to show \eqref{xi.comparison},
observe that for a cluster $\Gamma\in \cC=\cC(\cP_X,\sim)$ with $\lVert\Gamma\rVert<n/2$, we have $w'(\Gamma)=w(\Gamma)$ since the interior of any polymer $A\in \Gamma$ is empty. Therefore, recalling that $w'(\Gamma)$ and $w(\Gamma)$ have the same sign and  $|w'(\Gamma)|\leq |w(\Gamma)|$, we have
\begin{equation*}
    \left|\ln \Xi - \ln \Xi'\right| \leq \sum_{\lVert\Gamma\rVert\geq n/2} |w(\Gamma)|.
\end{equation*}
Conclusion~\eqref{kpbounds} of Theorem \ref{thm.KP} tells us that for any $v\in L_{r-1}$,
\beq{kp32bound}
    \sum_{\substack{\Gamma \in \cC \\ \Gamma \nsim \{v\}}} |w(\Gamma)|e^{g(\Gamma)}\leq f(\{v\})=1/n^3.
\enq
Note that if $\lVert\Gamma\rVert \geq n/2$, then $g(\Gamma)\geq n\ln n$. Therefore, summing \eqref{kp32bound} over all $v\in L_{r-1}$ and restricting to summands with $\lVert\Gamma\rVert \geq n/2$,
\beq{kptail}
    \sum_{\lVert\Gamma\rVert \geq n/2} |w(\Gamma)| \leq e^{-n\ln n}\sum_{\lVert\Gamma\rVert \geq n/2} |w(\Gamma)|e^{g(\Gamma)} \stackrel{\eqref{kp32bound}}{\leq} e^{-n\ln n} \binom{n}{ r  -1}n^{-3}=o\left(\frac{1}{n\sqrt{N}}\right).
\enq
This yields \eqref{xi.comparison} and completes the proof of Lemma~\ref{trunclemma2} (assuming Theorem~\ref{MT}).

\subsection{Proof of Theorem \ref{saporefined}}\label{saporefpf}

The proof of Theorem \ref{saporefined} is essentially a refinement of the approximation in Theorem \ref{trunc} for $\lam=1$. As we did in the preceding section, we sum the inequality in \eqref{kp32bound} over all choices of $v$, and restrict to only the terms with $\| \Gamma \| \geq n/2.$ Note that for $\lam=1$, we now have $g(\Gamma)=\Omega(n^2)$ for all $\Gamma$ such that $\lVert\Gamma\rVert \geq n/2$. Therefore, instead of \eqref{kptail} we have

\[ \sum_{\| \Gamma\| \geq n/2} |w(\Gamma)| \leq e^{-\Omega(n^2)} \sum_{\| \Gamma\| \geq n/2} |w(\Gamma)|e^{g(\Gamma)} \stackrel{\eqref{kp32bound}}{\leq} e^{-\Omega(n^2)} 2^n/n^3 = e^{-\Omega(n^2)}, \]
from which we deduce that (\textit{cf. \eqref{xi.comparison}})
\beq{gl.1}
    Z_r(L^{X}_{[r-2,r]}, 1) = \left(1+e^{-\Omega(n^2)}\right)  Z_r(L^{X}_{[r-1,r]}, 1)
\enq
for $r\in \{\lfloor n/2 \rfloor, \lceil n/2 \rceil\}$ and $X\subseteq L_{[r+1, n]}$. Note that this is analogous to Lemma~\ref{trunclemma2}, with $\gl=1$ and the error term $O(1/(n\sqrt N))$ replaced with $e^{-\gO(n^2)}.$ Therefore, by following the proof of Theorem \ref{trunc} (using \eqref{gl.1} instead of Lemma~\ref{trunclemma1}) we obtain the desired estimate \[ \psi(n)=\left(1+e^{-\Omega(n^2)}\right) Z(C_n, 1)\, . \]

\section{Antichains in three central layers}\label{secthreelayer}
With Theorem~\ref{trunc} and Theorem~\ref{saporefined} in hand, we now aim to estimate $Z(C_n,\lam)$, the contribution to the partition function $Z(\lam)$ from the middle layers $C_n= L_{[ \lfloor n/2 \rfloor-1, \lceil n/2 \rceil +1 ]}$. Specialising to the case $\lam=1$ will then prove Theorem~\ref{saporefined2}. We will need to estimate $Z(C_n,\lam)$ for general $\lam$ for the proofs of Theorems~\ref{dualthm1},~\ref{thm5} and \ref{sparsesperner3}.

The set $C_n$ consists of either three or four consecutive layers of $B_n$ depending on whether $n$ is even or odd. In the (simpler) case where $n$ is even, we will see that a typical sample from $\mu_{C_n,\lam}$, the hard-core measure on the graph $C_n$ at activity $\lam$\index{$\mu_{C_n,\lam}$} (defined at~\eqref{eqLocalId0}), mostly intersects the middle layer $L_{n/2}$ and has a small number of `defect' elements in layers $L_{n/2-1}, L_{n/2+1}$. When $n$ is odd, we will see in Section~\ref{secodd} that a typical sample from $\mu_{C_n,\lam}$ either $(i)$ mostly intersects $ L_{\lfloor n/2 \rfloor}$ with a small number of defect elements in one layer above and below or $(ii)$ mostly intersects $ L_{\lceil n/2 \rceil}$ with a small number of defect elements in one layer above and below. 

In either case, a typical sample from $\mu_{C_n,\lam}$ is contained in three central layers and in this section we develop the tools needed to analyse such antichains. Again we use the language of polymer models.

Let $k=\lceil n/2 \rceil$ and let $C_n'=L_{[k-1,k+1]}$. We note that $C_n'=C_n$ when $n$ is even.

We define the graph $G_C$\index{$G_C$} (`C' for `central') as follows. Take $V(G_C)=L_{k-1}\cup L_{k+1}$ and set $\{x, y\}\in E(G_C)$ if and only if $x\subsetneq y$ or $y \subsetneq x$.

For a subset $A\subseteq V(G_C)$, define $\partial(A)\coloneqq\left\{ x\in  L_{k} : \exists v\in A, \ {x\sub v \text{ or } v\sub x} \right\}$, the `two-sided' shadow of the set $A$ onto the middle layer $ L_{k}$.

Motivated by the discussion above, we now define a polymer model, $\cP_C$, whose partition function will capture the contribution to $Z(C_n,\lam)$ from antichains that mostly lie in $L_k$. 

For $i \in \{k-1, k+1\}$, the polymers in $\cP_C$ are the sets $A \sub L_i$ which are i) 2-linked in the graph $L_i \cup L_k$; and ii) $|\partial(A)| \ge (1+1/n)|A|$. \footnote{Note that if $n$ is even or if $n$ is odd and $A\subseteq L_{k+1}$, then the condition on $\partial(A)$ is redundant.} We say that two polymers $A_1, A_2$ are compatible, written $A_1 \sim A_2$, if and only if $\partial(A_1) \cap \partial (A_2)=\emptyset$. For the polymers $A\in \cP_C$ define the weight
 \[w_C(A)=\lambda^{|A|}(1+\lambda)^{-|\partial(A)|}.\] Write $\Omega=\Omega(\cP_C,\sim)$ for the collection of all sets of compatible polymers, and
$\cC=\cC(\cP_C,\sim)$ for the set of all clusters. Let
$\Xi_C=\Xi(\cP_C,\sim,w_C)$\index{$(\cP_C,\sim,w_C)$, $\Xi_C$} be the partition function of this polymer model, that is, 
\[\Xi_C=\sum_{\Lam \in \Omega} \prod_{A \in \Lam}w_C(A)\, .\] 
 Recall that $N=\binom{n}{\lfloor n/2 \rfloor}$.
 In later sections we will show that when $n$ is even $Z(C_n, \lam)=(1+\lambda)^N\Xi_C$ (\Cref{lempolymeq}) and when $n$ is odd $Z(C_n, \lam)=\left(2+o\left(\frac{1}{\sqrt{N}}\right)\right)(1+\lambda)^N\Xi_C$ (\Cref{defapprox}). We dedicate the remainder of this section to the study of the polymer model $(\cP_C,\sim,w_C)$ and its partition function $\Xi_C$.

 We begin by showing that the cluster expansion of $\ln \Xi_C$ converges. It turns out that this is almost automatic from our work in the previous section.

\begin{lemma}\label{central.lem}
The polymer model $(\cP_C,\sim,w_C)$ satisfies the Koteck\'y-Preiss condition \eqref{KPbound1}, with $f$ and $g$ as in \eqref{eq:fgdef}. In particular, we have
\begin{equation}
    \label{tailboundscentral}
    \sum_{\Gamma\in \cC} \left|w_C(\Gamma)\right|e^{g(\Gamma)}\leq 2^n/n^3.
\end{equation}
\end{lemma}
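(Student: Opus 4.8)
The plan is to derive Lemma~\ref{central.lem} from the analysis already carried out in Section~\ref{KPsection}, after observing that $(\cP_C,\sim,w_C)$ is assembled from two copies of polymer models treated there. I would begin by recording the dictionary. A polymer $A\in\cP_C$ with $A\subseteq L_{k-1}$ (recall $k=\lceil n/2\rceil$) has $\partial(A)=N^+(A)$ and is $2$-linked in $L_{k-1}\cup L_k$, so the polymers of $\cP_C$ lying in $L_{k-1}$ are exactly those of the model $(\cP_\emptyset,\sim,w')$ of Section~\ref{secOutlineTrunc} for the parameter $r=\lceil n/2\rceil$, with $w_C(A)=\lambda^{|A|}(1+\lambda)^{-|N^+(A)|}=w'(A)\le w(A)$. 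A polymer $A\in\cP_C$ with $A\subseteq L_{k+1}$ has $\partial(A)=N^-(A)$; applying the complementation automorphism $v\mapsto[n]\setminus v$ of $B_n$, which sends $L_i$ to $L_{n-i}$ and interchanges $N^-$ with $N^+$, identifies it with a polymer of the corresponding model for $r=\lfloor n/2\rfloor$, carrying the same weight and the same values of $f$ and $g$ (which depend only on $|A|$).

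Next I would import the key per-vertex estimate \eqref{eq:case1-3}. This was established in Section~\ref{KPsection} for both $r\in\{\lfloor n/2\rfloor,\lceil n/2\rceil\}$ and for the weight $w$ (which dominates $w'=w_C$), splitting into the cases $|[A]|\le n/10$, $n/10<|[A]|\le n^4$ (both via Proposition~\ref{isoper}), and $|[A]|>n^4$ (via the container bound Theorem~\ref{MT}). Through the dictionary above it gives immediately, for every $v\in L_{k-1}\cup L_{k+1}$,
\[
\sum_{A\in\cP_C,\ A\ni v} w_C(A)\,e^{f(A)+g(A)}\ \le\ \frac{1}{2n^5}\, .
\]
To obtain the Koteck\'y--Preiss condition \eqref{KPbound1} I would then argue exactly as at \eqref{KP}: if $A'\nsim A$ then $\partial(A')\cap\partial(A)\neq\emptyset$, so some $x\in L_k$ is adjacent in $B_n$ to a vertex of $A$ and to a vertex of $A'$, whence $A'$ meets $N^2(A):=N(N(A))$, where $N$ is adjacency in the three-layer graph on $L_{k-1}\cup L_k\cup L_{k+1}$; since that graph has maximum degree at most $n$ we have $|N^2(A)|\le n^2|A|$, and therefore
\[
\sum_{A'\nsim A} w_C(A')\,e^{f(A')+g(A')}\ \le\ \sum_{v\in N^2(A)}\ \sum_{A'\ni v} w_C(A')\,e^{f(A')+g(A')}\ \le\ \frac{n^2|A|}{2n^5}\ =\ \frac{f(A)}{2}\ \le\ f(A)\, .
\]

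Finally, for the tail bound \eqref{tailboundscentral} I would use that every singleton $\{v\}$ with $v\in L_{k-1}\cup L_{k+1}$ is a polymer (it is $2$-linked and $|\partial(\{v\})|\ge 2>1+1/n$), and that any cluster $\Gamma\in\cC$ contains a polymer $A$, hence a vertex $v\in A$, with $\Gamma\nsim\{v\}$ because $\partial(\{v\})\subseteq\partial(A)$. Applying conclusion \eqref{kpbounds} of Theorem~\ref{thm.KP} with $\gamma=\{v\}$ (so the right-hand side equals $f(\{v\})=n^{-3}$) and summing over all $v\in L_{k-1}\cup L_{k+1}$ yields
\[
\sum_{\Gamma\in\cC} |w_C(\Gamma)|\,e^{g(\Gamma)}\ \le\ \bigl(|L_{k-1}|+|L_{k+1}|\bigr)\,n^{-3}\ \le\ 2^n/n^3\, .
\]
I do not expect a genuine difficulty: the lemma is essentially a corollary of Section~\ref{KPsection}. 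The one step requiring care is the complementation identification of the $L_{k+1}$-polymers with the $r=\lfloor n/2\rfloor$ model --- checking that $2$-linkedness, the weight $w_C$, and the expansion condition $|\partial(A)|\ge(1+1/n)|A|$ all transport correctly --- together with the routine verification that the maximum degree of the three-layer graph is at most $n$, so that $|N^2(A)|\le n^2|A|$.
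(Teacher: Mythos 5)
Your proposal is correct and follows essentially the same route as the paper: it reduces the per-vertex estimate to \eqref{eq:case1-3} via the domination $w_C\le w$ for $L_{k-1}$-polymers and the complementation symmetry for $L_{k+1}$-polymers, verifies \eqref{KPbound1} through the $|N^2(A)|\le n^2|A|$ count, and deduces \eqref{tailboundscentral} by summing \eqref{kpbounds} over singletons. The extra details you supply (singletons are polymers, every cluster is incompatible with some singleton) are exactly the routine checks the paper leaves implicit.
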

\begin{proof}
  We claim that, for any $A \in \cP_C$,
    \beq{modifiedKP}\sum_{B\nsim A} w_C(B)e^{f(B)+g(B)} \le f(A)/2.\enq
   Recall from Section~\ref{secOutlineTrunc} that for $r\in\{\lfloor n/2 \rfloor, \lceil n/2 \rceil\}$, $\cP_{\emptyset,r}$ denotes the set of all $2$-linked
 subsets $A\subseteq L_{r-1}$ such that $|N^{+}(A)|\geq (1+1/n)|A|$. Recall also the definition of $w$ from~\eqref{eqwDef}. In Section \ref{KPsection} (\textit{cf.} \eqref{eq:case1-3}), we showed that for any $v \in L_{r-1},$ we have 
    \begin{align}\label{eqP0r}
    \sum_{\substack{A \in \cP_{\emptyset,r} \\ A \ni v}} w(A)e^{f(A)+g(A)} \le 1/(2n^5)\, .
    \end{align} 
If $r=\lceil n/2 \rceil$ then $\cP_{\emptyset,r}$ is the set of all polymers  $A\subseteq\cP_C$ such that $A\subseteq L_{k-1}$, and we have $w_C(A)\leq w(A)$ for all $A \in \cP_{\emptyset,r}$. Thus,
  \begin{align}\label{eqP0rtowC}
    \sum_{\substack{A \in \cP_C \\ A \ni v}} w_C(A)e^{f(A)+g(A)} \le 1/(2n^5)\, ,
\end{align}
 for all $v\in L_{k-1}$. Applying~\eqref{eqP0r} again, now with $r=\lfloor n/2 \rfloor$ (and noting the symmetry of the involution $B_n\to B_n$ that maps an element to its complement), it follows that ~\eqref{eqP0rtowC} holds also for $v\in L_{k+1}$.  Therefore for any $A\in\cP_C$,
    \[\sum_{B \nsim A}w_C(B)e^{f(B)+g(B)}\le \sum_{v \in N^2(A)}\sum_{B \ni v} w_C(B)e^{f(B)+g(B)} \le 2n^2|A|/(2n^5)= f(A),\]
    where we used the fact that $|N^2(A)|\leq n^2 |A|.$
    
    By conclusion~\eqref{kpbounds} of Theorem \ref{thm.KP} we have
    \[\sum_{\substack{ \Gamma \in \cC \\ \Gamma \nsim \{ v \}}} \left|w_C(\Gamma)\right|e^{g(\Gamma)} \le 1/n^3
    \]
    for any $v \in L_{k-1} \cup L_{k+1}$, (\textit{cf.} \eqref{kp32bound}), from which \eqref{tailboundscentral} follows.
\end{proof}

With this lemma in hand we prove the following estimate on $\Xi_C$. The proof follows those of {\cite[Theorem~8]{hypercube} and \cite[Lemma 16.1]{jenssen2023homomorphisms}}. Recall that we have set $k=\lceil n/2 \rceil$. For a polynomial $P(x,y)$, we write $\deg_x P$ for the degree of $P$ in $x$ and similarly for $y$.

\begin{theorem}
\label{clusterexpansion}
There exists $C>0$ such that if $\lam>C\log^2 n/\sqrt{n}$ then the following holds.
    There exist sequences of bivariate polynomials
    $S^0_j(n,\lam), S^1_j(n,\lam),S^2_j(n,\lam)$, $j\in \mathbb{N},$ such that for any fixed $t\geq 1$ we have
    \[ \Xi_C= \exp \left[\sum_{j=1}^{t-1}\binom{n}{k-1}S^0_j(n,\lam)(1+\lam)
^{-j(k+1)}+ O\left( \frac{2^n n^{100t}}{(1+\lam)^{\frac{n-2}{2}t-t^2}}\right)\right]\,  \]
if $n$ is even and 
 \[ \Xi_C= \exp \left[\sum_{j=1}^{t-1}\binom{n}{k-1}S^1_j(n,\lam)(1+\lam)
^{-j(k+1)}+  
\binom{n}{k+1}S^2_j(n,\lam)(1+\lam)
^{-j(k+1)} + O\left( \frac{2^n n^{100t}}{(1+\lam)^{\frac{n-2}{2}t-t^2}}\right)\right] \] 
if $n$ is odd.
    Moreover, for $r\in\{0,1,2\}$, $\deg_n S_j^r\leq 2(j-1)$ and $\deg_\lam S_j^r\leq 2j^2$, and the coefficients of $S_j^r$ can be computed in time $e^{O(j \log j)}.$
\end{theorem}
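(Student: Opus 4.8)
The plan is to extract the polynomials $S^r_j$ directly from the convergent cluster expansion $\ln\Xi_C = \sum_{\Gamma\in\cC} w_C(\Gamma)$ guaranteed by Lemma~\ref{central.lem}, organising the sum by the size of the cluster's two-sided shadow onto $L_k$. First I would stratify: for a cluster $\Gamma$ write $\partial(\Gamma)$ for the union of the shadows $\partial(A)$ over the polymers $A$ appearing in $\Gamma$ (these shadows pairwise intersect since $G_\Gamma$ is connected and incompatibility means shared shadow), and note $w_C(A)=\lambda^{|A|}(1+\lambda)^{-|\partial(A)|}$, so every cluster of total size $\|\Gamma\|=s$ carries a factor of order $(1+\lambda)^{-\Theta(ns)}$ coming from the expansion property $|\partial(A)|\ge(1+1/n)|A|$ together with the Kruskal--Katona bounds in Proposition~\ref{isoper} (for the smallest polymers $|\partial(A)|\ge \tfrac n2|A|-|A|^2$). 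This is exactly why the $j$-th term in the claimed expansion is weighted by $(1+\lambda)^{-j(k+1)}$: the lightest clusters with $\|\Gamma\|=j$ are single polymers $A$ that are down-sets of size $j$ sitting just below a single vertex of $L_k$ — more precisely $A = N^-(x)$-type configurations — contributing weight $\lambda^{j}(1+\lambda)^{-(k+1)}$ up to lower-order corrections, and there are $\binom{n}{k-1}$ (resp. $\binom{n}{k+1}$) translates of each such local pattern by vertex-transitivity of the relevant layer. I would collect, for each fixed $j$, the finite sum over all cluster \emph{shapes} $\Gamma$ with $\|\Gamma\|=j$ of $\phi(G_\Gamma)\prod w_C(\gamma)$ grouped by which central-layer vertex (or vertices) the shadow hugs; translating a fixed shape over the $\binom{n}{k\pm1}$ symmetric positions and over the two sublayers $L_{k-1},L_{k+1}$ produces a coefficient $\binom{n}{k-1}S^1_j(n,\lambda)+\binom{n}{k+1}S^2_j(n,\lambda)$ (with $S^0_j = S^1_j + S^2_j$ in the even case where $L_{k-1}$ and $L_{k+1}$ are symmetric), where $S^r_j$ is a polynomial in $\lambda$ (from the finitely many $\lambda^{|A|}$ and expansions of $(1+\lambda)^{-c}$ in powers of $(1+\lambda)^{-(k+1)}$ — I'd need to be a little careful and actually expand $(1+\lambda)^{-|\partial(A)|}$ as $(1+\lambda)^{-(k+1)}$ times $(1+\lambda)^{(k+1)-|\partial(A)|}$, which is a polynomial in $(1+\lambda)$ of bounded degree) and a polynomial in $n$ (from the combinatorial count of how many local shapes of a given size sit at a fixed vertex, which is governed by the degrees $d(x)=n$ and hence is polynomial of degree $O(j)$).

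The degree bounds then fall out of the bookkeeping: a cluster shape of size $j$ involves at most $O(j)$ vertices of $B_n$, each neighbourhood count contributing at most a linear-in-$n$ factor, but after factoring out the $\binom{n}{k\pm1}$ the residual count of \emph{rooted} shapes of size $j$ is a polynomial in $n$ of degree at most $2(j-1)$ (the $-1$ because one vertex is pinned); for $\deg_\lambda$, the $\lambda$-weight is $\lambda^{\|\Gamma\|}=\lambda^{j}$ times the polynomial expansion of $\prod(1+\lambda)^{(k+1)-|\partial(A)|}$ whose degree is controlled by $\sum(|\partial(A)|-(k+1))$, and the worst case $|\partial(A)|\approx \tfrac n2 |A|$ over $\|\Gamma\|=j$ is bounded by $O(j^2)$, giving $\deg_\lambda S^r_j \le 2j^2$. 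The error term $O(2^n n^{100t}(1+\lambda)^{-(\frac{n-2}{2}t - t^2)})$ is the tail of the cluster expansion: by conclusion~\eqref{kpbounds} of Theorem~\ref{thm.KP} (used exactly as in~\eqref{kp32bound}), $\sum_{v}\sum_{\Gamma\nsim\{v\}}|w_C(\Gamma)|e^{g(\Gamma)}\le 2^n/n^3$, and for clusters with $\|\Gamma\|\ge t$ the factor $e^{g(\Gamma)}$ — with $g$ as in~\eqref{eq:fgdef}, so $e^{g(A)}\gtrsim (1+\lambda)^{(\frac{n-2}{2}|A|-|A|^2)}n^{-100|A|}$ in the relevant small range — is at least $(1+\lambda)^{\frac{n-2}{2}t - t^2}n^{-100t}$, so dividing through bounds $\sum_{\|\Gamma\|\ge t}|w_C(\Gamma)|$ by the stated quantity; truncating $\ln\Xi_C$ at clusters of size $<t$ then incurs exactly this additive error inside the exponential. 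The algorithmic claim follows because enumerating all rooted cluster shapes of size $<j$ together with their incompatibility graphs and evaluating the Ursell function $\phi$ takes $e^{O(j\log j)}$ time — there are $e^{O(j\log j)}$ such shapes (each a connected configuration on $O(j)$ vertices of a bounded-degree-structure built around a root, since all the combinatorics is local and the ambient degree $n$ only enters polynomially via already-identified factors), and $\phi(G_\Gamma)$ on a graph with $O(j)$ vertices is computable within the same budget.

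\textbf{Main obstacle.} The genuinely delicate point is making the "expand $(1+\lambda)^{-|\partial(A)|}$ and collect by $(1+\lambda)^{-j(k+1)}$, pull out the symmetry factor $\binom{n}{k\pm1}$, and verify everything that remains is a \emph{polynomial} in $n$ and in $\lambda$ of the stated degrees" step completely rigorous and uniform over $j$. One has to argue that, for a cluster $\Gamma$ with $\|\Gamma\|=j$, the quantity $|\partial(\Gamma)| - (k+1)$ depends only on the isomorphism type of the local configuration (not on $n$) — this uses that each polymer is $2$-linked and its shadow is essentially forced once the "pattern" is fixed, which in turn rests on the isoperimetric rigidity near the middle layers (Proposition~\ref{isoper}, especially parts (3),(4)); and that the number of local patterns of size $j$ rooted at a fixed vertex, as a function of $n$, is a polynomial of degree $\le 2(j-1)$ — this requires an inclusion–exclusion / Kruskal–Katona argument showing that down-sets and their shadows of bounded size in $\binom{[n]}{k}$ are counted by such polynomials. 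Handling the odd case (two non-symmetric sublayers $L_{k-1}$ and $L_{k+1}$ with the shadow condition only binding on the $L_{k-1}$ side) cleanly, so that the two polynomials $S^1_j,S^2_j$ emerge naturally rather than by ad hoc case splitting, is the other place where care is needed; I would treat it by running the stratification separately over clusters whose polymers all lie below $L_k$ versus all above, which is forced since a single cluster is $2$-linked and cannot straddle both sublayers.
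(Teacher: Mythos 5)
Your overall route coincides with the paper's: truncate the convergent cluster expansion of $\ln\Xi_C$ at size $t$ via the Koteck\'y--Preiss tail bound with $g$ as in \eqref{eq:fgdef} (this gives exactly the stated error term), and for each $j\le t-1$ factor $(1+\lambda)^{-j(k+1)}$ out of the clusters of size $j$, pulling out $\binom{n}{k-1}$, $\binom{n}{k+1}$ by transitivity and leaving an $n$-independent count of rooted shapes that is enumerated in time $e^{O(j\log j)}$. However, there is one genuine error in your plan, located in your ``main obstacle'' paragraph: you claim a cluster ``cannot straddle both sublayers'' and propose to stratify into clusters whose polymers all lie in $L_{k-1}$ versus all in $L_{k+1}$. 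This is false. Each \emph{polymer} lies in a single layer, but a \emph{cluster} is a tuple of polymers connected through the incompatibility relation $\partial(A_1)\cap\partial(A_2)\neq\emptyset$, and a polymer $\{x\}\subseteq L_{k-1}$ is incompatible with a polymer $\{y\}\subseteq L_{k+1}$ whenever $x\subset y$ (their shadows share the two elements of $L_k$ lying between $x$ and $y$). Such straddling clusters appear already at $\|\Gamma\|=2$ and contribute at main order: the pairs $(\{x\},\{y\})$ just described contribute $\Theta\bigl(\binom{n}{k-1}\,n^2\lambda^2(1+\lambda)^{-2(k+1)+1}\bigr)$, the same order as the other $j=2$ clusters, so they cannot be absorbed into the error term once $t\ge 3$; your stratification would therefore yield incorrect $S_j^r$ for every $j\ge 2$. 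The paper does not partition clusters at all: it roots each cluster at one of its $\ell=|V(\Gamma)|$ vertices with weight $1/\ell$, so a straddling cluster contributes fractionally to the $\binom{n}{k-1}S^1_j$ term (roots in $L_{k-1}$) and to the $\binom{n}{k+1}S^2_j$ term (roots in $L_{k+1}$); for a fixed root and a fixed set of active coordinates the residual count is independent of $n$, and $\deg_n S_j^r\le 2(j-1)$ then comes from the binomial factors $\binom{k-1}{a_1}\binom{n-k+1}{a_2}$, $\binom{k+1}{a_1}\binom{n-k-1}{a_2}$ with $a_1+a_2\le 2(\ell-1)$.

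Two smaller points. Your factorization should read $(1+\lambda)^{-|\partial(A)|}=(1+\lambda)^{-(k+1)|A|}(1+\lambda)^{(k+1)|A|-|\partial(A)|}$, not $(1+\lambda)^{(k+1)-|\partial(A)|}$ per polymer; the degree bound $\deg_\lambda S_j^r\le 2j^2$ then follows from $0\le (k+1)\|\Gamma\|-\sum_{A\in\Gamma}|\partial(A)|\le j(j+1)/2$, which needs no isoperimetric or Kruskal--Katona rigidity at all: it is immediate from the fact that two distinct vertices of $L_{k-1}$ (or of $L_{k+1}$) have at most one common neighbour in $L_k$, giving $|\partial(A)|\ge k|A|-|A|(|A|-1)/2$. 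So the rigidity concerns in your obstacle paragraph are unnecessary once the active-coordinate bookkeeping is in place. Finally, for the $e^{O(j\log j)}$ computability you should specify how $\phi(G_\Gamma)$ is evaluated: brute force over edge subsets costs $2^{\Theta(j^2)}$, which exceeds the budget; the paper invokes the Tutte-polynomial algorithm of Bj\"orklund--Husfeldt--Kaski--Koivisto to compute each Ursell function in time $e^{O(j)}$.
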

\begin{proof} 
We prove the result in the case of $n$ odd. The case of $n$ even is analogous (and simpler).   

We analyse the cluster expansion $\ln \Xi_C=\sum_{\Gamma\in\cC}w_C(\Gamma)$, which is convergent by~\Cref{central.lem}.

    Note that with the choice of function $g$ as in \eqref{eq:fgdef}, for fixed $t$ we have that $\| \Gamma\| \geq t$ implies $g(\Gamma)\geq \left( \frac{n-2}{2}t - t^2\right)\ln(1+\lambda) - 100t\ln n.$
    Restricting the sum in the LHS of \eqref{tailboundscentral} to clusters of size at least $t$, we then have
    \beq{tailbds} \sum_{{\Gamma\in \mathcal{C}: \|\Gamma\|\geq t}} \left|w_C(\Gamma) \right|  \leq \frac{2^n n^{100t}}{n^3 (1+\lam)^{\frac{n-2}{2}t-t^2}}.\enq

  We are therefore done if we can show that for $j\leq t-1$,
  \begin{align}\label{eqs0s1}
  \sum_{\Gamma\in\cC: \|\Gamma\|=j}w_C(\Gamma)= \binom{n}{k-1}S^1_j(n,\lam)(1+\lam)
^{-j(k+1)}+  
\binom{n}{k+1}S^2_j(n,\lam)(1+\lam)
^{-j(k+1)}
\end{align}
where the $S^1_j, S^2_j$ have the properties claimed in the statement of the theorem. 

 Set $v_1=[k-1] \in L_{k-1}$ and $v_2=[k+1]\in L_{k+1}$ regarded as a vertex of the graph $G_C$. We call $v_1$ or $v_2$ the \emph{root} vertex. For $r\in\{1, 2\}$ we say $i\in [n]$ is a \emph{$v_r$-active coordinate} for some $v\in G_C$ if $i\in v \triangle v_r$ (symmetric difference). 
 For $S\subseteq G_C$ we define the set of active coordinates of $S$ to be the union of the sets of active coordinates of the elements of $S$. For a cluster $\Gamma$, we define its set of active coordinates as the set of active coordinates of $V(\Gamma)\coloneqq\cup_{A\in \Gamma} A$.

For $j, \ell \in\mathbb{N}$ and $a_1, a_2\in \mathbb{N}_0$ we now let $\cG^1_{j, \ell, a_1, a_2}$ denote the set of all clusters $\Gamma$ containing $v_1$, with $\|\Gamma\|=j$, $|V(\Gamma)|=\ell$ and with the set of $v_1$-active coordinates equal to $[a_1] \cup [k, k+a_2-1]$. Define $\cG^2_{j, \ell, a_1, a_2}$ in the analogous way, where we instead require that the clusters $\Gamma$ contain $v_2$ and that $[a_1]\cup[k+2, k+a_2+1]$ is the set of $v_2$-active coordinates. Note that if $\Gamma$ is a cluster with $|V(\Gamma)|=\ell$ containing $v_r$, then $\Gamma$ has at most $2(\ell-1)$ $v_r$-active coordinates. Thus $\mathcal{G}^r_{j, \ell, a_1, a_2}=\emptyset$ unless $a_1+a_2\leq 2(\ell-1)$. Note also that $\|\Gamma\|\geq |V(\Gamma)|$, so  $\mathcal{G}^r_{j, \ell, a_1, a_2}=\emptyset$ unless $\ell\leq j$.

    Now by vertex transitivity within the parts $L_{k-1}, L_{k+1}$ of $G_C$ and symmetry of coordinates, we have
    \[\begin{split}
&\sum_{\substack{\Gamma\in \cC \\ \|\Gamma\|=j}} w_C(\Gamma)=\sum_{ \ell=1}^{j} \frac{1}{\ell} \times \\
& \sum_{\substack{(a_1, a_2)\in\mathbb{N}_0^2 \\ a_1+a_2\leq 2(l-1)}} \left( \binom{n}{k-1}\binom{k-1}{a_1} \binom{n-k+1}{a_2}\sum_{\Gamma\in \cG^1_{j, \ell, a_1, a_2}} \mkern-24mu w_C(\Gamma) + \binom{n}{k+1}\binom{k+1}{a_1}\binom{n-k-1}{a_2}\sum_{\Gamma\in \cG^2_{j, \ell, a_1, a_2}}\mkern-24mu w_C(\Gamma) \right).
    \end{split} \]
    
The factors $\binom{n}{k-1}, \binom{n}{k+1}$ come from choosing a root vertex $v_1\in L_{k-1}$ or root vertex $v_2\in L_{k+1}$. The $1/\ell$ comes from the fact that each cluster $\Gamma$ with $|V(\Gamma)|=\ell$ is counted by precisely $\ell$ choices of root vertex. The factors of $\binom{k-1}{a_1}\binom{n-k+1}{a_2}$ and $\binom{k+1}{a_1}\binom{n-k-1}{a_2}$ come from the choice of active coordinates relative to the root vertex.
 
{Given $\Gamma \in \cG^1_{j, \ell, a_1,a_2} \cup \cG^2_{j, \ell, a_1,a_2}$, we can write \[w_C(\Gamma)=\phi(G_\Gamma)\lam^j (1+\lam)^{-(k+1)j + \alpha(\Gamma)}\] with $\alpha(\Gamma)\coloneqq (k+1)j - \sum_{A\in \Gamma} |\partial(A)|$. Observe that $0\leq \alpha(\Gamma)\leq j(j+1)/2$: indeed, each $A\in \Gamma$ is a subset of either $L_{k-1}$ or $L_{k+1}$. Moreover, any pair $u,v\in L_{k-1}$ has at most one common neighbour in $\cU$ and similarly for pairs $u,v\in L_{k+1}$. It follows that $k|A| - |A|(|A|-1)/2\leq |\partial(A)| \leq (k+1)|A|$.}

 Since $|\cG^r_{j, \ell, a_1,a_2}|$ is independent of $n$ for $r\in\{1,2\}$, we have that
    \[\sum_{\Gamma\in \cG^r_{j, \ell, a_1,a_2}} w_{C}(\Gamma) (1+\lam)^{j(k+1)}=\sum_{\Gamma\in \cG^r_{j, \ell, a_1,a_2}} \phi(G_\Gamma) \lam^j (1+\lam)^{\alpha(\Gamma)}
    \]
    is independent of $n$ and is a polynomial in $\lam$ of degree at most $j+j(j+1)/2\leq 2j^2$. 
    It follows that we may choose $S^1_j, S^2_j$ in 
    \eqref{eqs0s1} such that, $\deg_n S_j^r\leq 2(j-1)$ and $\deg_\lam S_j^r\leq 2j^2$.

    It remains to show that the coefficients of $S_j^r(n,\lam)$ can be computed efficiently. We split this into two steps.
    \begin{claim}
        For $r\in\{1,2\}$ and $j, \ell, a_1,a_2\in \mathbb{N}$, the set $\cG^r_{j, \ell, a_1,a_2}$ has size $e^{O(j\log j)}$ and can be generated in time $e^{O(j\log j)}$. 
    \end{claim}
    \begin{subproof}
     We assume that $r=1$ (the proof for $r=2$ is identical). 
     Recall that we may assume $a_1+a_2\leq 2(\ell-1)$ and $\ell\leq j$ else $\cG^1_{j, \ell, a_1,a_2}=\emptyset$.
       For each $m\leq \ell$, we first generate the set $\mathcal{L}_m$ of all $2$-linked sets (in $G_C$) $A$ with $|A|=m$, $v_1\in A$ and whose set of active coordinates is a subset of $[a_1]\cup[k,k+a_2-1]$. We do this by iterating over $1\leq m\leq \ell$.
        
        For $m=1$ this is trivial. For $m\geq 2$, suppose we have already generated $\mathcal{L}_{m-1}$. For each $A\in \cL_{m-1}$ and $v\in A$, run through the list of all vertices $w$ at distance 2 from $v$ and with set of active coordinates contained in $[a_1]\cup[k,k+a_2-1]$. Then, for each such $w$ not in $A$, add $A\cup \{w\}$ to $\cL_m$. There are $m-1$ choices for $v$, and given a choice of $v$ there are at most $\binom{a_1+a_2}{2}\leq 2\ell^2$ choices of $w$. This shows that $|\cL_m|\leq 2m\ell^2|\cL_{m-1}|$ and that $\cL_m$ can be generated in time $O(m\ell^2|\cL_{m-1}|)$ given $\cL_{m-1}$. In particular, $|\cL_{\ell}|\leq 2^\ell \ell! \ell^{2\ell}=e^{O(j\log j)}$. 

        Now let $\cL_\ell^{a_1,a_2}$ denote the subset of $\cL_\ell$ consisting of all $2$-linked sets $A$ whose set of active coordinates is precisely $[a_1]\cup[k,k+a_2-1]$. We can generate $\cL_\ell^{a_1,a_2}$ in time $e^{O(j\log j)}$ by checking each element of $\cL_\ell$ in turn. 

        Finally, we generate $\cG^1_{j, \ell, a_1,a_2}$. For each $A\in\cL_\ell^{a_1,a_2}$ in turn, we will generate the list of all clusters $\Gamma$ with $\|\Gamma\|=j$ and $V(\Gamma)=A$. We do this in time $e^{O(j\log j)}$. Recall that a cluster $\Gamma$ is an ordered tuple of polymers $(A_1, \dots , A_t)$ with $\sum_{i=1}^t |A_i|=j$. Now, there are precisely $2^j$ ordered (integer) partitions of $j$ of the form $(b_1, \dots , b_t)$ with $\sum_{i=1}^t b_i=j$. For each element of $\binom{A}{b_1}\times \dots \times \binom{A}{b_t}$ (a set of size at most $e^{O(j\log j)}$), we may check if it constitutes a cluster.

        This shows that $|\cG^1_{j, \ell, a_1,a_2}|=e^{O(j\log j)}$ and that we may generate the whole list $\cG^1_{j, \ell, a_1,a_2}$ in time $e^{O(j\log j)}$.
    \end{subproof}
    \begin{claim}
        For $r\in\{1,2\}$ and $\Gamma\in\cG^r_{j, \ell, a_1,a_2}$, we can calculate $w_C(\Gamma)$ in time $e^{O(j)}$.
    \end{claim}
    \begin{subproof}
        Given some $\Gamma\in\cG^r_{j, \ell, a_1,a_2}$ and $A\in\Gamma$, we can calculate $|\partial(A)|$ in time $O(j^2)$.
        Moreover, the Ursell function $\phi(G_{\Gamma})$ is an evaluation of the Tutte polynomial of $G_{\Gamma}$, and thus by a result of Bj\"orklund, Husfeldt, Kaski and Koivisto \cite[Theorem 1]{ursellcomp} it may be computed in time $e^{O(|V(G_\Gamma)|)}=e^{O(j)}$. Thus we can compute $w(\Gamma)$ in time $e^{O(j)}$.
    \end{subproof}
    The above two claims conclude the proof of \Cref{clusterexpansion}.
\end{proof}

\section{Antichains in three central layers of a given size}
\label{givensize}
In the previous section we saw how to obtain detailed asymptotics for the partition function $\Xi_C$. As we will see in later sections (Lemmas~\ref{lempolymeq} and \ref{defapprox}), this will translate to detailed asymptotics for $Z(C_n,\lam)$.

For the proofs of Theorems~\ref{dualthm1}, \ref{thm5}, and \ref{sparsesperner3}, we will use \Cref{transference} to obtain information about antichains in $B_n$ given our knowledge of antichains in $C_n$. Let $i_m(C_n)$ denote the number of antichains of size $m$ in $C_n$. Our next aim is twofold. First, we wish to find a suitable value of $\lam$ such that $i_m(C_n) \lam^m/Z(C_n, \lam)=\Omega(1/\sqrt{N})$ (cf. \eqref{lambda.cond} and \Cref{trunc}). Secondly, we wish to estimate $i_m(C_n)$ itself. It turns out that the first of these objectives will in fact be just one step on the way to establishing the second one, so we will focus on the latter. We achieve this by again studying the polymer model $(\cP_C, \sim, w_C)$. Let $\nu$ \index{$\nu$ on $\Omega(\cP_C,\sim)$} denote the measure on $\Omega=\Omega(\cP_C,\sim)$ given by
\beq{nu} \nu(\Lam)=\frac{\prod_{A\in \Lam}w_C(A)}{\Xi_C}\, . \enq

Recall that we let $k=\lceil n/2 \rceil$. Let $\mu^C_\lam$\index{$\mu^C_\lam$} denote the measure on antichains in $C_n'=L_{[k-1,k+1]}$ defined by the following process.
\begin{enumerate}
\item Select $\Lambda$ according to $\nu$ and let $\bar \Lambda=\bigcup_{A\in \Lambda}A$.\label{nu1}
\item Let $U$ be a $\frac{\lam}{1+\lam}$-random subset of $L_k\backslash \partial(\bar\Lambda)$.
\item Output the antichain $S=\bar\Lambda \cup U$.
\end{enumerate}
We let $i_m^C$ denote the number of antichains $A$ of size $m$ such that $\mu^C_\lam(A)>0$ (note that $i_m^C$ is independent of $\lam$). We will see, in \Cref{lempolymeq}, that when $n$ is even $\mu^C_\lam$ is simply the hard-core model on $C_n$ at activity $\lam$ (i.e.\  $\mu_{C_n,\lam}$ defined at~\eqref{eqLocalId0}) and so $i_m(C_n)=i_m^C$. When $n$ is odd we will show in \Cref{defapprox} that $i_m(C_n)=(2+o(1))i_m^C$. 

In the remainder of this section we develop the tools needed to estimate $i_m^C$. The main aim of this section is to prove the following.

\begin{theorem}
\label{thm5pre}
For $r\in\{0,1\}$,
  there exists a sequence of rational functions $R^r_j(n, \beta)$, $j\in \mathbb{N},$ such that the following holds. Given $\eps>0,\ t\in\mathbb{N}$ and $m\in[N]$ such that
$\beta\coloneqq \frac{m}{N}\in [1-4^{-1/t} +\eps,1]$ we have
    \[
    i_m^C=(1+o(1))\binom{N}{m}\exp \left[ N\sum _{j=1}^{t-1} R^r_j\left(n, \beta \right) \cdot \left(1-\beta\right)^{jk}\,\right]
    \]
    if $n\equiv r \pmod 2$.
    Moreover, the coefficients of $R^r_j$ can be computed in time $e^{O(j\log j)}.$
 \end{theorem}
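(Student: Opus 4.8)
The plan is to estimate $i_m^C$ by a local central limit theorem applied to the random variable $|S|$ under the measure $\mu_\lam^C$, choosing $\lam$ so that $\E_{\mu_\lam^C}|S| \approx m$. First I would set up the generating-function identity: by construction of $\mu_\lam^C$, if we write $Z^C(\lam) = (1+\lam)^N \Xi_C$ (the "partition function" whose Gibbs measure is $\mu_\lam^C$), then $i_m^C \cdot \lam^m / Z^C(\lam) = \mu_\lam^C(|S|=m)$, exactly as in~\eqref{eqLocalId}. Thus $i_m^C = \lam^{-m} Z^C(\lam)\, \mu_\lam^C(|S|=m)$, and the task reduces to (i) computing $Z^C(\lam)$ asymptotically via Theorem~\ref{clusterexpansion}, and (ii) showing a local limit theorem $\mu_\lam^C(|S|=m) = (1+o(1))/\sqrt{2\pi \sigma^2}$ for the right $\lam$, where $\sigma^2 = \operatorname{Var}_{\mu_\lam^C}|S|$.

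The second step is the technical heart. Decompose $|S| = |\bar\Lambda| + |U|$ where $\bar\Lambda$ is the polymer part and $U$ is the $\tfrac{\lam}{1+\lam}$-random subset of $L_k \setminus \partial(\bar\Lambda)$. Conditioned on $\Lambda$, $|U|$ is a sum of $N - |\partial(\bar\Lambda)|$ i.i.d.\ Bernoulli$(\tfrac{\lam}{1+\lam})$ variables, so it satisfies a sharp local limit theorem with variance $\Theta(N)$; the cluster-expansion tail bounds~\eqref{tailboundscentral} show that $|\partial(\bar\Lambda)|$ (and $|\bar\Lambda|$) is highly concentrated — with exponentially small tails on a scale $O(\log N) \ll \sqrt{N}$ — so the conditional local limit theorem can be integrated against the distribution of $\Lambda$ without distortion. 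Here is where I would follow the template of \cite[Section~5]{struct} and \cite{hypercube} essentially verbatim: the key estimates are (a) $\mu_\lam^C(|\bar\Lambda| = s)$ decays like $e^{-\Omega(sn)}$ via Lemma~\ref{central.lem}, so only $s = O(\log n)$ matters; (b) for each such $s$ the conditional law of $|U|$ is a Bernoulli sum with the standard LLT; (c) assembling gives $\mu_\lam^C(|S|=m) \sim 1/\sqrt{2\pi \sigma^2}$ with $\sigma^2 = (1+o(1))\tfrac{\lam}{(1+\lam)^2}N$ (the Bernoulli-sum variance dominates). The correct choice of activity is $\lam = \lam(n,m)$ with $\tfrac{\lam}{1+\lam} = \tfrac{m}{N}(1+o(1))$, i.e.\ $\lam \approx \tfrac{\beta}{1-\beta}$, adjusted by a lower-order term to make $\E_{\mu_\lam^C}|S|$ exactly $m$; the hypothesis $\beta \ge 1 - 4^{-1/t} + \eps$ guarantees $\lam$ is bounded away from $0$ (indeed $1+\lam \ge 4^{1/t} + \eps'$), which is exactly the condition under which Theorem~\ref{clusterexpansion} applies and the truncation at $\|\Gamma\| = t$ incurs only the claimed error.

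Third, I would plug in. From $i_m^C = \lam^{-m} (1+\lam)^N \Xi_C \cdot (1+o(1))(2\pi\sigma^2)^{-1/2}$ and the elementary identity $\lam^{-m}(1+\lam)^N \binom{N}{m}^{-1} = \exp[-N\, D(\beta \,\|\, \tfrac{\lam}{1+\lam}) + O(\cdot)]$ (relative entropy), together with Stirling $\binom{N}{m} \sim 2^{N H(\beta)}/\sqrt{2\pi\beta(1-\beta)N}$, one checks that the $(2\pi\sigma^2)^{-1/2}$ factor, the $\binom{N}{m}^{-1}$ from Stirling, and the entropy terms combine so that $i_m^C / \binom{N}{m}$ equals $\exp$ of the cluster-expansion correction $\sum_{j=1}^{t-1}\binom{n}{k-1}S_j^r(n,\lam)(1+\lam)^{-j(k+1)} + (\text{odd-case }S^2\text{ term})$, re-expressed in terms of $\beta$ rather than $\lam$. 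Converting $(1+\lam)^{-1} = 1-\beta + O((1-\beta)^2)$ and substituting into the polynomials $S_j^r$ turns each summand into a rational function of $(n,\beta)$ times $(1-\beta)^{jk}$, which defines $R_j^r(n,\beta)$; the computability in time $e^{O(j\log j)}$ is inherited from that of $S_j^r$ in Theorem~\ref{clusterexpansion} plus the (polynomial-time) algebraic substitution. The values of $R_1^0, R_1^1$ quoted after Theorem~\ref{thm5} serve as a sanity check on this substitution.

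The main obstacle is step (ii): making the local central limit theorem for $|S| = |\bar\Lambda| + |U|$ rigorous and, crucially, \emph{uniform} in the relevant range of $m$ and in $n$, while carrying the cluster-expansion error term through. The delicate point is that $\Lambda$ and $U$ are not independent — $U$ lives in $L_k \setminus \partial(\bar\Lambda)$ — so one must condition on $\Lambda$, use the concentration of $|\partial(\bar\Lambda)|$ from Lemma~\ref{central.lem} to show this conditioning perturbs the mean and variance of $|U|$ only by $o(\sqrt N)$ and $o(N)$ respectively, and then verify that the error in the local limit theorem (from the Bernoulli-sum LLT, e.g.\ via characteristic functions or the Stone–Fourier method) is $o(1/\sqrt N)$ and hence negligible against the target asymptotic. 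This is precisely the argument carried out in \cite[Lemma~15 and surrounding material]{struct}, so I expect the proof to proceed by adapting that machinery to the present polymer model, with the isoperimetric input of Proposition~\ref{isoper} replacing the hypercube edge-isoperimetry used there.
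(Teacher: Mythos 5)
Your outline follows the paper's own route almost step for step: the identity $i_m^C\lam^m=(1+\lam)^N\Xi_C\,\mathbb{P}(|\mathbf A|=m)$, concentration of $|\bar\Lambda|$ and $|\partial(\bar\Lambda)|$ from the convergent cluster expansion, a conditional Bernoulli-sum local limit theorem (the paper's Lemmas~\ref{largedeviation}, \ref{clt}, \ref{lem18}, \ref{lem16}), the asymptotics for $\Xi_C$ from Theorem~\ref{clusterexpansion}, and a final change of variables from $\lam$ to $\beta$.

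There is, however, one genuine gap, and it is exactly the technicality the paper isolates in Lemma~\ref{lem17}. You propose to take $\lam$ \emph{implicitly}, "adjusted by a lower-order term to make $\mathbb{E}_{\mu^C_\lam}|S|$ exactly $m$". That choice is fine for the local limit theorem, but it leaves $\lam$ non-explicit, and then the last step of your argument — substituting into the polynomials $S_j^r(n,\lam)$ and $\lam^{-m}(1+\lam)^N$ to produce \emph{rational} functions $R_j^r(n,\beta)$ whose coefficients are computable in time $e^{O(j\log j)}$ — has nothing to substitute. What is needed (and what the paper proves) is an explicit ansatz $\lam=\frac{\beta}{1-\beta}+\sum_{j\le q}B_j^r(n,\beta)(1-\beta)^{j(k+1)}$ with computable rational $B_j^r$, chosen so that $|\mathbb{E}|\mathbf A|-m|=o(\sqrt{\lam N})$; matching the mean only approximately is both sufficient for Lemma~\ref{lem16} and necessary for effectivity. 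Relatedly, your conversion "$(1+\lam)^{-1}=1-\beta+O((1-\beta)^2)$" is too crude: every term in the exponent carries a factor $N$, and with $\beta\ge 1-4^{-1/t}+\eps$ the quantities $N(1-\beta)^{jk}$ for $j\le t-1$ are polynomially large, so a first-order replacement leaves errors of size $N\,n^{O(1)}(1-\beta)^{(j+1)k}$ that are not $o(1)$. One must expand $\lam$, $(1+\lam)^{-\ell}$, $\ln\frac{1+\lam}{1+\lam_0}$ and $\ln\frac{\lam}{\lam_0}$ as power series in $X=\lam(1-\beta)-\beta$ to order roughly $t$, using $\beta\ge 1-4^{-1/t}+\eps$ to show the truncation error is $o(N^{-1})$ after normalising, and only then collect coefficients of $(1-\beta)^{jk}$ to define $R_j^r$. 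With Lemma~\ref{lem17} (or an equivalent explicit inversion of the mean equation) supplied and the expansion carried to the required order, your argument closes.
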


At a high level, we follow the approach taken in \cite{struct} to enumerate independent sets of a fixed size in the hypercube. 

Throughout this section we assume that $\lam\geq C\log^2 n/ \sqrt{n}$ for $C$ sufficiently large. We also assume that $\lam=O(1)$; for concreteness we will take $\lam\leq 10$. We will later treat the easy case where $\lam$ is unbounded separately (in Sections \ref{closeto1} and \ref{closeto1'}), using a more direct argument.

 Let $\mathbf{A}$ denote an antichain sampled according to $\mu^C_\lam$. Note that by the definition of $\mu^C_\lam$ we have 
\begin{align}\label{eqPAm}
\mathbb{P}(|\mathbf{A}|=m)=\frac{ i_m^C \lam^m}{(1+\lam)^N \Xi_C}\, .
\end{align}
 Given our detailed knowledge of $\Xi_C$ obtained from the previous section, it therefore suffices to estimate $\mathbb{P}(|\mathbf{A}|=m)$ in order to estimate $i_m^C$. We are at liberty to choose $\lam$ for this task (provided $\lam\geq C\log^2 n/ \sqrt{n}$ and $\lam=O(1)$) and so we choose $\lam$ so that the expectation of $\mathbf{A}$ is close to $m$. Our strategy for estimating $\mathbb{P}(|\mathbf{A}|=m)$ is then as follows. Let $\mathbf\Lam$ be sampled according to $\nu$ (as in Step~\ref{nu1} in the definition of $\mu^C_\lam$). First we show that  $\mathbf{\bar\Lam}$ is, with very high probability, `small' in some appropriate sense (see Lemma~\ref{largedeviation}). We then show that $|\mathbf{\bar\Lam}|$ and $|\partial(\mathbf{\bar\Lam})|$ are concentrated around their (small) means (Lemma~\ref{clt}). We may therefore think of $|\mathbf{A}|$ as a random variable that is very close to $\text{Bin}\left( N, \frac{\lam}{1+\lam}\right),$ and by the Local Central Limit Theorem we may (asymptotically) write down the probability that $|\mathbf{A}|=m.$

We will encounter several technicalities along the way. For instance, in order to prove Theorem \ref{sparsesperner3} we may (and will) choose $\lam$ so that $\mathbb{E}[|\mathbf{A}|]=m,$ but this value of $\lam$ is not explicit (its existence simply follows by continuity of the expectation). For our proof of Theorem \ref{thm5}, we instead wish to find a value of $\lam$ that can be computed efficiently. We therefore have to relax our condition to just $\left| \mathbb{E}[|\mathbf{A}|] -m \right| = o(\sqrt{N})$; a suitable $\lam$ for which this holds is given by Lemma~\ref{lem17} below.

It will be useful to consider the modified polymer model $(\cP_C,\sim,\tilde w_C)$\index{$(\cP_C,\sim,\tilde w_C)$} where $\tilde w_C(A)\coloneqq w_C(A)e^{|A|/n^3}$. Let $\tilde f(A)\coloneqq |A|/(2n^3)$ and let $g$ be as in~\eqref{eq:fgdef}. We observe that the Koteck\'y-Preiss condition \eqref{KPbound1} for the polymer model $(\cP_C,\sim,\tilde w_C)$ with the functions $\tilde f,$ $g: \cP_C \rightarrow [0, \infty)$  becomes precisely the inequality \eqref{modifiedKP} that we have already established. Recall that we let $\cC=\cC(\cP_C,\sim)$ denote the set of clusters for this polymer model, and we write $g(\Gamma)= \sum_{{\gamma}\in \Gamma} g({\gamma})$.

\begin{lemma}\label{lem:tildetail}
Let $\alpha\in[0,1]$, $M>0$ and let $\cC'\subseteq \cC$. If $g(\Gamma)\geq M$ for all $\Gamma \in \cC'$ then  \[
\sum_{\Gamma\in \cC'}|\tilde w_C(\Gamma)|e^{\alpha g(\Gamma)} \leq 2^n e^{-(1-\alpha)M}\, .
\] 
\end{lemma}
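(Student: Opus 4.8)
The plan is to reduce this to the Koteck\'y--Preiss tail bound~\eqref{kpbounds} already obtained for the modified polymer model $(\cP_C,\sim,\tilde w_C)$. Recall that the verification in~\Cref{central.lem}, combined with the observation (made just before the lemma) that the Koteck\'y--Preiss condition for $(\cP_C,\sim,\tilde w_C)$ with functions $\tilde f, g$ is exactly the already-established inequality~\eqref{modifiedKP}, shows that Theorem~\ref{thm.KP} applies with $f$ replaced by $\tilde f$. Hence, summing~\eqref{kpbounds} over all root vertices $v\in L_{k-1}\cup L_{k+1}$ exactly as in the derivation of~\eqref{tailboundscentral}, we get $\sum_{\Gamma\in\cC}|\tilde w_C(\Gamma)|e^{g(\Gamma)}\leq 2^n/n^3 \le 2^n$.

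The key trick is the standard exponential-weight split $e^{\alpha g(\Gamma)} = e^{g(\Gamma)}\cdot e^{-(1-\alpha)g(\Gamma)}$. On the set $\cC'$ we have $g(\Gamma)\ge M$, so $e^{-(1-\alpha)g(\Gamma)}\le e^{-(1-\alpha)M}$ (here we use $\alpha\le 1$ so that $1-\alpha\ge 0$ and the inequality goes the right way; the case $\alpha=1$ is trivial). Therefore
\[
\sum_{\Gamma\in\cC'}|\tilde w_C(\Gamma)|e^{\alpha g(\Gamma)}
= \sum_{\Gamma\in\cC'}|\tilde w_C(\Gamma)|e^{g(\Gamma)}e^{-(1-\alpha)g(\Gamma)}
\le e^{-(1-\alpha)M}\sum_{\Gamma\in\cC'}|\tilde w_C(\Gamma)|e^{g(\Gamma)}
\le e^{-(1-\alpha)M}\sum_{\Gamma\in\cC}|\tilde w_C(\Gamma)|e^{g(\Gamma)}
\le 2^n e^{-(1-\alpha)M},
\]
where the penultimate step extends the sum from $\cC'$ to all of $\cC$ (all summands are nonnegative) and the last step is the global bound from the previous paragraph.

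I do not expect any genuine obstacle here: this is a short bookkeeping argument whose only substantive input is the already-proven Koteck\'y--Preiss convergence. The one point that deserves a line of care is confirming that~\eqref{kpbounds} is available for the \emph{modified} weights $\tilde w_C$ rather than $w_C$ — this is exactly why the paper records that the KP condition for $(\cP_C,\sim,\tilde w_C)$ with $(\tilde f,g)$ coincides with~\eqref{modifiedKP}, so no new verification is needed. (One could even absorb the factor $2^n/n^3$ into the slightly weaker $2^n$ stated in the lemma, which gives a small amount of slack.)
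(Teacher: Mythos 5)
Your proposal is correct and follows essentially the paper's own argument: both rest on the Kotecký–Preiss tail bound \eqref{kpbounds} for the modified model $(\cP_C,\sim,\tilde w_C)$ (available via \eqref{modifiedKP}), the split $e^{\alpha g(\Gamma)}\le e^{g(\Gamma)}e^{-(1-\alpha)M}$ on $\cC'$, and summing over the at most $2^n$ root vertices of $L_{k-1}\cup L_{k+1}$. The only difference is the order of operations (you sum over roots first and then split, the paper splits per root and then sums), which is immaterial.
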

\begin{proof}
By Theorem~\ref{thm.KP}
\[
e^{(1-\alpha)M}\sum_{\substack{\Gamma \in \cC' \\ \Gamma \nsim \{v\}}} |\tilde w_C(\Gamma)|e^{\alpha g(\Gamma)}\le \sum_{\substack{\Gamma \in \cC \\ \Gamma \nsim \{v\}}}| \tilde w_C(\Gamma)|e^{g(\Gamma)} \stackrel{\eqref{kpbounds}}{\le} \tilde f(\{v\})\leq 1\, .
\]
Summing this inequality over all $v\in L_{k-1}\cup L_{k+1}\subseteq B_n$ completes the proof.
\end{proof}

\begin{lemma}
    \label{largedeviation}
    If $ \mathbf{\Lam}$ is sampled according to $\nu$, then
 \[ | \mathbf{\bar\Lam} | \leq N/n^2,\]
    with probability at least $1-\exp\{\Omega(N/n^5)\}$.
   
\end{lemma}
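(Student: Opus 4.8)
The plan is to bound the probability that $|\mathbf{\bar\Lam}|$ is large by summing over all possible `large' configurations, and then to convert this into a bound on cluster weights using the cluster expansion tail bounds already established. Concretely, since $\mathbf{\bar\Lam}=\bigcup_{A\in\Lam}A$ decomposes into polymers that are pairwise compatible (hence whose $\partial$-shadows are disjoint), and since each polymer $A\in\cP_C$ satisfies $|\partial(A)|\ge(1+1/n)|A|\ge|A|$, the quantity $|\mathbf{\bar\Lam}|=\sum_{A\in\Lam}|A|$ is controlled by $\|\Lam\|$. The strategy is: first observe that $\nu(|\mathbf{\bar\Lam}|\ge N/n^2)\le \Xi_C^{-1}\sum_{\Lam:\,\|\Lam\|\ge N/n^2}\prod_{A\in\Lam}w_C(A)$, and then dominate this truncated polymer partition function by a sum over clusters (or, more directly, by a large-deviation estimate that follows from convergence of the cluster expansion of $\ln\Xi_C$).

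The key steps, in order, are as follows. \textbf{Step 1.} Write $p\coloneqq\nu(|\mathbf{\bar\Lam}|\ge N/n^2)$. Since $g(A)$ from \eqref{eq:fgdef} satisfies $g(A)\ge c|A|\ln(1+\lam)$ for some absolute $c>0$ once, say, $|A|>n^4$, and more carefully $g(A)=\Omega(|A|\ln n)$ in all regimes relevant here (using $\lam\ge C\log^2 n/\sqrt n$ so $\ln(1+\lam)=\Theta(\log\log n)$ or better — actually one gets $g(A)\gtrsim |A|/n^2\cdot\ln(1+\lam)$ worst case), one should track the weakest bound and check it suffices. \textbf{Step 2.} Use a standard tilting/union-bound argument: for any configuration $\Lam$ with $\|\Lam\|\ge N/n^2$ there is at least one cluster appearing in it, or more cleanly, apply the inequality $\prod_{A\in\Lam}w_C(A)\le \prod_{A\in\Lam}w_C(A)e^{g(A)}\cdot e^{-g(\Lam)}$ and note $g(\Lam)\ge$ (the minimum value of $g$ over polymer-sets of total size $\ge N/n^2$). \textbf{Step 3.} Invoke \Cref{central.lem}, specifically \eqref{tailboundscentral}, together with the standard fact that $\Xi_C=\exp(\sum_{\Gamma}w_C(\Gamma))$ is bounded below by a constant, to convert the cluster-weight tail bound into the stated probability bound $1-\exp\{-\Omega(N/n^5)\}$. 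The precise exponent $N/n^5$ should emerge as (number of central-layer vertices $\asymp N$) $\times$ (per-vertex rate $\asymp 1/n^5$ coming from the $f(\{v\})=1/n^3$ bound combined with the $g$-gain), truncated at total size $N/n^2$.

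The cleanest implementation is probably to mimic the textbook "polymer models have exponential tails for the size of the union" argument: for each subset $T\subseteq L_{k-1}\cup L_{k+1}$ with $|T|=s\ge N/n^2$, the $\nu$-probability that $\mathbf{\bar\Lam}=T$ is at most $\Xi_C^{-1}\prod_{A}w_C(A)$ summed over partitions of $T$ into polymers; bounding $w_C(A)\le (1+\lam)^{-|A|/n}\lam^{|A|}$ (using the expansion $|\partial(A)|\ge(1+1/n)|A|$) and summing over all partitions of $T$, one gets roughly $(\text{const})^{s}(1+\lam)^{-s/n}$ per fixed $T$; multiplying by $\binom{|L_{k-1}\cup L_{k+1}|}{s}\le (en^{O(1)}\cdot N/s)^{s}$... actually it's cleaner to route everything through the already-proven \eqref{tailboundscentral}, avoiding re-deriving entropy estimates. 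So I would: fix the minimal $g$-value $M$ attained on clusters/configurations of size $\ge N/n^2$, show $M=\Omega(N/n^5)$ (here using that $g(A)\ge |A|\ln(1+\lam)/n^2 \ge |A|\cdot\Omega(1/n^{2.5})$ worst case, times $|A|$ summing to $\ge N/n^2$, giving $\Omega(N/n^{4.5})$ — comfortably $\Omega(N/n^5)$), and then bound $p\le \Xi_C^{-1}\sum_{\Lam:\,g(\Lam)\ge M}\prod w_C(A)\le \Xi_C^{-1}e^{-M}\sum_{\Lam}\prod w_C(A)e^{g(A)}$, where the last sum over \emph{configurations} (not clusters) is bounded by $\exp(\sum_{\Gamma}|w_C(\Gamma)|e^{g(\Gamma)})\le \exp(2^n/n^3)$ via \eqref{tailboundscentral} and the cluster expansion — wait, that's too lossy.

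So the main obstacle — and the point requiring genuine care — is getting an honest bound on the truncated \emph{configuration} sum $\sum_{\Lam:\,\|\Lam\|\ge N/n^2}\prod_{A\in\Lam}w_C(A)$ rather than on the cluster sum; a crude "$\exp(2^n/n^3)$" bound on the full configuration sum is far too weak to beat $\exp(-\Omega(N/n^5))$ since $2^n/n^3\gg N/n^5$. The right move is the standard Kotecký--Preiss large-deviation argument: one shows directly that $\nu(v\in\mathbf{\bar\Lam})$ is tiny for each fixed $v$ (this is essentially $\sum_{A\ni v}w_C(A)\cdot(\text{stuff})\le$ the per-vertex bound $1/(2n^5)$ from \eqref{eqP0rtowC}), then that the events $\{v\in\mathbf{\bar\Lam}\}$ have enough negative-ish correlation / the expected size $\E|\mathbf{\bar\Lam}|=O(N/n^5)$, and finally a concentration step (e.g.\ a bounded-differences or martingale argument on the cluster expansion, or a direct second-moment/exponential-moment computation using that $\E e^{t|\mathbf{\bar\Lam}|}$ is controlled by the cluster expansion of a tilted partition function) to upgrade the mean bound to the stated tail $1-\exp\{-\Omega(N/n^5)\}$. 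I expect the authors' proof to compute $\E|\mathbf{\bar\Lam}|=O(N/n^5)$ from \eqref{eqP0rtowC} (linearity over $v\in L_{k-1}\cup L_{k+1}$, so $\E|\mathbf{\bar\Lam}|\le \sum_v \nu(v\in\mathbf{\bar\Lam})\le 2N\cdot\frac{1}{2n^5}=N/n^5$) and then apply an exponential Markov inequality: $\nu(|\mathbf{\bar\Lam}|\ge N/n^2)\le e^{-N/n^2}\,\E e^{|\mathbf{\bar\Lam}|}$, where $\E e^{|\mathbf{\bar\Lam}|}=\Xi(\cP_C,\sim,\tilde w_C)/\Xi_C\le \exp(O(N/n^5))$ because the tilted model $(\cP_C,\sim,\tilde w_C)$ also satisfies Kotecký--Preiss (as noted right before the lemma) — giving $\nu(|\mathbf{\bar\Lam}|\ge N/n^2)\le\exp(O(N/n^5)-N/n^2)=\exp(-\Omega(N/n^2))$, which is even stronger than claimed, so one phrases it with the safe exponent $\Omega(N/n^5)$. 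That is the approach I would take, with the bookkeeping of the $\ln(1+\lam)$ factors in $g$ being the only fiddly part.
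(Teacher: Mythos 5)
Your final route (exponential Markov against a tilted polymer partition function, controlled by the Kotecký--Preiss verification already carried out) is exactly the paper's argument, but your execution has a quantitative slip that invalidates the numbers you quote. You tilt by $e^{|\mathbf{\bar\Lam}|}$ and assert $\E e^{|\mathbf{\bar\Lam}|}=\tilde\Xi_C/\Xi_C$; this identity is false, because the modified weights are $\tilde w_C(A)=w_C(A)e^{|A|/n^3}$, so the ratio $\tilde\Xi_C/\Xi_C$ equals $\E e^{|\mathbf{\bar\Lam}|/n^3}$, not $\E e^{|\mathbf{\bar\Lam}|}$. Moreover one cannot simply upgrade to the full tilt: the Kotecký--Preiss verification only supplies the gain $f(A)=|A|/n^3$, and for large polymers the function $g$ of \eqref{eq:fgdef} is far too small to absorb a factor $e^{|A|}$ (e.g.\ for $|A|>n^4$ one has $g(A)=\tfrac{|A|}{n^2}\ln(1+\lam)=O(|A|\log^2 n/n^{2.5})\ll|A|$, and in the container regime the available decay $(1+\lam)^{-\gamma|A|/(2n)}=e^{-O(|A|\log^2 n/n^{2})}$ likewise cannot beat $e^{|A|}$). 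So your claimed bound $\E e^{|\mathbf{\bar\Lam}|}\le e^{O(N/n^5)}$ is unjustified, and with it the ``even stronger'' conclusion $e^{-\Omega(N/n^2)}$.

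The fix is precisely what the paper does: apply Markov with the tilt that matches $f$, namely
$\P(|\mathbf{\bar\Lam}|>N/n^2)\le e^{-N/n^5}\,\E e^{|\mathbf{\bar\Lam}|/n^3}
= e^{-N/n^5}\,\tilde\Xi_C/\Xi_C\le e^{-N/n^5}\,\tilde\Xi_C$,
and then bound $\ln\tilde\Xi_C=\sum_\Gamma \tilde w_C(\Gamma)\le 2^n(1+\lam)^{-n/3}=o(N/n^5)$ via \Cref{lem:tildetail} with $\alpha=0$, using that $g(\Gamma)\ge \tfrac n3\ln(1+\lam)$ for every cluster. This yields exactly the stated $e^{-\Omega(N/n^5)}$, and no more. (Your intermediate remarks --- the rejection of the crude configuration-sum bound $e^{2^n/n^3}$, and the per-vertex estimate $\E|\mathbf{\bar\Lam}|=O(N/n^5)$ from \eqref{eqP0rtowC} --- are sensible but are not needed once the correct tilt is used; the paper's proof does not compute $\E|\mathbf{\bar\Lam}|$ at all.)
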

\begin{proof}
    By Markov's inequality, we have $ \mathbb{P}\left( | \mathbf{\bar\Lam} | >N/n^2\right) \leq e^{-N/n^5} \mathbb{E}\left[ e^{|\mathbf{\bar\Lam}|/n^3} \right]$.

    Let $\tilde \Xi_C= \Xi(\cP_C,\sim,\tilde w_C)$ be the polymer model partition function, and note that \[ \mathbb{E}\left[e^{|\mathbf{\Lam}| /n^3}\right]= \tilde\Xi_C/ \Xi_C \leq  \tilde\Xi_C\, .  \]

Observing that $g(\Gamma)\geq \frac{n}{3}\ln(1+\lam)$ for all $\Gamma\in\cC$, {we have}, by \Cref{lem:tildetail} (with $\alpha=0$),
    \[ \ln \tilde\Xi_C = \sum_{\Gamma\in\cC} \tilde w_C(\Gamma) \leq \frac{2^n}{ (1+\lam)^{n/3}}=o\left( \frac{N}{n^5}\right),  \]
    which yields
    \[\mathbb{P}\left( | \mathbf{\bar\Lam} | >N/n^2\right) \leq e^{-N/n^5} \tilde \Xi_C=e^{-\Omega(N/n^5)}.\]
\end{proof}

For a cluster $\Gamma$, recall we write $\|\Gamma\|$ for $\sum_{\gamma\in\Gamma} |\gamma|$. Let us also write $\|\partial(\Gamma)\|$ for $\sum_{\gamma\in\Gamma}|\partial(\gamma)|$, so that we have $w_C(\Gamma)=\phi(G_\Gamma)\lam^{\|\Gamma\|} (1+\lam)^{-\|\partial(\Gamma)\|}.$

\begin{lemma}
    \label{clbounds}
    For any $t, \ell\geq 1$ fixed, we have
    \beq{cmbound}\sum_{\substack{\Gamma\in \cC \\ \|\Gamma\|\geq t}} |w_C(\Gamma)|\|\Gamma\|^{\ell} =O\left( \frac{2^n n^{100t+ \ell}}{(1+\lam)^{\frac{n}{2}t}}\right).\enq
\end{lemma}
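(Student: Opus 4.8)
The plan is to derive \eqref{cmbound} from the Koteck\'y--Preiss tail bound \eqref{tailboundscentral} by absorbing the polynomial factor $\|\Gamma\|^\ell$ into an exponential via the function $g$. The key observation is that $g(\Gamma)$ grows at least linearly in $\|\Gamma\|$ once $\|\Gamma\|$ is moderately large, so a factor $e^{\epsilon g(\Gamma)}$ for any fixed $\epsilon>0$ dominates any fixed polynomial in $\|\Gamma\|$; meanwhile, removing such a factor from \eqref{tailboundscentral} costs at most a constant power loss in $(1+\lam)$ in the tail. More precisely, for clusters with $\|\Gamma\|\geq t$ (and $t$ fixed, so in particular $\|\Gamma\|\leq n/10$ eventually dominates the relevant range — but we do not even need this), the definition \eqref{eq:fgdef} gives $g(\Gamma)\geq \left(\tfrac{n-2}{2}\|\Gamma\| - \|\Gamma\|^2\right)\ln(1+\lam) - 100\|\Gamma\|\ln n$ when all polymers in $\Gamma$ are small, and in all cases $g(\Gamma)=\Omega(n\|\Gamma\|\ln(1+\lam))$ is a safe lower bound since each of the three branches of $g$ is at least a constant times $n|A|\ln(1+\lam)$ for $|A|$ in the corresponding range (for the first branch this uses $|A|\leq n/10$). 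Hence $\|\Gamma\|^\ell \leq e^{\ell \ln \|\Gamma\|}$ and, using $\ln x \leq x$, we get $\|\Gamma\|^\ell \leq e^{\ell \|\Gamma\|}$, which is at most $e^{\epsilon g(\Gamma)}$ provided $\epsilon \cdot \Omega(n \ln(1+\lam)) \geq \ell$; since $\lam\geq C\log^2 n/\sqrt n$ we have $n\ln(1+\lam)\to\infty$, so any fixed $\epsilon$ (say $\epsilon = 1/2$) works for $n$ large.

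Concretely, I would proceed as follows. First, fix $t,\ell\geq 1$ and restrict attention to clusters with $\|\Gamma\|\geq t$. Second, establish the pointwise inequality $\|\Gamma\|^\ell \leq e^{g(\Gamma)/2}$ for all such clusters and $n$ sufficiently large, by the argument in the previous paragraph (splitting according to whether the polymers of $\Gamma$ are small, and using that in every regime $g(\Gamma)\geq c\, n\|\Gamma\|\ln(1+\lam) \geq c\,n t \ln(1+\lam)$ for an absolute constant $c>0$, which tends to infinity). Third, apply \Cref{lem:tildetail} with $\alpha=1/2$, $M = \left(\tfrac{n-2}{2}t - t^2\right)\ln(1+\lam) - 100 t\ln n$ and $\cC'=\{\Gamma : \|\Gamma\|\geq t\}$ — noting $g(\Gamma)\geq M$ on $\cC'$ by \eqref{eq:fgdef} exactly as in the derivation of \eqref{tailbds} — to obtain
\[
\sum_{\substack{\Gamma\in\cC \\ \|\Gamma\|\geq t}} |\tilde w_C(\Gamma)| e^{g(\Gamma)/2} \leq 2^n e^{-M/2} = O\!\left(\frac{2^n n^{50 t}}{(1+\lam)^{\frac{n-2}{4}t - t^2/2}}\right).
\]
Since $\tilde w_C(\Gamma) = w_C(\Gamma)e^{\|\Gamma\|/n^3}$ and $e^{\|\Gamma\|/n^3}\geq 1$, we have $|w_C(\Gamma)|\leq |\tilde w_C(\Gamma)|$, and combining with $\|\Gamma\|^\ell \leq e^{g(\Gamma)/2}$ yields
\[
\sum_{\substack{\Gamma\in\cC \\ \|\Gamma\|\geq t}} |w_C(\Gamma)|\,\|\Gamma\|^\ell \leq \sum_{\substack{\Gamma\in\cC \\ \|\Gamma\|\geq t}} |\tilde w_C(\Gamma)| e^{g(\Gamma)/2} = O\!\left(\frac{2^n n^{50t}}{(1+\lam)^{\frac{n}{4}t - t^2}}\right).
\]
Fourth, observe that this bound is (up to adjusting constants in the exponents of $n$, which are absorbed into the $n^{100t}$ and of $(1+\lam)$, which is absorbed into $(1+\lam)^{\frac{n}{2}t}$ — here using $t$ fixed so $t^2 = o(nt)$) of the claimed form $O\!\left(\tfrac{2^n n^{100t+\ell}}{(1+\lam)^{\frac n2 t}}\right)$. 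In fact, since $t,\ell$ are fixed, the $n^{50t}$ and the $(1+\lam)^{-t^2}$ are both harmless: the former is swallowed by $n^{100t+\ell}$, and for the latter one replaces $\frac n4 t$ by, say, $\frac n8 t$ for $n$ large to absorb the $+t^2$, then notes $\frac n8 t \leq \frac n2 t$ makes the bound only weaker, so the stated exponent $\frac n2 t$ is fine.

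The only mildly delicate point — and the "main obstacle," such as it is — is justifying the pointwise bound $\|\Gamma\|^\ell\leq e^{g(\Gamma)/2}$ uniformly over all clusters with $\|\Gamma\|\geq t$, handling in particular clusters composed of large polymers (the $|A|>n^4$ branch of $g$, where $g(A)=\frac{|A|}{n^2}\ln(1+\lam)$ is only $\Theta(|A|\ln(1+\lam)/n^2)$, not $\Theta(n|A|\ln(1+\lam))$). In that branch one still has $g(A) = \frac{|A|}{n^2}\ln(1+\lam)\geq \frac{n^4}{n^2}\ln(1+\lam)=n^2\ln(1+\lam)$ since $|A|>n^4$, and more generally $g(\Gamma)\geq \frac{\|\Gamma\|}{n^2}\ln(1+\lam)$ whenever all polymers are large, so $g(\Gamma)/(2\ell)\geq \frac{\|\Gamma\|}{2\ell n^2}\ln(1+\lam)$; this exceeds $\ln\|\Gamma\|\geq \|\Gamma\|/n^4$ type quantities only once $\|\Gamma\|$ is large, but for large $\|\Gamma\|$ we instead just use $\|\Gamma\|^\ell \leq e^{\ell\ln\|\Gamma\|}$ and $\ln\|\Gamma\| \leq \|\Gamma\|^{1/2}$ (say), and compare against $g(\Gamma)\geq \frac{\|\Gamma\|}{n^2}\ln(1+\lam)\geq \frac{\|\Gamma\|^{1/2}\cdot n^2 \ln(1+\lam)}{n^2} = \|\Gamma\|^{1/2}\ln(1+\lam)$ — wait, this needs $\|\Gamma\|\geq n^4$, which does hold in the all-large case. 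For a cluster mixing small and large polymers one simply sums the per-polymer lower bounds. Thus in every case $g(\Gamma) \geq \Omega(\ln(1+\lam))\cdot \max(\|\Gamma\|, n^2) \gg \ell\|\Gamma\|^{1/2} \geq \ell\ln\|\Gamma\|$ for $n$ large, giving the desired domination. Once this elementary case analysis is written out carefully, the rest is a direct application of \Cref{lem:tildetail} exactly as in the proof of \eqref{tailbds}.
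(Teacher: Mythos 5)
There is a genuine gap, and it sits exactly at the step you flagged as "fourth". By applying \Cref{lem:tildetail} with $\alpha=1/2$ to \emph{all} clusters with $\|\Gamma\|\geq t$, you only obtain
\[
\sum_{\substack{\Gamma\in\cC \\ \|\Gamma\|\geq t}} |w_C(\Gamma)|\,\|\Gamma\|^\ell \;=\; O\!\left( \frac{2^n n^{50t}}{(1+\lam)^{\frac{n-2}{4}t - t^2/2}}\right),
\]
whose $(1+\lam)$-exponent is roughly $\frac{n}{4}t$, i.e.\ \emph{half} of the claimed $\frac{n}{2}t$. Your closing argument ("replace $\frac n4 t$ by $\frac n8 t$ \dots then note $\frac n8 t\leq \frac n2 t$ makes the bound only weaker, so the stated exponent is fine") runs the comparison backwards: a bound with denominator $(1+\lam)^{\frac n8 t}$ is \emph{larger} than one with denominator $(1+\lam)^{\frac n2 t}$, so it does not imply the statement of the lemma. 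Nor can the deficit be absorbed elsewhere: since $\lam\geq C\log^2 n/\sqrt n$, the missing factor $(1+\lam)^{\frac n4 t}$ is $e^{\Omega(\sqrt n\log^2 n)}$, far beyond the polynomial slack $n^{100t+\ell}$. The loss is not cosmetic: the downstream uses of \Cref{clbounds} in \Cref{clt} (the case $\lam\geq 3-\eps$ of the expectation bound, and the variance lower bound, where the error term $O(2^n n^{204}(1+\lam)^{-n})$ must be beaten by the main term $\Omega(2^n(1+\lam)^{-n/2})$) genuinely need the full exponent $\frac n2 t$.

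The repair is the paper's argument: split by cluster size and only spend $e^{g(\Gamma)/2}$ where you can afford it. For the dominant range $t\leq\|\Gamma\|\leq n/10$, do not trade $\|\Gamma\|^\ell$ against $g(\Gamma)$ at all; simply bound $\|\Gamma\|^\ell\leq n^\ell$ and apply \Cref{lem:tildetail} with $\alpha=0$ and $M=\left(\frac{n-2}{2}t-t^2\right)\ln(1+\lam)-100t\ln n$, which preserves the full exponent; the discrepancy between $\frac{n-2}{2}t-t^2$ and $\frac n2 t$ is then only $(1+\lam)^{t+t^2}=O(1)$ because of the standing assumption $\lam\leq 10$ and $t$ fixed. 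Your $\alpha=1/2$ device is correct and is exactly what the paper does, but only for $\|\Gamma\|>n/10$, where $g(\Gamma)\geq \frac{n^2}{1000}\ln(1+\lam)$ (resp.\ $g(\Gamma)\geq n^2\ln(1+\lam)$ for $\|\Gamma\|>n^4$) makes $\|\Gamma\|^\ell\leq e^{g(\Gamma)/2}$ valid and the resulting bound $2^n(1+\lam)^{-\Omega(n^2)}$ negligible. (A minor additional slip, which you later self-correct: the claim that every branch of $g$ satisfies $g(A)=\Omega(n|A|\ln(1+\lam))$ fails for $|A|>n^4$, where $g(A)=\frac{|A|}{n^2}\ln(1+\lam)$.)
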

\begin{proof}
First, for $t\leq \| \Gamma \| \leq n/10,$ note that $g(\Gamma)\geq \ln(1+\lam) \left(\frac{n-2}{2}t-t^2\right) -100t\ln n$. By Lemma~\ref{lem:tildetail} (with $\alpha=0$), noting that $|w_C(\Gamma)|\leq|\tilde w_C(\Gamma)|$ for all $\Gamma\in\cC$, we have  
    \[ 
    \sum_{\substack{\Gamma\in\cC: \\ t\leq \| \Gamma \| \leq n/10}} |w_C(\Gamma)| \|\Gamma\|^{\ell} \leq \sum_{\substack{\Gamma\in\cC: \\ t\leq \| \Gamma \| \leq n/10}} |w_C(\Gamma)| n^{\ell} \leq \frac{2^n n^{100t+\ell}}{(1+\lam)^{\frac{n-2}{2}t-t^2}}=O\left( \frac{2^n n^{100t+ \ell}}{(1+\lam)^{\frac{n}{2}t}}\right)
    \] 
    (recall that $\lam\leq 10$).
    For $n/10<\| \Gamma \| \leq n^4,$ we have $g(\Gamma)\geq \frac{n\|\Gamma\|}{100} \ln(1+\lam)\geq \frac{n^2}{1000}\ln(1+\lam),$ and so $\|\Gamma\|^{\ell}\leq n^{4\ell}\leq e^{g(\Gamma)/2}$. Thus by Lemma~\ref{lem:tildetail} (with $\alpha=1/2$),
    \[ 
    \sum_{\substack{\Gamma\in\cC \\ \frac{n}{10}< \| \Gamma\| \leq n^4}} |w_C(\Gamma)| \|\Gamma \|^{\ell} \le \sum_{\substack{\Gamma\in\cC \\ \frac{n}{10}< \| \Gamma\| \leq n^4}} |w_C(\Gamma)| e^{g(\Gamma)/2} \leq \frac{2^n}{(1+\lam)^{\frac{n^2}{2000}}}.
    \]  
    
    Finally, for $\| \Gamma \| >n^4,$ observe that $g(\Gamma)\geq \frac{\|\Gamma\|}{n^2} \ln(1+\lam)\geq n^2\ln(1+\lam)$, and so $\|\Gamma\|^{\ell}\leq e^{g(\Gamma)/2}$. Therefore, as above, 
    \[\sum_{\substack{\Gamma\in\cC \\ \| \Gamma\| >n^4}} |w_C(\Gamma)| \|\Gamma \|^{\ell} \leq \frac{2^n}{(1+\lam)^{\frac{n^2}{2}}}.\] Summing the above three bounds we get the result.
\end{proof}

We now introduce the \textit{cumulants} of a random variable $X$. First, the \textit{cumulant generating function} of $X$ is $h_t(X)\coloneqq \ln \mathbb{E} [e^{tX}].$ The $\ell^{\text{th}}$ cumulant of $X$ is then 
\[\kappa_{\ell}(X)\coloneqq \left. \frac{\partial^{\ell} h_t(X)}{\partial t^{\ell}}\right|_{t=0}. \]

The first two cumulants are $\kappa_1(X)=\mathbb{E}[X]$ and $\kappa_2(X)=\text{var}(X)$. Note that for constants $a, b>0$ we have
\beq{cumlin}\kappa_\ell ((X-a)/b)=\kappa_\ell(X)/b^\ell. \enq

As before, let $ \mathbf{\Lam}$ be sampled according to $\nu$.
The cumulants of $|N(\mathbf\Lam)|$ can be expressed in terms of the cluster expansion as follows. 
For $t\geq 0$, consider the polymer model $(\cP_C,\sim,w_t)$\index{$(\cP_C,\sim,w_t)$, $\Xi_t$} with modified weights $w_t(A)=w(A)e^{t|\partial(A)|}$ and let $\Xi_t=\Xi(\cP_C,\sim,w_t)$ denote the corresponding partition function. We then have 
\begin{align*}
h_t(|\partial(\mathbf{\bar\Lam})|)= \ln \Xi_t - \ln \Xi_0\, .
\end{align*}
Applying the cluster expansion to  $\ln \Xi_t$ (for $t$ sufficiently small), taking derivatives, and evaluating at $t=0$ shows that
\begin{align}\label{cumeq}
\kappa_\ell(|\partial(\mathbf{\bar\Lam})|)= \sum_{\Gamma\in\cC}w_C(\Gamma)\|\partial( \Gamma)\|^{\ell}\, .
\end{align}
Similarly
\beq{cumeq2} \kappa_{\ell}\left( | \mathbf{\bar\Lam}| \right) = \sum_{\Gamma\in\cC}w_C(\Gamma)\|\Gamma\|^{\ell}. \enq

We use the following standard fact (see e.g.~\cite{janson1988normal}).

\begin{fact}\label{cumulantfact}
    Let $(X_n)_{n\geq 1}$ be a sequence of real-valued random variables such that $\kappa_1(X_n)\rightarrow 0,$ $\kappa_2(X_n)\rightarrow 1$ and $\kappa_\ell(X_n)\rightarrow 0$ for all $\ell\geq 3.$ Then the sequence $(X_n)_{n\geq 1}$ converges in distribution to a standard Gaussian.
\end{fact}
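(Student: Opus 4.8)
The plan is to deduce convergence in distribution from the cumulant hypotheses by passing through moments and invoking the classical method of moments (the Fr\'echet--Shohat theorem). The first step is the universal moment--cumulant identity: for each $k\in\mathbb{N}$ there is a fixed polynomial $B_k$ (the complete Bell polynomial), with coefficients independent of the underlying distribution, such that every random variable $X$ with a finite $k$-th moment satisfies $\mathbb{E}[X^k]=B_k(\kappa_1(X),\dots,\kappa_k(X))$. Since the cumulants $\kappa_\ell(X_n)$ are assumed to be well-defined, all moments of $X_n$ exist; and since $\kappa_1(X_n)\to 0$, $\kappa_2(X_n)\to 1$ and $\kappa_\ell(X_n)\to 0$ for each fixed $\ell\ge 3$, continuity of $B_k$ gives, for every fixed $k$,
\[
\mathbb{E}[X_n^k]=B_k\bigl(\kappa_1(X_n),\dots,\kappa_k(X_n)\bigr)\longrightarrow B_k(0,1,0,\dots,0)=\mathbb{E}[Z^k],
\]
where $Z$ denotes a standard Gaussian, whose cumulants are exactly $\kappa_1=0$, $\kappa_2=1$ and $\kappa_\ell=0$ for $\ell\ge 3$.

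The second step is to record that the Gaussian is determined by its moments and to apply the method of moments. The even moments of $Z$ are $\mathbb{E}[Z^{2k}]=(2k-1)!!$, so $\sum_{k\ge1}\mathbb{E}[Z^{2k}]^{-1/(2k)}=\infty$; thus Carleman's condition holds and $Z$ is the unique probability measure on $\mathbb{R}$ with this moment sequence. By the Fr\'echet--Shohat theorem, since $\mathbb{E}[X_n^k]\to\mathbb{E}[Z^k]$ for every $k$ and the limiting moment sequence is determinate, $X_n$ converges to $Z$ in distribution, which is precisely the claim.

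There is no real obstacle here, as this is a classical fact: both ingredients --- the distribution-free moment--cumulant polynomial identity and the Carleman-type moment-determinacy of the Gaussian --- are entirely standard, and the method of moments needs only the convergence of each moment for fixed order, which the hypotheses supply (no uniformity in $\ell$ is required). One could instead try to argue directly via the cumulant generating series and characteristic functions, but that would require a priori growth control on the $\kappa_\ell(X_n)$ to guarantee the series converges; the moment route avoids this entirely, which is why I would take it.
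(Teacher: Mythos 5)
The paper does not actually prove this statement: it is quoted as a known result (labelled a Fact) with a pointer to Janson's 1988 work on normal convergence via cumulants, so there is no in-paper argument to compare against. Your proof is correct and is essentially the canonical way to establish the qualitative statement. The moment--cumulant identity $\mathbb{E}[X^k]=B_k(\kappa_1,\dots,\kappa_k)$ with $B_k$ a fixed polynomial is valid whenever the cumulants up to order $k$ exist (and their existence, under the paper's definition via the cumulant generating function, does entail finiteness of all moments), so continuity of $B_k$ converts the hypotheses $\kappa_1\to 0$, $\kappa_2\to 1$, $\kappa_\ell\to 0$ ($\ell\ge 3$) into $\mathbb{E}[X_n^k]\to\mathbb{E}[Z^k]$ for each fixed $k$, since the standard Gaussian has exactly the cumulant sequence $(0,1,0,0,\dots)$. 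Your verification of Carleman's condition from $\mathbb{E}[Z^{2k}]=(2k-1)!!$ is right, and Fr\'echet--Shohat then gives convergence in distribution; you are also correct that no uniformity in $\ell$ is needed, since each fixed moment involves only finitely many cumulants. The only remark worth adding is that the cited reference is really aimed at quantitative versions (bounds on the distance to normality in terms of cumulants), whereas your moment-method route gives the purely qualitative limit theorem, which is all the paper uses; as you note, a direct characteristic-function argument would need growth control on the cumulants that the hypotheses do not supply, so the moment route is the right choice.
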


\begin{lemma}
\label{clt}
If $ \mathbf{\Lam}$ is sampled according to $\nu$, then
    \[ \mathbb{P}\left(|\partial(\mathbf{\bar\Lam})|=\mathbb{E}(|\partial(\mathbf{\bar\Lam})|) + o(\sqrt{\lam N})\right)=1-o(1)\]
    and
    \[ \mathbb{P}\left(|\mathbf{\bar\Lam}|=\mathbb{E}(|\mathbf{\bar\Lam}|) + o(\sqrt{\lam N})\right)=1-o(1).\]
\end{lemma}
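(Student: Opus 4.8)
\textbf{Proof plan for Lemma~\ref{clt}.} The plan is to deduce both concentration statements from the cumulant expansions~\eqref{cumeq} and~\eqref{cumeq2} together with the tail bound in Lemma~\ref{clbounds}. First I would estimate the variances $\kappa_2(|\partial(\mathbf{\bar\Lam})|)$ and $\kappa_2(|\mathbf{\bar\Lam}|)$. Using~\eqref{cumeq2}, $\kappa_2(|\mathbf{\bar\Lam}|)=\sum_{\Gamma\in\cC}w_C(\Gamma)\|\Gamma\|^2$; the singleton polymers $A=\{v\}$ with $v\in L_{k-1}$ and, when $n$ is odd, $v\in L_{k+1}$ contribute the main term. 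Each such polymer has $w_C(\{v\})=\lam(1+\lam)^{-|\partial(\{v\})|}$ with $|\partial(\{v\})|$ equal to $k$ (for $v\in L_{k-1}$) or $n-k$ (for $v\in L_{k+1}$), and $\|\Gamma\|=1$, so together they contribute $\Theta\big(\lam N (1+\lam)^{-k}\big)$ or similar; crucially this is $O(\lam N)$, and by Lemma~\ref{clbounds} (with $t=2$, $\ell=2$) the contribution of all clusters with $\|\Gamma\|\geq 2$ is $O(2^n n^{200+2}(1+\lam)^{-n})=o(\lam N)$ since $\lam\geq C\log^2 n/\sqrt n$ and $2^n/N = \tilde\Theta(\sqrt n)$. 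The same reasoning, using that $\|\partial(\{v\})\|=k$ or $n-k$ is $\Theta(n)$, gives $\kappa_2(|\partial(\mathbf{\bar\Lam})|)=O(n^2\lam N)$. In both cases the variance is $o(\lam N)$ up to the relevant polynomial-in-$n$ factor — in fact I would record the cleaner bounds $\mathrm{Var}(|\mathbf{\bar\Lam}|)=o(\lam N)$ and $\mathrm{Var}(|\partial(\mathbf{\bar\Lam})|)=o(n^2\lam N)$, which after noting $|\partial(\mathbf{\bar\Lam})|\le n|\mathbf{\bar\Lam}|$ might be sharpened; but for the statement as written it suffices that both variances are $o(\lam N)$, which holds because each polymer contributes $O(1)$ or $O(n^2)$ to $\|\Gamma\|^2$ resp. $\|\partial(\Gamma)\|^2$ per unit weight and $\sum_\Gamma |w_C(\Gamma)| = O(2^n(1+\lam)^{-n/3})=o(N/n^{10})$.

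Wait — I should be careful: the second statement claims $o(\sqrt{\lam N})$ deviation for $|\partial(\mathbf{\bar\Lam})|$, and $|\partial|$ can be up to $n$ times $|\mathbf{\bar\Lam}|$, so I cannot simply bound $\mathrm{Var}(|\partial(\mathbf{\bar\Lam})|)$ by $\mathrm{Var}(|\mathbf{\bar\Lam}|)$. Instead I would observe that by Lemma~\ref{largedeviation}, whp $|\mathbf{\bar\Lam}|\le N/n^2$, and on this event the bulk of $\mathbf{\bar\Lam}$ consists of singleton polymers whose shadows have size $O(n)$, so $|\partial(\mathbf{\bar\Lam})|=O(n\cdot N/n^2)=O(N/n)$; a direct Chebyshev argument with the variance bound $\mathrm{Var}(|\partial(\mathbf{\bar\Lam})|)=O(n^2)\cdot\sum_\Gamma|w_C(\Gamma)|\|\Gamma\|^2 = o(\lam N)$ (using Lemma~\ref{clbounds} for the tail and the explicit singleton computation for the main term, where the key point is that a singleton contributes $k^2\le n^2$ but there are $\Theta(N)$ of them each with weight $O(\lam (1+\lam)^{-k})$, and $n^2\lam N(1+\lam)^{-k} = o(\lam N)$ since $(1+\lam)^{-k}\le (1+C\log^2n/\sqrt n)^{-n/2}=n^{-\omega(1)}$) then gives the claim.

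Concretely, the steps in order: (i) expand $\kappa_1$ and $\kappa_2$ of each of $|\mathbf{\bar\Lam}|$ and $|\partial(\mathbf{\bar\Lam})|$ via~\eqref{cumeq}--\eqref{cumeq2}; (ii) isolate the singleton contribution and bound it explicitly — the means are $\mathbb{E}|\mathbf{\bar\Lam}|=\Theta(\lam N(1+\lam)^{-k})+\ldots$ and similarly for $|\partial|$ with an extra factor $\Theta(n)$, while the variances pick up $\Theta(1)$ resp. $\Theta(n^2)$ per singleton; (iii) invoke Lemma~\ref{clbounds} with $t=2$ to show all $\|\Gamma\|\ge 2$ clusters contribute negligibly (here the exponent $(1+\lam)^{-n/2}\le n^{-\omega(1)}$ beats every polynomial, so $2^n$ is killed and the whole $\ge 2$ part is $o(1)$, a fortiori $o(\lam N)$); (iv) conclude $\mathrm{Var}(|\mathbf{\bar\Lam}|)=o(\lam N)$ and $\mathrm{Var}(|\partial(\mathbf{\bar\Lam})|)=o(\lam N)$; (v) apply Chebyshev's inequality to each. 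The main obstacle is bookkeeping in step (ii)–(iv): one must verify that all the weights of the singleton polymers, summed, genuinely give a variance of the claimed order $o(\lam N)$ — this requires knowing $|\partial(\{v\})|\ge k-O(1)\ge (n-o(n))/2$ so that $(1+\lam)^{-|\partial(\{v\})|}$ is super-polynomially small, which combined with the polynomial factors $n^2$ and the $\Theta(N)$ count of singletons still leaves something $o(\lam N)$; since $\lam N$ itself is exponentially large (of order $N$ up to the factor $\lam\ge n^{-1/2+o(1)}$) this is comfortable, but it must be spelled out. A secondary subtlety is that for $n$ odd there are two types of singleton (in $L_{k-1}$ and $L_{k+1}$) with different shadow sizes $k$ and $n-k$, but both are $\Theta(n)$ and $(1+\lam)^{-\min(k,n-k)}$ is still super-polynomially small, so the argument is unaffected. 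I would also remark that Fact~\ref{cumulantfact} is not needed here — only the first two cumulants enter — but the higher cumulant bounds from Lemma~\ref{clbounds} will be used in the subsequent Local CLT argument.
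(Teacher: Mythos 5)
Your proposal is correct, but it takes a genuinely different (and lighter) route than the paper. The paper splits into two cases: for $\lam\geq 3-\eps$ it observes that $\mathbb{E}|\partial(\mathbf{\bar\Lam})|=O\bigl(2^n n^{102}(1+\lam)^{-n/2}\bigr)$ is itself $o(\sqrt{\lam N})$ and applies Markov, while for $\lam\leq 3-\eps$ it invokes the cumulant criterion (Fact~\ref{cumulantfact}), proving a variance \emph{lower} bound $\Omega\bigl(2^n(1+\lam)^{-n/2}\bigr)$ from the singleton clusters and showing all normalized cumulants of order $\geq 3$ vanish, so that the normalized variable is asymptotically Gaussian; concentration then follows since $\mathrm{var}=o(\lam N)$. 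You instead run a single second-moment argument: expand $\kappa_2$ via \eqref{cumeq}--\eqref{cumeq2}, bound the singleton contribution explicitly (using that each singleton shadow has size $n/2+O(1)$, so $(1+\lam)^{-|\partial(\{v\})|}$ is superpolynomially small) and the $\|\Gamma\|\geq 2$ tail via Lemma~\ref{clbounds} with $t=\ell=2$, conclude $\mathrm{var}=o(\lam N)$ for both quantities, and finish with Chebyshev. This is valid, needs no case analysis in $\lam$, and correctly handles the worry that $|\partial(\mathbf{\bar\Lam})|\leq n|\mathbf{\bar\Lam}|$ by bounding its variance directly; what it gives up relative to the paper is the asymptotic normality itself, which is stronger than the lemma requires (and, as you note, is not what is used later -- the local CLT in Lemma~\ref{lem16} works conditionally on $\mathbf{\bar\Lam}$ via Lemma~\ref{lem18}). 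Two trivial slips to fix in a write-up: the singleton shadow sizes are $n-k+1$ (for $v\in L_{k-1}$) and $k+1$ (for $v\in L_{k+1}$), not $k$ and $n-k$; and the passing sentence claiming each polymer contributes ``$O(1)$ resp.\ $O(n^2)$ per unit weight'' is not literally true for clusters with $\|\Gamma\|\geq 2$ -- but your concrete steps (ii)--(iv) handle those clusters correctly through Lemma~\ref{clbounds}, so the argument stands.
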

\begin{proof}
    We present only the proof for $|\partial(\mathbf{\bar\Lam})|$; the concentration bound for $| \mathbf{\bar\Lam} |$ follows similarly. Note that
    \[ \mathbb{E}[|\partial(\mathbf{\bar\Lam})|]=\kappa_1(|\partial(\mathbf{\bar\Lam})|) \stackrel{\eqref{cumeq}}{=}\sum_{\Gamma\in\cC} w_C(\Gamma)\|\partial(\Gamma)\| \leq n\sum_{\Gamma\in\cC} |w_C(\Gamma)|\| \Gamma\| \stackrel{\eqref{cmbound}}{=} O\left( \frac{2^n n^{102}}{(1+\lam)^{\frac{n}{2}}} \right). \]

    We split our proof into two cases, depending on how large $\lam$ is. 
    
    More precisely, let $\eps>0$ be a sufficiently small constant.
    If $\lam\geq 3-\eps$, then $(1+\lam)^{\frac{n}{2}}\geq 2^{n} (1-\eps/4)^{\frac{n}{2}} = 2^{n} e^{-\eps' n},$ where $\eps'>0$ can be made arbitrarily small by choosing $\eps$ suitably small. Hence the right-hand side of the above bound is $o(\sqrt{\lam N}).$ The conclusion then follows from Markov's inequality.

Now assume that $\lam\leq 3-\eps$. In this case, we will use \Cref{cumulantfact}. For all $\ell\geq 2$, we have
    \beq{varbound} \kappa_{\ell}(|\partial(\mathbf{\bar\Lam})|)\stackrel{\eqref{cumeq}}{=}\sum_{\Gamma \in \cC}w_C(\Gamma)\|\partial(\Gamma)\|^\ell \stackrel{\eqref{cmbound}}{=}  O\left( \frac{2^n n^{100+2\ell}}{(1+\lam)^{\frac{n}{2}}} \right) \enq
    and
 \begin{align*} \text{var}(|\partial(\mathbf{\bar\Lam})|)=\kappa_2(|\partial(\mathbf{\bar\Lam})|)& \stackrel{\eqref{cumeq}}{\geq} \sum_{\substack{\Gamma\in\cC \\ \|\Gamma\|=1}} w_C(\Gamma)\|\partial(\Gamma)\|^2-
  n^2\sum_{\substack{\Gamma\in\cC \\ \|\Gamma\|\geq2}} |w_C(\Gamma)|\|\Gamma\|^2\\
& \stackrel{\eqref{cmbound}}{=}  2\binom{n}{n/2-1} \frac{\lam(n/2+1)^2}{(1+\lam)^{\frac{n}{2}+1}}- O\left( \frac{2^n n^{204}}{(1+\lam)^{n}}\right)\\
&\ =
\Omega\left(\frac{2^n}{(1+\lam)^{\frac{n}{2}}}\right).
\end{align*}
   
Setting $Z=(|\partial(\mathbf{\bar\Lam})|-\mathbb{E}|\partial(\mathbf{\bar\Lam})|)/ \sqrt{\text{var}(|\partial(\mathbf{\bar\Lam})|)},$ we have, for fixed $\ell\geq 3$,
    \[ \kappa_{\ell}(Z)\stackrel{\eqref{cumlin}}{=} \text{var}(|\partial(\mathbf{\bar\Lam})|)^{-\ell/2}\kappa_{\ell}(|\partial(\mathbf{\bar\Lam})|)= O\left( \frac{n^{100+2\ell} ((1+\lam)^{\frac{n}{2}})^{\frac{\ell}{2}-1}}{(2^{n})^{\frac{\ell}{2}-1}}  \right)=o(1)\, , \]
where for the final equality we used the assumption $\lam\leq 3-\eps$.
Hence {by Fact \ref{cumulantfact},} $Z$ converges in distribution to a unit Gaussian. Finally, note that setting $\ell=2$ in \eqref{varbound} we have $\text{var}(|\partial(\mathbf{\bar\Lambda})|)=o(\lam N).$ The conclusion follows.
\end{proof}

{Recall that $\mathbf{A}$ is an antichain sampled from the distribution $\mu^C_\lam$, and $|\mathbf{A}|=|\mathbf{\bar\Lam}|+\text{Bin}\left(N-|\partial(\mathbf{\bar\Lam})|, \frac{\lam}{1+\lam}\right).$} Taking expectations and using \eqref{cumeq} and \eqref{cumeq2}, we obtain the following lemma.

\begin{lemma}

    \label{expformula}
    \[ \mathbb{E}|\mathbf{A}|=\mathbb{E}| \mathbf{\bar\Lam}| +\frac{\lam}{1+\lam}\left(N-\mathbb{E}|\partial(\mathbf{\bar\Lam})|\right) =\frac{\lam}{1+\lam}N+\sum_{\Gamma\in \cC}  w_C(\Gamma)\left( \|\Gamma\| - \frac{\lam}{1+\lam}\|\partial(\Gamma)\|\right).\]
\end{lemma}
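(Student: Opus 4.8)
The plan is to treat this as a short bookkeeping consequence of the definition of $\mu^C_\lam$ together with the first-cumulant formulas \eqref{cumeq} and \eqref{cumeq2}. For the first equality I would condition on the polymer configuration $\mathbf{\Lam}$ drawn in the first step of the definition of $\mu^C_\lam$, equivalently on $\mathbf{\bar\Lam}=\bigcup_{A\in\mathbf{\Lam}}A$. Since polymers in $\cP_C$ are subsets of $L_{k-1}$ or $L_{k+1}$, the set $\mathbf{\bar\Lam}$ is disjoint from $L_k$, so $\mathbf{A}=\mathbf{\bar\Lam}\cup U$ is a disjoint union, where $U$ is a $\tfrac{\lam}{1+\lam}$-random subset of $L_k\setminus\partial(\mathbf{\bar\Lam})$. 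As $|L_k|=\binom{n}{\lceil n/2\rceil}=\binom{n}{\lfloor n/2\rfloor}=N$, conditionally on $\mathbf{\bar\Lam}$ we have $|U|\sim\text{Bin}\!\left(N-|\partial(\mathbf{\bar\Lam})|,\tfrac{\lam}{1+\lam}\right)$, hence $\mathbb{E}\!\left[|\mathbf{A}|\,\middle|\,\mathbf{\bar\Lam}\right]=|\mathbf{\bar\Lam}|+\tfrac{\lam}{1+\lam}\bigl(N-|\partial(\mathbf{\bar\Lam})|\bigr)$; taking a further expectation (the tower rule) gives the first displayed identity.

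For the second equality I would expand the bracket and substitute the $\ell=1$ instances of the cumulant identities, namely $\mathbb{E}|\mathbf{\bar\Lam}|=\kappa_1(|\mathbf{\bar\Lam}|)\stackrel{\eqref{cumeq2}}{=}\sum_{\Gamma\in\cC}w_C(\Gamma)\|\Gamma\|$ and $\mathbb{E}|\partial(\mathbf{\bar\Lam})|=\kappa_1(|\partial(\mathbf{\bar\Lam})|)\stackrel{\eqref{cumeq}}{=}\sum_{\Gamma\in\cC}w_C(\Gamma)\|\partial(\Gamma)\|$, and then merge the two sums over $\cC$ into one. This yields $\mathbb{E}|\mathbf{A}|=\tfrac{\lam}{1+\lam}N+\sum_{\Gamma\in\cC}w_C(\Gamma)\bigl(\|\Gamma\|-\tfrac{\lam}{1+\lam}\|\partial(\Gamma)\|\bigr)$, as required.

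There is no real obstacle, but two small points deserve a sentence each. First, merging the two series into a single sum over $\cC$ is legitimate because each of $\sum_{\Gamma}|w_C(\Gamma)|\,\|\Gamma\|$ and $\sum_{\Gamma}|w_C(\Gamma)|\,\|\partial(\Gamma)\|$ converges absolutely; this is the $t=\ell=1$ case of \Cref{clbounds} (for the $\partial$-weighted sum one also uses $\|\partial(\Gamma)\|\le n\|\Gamma\|$). Second, the cumulant identities \eqref{cumeq}--\eqref{cumeq2} are themselves obtained by differentiating term by term, at $t=0$, the cluster expansion of $\ln\Xi_t$ (respectively of $\ln\Xi(\cP_C,\sim,w_s)$ with $w_s(A)=w_C(A)e^{s|A|}$), which is valid since these cluster expansions converge absolutely in a neighbourhood of $0$ by \Cref{central.lem} via \Cref{thm.KP}. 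So the only genuinely nontrivial ingredient — the validity of the cluster-expansion differentiation — has already been recorded before the lemma; if one wanted the proof to be self-contained one could simply re-run that one-line $\ell=1$ differentiation in place of citing \eqref{cumeq}--\eqref{cumeq2}.
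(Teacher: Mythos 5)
Your proof is correct and follows the paper's own route: the paper likewise obtains the first identity from $|\mathbf{A}|=|\mathbf{\bar\Lam}|+\mathrm{Bin}\bigl(N-|\partial(\mathbf{\bar\Lam})|,\tfrac{\lam}{1+\lam}\bigr)$ by taking expectations, and the second by substituting the $\ell=1$ cases of \eqref{cumeq} and \eqref{cumeq2}. Your extra remarks on absolute convergence (via Lemma~\ref{clbounds}) and on the term-by-term differentiation behind the cumulant identities are accurate but only make explicit what the paper leaves implicit.
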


In particular, this gives the following estimate for $\mathbb{E}|\mathbf{A}|$. Recall that $k=\lceil n/2 \rceil$.

\begin{corollary}
\label{cor14}
For $r\in\{0,1\}$,
there exists a sequence of polynomials $F^r_j(n ,\lam)$, $j\in \mathbb{N},$ such that for any fixed $t\geq 0$ we have
    \beq{eqEAlong}
    \mathbb{E} |\mathbf{A}| 
    = \frac{\lam}{1+\lam}N\left[1+   \frac{1}{(n-k+1)(k+1)} \sum_{j=1}^{t} F^r_j(n,\lam)(1+\lam)^{-j(k+1)}\right]+ O\left(\frac{2^n n^{100(t+1)+2}}{(1+\lam)^{(t+1)k}}\right) \enq
    if $n\equiv r \pmod 2$.
     Moreover $\deg_n F^r_j\leq 2j+1$ and $\deg_\lam F^r_j\leq 2j^2$, and the coefficients of $F^r_j$ can be computed in time $e^{O(j \log j)}.$
     In particular,
      \beq{eqEAshort}
 \mathbb{E} |\mathbf{A}| = (1+o(1))\frac{\lam}{1+\lam}N \, .
      \enq
\end{corollary}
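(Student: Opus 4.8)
The plan is to substitute the exact identity of Lemma~\ref{expformula} into the cluster-expansion estimates already established and then extract the polynomial structure exactly as in the proof of Theorem~\ref{clusterexpansion}. Write
\[
E\coloneqq \mathbb{E}|\mathbf{A}|-\frac{\lam}{1+\lam}N=\sum_{\Gamma\in\cC}w_C(\Gamma)\Big(\|\Gamma\|-\frac{\lam}{1+\lam}\|\partial(\Gamma)\|\Big),
\]
where $\cC=\cC(\cP_C,\sim)$, and split $E$ into the \emph{head} $\sum_{1\le\|\Gamma\|\le t}$ and the \emph{tail} $\sum_{\|\Gamma\|\ge t+1}$. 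Since $0\le\|\partial(\Gamma)\|\le(k+1)\|\Gamma\|$ we have $\lvert\|\Gamma\|-\frac{\lam}{1+\lam}\|\partial(\Gamma)\|\rvert=O(n\|\Gamma\|)$, so the tail is $O(n)\sum_{\|\Gamma\|\ge t+1}|w_C(\Gamma)|\|\Gamma\|$; applying Lemma~\ref{clbounds} with $\ell=1$ and parameter $t+1$ — and noting that for odd $n$ the factor $(1+\lam)^{(t+1)/2}$ relating $(1+\lam)^{-\frac n2(t+1)}$ to $(1+\lam)^{-(t+1)k}$ is $O(1)$, since $\lam\le 10$ and $t$ is fixed — this bounds the tail by $O\big(2^n n^{100(t+1)+2}/(1+\lam)^{(t+1)k}\big)$, which is exactly the error term in~\eqref{eqEAlong}.

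For the head I would treat each $j\coloneqq\|\Gamma\|\in\{1,\dots,t\}$ separately. Put $p\coloneqq\|\partial(\Gamma)\|$ and $\alpha(\Gamma)\coloneqq(k+1)j-p$; the elementary bound $k|A|-\binom{|A|}{2}\le|\partial(A)|\le(k+1)|A|$ (valid for every polymer $A$, because $\partial$ maps to $L_k$ and any two elements of a layer have at most one common neighbour in $L_k$) gives $0\le\alpha(\Gamma)\le j(j+1)/2$, and a one-line manipulation of $w_C(\Gamma)=\phi(G_\Gamma)\lam^{j}(1+\lam)^{-p}$ yields
\[
w_C(\Gamma)\Big(j-\frac{\lam}{1+\lam}p\Big)=\phi(G_\Gamma)\,\lam^{j}(1+\lam)^{-(k+1)j-1}(1+\lam)^{\alpha(\Gamma)}\big(j-\lam jk+\lam\alpha(\Gamma)\big).
\]
I would then run the vertex-transitivity and coordinate-symmetry bookkeeping from the proof of Theorem~\ref{clusterexpansion} verbatim: grouping clusters by $\ell\coloneqq|V(\Gamma)|$ and by the profile $(a_1,a_2)$ of active coordinates relative to a root $v_1\in L_{k-1}$ or $v_2\in L_{k+1}$, and using that $\alpha(\Gamma)$ and $\phi(G_\Gamma)$ are independent of $n$ on each set $\cG^r_{j,\ell,a_1,a_2}$, the sum over $\{\Gamma:\|\Gamma\|=j\}$ becomes $\sum_{\ell,a_1,a_2}\tfrac1\ell$ times $\binom{n}{k-1}\binom{k-1}{a_1}\binom{n-k+1}{a_2}$ (respectively $\binom{n}{k+1}\binom{k+1}{a_1}\binom{n-k-1}{a_2}$) times an expression of the form $P^r(\lam)+kQ^r(\lam)$, where $P^r,Q^r$ are fixed polynomials in $\lam$ divisible by $\lam^{j}$ (the only $n$-dependence being the explicit $k$ in $j-\lam jk+\lam\alpha$). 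Since $\binom{n}{k-1}=N\tfrac{k}{n-k+1}$ and $\binom{n}{k+1}=N\tfrac{n-k}{k+1}$, solving for $F^r_j$ makes the factors $N$, $(n-k+1)(k+1)$, the spurious powers of $(1+\lam)$, and one power of $\lam$ all cancel, leaving $F^r_j(n,\lam)$ (with $r\equiv n\bmod 2$) a genuine polynomial satisfying $\sum_{\|\Gamma\|=j}w_C(\Gamma)(\|\Gamma\|-\tfrac{\lam}{1+\lam}\|\partial(\Gamma)\|)=\tfrac{\lam}{1+\lam}N\cdot\tfrac{1}{(n-k+1)(k+1)}F^r_j(n,\lam)(1+\lam)^{-j(k+1)}$. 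Summing over $1\le j\le t$ and adding the tail estimate gives~\eqref{eqEAlong}.

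For the degree bounds: the explicit polynomial factors $k(k+1)$ (or $(n-k)(n-k+1)$), $\binom{k\mp1}{a_1}$ and $\binom{n-k\pm1}{a_2}$ contribute total degree $2+a_1+a_2$ in $n$, and the $k$ in $P^r+kQ^r$ contributes one more, so $\deg_n F^r_j\le 3+(a_1+a_2)\le 3+2(\ell-1)\le 2j+1$; in $\lam$ we pick up $j-1$ from the surviving $\lam^{j-1}$, at most $j(j+1)/2$ from $(1+\lam)^{\alpha}$, and at most $1$ from the bracket $j-\lam jk+\lam\alpha$, so $\deg_\lam F^r_j\le j+j(j+1)/2\le 2j^2$. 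The efficient computability of the coefficients of $F^r_j$ is inherited from the two Claims proved inside Theorem~\ref{clusterexpansion}: the sets $\cG^r_{j,\ell,a_1,a_2}$ can be generated in time $e^{O(j\log j)}$ and each $\phi(G_\Gamma)$ (hence each $w_C(\Gamma)$) evaluated in time $e^{O(j)}$, so the required sums can be assembled in time $e^{O(j\log j)}$. Finally~\eqref{eqEAshort} is the case $t=0$: the head is empty and~\eqref{eqEAlong} reads $\mathbb{E}|\mathbf{A}|=\tfrac{\lam}{1+\lam}N+O\big(2^n n^{102}/(1+\lam)^{k}\big)$; since $\lam\ge C\log^2 n/\sqrt n$ forces $(1+\lam)^{k}\ge\exp(\Omega(\sqrt n\log^2 n))$ while $\tfrac{\lam}{1+\lam}N=\Omega(2^n\log^2 n/n)$, the error is $o\big(\tfrac{\lam}{1+\lam}N\big)$.

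The genuine work is all in the second paragraph. The tail bound is an immediate consequence of Lemma~\ref{clbounds}, and the final estimate~\eqref{eqEAshort} is routine; the only delicate point is verifying that all the ostensibly illegitimate factors ($1/N$, $1/\lam$, the $1/\ell$ from the root over-counting) cancel so that $F^r_j$ really is a polynomial, and that the $n$-dependence enters only through the binomial coefficients and the single explicit $k$ in $j-\lam jk+\lam\alpha$ — this is precisely what pins down both the polynomiality and the degree bounds $\deg_n F^r_j\le 2j+1$, $\deg_\lam F^r_j\le 2j^2$.
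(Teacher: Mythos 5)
Your proposal is correct, and it follows the paper's overall scheme (the exact identity of Lemma~\ref{expformula}, the head/tail split at $\|\Gamma\|=t$, and the tail bound via Lemma~\ref{clbounds} with $\ell=1$, including the same parity adjustment $(1+\lam)^{-(t+1)n/2}=O((1+\lam)^{-(t+1)k})$ that uses $\lam\le 10$). Where you diverge is in the head: the paper does not re-run the cluster bookkeeping on the modified weights. Instead it differentiates the already-established identity~\eqref{eqs0s1} with respect to $\lam$, observing that $\lam\,\partial_\lam w_C(\Gamma)=w_C(\Gamma)\bigl(\|\Gamma\|-\tfrac{\lam}{1+\lam}\|\partial(\Gamma)\|\bigr)$, and then simply sets $F_j^r$ equal to an explicit combination of $S_j^1,S_j^2$ and their $\lam$-derivatives, e.g.\ $F_j^1=k(k+1)\bigl[(1+\lam)\partial_\lam S_j^1-j(k+1)S_j^1\bigr]+(n-k)(n-k+1)\bigl[(1+\lam)\partial_\lam S_j^2-j(k+1)S_j^2\bigr]$; polynomiality, the degree bounds and the $e^{O(j\log j)}$ computability are then inherited in one line from Theorem~\ref{clusterexpansion}. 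Your route instead repeats the root-vertex/active-coordinate grouping from the proof of Theorem~\ref{clusterexpansion} directly on the sums $\sum_{\|\Gamma\|=j}w_C(\Gamma)(\|\Gamma\|-\tfrac{\lam}{1+\lam}\|\partial(\Gamma)\|)$, using the rewriting $w_C(\Gamma)(j-\tfrac{\lam}{1+\lam}p)=\phi(G_\Gamma)\lam^j(1+\lam)^{-(k+1)j-1+\alpha(\Gamma)}(j-\lam kj+\lam\alpha(\Gamma))$, which is algebraically correct; the divisibility by $\lam^j$ and the single explicit $k$ in the bracket do yield a genuine polynomial after the cancellations you describe (the $1/\ell$ factors need not cancel — they only produce rational coefficients, which is harmless), and your degree count $\deg_\lam F_j^r\le j+j(j+1)/2\le 2j^2$ and $\deg_n F_j^r\le 2j+1$ matches the statement. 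The trade-off is that the paper's differentiation trick is shorter and makes the inheritance from Theorem~\ref{clusterexpansion} automatic, while your direct computation re-exposes the structure (and implicitly re-uses the same facts — $n$-independence of $|\cG^r_{j,\ell,a_1,a_2}|$, $\alpha(\Gamma)$ and $\phi(G_\Gamma)$, and the two computability Claims — that the paper only needs once). Your justification of~\eqref{eqEAshort} at $t=0$ is more detailed than the paper's but fine.
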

\begin{proof}
We prove this only in the case of $n$ odd; the case of $n$ even is similar but simpler. By Lemmas~\ref{clbounds} and \ref{expformula}, noting that $\|\partial(\Gamma)\| \leq n\|\Gamma\|$, we have
    \[ \mathbb{E}|\mathbf{A}|=\frac{\lam}{1+\lam}N+\sum_{\substack{\Gamma\in \cC \\ \|\Gamma\|\leq t}}  w_C(\Gamma)\left( \|\Gamma\| - \frac{\lam}{1+\lam}\|\partial(\Gamma)\|\right) + O\left(\frac{2^n n^{100(t+1)+2}}{(1+\lam)^{(t+1)k}} \right).\]
   
     Recalling that $w_C(\Gamma)=\phi(G_\Gamma)\lam^{\|\Gamma\|} (1+\lam)^{-\|\partial(\Gamma)\|}$ and taking the partial derivative with respect to $\lam$ on both sides of~\eqref{eqs0s1} we obtain, for fixed $j\geq 1$, 
\begin{align*}
& \binom{n}{k-1} 
 \left[\frac{\frac{\partial}{\partial \lam}S_j^1}{(1+\lam)^{j(k+1)}}
 - 
 \frac{j(k+1) S_j^1}{(1+\lam)^{j(k+1)+1}}
 \right] 
+
 \binom{n}{k+1} 
 \left[\frac{\frac{\partial}{\partial \lam}S_j^2}{(1+\lam)^{j(k+1)}}
 - 
 \frac{j(k+1) S_j^2}{(1+\lam)^{j(k+1)+1}}
 \right] 
 \\
 &=
 \frac{1}{\lam}\sum_{\substack{\Gamma\in \mathcal{C} \\ \|\Gamma\|=j}} w_C(\Gamma)\left(\|\Gamma\|-\frac{\lam}{1+\lam} \|\partial \Gamma\|\right).
\end{align*}
Noting that $N=\binom{n}{k}$, $\binom{n}{k-1}/N=k/(n-k+1)$,    $\binom{n}{k+1}/N=(n-k)/(k+1)$ and setting
\[
F_j^1 = k(k+1)\left[(1+\lam)\frac{\partial}{\partial \lam}S_j^1-j(k+1) S_j^1\right] + (n-k)(n-k+1)\left[(1+\lam)\frac{\partial}{\partial \lam}S_j^2-j(k+1) S_j^2\right]
\]
we obtain~\eqref{eqEAlong}. By~\Cref{clusterexpansion}, 
$\deg_n F^1_j\leq 2(j-1)+3=2j+1$ and $\deg_\lam F^1_j\leq 2j^2$. 
 Moreover the coefficients of $F^1_j(n,\lam)$ can be computed in time $e^{O(j \log j)}$ since the same is true for $S_j^1(n,\lam), S_j^2(n,\lam)$.

Finally note that we obtain~\eqref{eqEAshort} by setting $t=0$ in~\eqref{eqEAlong}.
     \end{proof}

\begin{lemma}
\label{lem18}
    For $p=p(n)$ bounded away from 1 with $pn\rightarrow\infty$, let $X=X(n)\sim \text{Bin}(n, p).$ Then, for any $m=np+o(\sqrt{np}),$ we have as $n\ra \infty$
    \[ \mathbb{P}(X=m)= \frac{1+o(1)}{\sqrt{2\pi n p (1-p)}}. \]
\end{lemma}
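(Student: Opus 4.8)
The plan is to prove Lemma~\ref{lem18} by a direct application of Stirling's formula, which is the classical route to a local central limit theorem for the binomial distribution. Write $q=1-p$ and $t=m-np$, so that $|t|=o(\sqrt{np})$. First I would verify that $n$, $m$ and $n-m$ all tend to infinity: since $np\to\infty$ and $|t|=o(np)$ we have $m=np+t\sim np\to\infty$, and since $p$ is bounded away from $1$ there is a constant $c>0$ with $q\ge c$, whence $n-m=nq-t\ge cn/2\to\infty$ for $n$ large (using $|t|=o(\sqrt{np})=o(n)$). Thus the form $\ell!=(1+o(1))\sqrt{2\pi\ell}\,(\ell/e)^{\ell}$ of Stirling's approximation is valid for each of $n!$, $m!$ and $(n-m)!$.

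Substituting this into $\mathbb{P}(X=m)=\binom{n}{m}p^{m}q^{n-m}$ gives
\[
\mathbb{P}(X=m)=(1+o(1))\sqrt{\frac{n}{2\pi m(n-m)}}\left(\frac{np}{m}\right)^{m}\left(\frac{nq}{n-m}\right)^{n-m}.
\]
For the square-root prefactor I would note that $m=np(1+o(1))$ and $n-m=nq(1+o(1))$ (the relative error in the second being $o(1/\sqrt n)$, using $q\ge c$), so $m(n-m)=n^{2}pq(1+o(1))$ and hence $\sqrt{n/(2\pi m(n-m))}=(1+o(1))/\sqrt{2\pi npq}$, which already matches the target expression. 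It therefore remains to show that the remaining factor $\left(\frac{np}{m}\right)^{m}\left(\frac{nq}{n-m}\right)^{n-m}$ tends to $1$.

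To do this I would take logarithms and set $x=t/(np)$ and $y=-t/(nq)$, so that $m/(np)=1+x$ and $(n-m)/(nq)=1+y$ with $|x|=o(1/\sqrt{np})$ and $|y|=o(1/\sqrt{n})$. Expanding $\ln(1+x)$ and $\ln(1+y)$ and using $m=np(1+x)$, $n-m=nq(1+y)$, the logarithm of the bulk factor becomes $-m\ln(1+x)-(n-m)\ln(1+y)$. The first-order terms are $-npx-nqy=-t+t=0$; the second-order terms combine to $-\tfrac{t^{2}}{2}\bigl(\tfrac{1}{np}+\tfrac{1}{nq}\bigr)=-\tfrac{t^{2}}{2npq}$, which is $o(1)$ because $t^{2}=o(np)$ and $q\ge c$; and the third- and higher-order terms are bounded in absolute value by a constant multiple of $\frac{|t|^{3}}{(np)^{2}}+\frac{|t|^{3}}{(nq)^{2}}=o(1)$ (again using $|t|=o(\sqrt{np})$, $q\ge c$, $np\to\infty$). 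Hence the logarithm of the bulk factor is $o(1)$, the factor tends to $1$, and the lemma follows.

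The only real obstacle here is \emph{bookkeeping} rather than ideas: because $p$ may tend to $0$, one must carry along two a priori different small parameters $x=t/(np)$ and $y=t/(nq)$, and it is precisely the hypothesis that $p$ is bounded away from $1$ (so $q$ is bounded away from $0$) that forces the $y$-terms to be negligible. As an alternative one could simply invoke a standard local limit theorem for sums of i.i.d.\ Bernoulli variables, but the Stirling computation above is short and self-contained, so I would include it.
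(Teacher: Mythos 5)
Your proof is correct and follows exactly the route the paper takes: the paper's proof is the one-line observation that $\mathbb{P}(X=m)=\binom{n}{m}p^m(1-p)^{n-m}$ together with Stirling's formula, and your argument is simply a careful write-up of that computation (with the hypothesis $1-p\ge c$ handling the asymmetric error terms, as you note). Nothing further is needed.
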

\begin{proof}
    $\mathbb{P}(X=m)=\binom{n}{m}p^m(1-p)^{n-m}$, and the result follows from Stirling's formula.
\end{proof}

\begin{lemma}
    \label{lem16}
    Suppose $m\in[N]$ is such that
    \beq{exp} | \mathbb{E} |\mathbf{A}| - m|=o(\sqrt{{\lambda} N}).\enq
    Then
    \beq{am(Cn)} i_m^C=(1+o(1))\frac{(1+\lambda)^{N+1} \Xi_C}{\lambda^m \sqrt{2\pi N\lambda}}.\enq
    
\end{lemma}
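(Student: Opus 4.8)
The plan is to combine the identity~\eqref{eqPAm} with a local limit theorem for $|\mathbf A|$. Recall from~\eqref{eqPAm} that
\[
\mathbb P(|\mathbf A|=m) = \frac{i_m^C\lam^m}{(1+\lam)^N \Xi_C}\,,
\]
so~\eqref{am(Cn)} is equivalent to the statement $\mathbb P(|\mathbf A|=m) = (1+o(1))/\sqrt{2\pi N\lam}$. Thus it suffices to prove this local limit estimate under the hypothesis~\eqref{exp}. First I would recall the decomposition $|\mathbf A| = |\mathbf{\bar\Lam}| + \text{Bin}\bigl(N-|\partial(\mathbf{\bar\Lam})|,\tfrac{\lam}{1+\lam}\bigr)$ from the definition of $\mu^C_\lam$, where the binomial is independent of $\mathbf{\bar\Lam}$. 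The idea is that $|\mathbf{\bar\Lam}|$ and $|\partial(\mathbf{\bar\Lam})|$ are both tiny compared to $\sqrt{\lam N}$ with probability $1-o(1)$ (this is exactly Lemma~\ref{clt}, together with the bounds on $\mathbb E|\mathbf{\bar\Lam}|$, $\mathbb E|\partial(\mathbf{\bar\Lam})|$ from the proof of Lemma~\ref{clt} via~\eqref{cmbound}), so that $|\mathbf A|$ behaves like a binomial with parameters close to $N$ and $\tfrac{\lam}{1+\lam}$.

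Concretely, I would condition on the value of $\mathbf{\bar\Lam}$. Let $\mathcal E$ be the event that $|\mathbf{\bar\Lam}| = \mathbb E|\mathbf{\bar\Lam}| + o(\sqrt{\lam N})$ and $|\partial(\mathbf{\bar\Lam})| = \mathbb E|\partial(\mathbf{\bar\Lam})| + o(\sqrt{\lam N})$, which has probability $1-o(1)$ by Lemma~\ref{clt}. On $\mathcal E$, write $N' = N - |\partial(\mathbf{\bar\Lam})|$ and $s = |\mathbf{\bar\Lam}|$; then conditionally $|\mathbf A| = s + \text{Bin}(N',\tfrac{\lam}{1+\lam})$, so
\[
\mathbb P(|\mathbf A| = m \mid \mathbf{\bar\Lam}) = \mathbb P\Bigl(\text{Bin}\bigl(N',\tfrac{\lam}{1+\lam}\bigr) = m - s\Bigr)\,.
\]
Here $N' = N(1+o(1))$ since $\mathbb E|\partial(\mathbf{\bar\Lam})| = O(2^n n^{102}/(1+\lam)^{n/2}) = o(N)$ by the computation in Lemma~\ref{clt} (using $\lam = O(1)$), and the target $m-s$ satisfies $m - s = \tfrac{\lam}{1+\lam}N' + o(\sqrt{\lam N'})$: indeed $m = \mathbb E|\mathbf A| + o(\sqrt{\lam N}) = \mathbb E|\mathbf{\bar\Lam}| + \tfrac{\lam}{1+\lam}(N - \mathbb E|\partial(\mathbf{\bar\Lam})|) + o(\sqrt{\lam N})$ by~\eqref{exp} and Lemma~\ref{expformula}, and on $\mathcal E$ both $|\mathbf{\bar\Lam}|$ and $|\partial(\mathbf{\bar\Lam})|$ differ from their means by $o(\sqrt{\lam N})$. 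Since $\tfrac{\lam}{1+\lam}N' (1 - \tfrac{\lam}{1+\lam}) = \tfrac{\lam}{(1+\lam)^2}N'(1+o(1)) = \tfrac{\lam N}{(1+\lam)^2}(1+o(1))$ and $\tfrac{\lam}{1+\lam}N' \to \infty$ (as $N\to\infty$ and $\lam \geq C\log^2 n/\sqrt n$), Lemma~\ref{lem18} applies and gives
\[
\mathbb P(|\mathbf A| = m \mid \mathbf{\bar\Lam}) = \frac{1+o(1)}{\sqrt{2\pi N' \cdot \tfrac{\lam}{1+\lam}\cdot\tfrac{1}{1+\lam}}} = \frac{1+o(1)}{\sqrt{2\pi \lam N/(1+\lam)^2}} = \frac{(1+o(1))(1+\lam)}{\sqrt{2\pi \lam N}}
\]
uniformly over $\mathbf{\bar\Lam}$ satisfying $\mathcal E$. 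Finally, averaging over $\mathbf{\bar\Lam}$: the contribution of $\mathcal E^c$ is at most $\mathbb P(\mathcal E^c) = o(1)$ times the trivial bound, but more carefully one just notes $\mathbb P(|\mathbf A|=m) = \mathbb P(|\mathbf A|=m, \mathcal E) + O(\mathbb P(\mathcal E^c))$; since the main term is of order $1/\sqrt{\lam N}$ and $\mathbb P(\mathcal E^c) = o(1)$, we need the error $o(1)$ to be $o(1/\sqrt{\lam N})$ — which is not immediate. I would address this by using the quantitative rate in Lemma~\ref{clt} (the cumulant bounds give $\mathbb P(\mathcal E^c)$ polynomially small, in fact $\mathbb P(\mathcal E^c) = O(\lam N / (\text{something growing}))$ is far smaller than $1/\sqrt{\lam N}$ once one tracks that $\text{var} = o(\lam N)$ forces the deviation probability to be $o(1)$ at scale $\sqrt{\lam N}$, and more precisely Chebyshev at scale $\epsilon\sqrt{\lam N}$ gives $O(\text{var}/(\lam N)) = o(1)$; to get it below $1/\sqrt{\lam N}$ one takes the deviation window to be, say, $\sqrt{\lam N}/\log n$ and uses that $\text{var}/(\lam N) \cdot \log^2 n \to 0$, since $\text{var} = O(2^n n^{O(1)}/(1+\lam)^{n/2})$ which is superpolynomially smaller than $\lam N \asymp \lam 2^n/\sqrt n$ when $\lam$ is bounded below by $C\log^2 n/\sqrt n$). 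Putting these together yields $\mathbb P(|\mathbf A| = m) = (1+o(1))(1+\lam)/\sqrt{2\pi\lam N}$, which rearranges to~\eqref{am(Cn)}.

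The main obstacle I anticipate is precisely this last averaging step: ensuring that the exceptional event $\mathcal E^c$ contributes negligibly \emph{relative to the main term} $\Theta(1/\sqrt{\lam N})$, rather than merely being $o(1)$. This requires the concentration in Lemma~\ref{clt} to hold with a rate that beats $1/\sqrt{\lam N}$, which should follow from the superpolynomial gap between $\text{var}(|\partial(\mathbf{\bar\Lam})|)$ (and $\text{var}(|\mathbf{\bar\Lam}|)$) and $\lam N$ coming from~\eqref{cmbound} and the lower bound $\lam \geq C\log^2 n/\sqrt n$. A secondary technical point is checking the uniformity of the Lemma~\ref{lem18} application over the conditioning: one must verify $m - |\mathbf{\bar\Lam}| = \tfrac{\lam}{1+\lam}N' + o(\sqrt{\lam N'})$ holds for \emph{all} $\mathbf{\bar\Lam}$ in the good event, not just on average, which is why the event $\mathcal E$ is defined to control $|\mathbf{\bar\Lam}|$ and $|\partial(\mathbf{\bar\Lam})|$ pointwise.
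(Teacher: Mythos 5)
Your overall route is the same as the paper's: use the identity $\mathbb P(|\mathbf A|=m)= i_m^C\lam^m/((1+\lam)^N\Xi_C)$, condition on $\mathbf{\bar\Lam}$, apply the binomial local limit theorem (Lemma~\ref{lem18}) on a good event coming from Lemma~\ref{expformula} and Lemma~\ref{clt}, and average. The gap is exactly at the step you flagged: your treatment of the exceptional event $\mathcal E^c$ does not work. You bound its contribution by $\mathbb P(\mathcal E^c)$ (trivially bounding the conditional point probability by $1$) and then try to force $\mathbb P(\mathcal E^c)=o(1/\sqrt{\lam N})$ by Chebyshev with a shrunken window, using that $\mathrm{var}(|\partial(\mathbf{\bar\Lam})|)=O\bigl(2^n n^{O(1)}(1+\lam)^{-n/2}\bigr)$ is superpolynomially smaller than $\lam N$. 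But $1/\sqrt{\lam N}$ is exponentially small in $n$ (of order $2^{-n/2}$ up to polynomial factors), whereas at the low end of the admissible range, $\lam=\Theta(\log^2 n/\sqrt n)$, the Chebyshev bound $\mathrm{var}\cdot\log^2 n/(\lam N)$ is only of size $e^{-\Theta(\sqrt n\log^2 n)}$ --- vastly larger than $1/\sqrt{\lam N}$. In fact no window can work: Lemma~\ref{lem18} needs the window to be $o(\sqrt{\lam N})$, while your error bound needs $\mathrm{var}/\mathrm{window}^2=o(1/\sqrt{\lam N})$; together these force $\mathrm{var}\ll\sqrt{\lam N}$, which is false whenever $\lam=o(1)$. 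So second-moment concentration cannot beat the size of the main term.

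The paper avoids needing any such rate by splitting the complement of the good event in two. First, the pathological event $|\mathbf{\bar\Lam}|>N/n^2$ (hence $|\partial(\mathbf{\bar\Lam})|>N/n$) is discarded using the exponential-moment bound of Lemma~\ref{largedeviation}, whose failure probability $e^{-\Omega(N/n^5)}$ really is negligible against $1/\sqrt{\lam N}$. Second, on the remaining event one never uses the trivial bound: since conditionally $|\mathbf A|-|\mathbf{\bar\Lam}|\sim\Bin\bigl(N-|\partial(\mathbf{\bar\Lam})|,\tfrac{\lam}{1+\lam}\bigr)$ with at least $(1-1/n)N$ trials and $\lam\le 10$, \emph{every} binomial point probability is uniformly $O(1/\sqrt{\lam N})$; hence the set where the CLT-scale concentration of Lemma~\ref{clt} fails, which has probability merely $o(1)$, contributes only $o(1)\cdot O(1/\sqrt{\lam N})=o(1/\sqrt{\lam N})$. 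With this bookkeeping your computation of the main term on $\mathcal E$ goes through essentially verbatim; without it (or a genuinely stronger tail bound at scale $\sqrt{\lam N}$, e.g.\ via a tilted cluster expansion, which you did not carry out), the error from $\mathcal E^c$ can swamp the main term.
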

\begin{proof}
    We have $i_m^C=\mathbb{P}(|\mathbf{A}|=m) \lam^{-m}(1+\lam)^N\Xi_C.$ Hence it suffices to show that
    \[ \mathbb{P}(|\mathbf{A}|=m)=(1+o(1)) \frac{1+\lam}{\sqrt{2\pi N\lam}}. \]
As before we let $\mathbf \Lam$ denote a sample from $\nu$ so that
    \beq{bineq} \mathbb{P}(|\mathbf{A}|=m)=\sum_{\Lam\in \Omega} \mathbb{P}(\mathbf{\Lam}=\Lam)\cdot \mathbb{P}\left( \text{Bin}\left(N-|\partial(\bar\Lam)|, \frac{\lam}{1+\lam}\right)=m-| \bar\Lam | \right).\enq

    By Lemma~\ref{largedeviation}, we may condition on the event $|\partial(\bar\Lam)|\leq N/n$ while only incurring an additive error of $o(1/\sqrt{\lam N})$. The binomial probabilities in $\eqref{bineq}$ are then bounded uniformly by $O(1/\sqrt{\lam N})$, so it suffices to show that they are in fact, with high probability, equal to $(1+o(1))\frac{1+\lam}{\sqrt{2\pi N\lam}}$. By Lemma~\ref{lem18}, it suffices to show that with high probability in the choice of $\mathbf{\Lam}$, we have
    \[ \frac{\lam}{1+\lam}\left(N-|\partial(\bar\Lam)|\right)=m-|\bar\Lam|+o(\sqrt{\lam N}).\]
    But this follows immediately by recalling our assumption that $m=\mathbb{E} |\mathbf{A}|+o(\sqrt{\lam N})$ together with the identity given by Lemma~\ref{expformula} and the concentration bounds of Lemma~\ref{clt}.
\end{proof}

\begin{lemma}
    \label{lem17}
    For $r\in\{0, 1\}$, there exists a sequence of rational functions $B_j^r(n,\beta), \ j\in \mathbb{N},$ such that the following holds. Given $\eps>0$ and $t \in \mathbb N$, let $q=\lceil t/2 \rceil-1.$ If $n\equiv r \pmod 2$ and $m\in [N]$ is such that $\beta\coloneqq \frac{m}{N}\in [1-4^{-1/t} +\eps,1-\eps]$, then taking
    \[ \lam \coloneqq \frac{\beta}{1-\beta} + \sum_{j=1}^q B_j^r(n, \beta)(1-\beta)^{j(k+1)}\]
    we have
    \[ \left| \mathbb{E} |\mathbf{A}| -m \right| = o(\sqrt{\lam N}). \]
    Moreover, the coefficients of $B_j^r$ can be computed in time $e^{O(j \log j)}$.
\end{lemma}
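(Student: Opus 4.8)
The plan is to take $\lam$ to be a small perturbation of the naive guess $\lam_0\coloneqq\beta/(1-\beta)$, for which $\tfrac{\lam_0}{1+\lam_0}N=m$ exactly and $(1+\lam_0)^{-1}=1-\beta$. Since $\beta\in[1-4^{-1/t}+\eps,1-\eps]$ is bounded away from both $0$ and $1$, we have $\lam_0=\Theta_{\eps,t}(1)$. By \Cref{cor14} applied with its parameter equal to $q$, for every $\lam$ with $C\log^2 n/\sqrt n\le\lam=O(1)$ we have $\mathbb{E}|\mathbf{A}|=M(\lam)+O\bigl(2^n n^{100(q+1)+2}(1+\lam)^{-(q+1)k}\bigr)$, where
\[
M(\lam)\coloneqq\tfrac{\lam}{1+\lam}N\Bigl[1+\tfrac{1}{(n-k+1)(k+1)}\textstyle\sum_{j=1}^{q}F_j^r(n,\lam)(1+\lam)^{-j(k+1)}\Bigr].
\]
As $(n-k+1)(k+1)=\Theta(n^2)$ and the $F_j^r$ have degrees bounded in terms of $j$, the bracket differs from $1$ by only $O(n^{O(q)}(1+\lam)^{-(k+1)})$, which is exponentially small in $n$; so $\mathbb{E}|\mathbf{A}|=m$ should have a solution with $\lam=\lam_0+o(1)$, and I aim to compute its first $q$ correction terms explicitly.

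Next I would define the $B_j^r$ by formal inversion of $M$. Substituting $\lam=\lam_0+\mu$ into $M(\lam)$ and expanding in powers of $1-\beta$ — treating $n$ (hence $k$) as a fixed parameter and using that each $F_j^r(n,\lam_0)$ is a rational function of $\beta$ — one finds that $M(\lam_0)-m$ is a sum of terms whose orders in $1-\beta$ are $j(k+1)$ for $1\le j\le q$, up to bounded ($j$-dependent) shifts coming from the poles of the $F_j^r$ at $\beta=1$, whereas $\tfrac{d}{d\lam}M(\lam)\big|_{\lam_0}=N(1-\beta)^2+(\text{lower order})$. Working over the field of rational functions of $\beta$, in which $1-\beta$ is invertible, the nonvanishing of this derivative lets a standard triangular (Lagrange-type) inversion produce rational functions $B_1^r,\dots,B_q^r$ of $\beta$, each regular on $\{\beta\le 1-\eps\}$ but with a pole at $\beta=1$, so that with
\[
\lam\coloneqq\lam_0+\sum_{j=1}^{q}B_j^r(n,\beta)(1-\beta)^{j(k+1)}
\]
the residual satisfies $M(\lam)-m=O\bigl(n^{O(q)}N(1-\beta)^{(q+1)k}\bigr)$ (using $1-\beta\ge\eps$ to absorb the bounded exponent shift into the implicit constant). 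Each $B_j^r$ is built from $F_1^r,\dots,F_j^r$ — computable in time $e^{O(j\log j)}$ by \Cref{cor14} — and $B_1^r,\dots,B_{j-1}^r$ by arithmetic on rational functions of bounded degree, so its coefficients are computable in time $e^{O(j\log j)}$. Finally, $\mu\coloneqq\sum_{j\le q}B_j^r(n,\beta)(1-\beta)^{j(k+1)}=o(1)$ (as $1-\beta$ is bounded by a constant $<1$), so $\lam=\lam_0+o(1)=\Theta_{\eps,t}(1)$ lies in the range where \Cref{cor14} applies and moreover $1+\lam=(1-\beta)^{-1}(1+o(1))$.

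It remains to verify $\bigl|\mathbb{E}|\mathbf{A}|-m\bigr|=o(\sqrt{\lam N})$. By the above, $\mathbb{E}|\mathbf{A}|-m$ equals the residual $M(\lam)-m$ plus the error term of \Cref{cor14}; using $1+\lam=(1-\beta)^{-1}(1+o(1))$, $N=\Theta(2^n/\sqrt n)$ and $\lam=\Theta(1)$, both are $o(\sqrt{\lam N})$ as soon as $\sqrt N\,n^{O(q)}(1-\beta)^{(q+1)k}=o(1)$. This is where the hypotheses are used: $k=\lceil n/2\rceil\ge n/2$ and $q+1=\lceil t/2\rceil\ge t/2$, so, writing $4^{-1/t}-\eps=4^{-1/t}(1-\delta)$ with $\delta=\delta(\eps,t)>0$ and using $1-\beta\le 4^{-1/t}-\eps$,
\[
(1-\beta)^{(q+1)k}\le 4^{-(q+1)k/t}(1-\delta)^{(q+1)k}\le 2^{-k}(1-\delta)^{k}\le 2^{-n/2}(1-\delta)^{k},
\]
whence $\sqrt N\,n^{O(q)}(1-\beta)^{(q+1)k}\le 2^{n/2}\cdot 2^{-n/2}\cdot n^{O(q)}(1-\delta)^{k}=n^{O(q)}(1-\delta)^{k}=o(1)$, since $(1-\delta)^k$ decays exponentially in $n$ while $q$ is fixed. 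This completes the proof. I expect the main obstacle to be this balancing act: one must run the formal inversion carefully enough to be certain the residual really carries an exponent $\gtrsim(q+1)\cdot n/2$ in $1-\beta$, and then notice that the precise relation $q=\lceil t/2\rceil-1$ together with $k\approx n/2$ makes this just barely beat $\sqrt N\approx 2^{n/2}$ — it is here, and only here, that the slack $\eps$ in the hypothesis $\beta\ge 1-4^{-1/t}+\eps$ is needed.
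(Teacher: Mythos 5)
Your proposal is correct and follows essentially the same route as the paper: the same ansatz $\lam=\frac{\beta}{1-\beta}+\sum_{j\le q}B_j^r(1-\beta)^{j(k+1)}$, the same application of Corollary~\ref{cor14} with parameter $q=\lceil t/2\rceil-1$, the same inductive/triangular determination of the $B_j^r$ (the paper phrases your "Lagrange-type inversion" as collecting coefficients $H_j^r$, each linear in $B_j^r$ with nonzero coefficient, and solving successively), and the same use of $\beta\ge 1-4^{-1/t}+\eps$ together with $k\approx n/2$ to show the residual and the Corollary~\ref{cor14} error beat $\sqrt{N}$. The only difference is presentational: the paper carries out the expansion in $X=\lam(1-\beta)-\beta$ explicitly, while you argue the same triangular structure via the nonvanishing derivative $\frac{d}{d\lam}\frac{\lam}{1+\lam}N\approx N(1-\beta)^2$.
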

\begin{proof}
	With $q=\lceil t/2 \rceil -1$, set
\[ \lam= \frac{\beta}{1-\beta} + \sum_{j=1}^q B^r_j(n, \beta)(1-\beta)^{j(k+1)} \]
where the functions $B^r_j$ are rational functions in $n, \beta$ of constant degree (independent of $n$) to be determined later. Note that by the assumption on $\beta$ we have $\lam=\Theta(1)$.

By \Cref{cor14} we have
 \beq{poly1}  \mathbb{E} |\mathbf{A}| 
    = \frac{\lam}{1+\lam}N\left[1+   \frac{1}{(n-k+1)(k+1)} \sum_{j=1}^{q} F_j^r(n,\lam)(1+\lam)^{-j(k+1)}\right]+ O\left(\frac{2^n n^{100(q+1)+2}}{(1+\lam)^{(q+1)k}}\right)\, , \enq
where $F_j^r (n, \lam)$ is a polynomial with $\deg_n F_j^r\leq 2j+1$ and $\deg_\lam F_j^r \leq 2j^2$. We now show that our choice of $q=\lceil t/2 \rceil-1$ makes the error in the above expression equal to $o(N^{1/2})$.

Let $X\coloneqq\lam(1-\beta)-\beta= \sum_{j=1}^q B^r_j (1-\beta)^{j(k+1)+1}$ and note that $X=o(1)$. Since $(1+\lam)^{-1}=(1-\beta)/(1+X)=(1+o(1)) (1-\beta)$ and $\beta \ge 1-4^{-1/t}+\eps$, it follows that
\begin{align}\label{eq:expwin}
    (1+\lam)^{-(q+1)k}\leq e^{o(n)} (1-\beta)^{(q+1)k}
=O\left( e^{-\Omega(n)} \cdot 4^{-\frac{(q+1)k}{t}}\right) = O\left( e^{-\Omega(n)} N^{-1/2}\right)\, . 
\end{align}
Returning to~\eqref{poly1}, we conclude that 
\begin{align}\label{poly12}   \mathbb{E} |\mathbf{A}| 
    = \frac{\lam}{1+\lam}N\left[1+   \frac{1}{(n-k+1)(k+1)} \sum_{j=1}^{q} F_j^r(n,\lam)(1+\lam)^{-j(k+1)}\right] + o(N^{1/2})\, .
\end{align}
 Our goal is to find a suitable choice of $B^r_1, \dots B^r_q$ that makes \eqref{poly12} equal to $m+o(N^{1/2})$. Our first step is to expand the RHS of~\eqref{poly12} as a function of $n,\beta, B^r_1, \dots B^r_q$. 

Note that $X=O(n^{O(1)}(1-\beta)^{n/2})$. Since $\beta\ge 1-4^{-1/t}+\eps$, it follows that $X^{q+1}=O(e^{-\Omega(n)}N^{-1/2})$.

First we expand $(1+\lam)^{-\ell}$ as function of $\beta$ and $X$: for $\ell=O(n)$,
\begin{align}
(1+\lam)^{-\ell}=\frac{(1-\beta)^\ell }{(1+X)^ \ell}
&= (1-\beta)^\ell\left[ \sum_{i=0}^q \binom{-\ell}{i} X^i + O\left((\ell X)^{q+1}\right)\right]\nonumber\\
&=
(1-\beta)^\ell \sum_{i=0}^q \binom{-\ell}{i} X^i + O\left( e^{-\Omega(n)}N^{-1/2} \right)\, .\label{taylor}
\end{align}

Next we rewrite $F_j^r(n,\lam)$ as a function of $n,\beta$ and $X$. Since $\deg_\lam F_j^r \leq 2j^2$ and $\lam=\frac{\beta+X}{1-\beta}$, we may write $F_j^r (n, \lam)=(1-\beta)^{-c_j} G_j^r  (n, \beta, X)$
for some integer $0\leq c_j\leq 2j^2$ and $G_j^r $ a polynomial in $n, \beta, X$ with $\deg_n G_j^r\leq 2j+1$. Plugging this into~\eqref{poly12} along with~\eqref{taylor} we have
\begin{gather}
    \notag \frac{1}{N}\mathbb{E} |\mathbf{A}|= \\ \label{poly2}(\beta+X)\sum_{i=0}^q(-X)^i\left[1 +  \sum_{j=1}^q \frac{G_j^r (n, \beta, X)}{(n-k+1)(k+1)} (1-\beta)^{j(k+1)-c_j}\sum_{i=0}^q \binom{-j(k+1)}{i}X^i\right] +o(N^{-1/2}).
\end{gather}

Recalling that $X=\sum_{j=1}^q B^r_j (1-\beta)^{j(k+1)+1}$, we rewrite \eqref{poly2} as a polynomial in $(1-\beta)^{k+1}$ and note that by~\eqref{eq:expwin}, $n^{O(1)}(1-\beta)^{s(k+1)}=o(N^{-1/2})$ for $s\geq q+1$. This yields
\beq{poly3} 
\frac{1}{N}\mathbb{E}_{C_n, \lam} |\mathbf{A}|= \beta + \sum_{j=1}^q \frac{H^r_j (n, \beta, B^r_1 , \dots B^r_j)}{(n-k+1)(k+1)(1-\beta)^{c_j}}\cdot (1-\beta)^{j(k+1)}+ o(N^{-1/2}) \enq
where $H^r_j $ is a polynomial in $n, \beta, B^r_1 , \dots B^r_j$. Moreover, note that each $H^r_j $ is linear in $B^r_j$, with nonzero coefficient. By induction, we conclude that there is a choice of $B^r_1 , \dots B^r_j$ such that $H^r_1 =\dots=H^r_q =0$, with each $B^r_j$ a rational function of $n,\beta$ of constant degree (depending on $j$ but not on $n$). Plugging this choice of $B^r_j$ into \eqref{poly3} and recalling that $\lam=\Theta(1)$ we conclude that
\[\mathbb{E}|\mathbf{A}|=\beta N+o(\sqrt{\lam N})\, . \]

Finally, note that the above proof also gives an algorithm for computing the functions $B^r_j$. Indeed, the coefficients of $F^r_j $ can be computed in time $e^{O(j\log j)}$ (by \Cref{cor14}), and therefore so can the coefficients of  $G^r_j $ and $H^r_j $. Having computed $B^r_1, \ldots, B^r_{j-1}$, we can compute $B^r_j$ by solving a linear equation.
\end{proof}

We are now in a position to prove \Cref{thm5pre}. The final ingredient that we require is the following simple reformulation of \Cref{lem18}: for $m=m(n)\in [N]$ such that $\beta:=m/N$ is bounded away from  $0$ and $1$, we have, with $\gl_0:=\frac{\beta}{1-\beta}$,
\beq{binombd} \binom{N}{m}=(1+o(1))\frac{(1+\lam_0)^{N+1}}{\lam_0^{m} \sqrt{ 2\pi m \lam_0}}\,.  \enq

 \begin{proof}[Proof of \Cref{thm5pre}]
Once again, we present only the proof for $n$ odd. The case of even $n$ is analogous. Take $\lam$ as in \Cref{lem17} and let $\lam_0=\beta/(1-\beta)$. Note that $\lam=\Theta(1)$ and so we are free to apply the results of \Cref{givensize}. Note also that $\lam=\lam_0+o(1)$.

By \Cref{lem16} 
and \eqref{binombd} we have 
\begin{align}\label{eq:psit1}
i_m^C
= &(1+o(1)) \binom{N}{m} \left(\frac{1+\lam}{1+\lam_0}\right)^N \left( \frac{\lam_0}{\lam}\right)^{m}\Xi_C\, ,
\end{align}
and by~\Cref{clusterexpansion}

\begin{align}\label{eq:psit2}
\Xi_C=(1+o(1))\exp \left[\sum_{j=1}^{t-1}\binom{n}{k-1}S^1_j(n,\lam)(1+\lam)
^{-j(k+1)}+  
\binom{n}{k+1}S^2_j(n,\lam)(1+\lam)
^{-j(k+1)}\right]\, .
\end{align}
Taking logarithms and dividing by $N=\binom{n}{k}$, it remains to prove that we can find $R_j^1=R_j^1(n, \beta)$ for all $j\geq 1$ such that

\begin{align}\label{eqRjdef}
&\ln\left(\frac{1+\lam}{1+\lam_0}\right) -\beta\ln\left(\frac{\lam}{\lam_0}\right) + \sum_{j=1}^{t-1} \frac{k}{n-k+1}S^1_j(n,\lam)(1+\lam)
^{-j(k+1)}+  
\frac{n-k}{k+1}S^2_j(n,\lam)(1+\lam)
^{-j(k+1)}\nonumber\\
&=\sum_{j=1}^{t-1}R_j^1(n, \beta)(1-\beta)^{jk} + o(N^{-1}).
\end{align}

We now expand each term on the LHS of the above as a power series in $(1-\beta)^{k}$. As in the proof of \Cref{lem17}, we take $X\coloneqq \sum_{j=1}^q B^r_j(n, \beta) (1-\beta)^{j(k+1)+1}$, where $q=\lceil t/2\rceil -1$. Note that $X^t=e^{-\Omega(n)}N^{-1}$ since $\beta\geq 1-4^{-1/t}+\eps$. By Taylor expanding, we have
\beq{taylorseries}\ln\left(\frac{1+\lam}{1+\lam_0}\right)-\beta\ln\left(\frac{\lam}{\lam_0}\right)=\ln(1+X)-\beta\ln(1+X/\beta)=\sum_{i=1}^{t-1}\frac{(-1)^{i+1}}{i}(1-\beta^{1-i})X^i+o(N^{-1}).\enq

We also note that if $\ell=O(n)$, then
\begin{align}\label{tayloragain}
(1+\lam)^{-\ell}=
(1-\beta)^\ell \sum_{i=0}^{t-1} \binom{-\ell}{i} X^i + o\left(N^{-1} \right)\, .
\end{align}

Now for $r\in\{1,2\}$, each $S_j^r$ is a polynomial in $n$ and $\lam$ with $\deg_n(S_j^r)\leq 2j$ and $\deg_{\lam}(S_j^r)\leq 2j^2$. Since $\lam=(\beta+X)/(1-\beta)$, we may therefore write
\[S_j^r(n, \lam)=(1-\beta)^{-c_j}T_j^r(n, \beta, X)\]
for some non-negative integer $c_j=c_j^r\leq 2j^2$ and $T_j^r$ a polynomial in $n, \beta, X$ such that $\deg_n(T_j^r)\leq 2j$. Therefore by~\eqref{tayloragain},
\beq{polyfinal}\sum_{j=1}^{t-1}S_j^r(n, \lam)(1+\lam)^{-j(k+1)}=\sum_{j=1}^{t-1}T_j^r(n, \beta, X)(1-\beta)^{j(k+1)-c_j^{(r)}}\sum_{i=0}^{t-1}\binom{-j(k+1)}{i}X^i+o(N^{-1}).  \enq

To compute the coefficients of $R_j^1$, we substitute \eqref{taylorseries} and \eqref{polyfinal} into the LHS of~\eqref{eqRjdef}, expand the powers of $X=\sum_{j=1}^q B_j (1-\beta)^{j(k+1)+1}$ and collect the coefficients of $(1-\beta)^{k}, \dots, (1-\beta)^{(t-1)k}$. Finally, observe that $R_j^1$ can be computed in time $e^{O(j\log j)}$, since so can $S_j^r, T_j^r$ ($r\in\{1,2\}$) and $B^1_j$.
 \end{proof}

\section{The proofs of Theorems~\ref{dualthm1}-\ref{sparsesperner3}: the case of $n$ even}\label{seceven}
In this section we prove each of our main theorems in the case where $n$ is even.

\subsection{Theorem~\ref{saporefined2}}
We begin with Theorem~\ref{saporefined2}, which follows easily from the following lemma. Recall that when $n$ is even, $C_n=L_{[k-1,k+1]}$ and $k=n/2$.

 \begin{lemma}\label{lempolymeq} 
\[Z(C_n, \lam)\equiv(1+\lambda)^N\Xi_C\, .\]
In particular,
\[
i_m(C_n)=i_m^C\,
\]
for all $m\in[N]$.
 \end{lemma}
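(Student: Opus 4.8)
The plan is to show directly that the polymer configurations $\Lambda\in\Omega(\cP_C,\sim)$ are in weight-preserving bijection with antichains in $C_n=L_{[k-1,k+1]}$ (where $k=n/2$), after factoring out the contribution $(1+\lambda)^N$ coming from the free choice of a subset of the middle layer $L_k$. This is the even-$n$ specialisation of the correspondence already used in the proof of \Cref{lem:polymer-antichain}, but now it is an exact identity rather than an approximation, because when $n$ is even every $2$-linked subset $A$ of $L_{k-1}$ or $L_{k+1}$ automatically satisfies $|\partial(A)|\geq(1+1/n)|A|$ by \Cref{isoper} (parts 3 and 4, applied in both directions using the complementation symmetry of $B_n$), so $\cP_C$ really is the set of \emph{all} $2$-linked subsets of $L_{k-1}\cup L_{k+1}$ and no antichains are excluded.

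First I would set up the map. Given an antichain $S\subseteq C_n$, write $S=S_-\cup S_0\cup S_+$ according to the layers $L_{k-1},L_k,L_{k+1}$, and let $\Lambda(S)$ be the set of $2$-linked components (in the graph $(L_{k-1}\cup L_{k+1})^2$, equivalently via the $\partial$-adjacency) of $S_-\cup S_+$. The key observations are: (i) since $S$ is an antichain, $S_0$ is disjoint from $\partial(S_-\cup S_+)=\partial(\bar\Lambda)$ where $\bar\Lambda=\bigcup_{A\in\Lambda}A$; conversely any subset of $L_k\setminus\partial(\bar\Lambda)$ together with $\bar\Lambda$ forms an antichain; (ii) two components $A_1,A_2$ of $S_-\cup S_+$ are compatible in the polymer sense ($\partial(A_1)\cap\partial(A_2)=\emptyset$) — this needs a short argument: if $\partial(A_1)\cap\partial(A_2)\neq\emptyset$ then $A_1\cup A_2$ would be $2$-linked in $L_{k-1}\cup L_k\cup L_{k+1}$, contradicting that $A_1$, $A_2$ are distinct $2$-linked components. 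So $\Lambda(S)\in\Omega(\cP_C,\sim)$, and $S\mapsto(\Lambda(S),S_0)$ is a bijection from antichains in $C_n$ onto pairs $(\Lambda, U)$ with $\Lambda\in\Omega$ and $U\subseteq L_k\setminus\partial(\bar\Lambda)$.

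Next I would check the weight bookkeeping. Under this bijection $|S|=\sum_{A\in\Lambda}|A|+|U|$, so
\[
Z(C_n,\lam)=\sum_{\Lambda\in\Omega}\ \sum_{U\subseteq L_k\setminus\partial(\bar\Lambda)}\lam^{\sum_{A\in\Lambda}|A|}\lam^{|U|}
=\sum_{\Lambda\in\Omega}\lam^{\sum_{A\in\Lambda}|A|}(1+\lam)^{N-|\partial(\bar\Lambda)|},
\]
using $|L_k|=N$ and that $\sum_{U\subseteq T}\lam^{|U|}=(1+\lam)^{|T|}$. Since the polymers in $\Lambda$ are pairwise compatible, $|\partial(\bar\Lambda)|=\sum_{A\in\Lambda}|\partial(A)|$, so the summand equals $(1+\lam)^N\prod_{A\in\Lambda}\lam^{|A|}(1+\lam)^{-|\partial(A)|}=(1+\lam)^N\prod_{A\in\Lambda}w_C(A)$. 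Summing over $\Lambda\in\Omega$ gives $Z(C_n,\lam)=(1+\lam)^N\Xi_C$, which is the first claim. The ``in particular'' statement then follows by comparing coefficients: $\mu^C_\lam$ is, by its defining three-step process, exactly the hard-core measure $\mu_{C_n,\lam}$ on independent sets of $C_n$ (the bijection above is precisely the description of a sample from $\mu_{C_n,\lam}$ in terms of its restriction to $L_{k-1}\cup L_{k+1}$ and then a $\tfrac{\lam}{1+\lam}$-random subset of the remaining middle-layer slots), so the sets $A$ with $\mu^C_\lam(A)>0$ are exactly all antichains in $C_n$; hence $i_m^C=i_m(C_n)$ for every $m$.

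The only genuinely non-routine point — and the one I would be most careful about — is verifying that $\cP_C$ contains \emph{all} $2$-linked subsets of $L_{k-1}$ and of $L_{k+1}$, i.e.\ that the expansion condition $|\partial(A)|\geq(1+1/n)|A|$ is never binding when $n$ is even. For $A\subseteq L_{k+1}$ one applies \Cref{isoper}(3) to $N^-(A)$ going downward (equivalently, complement everything so that $L_{k+1}$ becomes $L_{k-1}$ in $B_n$ and $\partial$ becomes $N^+$), and for $A\subseteq L_{k-1}$ one applies \Cref{isoper}(3) directly since $k-1=n/2-1<\lfloor n/2\rfloor$; in both cases $\partial(A)\supseteq N^+(A)$ (or $N^-(A)$) has size at least $(1+1/n)|A|$. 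This is exactly the footnote remark in the definition of $\cP_C$, so it should be invoked rather than reproved, but it is the fact that makes the identity exact rather than asymptotic, so I would state it explicitly. Everything else is the straightforward counting above.
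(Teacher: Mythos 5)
Your proof is correct and follows essentially the same route as the paper: partition the antichains of $C_n$ according to their trace on $L_{k-1}\cup L_{k+1}$, sum the free choices in the middle layer to get the factor $(1+\lambda)^{N-|\partial(\cdot)|}$, and factor the weight over the pairwise-compatible $2$-linked components, noting that for even $n$ the expansion condition in the definition of $\cP_C$ is vacuous by \Cref{isoper}. One small inaccuracy: adjacency in $G_C^2$ is \emph{not} equivalent to $\partial$-adjacency (two vertices of $L_{k-1}$ can share a neighbour in $L_{k+1}$ without sharing one in $L_k$), but since your decomposition is in fact taken with respect to $\partial$-adjacency, i.e.\ $2$-linkedness through $L_k$ as in the definition of $\cP_C$, this does not affect the argument.
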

 \begin{proof}
Given an antichain $S\subseteq C_n$, let 
\[
\rho(S)= S\cap(L_{k-1} \cup L_{k+1})\, .
\]
We partition the collection of antichains $S\subseteq C_n$ according to the value of $\rho(S)$. Note that if $\rho(S)=A$, then $S\cap L_k$ can be any subset of $L_k\backslash \partial(A)$. We conclude that 
\begin{align*}
Z(C_n,\lam) 
=
\sum_{A\subseteq L_{k-1}\cup L_{k+1}}\sum_{\substack{S\subseteq C_n\\S\in \mathcal A,\,  \rho(S)=A}} \lam^{|S|}
=\sum_{A\subseteq L_{k-1}\cup L_{k+1}} \lam^{|A|}(1+\lam)^{N-|\partial(A)|}
=
(1+\lam)^N\sum_{A\subseteq L_{k-1}\cup L_{k+1}}  w_C(A)\, .
\end{align*}
The first claim follows by noting that if $A$ decomposes into the collection of mutually compatible polymers $\{A_1, \ldots, A_\ell\}$, then $w_C(A)=\prod_{i=1}^\ell w_C(A_i)$.
The second claim follows by noting that $Z(C_n,\lam)=\sum_{m=0}^Ni_m(C_n)\lam^m$ and (e.g. by \eqref{eqPAm}) $(1+\lambda)^N\Xi_C=\sum_{m=0}^Ni_m^C\lam^m$.
\end{proof}

In the case of $n$ even, Theorem \ref{saporefined2} now follows by combining Theorem \ref{saporefined} and Theorem \ref{clusterexpansion}, taking $\lam=1$ and setting $P_j^0(n)\coloneqq 2^{-j}S^0_j(n, 1)$. We remark that in order to obtain the desired error term $O\left(2^{-tn/2}n^{O(t)} \right)$, we need to sum the first $t+1$ terms in the sum in \Cref{clusterexpansion}.

Next we turn to Theorems~\ref{thm5} and \ref{sparsesperner3}. We will apply our results from \Cref{givensize}. Recall that in \Cref{givensize} we assumed that $\lam$ is bounded, which will translate to the case where $\beta$ in Theorems~\ref{thm5} and \ref{sparsesperner3} is bounded away from $1$. We treat the (simpler) case of $\beta$ close to $1$ separately in  \Cref{closeto1}.

\subsection{Theorem \ref{sparsesperner3} in the case of $\beta$ bounded away from $1$}
\label{sparsespernersection}
For concreteness, let us assume that $\beta \leq 9/10$.
Let $2C\log^2 n/\sqrt{n}\leq\beta\leq 9/10$ where $C$ is chosen sufficiently large so that Theorem~\ref{trunc} and Lemma~\ref{lem16} hold. Set $m=\beta N$. Our task is to show that $i_m(B_n)=(1+o(1))i_m(C_n)$. For this we will appeal to Lemma~\ref{transference}. 

As in \Cref{givensize}, let $\mathbf A$ denote a sample from $\mu_\lam^C$. Recall from \Cref{cor14} that 
\[\E |\mathbf A|=(1+o(1))\lam N/(1+\lam).\]
Since $\E |\mathbf A|$ is continuous in $\lam$, it follows that for $\lam\in (C\log^2 n/ \sqrt{n}, 10)$ the quantity $\mathbb{E}|\mathbf{A}|$ takes all values in the interval $\big[{2C\log^2 n} N/\sqrt{n}, 9N/10\big].$ There therefore exists a value of $\lam\in (C\log^2 n/ \sqrt{n}, 10)$ such that $\mathbb{E} | \mathbf{A}|=m.$

By Theorem~\ref{trunc}, for this choice of $\lam$ we have $Z(\lam)=Z(B_n,\lam)=\left(1+o(1/\sqrt{N})\right)Z(C_n,\lam)$. By Lemma~\ref{lem16}, we also have $i_m(C_n)\lam^m/Z(C_n, \lam)=\Omega\left(\frac{1}{\sqrt{\lam N}}\right),$ and so condition \eqref{lambda.cond} of Lemma~\ref{transference} holds. By Lemma~\ref{transference}, we obtain $i_m(B_n)=(1+o(1))i_m(C_n)$ as desired.

\subsection{Theorem \ref{thm5} in the case of $\beta$ bounded away from $1$}\label{sparsespernersection2}

As in the previous section, we assume that $\beta\leq 9/10$. 

Fix $t\geq 1$, $\eps>0$ and suppose $m\in[N]$ is such that $\beta=m/N\geq 1-4^{-1/t}+\eps$. By \Cref{sparsesperner3} and \Cref{lempolymeq}, $\psi(n,m)=i_m(B_n)=(1+o(1))i_m(C_n)=(1+o(1))i_m^C$. Theorem \ref{thm5} (for $\beta\leq 9/10$) now follows from \Cref{thm5pre}.

\subsection{Theorems~\ref{thm5} and \ref{sparsesperner3} in the case of $\beta$ close to $1$}\label{closeto1}
Suppose that $\lam\geq 9$. In this case, we may further truncate the partition function $Z(C_n, \lam),$ reducing it to the trivial partition function of the middle layer $Z(L_{n/2}, \lam)=(1+\lam)^N$. Indeed, Theorem \ref{clusterexpansion} (applied with $t=1$) and \Cref{lempolymeq} gives $Z(C_n, \lam)=(1+o(1/\sqrt{N}))(1+\lam)^N$ for $\lam\geq 8$. Therefore by \Cref{trunc},
\beq{1layer} Z(B_n, \lam)=\left(1+o(1/\sqrt{N})\right)Z(L_k, \lam)\, . \enq
To obtain the result of Theorem \ref{thm5}, we wish to apply Lemma~\ref{transference} with $\mathcal F_1$ and $\mathcal F_2$ the set of antichains in $L_k$ and $B_n$, respectively. Set $\beta=m/N$ and suppose that $\beta\geq 9/10$ and set $\gl:=\frac{\beta}{1-\beta}$ (so in particular $\lam\geq 9$). We claim that $\lam$  satisfies \eqref{lambda.cond} with $f(n)=o(1/\sqrt{N})$ as in~\eqref{1layer}, for which it suffices to show that $\binom{N}{m} \lam^m(1+\lam)^{-N} =\Omega( 1/\sqrt{N}).$ 

Recall the standard estimate $\binom{N}{m}\geq \frac{1}{\sqrt{2N}} 2^{H(\beta)N},$ where $H$ is the binary entropy function. For the above $\gl$  we obtain
\[ \frac{\binom{N}{m} \lam^m}{(1+\lam)^N} \geq \frac{1}{\sqrt{2N}} \left(1-\beta\right)^{-(1-\beta)N} \beta^{-\beta N} \left(\frac{\beta}{1-\beta}\right)^{\beta N} \left( 1-\beta\right)^N=\frac{1}{\sqrt{2N}}.\]

{Combining this with \eqref{1layer} and using Lemma~\ref{transference}, we see that $\psi(n,m)=i_m(B_n)=(1+o(1))i_m(L_k)=(1+o(1))\binom{N}{m}$ for all $m\geq  9N/10$. This is precisely the statement of Theorem \ref{thm5}, and Theorem \ref{sparsesperner3} in this regime follows immediately.}

\subsection{Theorem \ref{dualthm1}}
\label{phasetransitionanalysis}

\subsubsection{1-statement and behaviour inside the scaling window}
\label{1statement}
Let $\beta=m/N=\frac{3}{4}-\frac{\ln(n)}{4n} + \frac{h(n)}{n}$. Theorem \ref{thm5} shows that if $h(n)=\Omega(n)>0$ then $\psi(n,m)=i_m(B_n)=(1+o(1))\binom{N}{m}$. For the 1-statement and behaviour inside the scaling window we may therefore assume that $h(n)=o(n)$. Let $\lam$ be as in Lemma~\ref{lem17} applied with $t=2$, i.e. $\lam=\beta/(1-\beta)$. 
Combining~\eqref{eqs0s1},~\eqref{eq:psit1},~\eqref{eq:psit2} (again with $t=2$), and \Cref{lempolymeq} gives
\begin{align}\label{eq:psinm2}
\psi(n, m)= (1+o(1)) \binom{N}{m} \exp\left( \sum_{\Gamma\in \mathcal{C}, \|\Gamma\|=1} w_C(\Gamma) \right)\, .
\end{align}
We now turn our attention to computing the sum on the RHS.

Let $\Gamma\in \cC$ be a cluster of size 1. Then $\Gamma$ consists of precisely one polymer $A$ which has size 1, i.e. it is an element of $V(G_C)=L_{n/2-1}\cup L_{n/2+1}$. In particular, the incompatibility graph $G_\Gamma$ consists of a single vertex and so $\phi(G_\Gamma)=1$ and  $w_C(\Gamma)=w_C(A)=\lambda (1+\lambda)^{-\frac{n+2}{2}}$. There are $2\binom{n}{n/2+1}$ choices for $\Gamma$, and hence,
\begin{align}\label{1clusters}
    \sum\limits_{\|\Gamma \|=1} w_C(\Gamma) 
    &= 2\binom{n}{n/2+1} \lambda (1+\lambda)^{-\frac{n+2}{2}}\nonumber\\
    &=N  \frac{2\beta n}{(n+2)} {(1-\beta)^{\frac{n}{2}}}\\
     &=(1+o(1))\frac{2^n}{\sqrt{\pi n/2}}\cdot2\cdot \frac{3}{4}\left(\frac{1+\frac{\ln n}{n}-\frac{4h(n)}{n}}{4}\right)^{n/2}\nonumber\\
 &= (1+o(1))\frac{3}{\sqrt{2\pi n}} 
 \exp\left\{\frac{\ln n}{2}-2h(n)+O\left(\frac{(\ln n)^2+(h(n))^2}{n}\right)\right\}\nonumber\\
 &=(1+o(1))\frac{3}{\sqrt{2\pi }} 
 \exp\left\{-2h(n)(1+o(1))\right\}\, ,\nonumber
\end{align}
where for last two equalities we used the assumption that $h(n)=o(n)$.
Considering the cases $h(n)\to\infty$ and $h(n)\to c\in\mathbb R$ recovers the first two statements of Theorem \ref{dualthm1} in the case of $n$ even.

\begin{remark}\label{rem:R01}
Theorem \ref{thm5} shows that for $\beta=m/N\geq 1/2+\eps$,
\[
    \psi(n,m)=(1+o(1))\binom{N}{m}\exp \left( N R_1^{0}\left(n, \beta \right) \cdot \left(1-\beta\right)^{\frac{n}{2}} \right)\, .
    \]
    Comparing this to~\eqref{eq:psinm2} and~\eqref{1clusters} reveals that $R_1^{0}\left(n, \beta \right)=\frac{2\beta n}{n+2}$.
\end{remark}

\subsubsection{$0$-statement}
\label{0statement}
We now consider the case $h(n)\to-\infty$. First note that if $\beta=\Omega(1)$ then Theorem \ref{thm5} shows that
\begin{align}\label{eq:psinm3}
    \psi(n,m)=(1+o(1))\binom{N}{m}\exp \left((1+o(1)) N R_1^{\cE}\left(n, \beta \right) \cdot \left(1-\beta\right)^{\frac{n}{2}} \right)\, .
    \end{align}
    If $h(n)\to-\infty$ then $N R_1^{\cE}\left(n, \beta \right) \cdot \left(1-\beta\right)^{\frac{n}{2}}\to \infty$ and the result follows. 
    
    We may therefore assume that $\beta=m/N=o(1)$. In this case we proceed with a more direct argument. As before, we let $k=n/2$. 
    
    We consider first the case where $m=\omega(1)$. In this case it suffices to count the number of antichains of size $m$ that have one element in $L_{k-1}$ and the other $m-1$ in $L_k$. The number of such antichains is $\binom{n}{k-1} \binom{N-k-1}{m-1},$ and we wish to prove that this is much bigger than $\binom{N}{m}.$ We have:

\beq{binoms} \frac{\binom{n}{k-1}\binom{N-k-1}{m-1}}{\binom{N}{m}}=(1+o(1))\frac{N\binom{N-k-1}{m-1}}{\binom{N}{m}}=(1+o(1))m\prod_{i=1}^{m-1}\frac{N-k-i}{N-i}. \enq

Now, when $m\leq k$ this can be lower bounded by $(1+o(1))m \left(\frac{N-k}{N}\right)^m\geq (1+o(1))me^{-\frac{mk}{N}}=(1+o(1))m=\omega(1)$. For $m> k$, instead note that the product on the right hand side of \eqref{binoms} telescopes and thus the above expression equals
\[ (1+o(1))m\prod_{i=1}^{k} \frac{N-m+1-i}{N-i} \geq (1+o(1))m \left( \frac{N-m}{N}\right)^k\geq (1+o(1))me^{-\frac{mk}{N}}=\omega(1), \]
where the last equality holds since $m=o(N)$ and so in particular $m/\ln m=o(N/\ln N)=o(N/k)$.

Finally, it remains to deal with the case of $m=O(1)$. Here, our previous approach doesn't work. Indeed, a constant proportion of the antichains of size $m$ in $L_{k-1}\cup L_k$ are contained entirely in $L_k$. We will instead count \textit{all} antichains of size $m$ in $B_n$.

Let $u$ and $v$ be two elements of $B_n$ chosen independently and uniformly at random. {Recall that we say $u$ and $v$ are adjacent in $B_n$ if and only if $u\subsetneq v$ or $v\subsetneq u$, and we write $u\sim v$.} Conditioning on $|u|=\ell,$ the probability that $u$ and $v$ are {either adjacent in $B_n$ or equal} is precisely $\frac{2^\ell + 2^{n-\ell}{-1}}{2^n}.$ Dropping the conditioning, we have
\[ \mathbb{P}(u\sim v {\text{ or } u=v})= \sum_{\ell=0}^{n} \frac{\binom{n}{\ell}}{2^n}\times\frac{2^\ell+ 2^{n-\ell}{-1}}{2^n}=2\cdot\left(\frac{3}{4}\right)^n{-\left(\frac{1}{2}\right)^n=o(1)}. \]

Now sample $m=O(1)$ distinct elements $u_1, \dots, u_m$ independently and uniformly at random from $B_n.$ {For $1\leq i<j\leq m$, let $E_{i, j}$ denote the event `$u_i\nsim u_j$ and $u_i \neq u_j$'. Observe that the event $\bigcap_{i,j} E_{i, j}$ is precisely the event that $\{ u_1, \dots u_m\}$ is an antichain of size $m$. But by the above, we have $\mathbb{P}(E_{i, j})=1-o(1)$ for all $i, \ j$. Since there are only $\binom{m}{2}=O(1)$ events $E_{i, j}$, we have} 

\beq{bddsize} \mathbb{P}\left(\{ u_1, \dots, u_m\} \text{ is an antichain}\right)=1-o(1). \enq

Let $U$ be the random variable with distribution equal to that of $\{u_1, \dots, u_m\}$ conditioned on the event that $\{u_1, \dots, u_m\}$ is an antichain {of size $m$}. We clearly have that $U$ is uniformly distributed over the set of all antichains of size $m$. On the other hand, $\mathbb{P}(\{ u_1, \dots , u_m\} \subseteq L_k)=o(1),$ and therefore by \eqref{bddsize}, $\mathbb{P}(U\subseteq L_k)=o(1)$ too. This concludes the proof. 

\begin{remark*}
    The computations in \eqref{bddsize} hold for $m=o((4/3)^{n/2})$, and hence we may conclude that in this regime $\psi(n,m)=(1+o(1))\displaystyle\binom{2^n}{m}.$
\end{remark*}

\section{The proofs of Theorems~\ref{dualthm1}-\ref{sparsesperner3}: the case of $n$ odd}\label{secodd}
In this section we prove Theorems~\ref{dualthm1}-\ref{sparsesperner3} in the case where $n$ is odd. Throughout this section we set $k=\lceil n/2 \rceil=(n+1)/2$ and $\lam>C \log^2 n/\sqrt{n}$ where $C$ is a sufficiently large constant. 

Recall that $C_n$ is the union of the four consecutive layers $L_{k-2}, L_{k-1}, L_{k}, L_{k+1}$ and by Theorem~\ref{trunc} the partition function $Z(\lam)$ is closely approximated by $Z(C_n,\lam)$. 

Our first aim is to establish an analog of \Cref{lempolymeq} (\Cref{defapprox} below), relating $Z(C_n,\lam)$ to a suitable polymer model partition function.

The main complication arising from the $n$ odd case is that there are now \emph{two} maximum antichains $L_k$ and $L_{k-1}$, or in physics terminology two \emph{ground states}. It will turn out that almost all of the contribution to $Z(C_n,\lam)$ comes from antichains that are `close' to a subset of $L_{k}$ or $L_{k-1}$. Contrast this to the case where $n$ is even, where almost all of the contribution to $Z(C_n,\lam)$ comes from antichains that are close to a subset of the (unique) middle layer. 

To deal with this, we show that the measure $\mu_{C_n,\lam}$ (the hard-core measure on $C_n$ at activity $\lam$ defined at~\eqref{eqLocalId0}) is well-approximated by a mixture of two measures: one on antichains in $L_{[k-1,k+1]}$ and one on antichains in $L_{[k-2,k]}$. The measure on antichains in $L_{[k-1,k+1]}$ will concentrate on antichains that are close to a subset of $L_k$ and the measure on antichains in $L_{[k-2,k]}$ will concentrate on antichains that are close to a subset of $L_{k-1}$. The measure on antichains in $L_{[k-1,k+1]}$ will be precisely the measure $\mu^C_{\lam}$ introduced in \Cref{givensize}. The measure on antichains $L_{[k-2,k]}$ will equivalent under the symmetry $B_n\to B_n$, $A \mapsto A^c$.

We note that a similar strategy was carried out in~\cite{hypercube} to analyse independent sets in the hypercube: there it is shown that the hard-core measure on the discrete hypercube $Q_n$ is well-approximated by a mixture of measures that favour odd/even vertices of $Q_n$. This idea is also present in the earlier work of Korshunov and Sapozhenko~\cite{korshunov1983number} and Galvin~\cite{galvin} on independent sets in $Q_n$.

\subsection{Approximating $Z(C_n,\lam)$ by a polymer model partition function}\label{odd}

Write $\cL= L_{k-1}$, $\cU= L_{k}$ (for `lower' and `upper' ground states respectively).

In this section we introduce two polymer models, $(\cP_\cL,\sim,w_\cL)$ and $(\cP_\cU,\sim,w_\cU)$, which are designed to capture antichains in $C_n$ which are `close' to a subset of $\cL$ or $\cU$. In the next section, we perform one more `truncation step' which shows that the main contribution to $Z(C_n,\lam)$ comes from antichains that are a subset of either  $L_{[k-1,k+1]}$ or $L_{[k-2,k]}$.

Given $A\subseteq \cL$, let $\Int(A)=\{ v\in L_{k-2}: N^+(v)\subseteq A\}$. For $i \in \{k-1, k+1\}$, $\cP_\cL$ consists of the sets $A \sub L_i$ which are i) 2-linked in the graph $L_i \cup L_k$; and ii) $|\partial(A)| \ge (1+1/n)|A|$, recalling that $\partial(A)=\{x \in \cU:\exists v \in A, x \subseteq v \text{ or } v \subseteq x\}$.
We say $A_1, A_2\in \cP_\cL$ are compatible, written $A_1\sim A_2$, if and only if $\partial(A_1) \cap \partial(A_2)=\emptyset$.
For $A\in \cP_\cL$, set
\[
w_\cL(A)=
\begin{cases}
\lambda^{|A|}(1+\lambda)^{-|N^-(A)|}
& \text{ if $A \subseteq L_{k+1}$,} \\
\lambda^{|A|}(1+\lambda)^{-|N^+(A)|}\sum\limits_{B\subseteq \Int(A)} \lambda^{|B|-|N^+(B)|} & \text{if $A \subseteq \cL$ .} 
\end{cases}
\]

As a look ahead, it will be useful to compare this polymer model to the polymer model $(\cP_C, \sim, w_C)$ defined in~\Cref{secthreelayer}. Indeed we have $\cP_C=\cP_\cL$ and $w_C(A)\leq w_\cL(A)$ for all polymers $A$. In the next section we will `reduce' the study of $(\cP_\cL,\sim,w_\cL)$\index{$(\cP_\cL,\sim,w_\cL)$} to the study of the (simpler) polymer model $(\cP_C, \sim,w_C)$.

We define a polymer model $(\cP_\cU, \sim, w_\cU)$\index{$(\cP_\cU, \sim, w_\cU)$} analogously: given $A\subseteq \cU$, let $\Int(A)=\{ v\in L_{k+1}: N^-(v)\subseteq A\}$. For $i \in \{k-2, k\}$, $\cP_\cU$ consists of the sets $A \sub L_i$ which are i) 2-linked in the graph $L_i \cup L_k$; and ii) $|\partial(A)| \ge (1+1/n)|A|$, where, this time, we use $\partial(A):=\{x \in \cL: \exists v \in A, x \subseteq v \text{ or } v \subseteq x\}$.
We say $A_1, A_2\in \cP_\cU$ are compatible, written $A_1\sim A_2$, if and only if $\partial(A_1) \cap \partial(A_2)=\emptyset$. 
For $A\in \cP_\cU$, set
\[
w_\cU(A)=
\begin{cases}
\lambda^{|A|}(1+\lambda)^{-|N^+(A)|}
& \text{ if $A \subseteq L_{k-2}$,} \\
\lambda^{|A|}(1+\lambda)^{-|N^-(A)|}\sum\limits_{B\subseteq \Int(A)} \lambda^{|B|-|N^-(B)|} & \text{if $A \subseteq \cU$.} 
\end{cases}
\]

Recall that for $\cD\in\{\cL,\cU\}$, $\Xi(\cP_\cD, \sim, w_\cD)$\index{$(\cP_\cD, \sim, w_\cD)$} denotes the polymer model partition function (defined at~\eqref{defpolypartfun}).
Note that by symmetry $\Xi(\cP_\cL, \sim, w_\cL)=\Xi(\cP_\cU, \sim, w_\cU)$ and so we denote the partition function by $\Xi$. Let $\Omega_\cD=\Omega(\cP_\cD,\sim)$\index{$\Omega_\cD$} denote the set of 
collections of compatible polymers in $\cP_\cD$. Let $\nu_\cD$ denote the measure on $\Omega_\cD$ given by
\begin{equation*}
    \nu_\cD(\Lam)=\frac{\prod_{A\in \Lam}w_\cD(A)}{\Xi} \text{\,  for\,  } \Lam\in \Omega_\cD.
\end{equation*}

It will be convenient to introduce the following measure on antichains in $C_n$ which will closely approximate the hard-core model on $C_n$ at activity $\lam$ (see \Cref{defapprox} below) but is more amenable to analysis. Let $\hat \mu_\lam$\index{$\hat \mu_\lam$} denote the distribution of the random antichain produced by the following process:

\begin{enumerate}
\item Select $\cD\in\{\cL,\cU\}$ uniformly at random. \label{hat1}
\item Select $\Lambda$ according to $\nu_\cD$ and let $\bar\Lam=\bigcup_{A\in \Lambda}A$.\label{hat2}
\item If $\cD=\cL$, then select $W\subseteq \Int(\bar\Lam \cap \cL)$ with probability proportional to $\lam^{|W|-|N^{+}(W)|}$. 
If $\cD=\cU$, then select $W\subseteq \Int(\bar\Lam\cap \cU)$ with probability proportional to $\lam^{|W|-|N^{-}(W)|}$.\label{hat3}
\item If $\cD=\cL$, then let $U$ be a $\frac{\lam}{1+\lam}$-random subset of $\cU\backslash N(\bar\Lam)$. If $\cD=\cU$, then let $U$ be a $\frac{\lam}{1+\lam}$-random subset of $\cL\backslash N(\bar\Lam)$.\label{hat4}
\item Output the antichain $S=(\bar\Lam\backslash N(W))\cup W \cup U$.
\end{enumerate}

We say that an antichain $S$ is \emph{captured by $\hat \mu_\lam$ under $\cL$} (resp. $\cU$) if there is a positive probability that $S$ is selected by the above process given that $\cL$ (resp. $\cU$) is selected in Step~\ref{hat1}. Moreover we let $c(S)$ ($\in\{0, 1, 2\}$) denote the number of $\cD\in\{\cL,\ \cU\}$ such that $S$ is captured  by $\hat \mu_\lam$ under $\cD$.

We record the following observation about the measure $\hat \mu_\lam$.

\begin{lemma}\label{lem:cap}
Let $\cD$ denote the state chosen at Step~\ref{hat1} in the definition of $\hat \mu_\lam$. We have
\begin{align}\label{eqcapture0}
    \hat \mu_\lam(S \mid \cD=\cL)= \frac{\lam^{|S|}}{ (1+\lam)^{N} \Xi}\, \mathbf{1}_{\{S \text{  captured under $\cL$}\}} \, \, 
\end{align}
and similarly with $\cL$ replaced by $\cU$. In particular,
\begin{align}\label{eqcapture}
    \hat \mu_\lam(S)= c(S) \frac{\lam^{|S|}}{2 (1+\lam)^{N} \Xi}\, .
\end{align}
Moreover, $c(S)\geq 1$ for all antichains $S\subseteq C_n$.
\end{lemma}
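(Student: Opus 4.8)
The plan is to unwind the five-step definition of $\hat\mu_\lam$ and reorganize the sum over outcomes. First I would fix an antichain $S\subseteq C_n$ and compute $\hat\mu_\lam(S\mid \cD=\cL)$ directly. Conditioned on $\cD=\cL$, the output is $S=(\bar\Lam\setminus N(W))\cup W\cup U$, so $S$ is captured under $\cL$ precisely when $S$ can be decomposed in this form for some legal choice of $\bar\Lam$ (a union of pairwise compatible polymers in $\cP_\cL$), some $W\subseteq\Int(\bar\Lam\cap\cL)$, and some $U\subseteq \cU\setminus N(\bar\Lam)$. The key structural observation — analogous to the map $\rho$ used in the proofs of Lemma~\ref{lem:polymer-antichain} and Lemma~\ref{lempolymeq} — is that this decomposition is \emph{unique}: given $S$, one recovers $\bar\Lam\cap\cL = (S\cap\cL)\cup N^+(S\cap L_{k-2})$, then $W = S\cap L_{k-2}$, then $\bar\Lam\cap L_{k+1}=S\cap L_{k+1}$, and finally $U = S\cap\cU$. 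One then checks that the resulting $\bar\Lam$ indeed splits into a legal collection of polymers in $\cP_\cL$ iff $S$ is captured under $\cL$ (this is where the definition of $\cP_\cL$ and the $2$-linked decomposition enter, exactly as the multiplicativity $w_\cL(A)=\prod_i w_\cL(A_i)$ was used before).

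With uniqueness of the decomposition in hand, the conditional probability factorizes. Step~\ref{hat2} contributes $\nu_\cL(\Lam)=\prod_{A\in\Lam}w_\cL(A)/\Xi$ summed over the $2$-linked components $\Lam$ of $\bar\Lam$, which equals $\big(\prod_{A\subseteq\cL\text{ component}}\lam^{|A|}(1+\lam)^{-|N^+(A)|}\sum_{B\subseteq\Int(A)}\lam^{|B|-|N^+(B)|}\big)\cdot\big(\prod_{A\subseteq L_{k+1}\text{ component}}\lam^{|A|}(1+\lam)^{-|N^-(A)|}\big)/\Xi$. Step~\ref{hat3} contributes $\lam^{|W|-|N^+(W)|}$ divided by the normalizing constant $\sum_{W'\subseteq\Int(\bar\Lam\cap\cL)}\lam^{|W'|-|N^+(W')|}$, and crucially this normalizer is exactly the product of the $\sum_{B\subseteq\Int(A)}$ factors appearing in $w_\cL$ over the components $A\subseteq\cL$, so it cancels. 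Step~\ref{hat4} contributes $\big(\tfrac{\lam}{1+\lam}\big)^{|U|}\big(\tfrac{1}{1+\lam}\big)^{|\cU\setminus N(\bar\Lam)|-|U|} = \lam^{|U|}(1+\lam)^{-|\cU\setminus N(\bar\Lam)|}$. Now I collect powers of $\lam$: the exponents are $|\bar\Lam\cap\cL| + |\bar\Lam\cap L_{k+1}| + (|W|-|N^+(W)|) + |U|$, and since $S\cap\cL = (\bar\Lam\cap\cL)\setminus N^+(W)$ while $|N^+(W)|$ is the contribution removed, and $S\cap L_{k-2}=W$, $S\cap L_{k+1}=\bar\Lam\cap L_{k+1}$, $S\cap\cU=U$, these telescope to exactly $|S|$. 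For the powers of $(1+\lam)$: the factors $(1+\lam)^{-|N^+(A)|}$ over components $A\subseteq\cL$ multiply to $(1+\lam)^{-|N^+(\bar\Lam\cap\cL)|} = (1+\lam)^{-|\partial(\bar\Lam\cap\cL)\cap\cU|}$, those over components $A\subseteq L_{k+1}$ give $(1+\lam)^{-|N^-(\bar\Lam\cap L_{k+1})|}$, and Step~\ref{hat4} gives $(1+\lam)^{-(N-|N(\bar\Lam)\cap\cU|)}$; since $N(\bar\Lam)\cap\cU$ is the disjoint union of $N^+(\bar\Lam\cap\cL)$ and $N^-(\bar\Lam\cap L_{k+1})$ (disjoint because $\bar\Lam$ is a union of compatible polymers, so the shadows onto $\cU$ are disjoint), the total exponent of $(1+\lam)$ is $-N$. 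Dividing by $\Xi$ from Step~\ref{hat2} yields $\hat\mu_\lam(S\mid\cD=\cL)=\dfrac{\lam^{|S|}}{(1+\lam)^N\Xi}\,\mathbf 1_{\{S\text{ captured under }\cL\}}$, which is~\eqref{eqcapture0}. The statement for $\cU$ follows by the complementation symmetry $A\mapsto A^c$, and~\eqref{eqcapture} follows by averaging over the uniform choice in Step~\ref{hat1}, since $c(S)=\mathbf 1_{\{\text{captured under }\cL\}}+\mathbf 1_{\{\text{captured under }\cU\}}$.

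Finally, to see $c(S)\geq 1$ for every antichain $S\subseteq C_n$: I would show $S$ is captured under at least one of $\cL,\cU$. Decompose $S = (S\cap L_{k-2})\cup(S\cap L_{k-1})\cup(S\cap L_k)\cup(S\cap L_{k+1})$. The natural candidate for $\bar\Lam$ in the $\cL$-decomposition is $\big((S\cap\cL)\cup N^+(S\cap L_{k-2})\big)\cup(S\cap L_{k+1})$; the only thing that can fail is the polymer condition $|\partial(A)|\geq(1+1/n)|A|$ on some $2$-linked component $A$ of this set. Here I would invoke Proposition~\ref{isoper}: for $A\subseteq L_{k+1}$ this is automatic (part~3 applied under complementation, or the footnote remark), and for components $A$ meeting $\cL=L_{k-1}$ it holds unless $A$ is `large' relative to $L_{\lfloor n/2\rfloor}$. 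If every component satisfies the expansion bound, $S$ is captured under $\cL$ and we are done; otherwise the offending large component forces the `mass' of $S$ to sit predominantly near $\cL$ in a way that makes the symmetric $\cU$-decomposition valid — and by the size constraints in Proposition~\ref{isoper}(4) it cannot fail for \emph{both} $\cL$ and $\cU$ simultaneously, since a single antichain cannot have a component exceeding half of $L_{\lfloor n/2\rfloor}$ on both sides. I expect this last point — pinning down precisely why at least one of the two ground-state decompositions always succeeds — to be the only genuinely delicate step; the bulk of the proof is the bookkeeping of exponents described above, which is routine once the uniqueness of the decomposition is established.
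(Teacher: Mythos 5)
Your computation of the conditional law is correct and essentially identical to the paper's proof: you use the same unique decomposition (the paper writes $S'=N^+(S_{k-2})\cup S_{k-1}$ for your $\bar\Lam\cap\cL$), the same cancellation of the Step~3 normalizer against the $\sum_{B\subseteq\Int(A)}\lam^{|B|-|N^+(B)|}$ factors in $w_\cL$, and the same bookkeeping giving exponent $|S|$ for $\lam$ and $-N$ for $(1+\lam)$; averaging over Step~1 then gives \eqref{eqcapture}. So \eqref{eqcapture0} and \eqref{eqcapture} are fine.

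The gap is in the final claim $c(S)\geq 1$, which you explicitly defer as the "delicate step" and for which your sketch does not work as stated. The obstruction to capture under $\cL$ is a $2$-linked component of $S'=N^+(S\cap L_{k-2})\cup(S\cap L_{k-1})$ failing the expansion condition, and under $\cU$ a component of $S''=N^-(S\cap L_{k+1})\cup(S\cap L_{k})$ failing it; your assertion that "a single antichain cannot have a component exceeding half of $L_{\lfloor n/2\rfloor}$ on both sides" concerns these \emph{augmented} sets, not $S$ itself, and needs a proof using the antichain relations across all four layers (e.g.\ ruling out $x\subset v\subset u\subset w$ with $x\in S_{k-2}$, $w\in S_{k+1}$). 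The paper's argument is short but essential: take a perfect matching between $L_{k-1}$ and $L_k$ (they have equal size $N$ since $n$ is odd) and observe that for a matched pair $v\subset u$, the antichain property forbids $v\in S'$ and $u\in S''$ simultaneously (in each of the four cases one produces a comparable pair inside $S$). Hence $|S'|+|S''|\leq N$, so one of them has size at most $N/2$, and then Proposition~\ref{isoper}(4) (applied after complementation on the $\cU$ side) gives the $(1+1/n)$ expansion for every $2$-linked component of that set, so $S$ is captured under the corresponding ground state. Without this counting step your proof of the last assertion is incomplete.
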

\begin{proof}
Let $\cD$ denote the state chosen at Step~\ref{hat1} in the definition of $\hat \mu_\lam$. 
Suppose that $S$ is captured by $\hat \mu_\lam$ under $\cL$ and for $i\in [k-2,k+1]$ let $S_i=S\cap L_i$. Let $S'=N^{+}(S_{k-2})\cup S_{k-1}$ and let $\Lambda$ denote the collection of $2$-linked components of $S'$ (in the graph $L_{k-1} \cup L_k$) and 2-linked components of $S_{k+1}$ (in the graph $L_k \cup L_{k+1}$).
By the definition of $\hat \mu_\lam$, we have that
\begin{align*}
\hat\mu_\lam\left(S\mid \cD=\cL \right)
&= 
\frac{\prod_{A\in \Lambda} w_{\cL}(A)}{\Xi}
\cdot \frac{\lam^{|S_{k-2}|-|N^{+}(S_{k-2})|}}{\sum\limits_{B \subseteq \Int(S')} \lambda^{|B|-|N^+(B)|}}
\cdot
\frac{\lam^{|S_{k}|}}{(1+\lam)^{N-|N^{+}(S')|-|N^-(S_{k+1})|}}\, .
\end{align*}
Also,
\[\begin{split}
\prod_{A\in \Lambda} w_{\cL}(A)&=\prod_{A \in \Lambda, A \sub L_{k+1}} \lambda^{|A|}(1+\gl)^{-|N^-(A)|} \prod_{A \in \Lambda, A \sub \cL} \left(\gl^{|A|}(1+\gl)^{-|N^+(A)|}\sum_{B \sub \Int(A)}\gl^{|B|-|N^+(B)|}\right)\\
&= \lam^{|S'|+|S_{k+1}|} (1+\lam)^{-|N^+(S')|-|N^-(S_{k+1})|}\sum\limits_{B \subseteq \Int(S')} \lambda^{|B|-|N^+(B)|}
\end{split}\]
(note that, in the last equality, we are using the fact that if $B \sub {\rm Int}(S')$, then $B$ can be uniquely partitioned to $B_1, \ldots, B_\ell$ so that $B_i \sub {\rm Int}(A_i)$ for some $A_i \in \Lambda$ where, for all $i=1,\ldots, \ell$, $A_i$ are all distinct and $A_i \sub \cL$). Combining the above two equations, noting that $|S'|+|S_{k+1}|+|S_{k-2}|-|N^{+}(S_{k-2})|=|S|$, we have
\[
\hat \mu_\lam\left(S\mid \cD=\cL \right)= \frac{\lam^{|S|}}{(1+\lam)^N \Xi}\, ,
\]
as desired.
Similarly if $S$ is captured by $\hat \mu_\lam$ under $\cU$ then $\hat \mu_\lam\left(S\mid \cD=\cU \right)= \lam^{|S|}(1+\lam)^{-N}\Xi^{-1}$. Statement~\eqref{eqcapture} follows by noting that $\hat \mu_\lam(S)=\tfrac{1}{2}\hat \mu_\lam\left(S\mid \cD=\cL \right)+ \tfrac{1}{2}\hat \mu_\lam\left(S\mid \cD=\cU \right)$.

For the final claim, suppose now that $S\subseteq C_n$ is an antichain. Since the graph between layers $\cL=L_{k-1}$ and $\cU=L_{k}$ contains a perfect matching and $S$ is an antichain, we must have that either $|N^{+}(S_{k-2})\cup S_{k-1}|\leq N/2$ or $|N^{-}(S_{k+1})\cup S_{k}|\leq N/2$. By \Cref{isoper}, it follows that either $|N^{+}(A)|\geq (1+1/n)|A|$ for all $2$-linked components $A$ of $N^{+}(S_{k-2})\cup S_{k-1}$ or $|N^{-}(A)|\geq (1+1/n)|A|$ for all $2$-linked components of $N^{-}(S_{k+1})\cup S_{k}$ and so $S$ is captured by $\hat \mu_\lam$ under $\cL$ or $\cU$.
\end{proof}

We now aim to establish the following analogue of Lemma~\ref{lempolymeq}.  Recall that we let $N=\binom{n}{\lceil n/2 \rceil}=\binom{n}{k}$ and we let $\mu_{C_n,\lam}$ denote the hard-core measure on $C_n$ at activity $\lam$.

\begin{lemma}
    \label{defapprox}
    \[ \left| \ln  Z(C_n,\lam) - \ln \left[ 2(1+\lambda)^{N} \Xi \right] \right| = e^{-\Omega(N/n^5)}\, , \]
    and
    \[
    \|\hat \mu_\lam-\mu_{C_n,\lam}\|_{TV}= e^{-\Omega(N/n^5)}\, .
    \]
\end{lemma}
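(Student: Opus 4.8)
The plan is to show that $\hat\mu_\lam$ is, up to an exponentially small error, a legitimate probability measure supported on antichains in $C_n$, and that the partition function $Z(C_n,\lam)=\sum_{S\subseteq C_n,\, S\in\cA}\lam^{|S|}$ is captured, again up to exponentially small error, by $2(1+\lam)^N\Xi$. The two assertions of the lemma will then follow from a short computation together with \Cref{transference}. The crucial quantitative input is already available: \Cref{lem:cap} tells us that $\hat\mu_\lam(S)=c(S)\,\lam^{|S|}/\bigl(2(1+\lam)^N\Xi\bigr)$ for every antichain $S\subseteq C_n$, with $c(S)\in\{1,2\}$, and that $c(S)\geq 1$ always. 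The only antichains for which $\hat\mu_\lam$ differs from the `ideal' measure $S\mapsto \lam^{|S|}/\bigl((1+\lam)^N\Xi\bigr)$ are therefore exactly those with $c(S)=2$, i.e.\ those captured under \emph{both} $\cL$ and $\cU$.

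First I would bound the total weight of the antichains with $c(S)=2$. An antichain $S$ is captured under both ground states only if $N^+(S_{k-2})\cup S_{k-1}$ has all its $2$-linked components expanding by a factor $1+1/n$ into $\cU$ \emph{and} $N^-(S_{k+1})\cup S_k$ has all its $2$-linked components expanding into $\cL$; by the matching argument in the proof of \Cref{lem:cap} this forces both $|N^+(S_{k-2})\cup S_{k-1}|$ and $|N^-(S_{k+1})\cup S_k|$ to be at most $N/2$, hence $|S\cap(\cL\cup\cU)|\le |S_{k-1}|+|S_k|\le N$ but more usefully both $|S_{k-1}\cup N^+(S_{k-2})|\le N/2$ and its counterpart hold simultaneously. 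I will argue that such configurations are rare: writing $S$ in the polymer language for $\cL$, the $\cU$-layer part $S_k$ must avoid $N(\bar\Lam)$ and must itself, together with $N^-(S_{k+1})$, have good expansion into $\cL$; combining Lemma~\ref{largedeviation} (the analogue for the models $(\cP_\cD,\sim,w_\cD)$, whose cluster expansions converge for the same reason as $(\cP_C,\sim,w_C)$, since $w_\cD(A)\le w(A)$ with $w$ as in~\eqref{eqwDef}) with the isoperimetric estimate of \Cref{isoper}(4), one gets that the conditional $\hat\mu_\lam$-probability of $\{c(S)=2\}$ is $e^{-\Omega(N/n^5)}$. Concretely: if $S$ is captured under $\cL$ then with probability $1-e^{-\Omega(N/n^5)}$ (over the $\cL$-process) we have $|\bar\Lam|\le N/n^2$ and the subset $U\cup W$ of $\cU$ has size $\tfrac{\lam}{1+\lam}N + o(N)$, which is more than $N/2$ when $\lam>C\log^2 n/\sqrt n$ is... wait — $\lam$ may be small. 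So instead I bound the number of elements of $\cU$ in $S$ from \emph{below} by showing that a typical $S$ captured under $\cL$ has $S_k$ of size $\Omega(N)$ but, more to the point, has $N^-(S_{k+1})\cup S_k$ with a non-expanding $2$-linked component, via the container result/expansion dichotomy, with probability $1-e^{-\Omega(N/n^5)}$. This step — showing the overlap set $\{c(S)=2\}$ has exponentially small measure — is the main obstacle, and it is essentially the `only one ground state is seen' phenomenon from~\cite{hypercube}; I expect to adapt the argument there (and Lemma~\ref{largedeviation}) rather than invent anything new.

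Granting that, summing $\hat\mu_\lam(S)=c(S)\lam^{|S|}/(2(1+\lam)^N\Xi)$ over all antichains $S\subseteq C_n$ and using $c(S)\in\{1,2\}$ gives
\[
1=\sum_{S\subseteq C_n,\,S\in\cA}\hat\mu_\lam(S)=\frac{1}{2(1+\lam)^N\Xi}\sum_{S}c(S)\lam^{|S|}
=\bigl(1+e^{-\Omega(N/n^5)}\bigr)\frac{Z(C_n,\lam)}{2(1+\lam)^N\Xi}\, ,
\]
where the last equality holds because $\sum_S c(S)\lam^{|S|}=Z(C_n,\lam)+\sum_{S:\,c(S)=2}\lam^{|S|}$ and the overlap sum is an $e^{-\Omega(N/n^5)}$ fraction of $Z(C_n,\lam)$ by the previous paragraph (note also every antichain in $C_n$ has $c(S)\ge1$, so no antichain is missed). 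Rearranging yields $Z(C_n,\lam)=\bigl(1+e^{-\Omega(N/n^5)}\bigr)2(1+\lam)^N\Xi$, which is the first displayed estimate after taking logarithms (using $|\ln(1+x)|\le 2|x|$ for small $x$).

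For the total variation bound, let $\tilde\mu_\lam$ be the sub-probability measure $S\mapsto \lam^{|S|}/\bigl(2(1+\lam)^N\Xi\bigr)$ on antichains in $C_n$; then $\hat\mu_\lam(S)-\tilde\mu_\lam(S)=(c(S)-1)\tilde\mu_\lam(S)\ge0$, supported on $\{c(S)=2\}$, so $\hat\mu_\lam$ and $2\tilde\mu_\lam$ differ in total variation by exactly $\tilde\mu_\lam(\{c(S)=2\})=e^{-\Omega(N/n^5)}$. On the other hand $2\tilde\mu_\lam$ is proportional to $\lam^{|S|}$ on antichains in $C_n$, i.e.\ it is $\mu_{C_n,\lam}$ up to the normalisation $Z(C_n,\lam)/\bigl(2(1+\lam)^N\Xi\bigr)=1+e^{-\Omega(N/n^5)}$; applying \Cref{transference} with $\cF_1=\cF_2$ the antichains in $C_n$ and the two activities' normalisations (equivalently, directly estimating $\|\,\mu_{C_n,\lam}-2\tilde\mu_\lam\|_{TV}=e^{-\Omega(N/n^5)}$ from $Z(C_n,\lam)=(1+e^{-\Omega(N/n^5)})\cdot 2(1+\lam)^N\Xi$) and the triangle inequality gives $\|\hat\mu_\lam-\mu_{C_n,\lam}\|_{TV}=e^{-\Omega(N/n^5)}$, as required.
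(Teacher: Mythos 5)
Your reduction of the lemma to a bound on the overlap event $\{c(S)=2\}$ is exactly the paper's structure: given \Cref{lem:cap}, the identity $\sum_S c(S)\lam^{|S|}=2(1+\lam)^N\Xi$ together with $c(S)\ge 1$ yields the partition function estimate, and the total variation bound follows because $\hat\mu_\lam$ can only exceed $\mu_{C_n,\lam}$ on antichains with $c(S)=2$; these deductions in your second and third paragraphs are correct and essentially identical to the paper's (the paper does not even need your intermediate sub-probability measure or \Cref{transference}: it observes directly that $\hat\mu_\lam(S)>\mu_{C_n,\lam}(S)$ forces $c(S)=2$ and sums).

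The genuine gap is the overlap bound itself, which is the content of the paper's \Cref{lem:cap2} and which you explicitly defer (``I expect to adapt the argument there rather than invent anything new''). Moreover, the route you sketch for it is not the paper's and looks unworkable as stated: you propose to show that a typical sample captured under $\cL$ has a \emph{non-expanding} $2$-linked component in $N^-(S_{k+1})\cup S_k$. But under the $\cL$-process with $\lam$ bounded below $1$, one has $|S\cap\cU|\approx\frac{\lam}{1+\lam}N<N/2$ and $|N^-(S_{k+1})|\le n|\bar\Lam|$ is negligible (by the analogue of \Cref{largedeviation}), so $|N^-(S_{k+1})\cup S_k|$ is typically well below $N/2$, and then \Cref{isoper} guarantees that \emph{every} $2$-linked component expands by $(1+1/n)$; the non-expansion dichotomy you want to exploit simply is not available there, and your first attempt (``$|S\cap\cU|>N/2$'') fails for the same reason, as you noticed. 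The paper's \Cref{lem:cap2} avoids any structural statement of this kind: it uses the exact identity \eqref{eqcapture0} to observe that on $\{c(S)=2\}$ the two conditional laws $\hat\mu_\lam(\cdot\mid\cD=\cL)$ and $\hat\mu_\lam(\cdot\mid\cD=\cU)$ coincide, so (after splitting by whether $|S\cap\cL|\ge|S\cap\cU|$ and using the complementation symmetry) the overlap event can be evaluated under the conditional law for which it forces the binomially distributed side, of mean $\Theta(\lam N)$, to be at most $|\bar\Lam|\le N/n^2$ --- exponentially unlikely by Chernoff together with \Cref{smalldev}. Without this (or some substitute) argument, your proposal proves the lemma only conditionally on the unestablished overlap estimate, which is precisely the nontrivial part of the proof.
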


We begin by showing that the cluster expansion of $\ln \Xi$ is convergent (\textit{cf}.~\Cref{central.lem}).

\begin{lemma}\label{central.lemodd}
For $\cD\in\{\cL,\cU\}$, 
the polymer model $(\cP_\cD,\sim,w_\cD)$ satisfies the Koteck\'y-Preiss condition \eqref{KPbound1}, with $f$ and $g$ as in \eqref{eq:fgdef}. In particular, with $\cC=\cC(\cP_\cD,\sim)$, we have
\begin{equation}
    \label{tailboundscentralodd}
    \sum_{\Gamma\in \cC} \left|w_\cD(\Gamma)\right|e^{g(\Gamma)}\leq 2^n/n^3.
\end{equation}
\end{lemma}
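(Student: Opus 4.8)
The plan is to reduce the claim for $(\cP_\cD,\sim,w_\cD)$, for both $\cD\in\{\cL,\cU\}$, to the already-established Kotecký–Preiss bound for the simpler polymer models that appeared in Section~\ref{KPsection} and Section~\ref{secthreelayer}. By the symmetry of $B_n$ under complementation (which swaps $\cL\leftrightarrow\cU$, $N^+\leftrightarrow N^-$, and $L_{k-2}\leftrightarrow L_{k+1}$), the two cases are equivalent, so it suffices to treat $\cD=\cL$. The key structural observation is that $\cP_\cL$ decomposes into two types of polymers: those contained in $\cL=L_{k-1}$, and those contained in $L_{k+1}$. For a polymer $A\subseteq\cL$, the weight $w_\cL(A)=\lambda^{|A|}(1+\lambda)^{-|N^+(A)|}\sum_{B\subseteq\Int(A)}\lambda^{|B|-|N^+(B)|}$ is \emph{exactly} the weight $w(A)$ from~\eqref{eqwDef} of the polymer model $(\cP_{\emptyset,k},\sim,w)$ analysed in Section~\ref{KPsection} (with $r=k=\lceil n/2\rceil$), since $\partial(A)=N^+(A)$ for $A\subseteq L_{k-1}$ and the $2$-linkedness and expansion conditions coincide. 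For a polymer $A\subseteq L_{k+1}$, we have $w_\cL(A)=\lambda^{|A|}(1+\lambda)^{-|N^-(A)|}$, and under complementation this is precisely a polymer weight $w(A^c)$ with $A^c\subseteq L_{n-k-1}=L_{k-2}$ when $n$ is odd --- wait, more carefully: complementation maps $L_{k+1}$ to $L_{n-k-1}$, and for $n$ odd $n-k-1=k-2$, so $A^c\subseteq L_{k-2}$, which is the layer \emph{below} $L_{k-1}$; the relevant model here is $(\cP_{\emptyset,k-1},\sim,w)$ with $r=k-1=\lfloor n/2\rfloor$, again with weight of the simple form since for $A^c\subseteq L_{k-2}$ one layer below the middle, $\Int(A^c)$ plays no role in the bound we need --- in any case $w_\cL(A)\le w(A^c)$ where $w$ is the weight from~\eqref{eqwDef} for the $r=\lfloor n/2\rfloor$ model.

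Concretely, I would verify~\eqref{modifiedKP}, i.e. that for every $A\in\cP_\cD$ we have $\sum_{B\nsim A}w_\cD(B)e^{f(B)+g(B)}\le f(A)/2$, with $f,g$ as in~\eqref{eq:fgdef}. Exactly as in the proof of~\Cref{central.lem}, it suffices to show that for every vertex $v\in L_{k-1}\cup L_{k+1}$,
\begin{equation*}
\sum_{\substack{A\in\cP_\cL\\ A\ni v}} w_\cL(A)e^{f(A)+g(A)}\le \frac{1}{2n^5}\,,
\end{equation*}
since then summing over $v\in N^2(A)$ (a set of size at most $n^2|A|$) gives $\sum_{B\nsim A}w_\cL(B)e^{f(B)+g(B)}\le 2n^2|A|/(2n^5)=f(A)$, which is even stronger than needed. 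For $v\in L_{k-1}$, every polymer $A\ni v$ lies in $\cL$, so $w_\cL(A)=w(A)$ for the $r=\lceil n/2\rceil$ model, and the required bound is exactly~\eqref{eqP0r} (equivalently~\eqref{eq:case1-3}), established in Section~\ref{KPsection}. For $v\in L_{k+1}$, every polymer $A\ni v$ lies in $L_{k+1}$; applying the complementation symmetry $B_n\to B_n$, $x\mapsto x^c$, turns these into polymers containing $v^c\in L_{k-2}=L_{\lfloor n/2\rfloor-1}$ in the $r=\lfloor n/2\rfloor$ model, with $w_\cL(A)\le w(A^c)$, and the bound is again~\eqref{eqP0r} with $r=\lfloor n/2\rfloor$. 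This establishes~\eqref{modifiedKP} for $(\cP_\cL,\sim,w_\cL)$, hence the Kotecký–Preiss criterion~\eqref{KPbound1} holds (with $f/2$ on the right, which is fine); convergence of the cluster expansion follows from~\Cref{thm.KP}.

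For the ``in particular'' statement~\eqref{tailboundscentralodd}: having verified~\eqref{KPbound1}, conclusion~\eqref{kpbounds} of~\Cref{thm.KP} gives, for every $v\in L_{k-1}\cup L_{k+1}$,
\begin{equation*}
\sum_{\substack{\Gamma\in\cC\\ \Gamma\nsim\{v\}}}|w_\cD(\Gamma)|e^{g(\Gamma)}\le f(\{v\})=\frac{1}{n^3}\,.
\end{equation*}
Summing over all $v\in L_{k-1}\cup L_{k+1}$ --- a set of size $2\binom{n}{k-1}\le 2^n$ --- and noting that every cluster $\Gamma$ is incompatible with $\{v\}$ for at least one such $v$ (indeed for every $v\in V(\Gamma)$), we obtain $\sum_{\Gamma\in\cC}|w_\cD(\Gamma)|e^{g(\Gamma)}\le 2^n/n^3$, which is~\eqref{tailboundscentralodd}. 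The main thing to be careful about --- and the one genuinely new point compared to~\Cref{central.lem} --- is checking that the complementation map really does identify polymers in $L_{k+1}\subseteq\cP_\cL$ with polymers in $L_{\lfloor n/2\rfloor-1}$ of the $r=\lfloor n/2\rfloor$ model, including the compatibility relation (disjointness of shadows onto $L_k$ becomes disjointness of shadows onto $L_{k-1}$ after complementing, matching the $\sim$ relation of that model) and the expansion condition $|\partial(A)|\ge(1+1/n)|A|$ (which becomes $|N^+(A^c)|\ge(1+1/n)|A^c|$, automatically satisfied below the middle layer by Proposition~\ref{isoper}(3)); once this bookkeeping is in place the estimate is immediate from the prior sections, so there is no real obstacle, only the need to state the symmetry cleanly.
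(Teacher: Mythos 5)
Your proposal is correct and follows essentially the same route as the paper: the paper's proof simply reduces to $\cD=\cL$ by complementation symmetry and declares the argument identical to that of \Cref{central.lem} with $w_C$ replaced by $w_\cL$, which is exactly what you spell out — using that $w_\cL(A)=w(A)$ (the full weight of~\eqref{eqwDef} with $r=\lceil n/2\rceil$) for $A\subseteq\cL$, that $w_\cL(A)\le w(A^c)$ for $A\subseteq L_{k+1}$ via the complementation map into the $r=\lfloor n/2\rfloor$ model, and then invoking the per-vertex bound~\eqref{eqP0r}, summing over $N^2(A)$, and deducing~\eqref{tailboundscentralodd} from~\eqref{kpbounds}. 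The only blemish is the harmless factor-of-two slip (summing $1/(2n^5)$ over $|N^2(A)|\le n^2|A|$ vertices gives $f(A)/2$, not $f(A)$), which in either form satisfies the Koteck\'y--Preiss requirement.
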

\begin{proof}
By symmetry we may assume that $\cD=\cL$.
The proof is now identical to~\Cref{central.lem} where we replace each instance of $w_C$ with $w_\cL$.
\end{proof}

We record the following corollary of Lemma~\ref{central.lemodd} which is an analog of \Cref{largedeviation}. We omit the proof since it is identical to that of \Cref{largedeviation}.

\begin{lemma}
    \label{smalldev}
    Let $\cD\in\{\cL,\cU\}$.
       If $ \mathbf{\Lam}$ is sampled according to $\nu_\cD$, then
 \[ | \mathbf{\bar\Lam} | \leq N/n^2,\]
    with probability at least $1-\exp\{\Omega(N/n^5)\}$.
\end{lemma}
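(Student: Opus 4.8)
The plan is to mimic the proof of \Cref{largedeviation} almost verbatim, replacing the polymer model $(\cP_C,\sim,w_C)$ by $(\cP_\cD,\sim,w_\cD)$ and \Cref{central.lem} by its odd-case counterpart \Cref{central.lemodd}. The complement involution $B_n\to B_n$, $A\mapsto A^c$ swaps $\cL$ and $\cU$ and carries one polymer model onto the other, so it suffices to treat $\cD=\cL$. First I would apply Markov's inequality to an exponential moment: writing $\mathbf{\bar\Lam}=\bigcup_{A\in\mathbf\Lam}A$,
\[
\mathbb{P}\!\left(|\mathbf{\bar\Lam}|>N/n^2\right)\le e^{-N/n^5}\,\mathbb{E}\!\left[e^{|\mathbf{\bar\Lam}|/n^3}\right].
\]
Since compatible polymers $A_1,A_2\in\cP_\cL$ satisfy $\partial(A_1)\cap\partial(A_2)=\emptyset$ while $\partial(v)\neq\emptyset$ for every vertex $v$, the polymers of any compatible collection are pairwise disjoint, so $|\mathbf{\bar\Lam}|=\sum_{A\in\mathbf\Lam}|A|$; introducing the tilted weight $\tilde w_\cL(A)\coloneqq w_\cL(A)e^{|A|/n^3}$ and $\tilde\Xi_\cL\coloneqq\Xi(\cP_\cL,\sim,\tilde w_\cL)$, this gives $\mathbb{E}[e^{|\mathbf{\bar\Lam}|/n^3}]=\tilde\Xi_\cL/\Xi\le\tilde\Xi_\cL$, using $\Xi\ge1$.

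It then remains to show $\ln\tilde\Xi_\cL=o(N/n^5)$. The cluster expansion of $\ln\tilde\Xi_\cL$ converges, because the Koteck\'y--Preiss condition \eqref{KPbound1} for $(\cP_\cL,\sim,\tilde w_\cL)$ with $\tilde f(A)\coloneqq|A|/(2n^3)$ and $g$ as in \eqref{eq:fgdef} is again exactly the inequality \eqref{modifiedKP} (with $w_C$ replaced by $w_\cL$) that \Cref{central.lemodd} establishes, just as in the even case. Writing $\tilde w_\cL(\Gamma)=e^{\|\Gamma\|/n^3}w_\cL(\Gamma)$ for a cluster $\Gamma$, we have $\ln\tilde\Xi_\cL\le\sum_{\Gamma}|\tilde w_\cL(\Gamma)|=\sum_\Gamma e^{\|\Gamma\|/n^3}|w_\cL(\Gamma)|$. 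Every nonempty cluster has $\|\Gamma\|\ge1$, so the formula for $g$ in \eqref{eq:fgdef} together with $\lam\ge C\log^2 n/\sqrt n$ yields both $g(\Gamma)\ge\tfrac n3\ln(1+\lam)$ and $\|\Gamma\|/n^3\le\tfrac12 g(\Gamma)$ (the elementary estimates are identical to the even case). Hence $e^{\|\Gamma\|/n^3}\le e^{-g(\Gamma)/2}e^{g(\Gamma)}\le(1+\lam)^{-n/6}e^{g(\Gamma)}$ for every cluster, and combining with the tail bound \eqref{tailboundscentralodd},
\[
\ln\tilde\Xi_\cL\le(1+\lam)^{-n/6}\sum_\Gamma e^{g(\Gamma)}|w_\cL(\Gamma)|\le\frac{2^n}{n^3(1+\lam)^{n/6}}.
\]
Since $(1+\lam)^{n/6}\ge(1+C\log^2 n/\sqrt n)^{n/6}=e^{\Omega(\sqrt n\,\log^2 n)}$ grows faster than every power of $n$, while $N=\Theta(2^n/\sqrt n)$, the right-hand side is $o(N/n^5)$; plugging this back into the Markov estimate gives $\mathbb{P}(|\mathbf{\bar\Lam}|>N/n^2)\le e^{-N/n^5+o(N/n^5)}=e^{-\Omega(N/n^5)}$.

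I do not expect any genuine obstacle: the only thing to check is that the three ingredients of the even-case argument — the Koteck\'y--Preiss verification (now \Cref{central.lemodd} in place of \Cref{central.lem}), the uniform lower bound $g(\Gamma)\ge\tfrac n3\ln(1+\lam)$, and the resulting tail estimate — transfer to $w_\cL$ (hence $w_\cU$). This transfer is routine: $\cP_C=\cP_\cL$ as polymer sets, $0\le w_C(A)\le w_\cL(A)$, and for $A\subseteq L_{k+1}$ one even has $w_\cL(A)=w_C(A)$ since then $\partial(A)=N^-(A)$; for $A\subseteq\cL=L_{k-1}$ the weight $w_\cL(A)$ has exactly the form \eqref{eqwDef}, whose sum over $B\subseteq\Int(A)$ is controlled precisely as in the analysis of Section~\ref{KPsection}. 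So all the combinatorial inputs are the ones already established there and in Section~\ref{secthreelayer}.
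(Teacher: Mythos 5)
Your proposal is correct and follows essentially the same route as the paper, which omits the proof of this lemma precisely because it is identical to that of Lemma~\ref{largedeviation} (Markov on the exponential moment, the tilted polymer model, and the Koteck\'y--Preiss tail bound now supplied by Lemma~\ref{central.lemodd}). The only cosmetic difference is that you extract the tail estimate directly from \eqref{tailboundscentralodd} via $e^{\|\Gamma\|/n^3}\le(1+\lam)^{-n/6}e^{g(\Gamma)}$ rather than restating Lemma~\ref{lem:tildetail} for $\tilde w_\cD$, which costs a harmless factor of two in the exponent.
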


We record one final lemma before turning to the proof of Lemma~\ref{defapprox}.

\begin{lemma}\label{lem:cap2} We have
\[
\hat \mu_\lam\left(\{ S: c(S)=2\}\right)=e^{-\Omega(N/n^5)}\, .
\]
\end{lemma}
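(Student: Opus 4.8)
The statement asserts that an antichain sampled from $\hat\mu_\lam$ is captured under \emph{both} ground states with only exponentially small probability. The plan is to show that if $c(S)=2$, then $S$ must be `balanced' in the sense that its shadow into the middle layer $\cU$ has size close to $N/2$ from \emph{both} sides, and then to show that such balanced antichains have negligible weight because they force $\bar\Lam$ (the union of polymers) to be large. The key observation, already essentially contained in the proof of \Cref{lem:cap}, is the following: since the bipartite graph between $\cL=L_{k-1}$ and $\cU=L_k$ contains a perfect matching, for \emph{any} antichain $S\subseteq C_n$ we have $|N^{+}(S_{k-2})\cup S_{k-1}|\le N/2$ \emph{or} $|N^{-}(S_{k+1})\cup S_{k}|\le N/2$ (where $S_i=S\cap L_i$). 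If $c(S)=2$, I would argue that in fact \emph{both} of these sets must have size at least (roughly) $N/2 - N/n^2$: being captured under $\cL$ requires every $2$-linked component $A$ of $N^{+}(S_{k-2})\cup S_{k-1}$ to satisfy $|N^+(A)|\ge (1+1/n)|A|$, and by \Cref{isoper}(4) this expansion \emph{fails} for the component structure precisely when the total size exceeds $N/2$; but a more careful bookkeeping (using that the failure of expansion on a single large component forces that component, hence $\bar\Lam\cap\cL$, to have size $\Omega(N)$) shows that capture under $\cL$ combined with $|N^{-}(S_{k+1})\cup S_{k}|> N/2$ still allows $|N^{+}(S_{k-2})\cup S_{k-1}|$ to be as small as we like, so the real content is: $c(S)=2$ forces $S$ to have at least $\Omega(N)$ elements `spread out' in a way that makes $|\bar\Lam|$ large under at least one of the two decompositions.

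Concretely, here are the steps I would carry out. \textbf{Step 1:} Fix an antichain $S$ with $c(S)=2$. Write $S_i=S\cap L_i$, $P=N^{+}(S_{k-2})\cup S_{k-1}\subseteq\cL$ and $Q=N^{-}(S_{k+1})\cup S_{k}\subseteq\cU$. Since $S$ is an antichain and the $\cL$--$\cU$ graph has a perfect matching, $\partial$-images force $|P|\le N-|S_k\setminus\partial(S_{k-1})|$ type inequalities; the clean statement to extract is that at least one of $|P|, |Q|$ exceeds $N/2 - o(N)$ — say $|Q|\ge N/2 - N/n^2$ — whenever the other is not tiny. \textbf{Step 2:} Show that $c(S)=2$ implies $|P|\ge (1-o(1))N/2$ as well. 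The point: capture under $\cU$ requires every $2$-linked component $A$ of $Q$ to expand, $|N^-(A)|\ge (1+1/n)|A|$; by \Cref{isoper}(4) a component $A\subseteq L_k$ with $|A|\le N/2$ expands into $\cL$, but the \emph{union} of all components is $Q$ itself, and if $|Q|>N/2$ then some component must be large, and Proposition~\ref{isoper}(3) (applied at $i=k-1\le\lfloor n/2\rfloor$) combined with the $\partial$-matching structure shows $|N^-(Q)|\ge |Q|$, i.e.\ the complementary shadow into $\cL$ is also large; translating back, $|P|\ge N - |N^-(Q)^c|$ is forced to be $\ge (1-o(1))N/2$. \textbf{Step 3:} Conclude that under \emph{at least one} of the decompositions (say the $\cL$-decomposition), the union of polymers $\bar\Lam_\cL$ has $|\bar\Lam_\cL|\ge N/n^2$: indeed $\bar\Lam_\cL\supseteq (S_{k-1}$ part of $P) \cup S_{k+1}$, and the subset of $P$ lying in $\cL$ that belongs to polymers is $P$ minus the interior-generated part; since $|P|\ge(1/2-o(1))N$ while $\Int$-contributions are a lower-order correction, we get $|\bar\Lam_\cL|=\Omega(N)\gg N/n^2$.

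\textbf{Step 4 (the main estimate).} By \Cref{lem:cap}, equation~\eqref{eqcapture0}, $\hat\mu_\lam(S\mid \cD=\cL)$ is proportional to $\lam^{|S|}$ over antichains captured under $\cL$, and summing this over all $S$ captured under $\cL$ with $|\bar\Lam_\cL|\ge N/n^2$ is \emph{exactly} the event bounded by \Cref{smalldev} (after marginalising over the interior and the $U$-part): the probability under $\nu_\cL$ that $|\mathbf{\bar\Lam}|\ge N/n^2$ is $\exp\{-\Omega(N/n^5)\}$. The same holds with $\cL$ replaced by $\cU$. Hence
\[
\hat\mu_\lam(\{S:c(S)=2\}) \le \tfrac12\,\nu_\cL\!\left(|\mathbf{\bar\Lam}|\ge N/n^2\right) + \tfrac12\,\nu_\cU\!\left(|\mathbf{\bar\Lam}|\ge N/n^2\right) = e^{-\Omega(N/n^5)}\, ,
\]
as required. \textbf{The main obstacle} I anticipate is Step 2: showing rigorously that $c(S)=2$ forces \emph{both} shadows $|P|,|Q|$ to be close to $N/2$, rather than merely one of them. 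This requires pushing the isoperimetric inequalities of \Cref{isoper} — particularly part (4), which only applies below the $N/2$ threshold — together with the perfect-matching structure of the central bipartite graph, to rule out the asymmetric case where one shadow is small. One clean way to handle it: if $S$ is captured under $\cL$ then the $2$-linked components of $P$ all expand, which by a standard argument (summing the expansion over components, as in the proof that $\cA_r=\cA$ when $r=\lfloor n/2\rfloor$) gives $|N^+(P)|\ge(1+1/n)|P|$ \emph{unless} $|P|$ is already $\ge N/2$; but $N^+(P)\subseteq\cU$ and $N^+(P)$ is disjoint from $S_k$ only if... — and pairing this with the analogous statement under $\cU$ yields $|P|+|Q|\ge (1-o(1))N$ together with $|P|,|Q|\le N/2+o(N)$, forcing both to be $(1/2-o(1))N$. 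Once the symmetric `balanced' regime is established, Steps 3–4 are routine given \Cref{smalldev}.
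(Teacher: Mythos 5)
The central structural claim driving your Steps 1--3 --- that $c(S)=2$ forces both shadows $|N^{+}(S_{k-2})\cup S_{k-1}|$ and $|N^{-}(S_{k+1})\cup S_{k}|$ to be close to $N/2$, and hence forces $|\bar\Lam|=\Omega(N)$ under at least one (and, for your Step 4 bookkeeping, really under both) of the two decompositions --- is false. Take $S=\{v\}$ for a single $v\in L_k$, or more generally any small antichain contained in a single middle layer: it is captured under $\cL$ via $\Lambda=\emptyset$, $W=\emptyset$, $U=\{v\}$, and it is captured under $\cU$ via $\Lambda=\{\{v\}\}$ (a legitimate polymer, since $|N^{-}(v)|=k\geq(1+1/n)\cdot 1$), $W=U=\emptyset$. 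So $c(S)=2$ while both shadows have size $O(n)$ and both decompositions have $|\bar\Lam|\leq 1$. The event $\{S:c(S)=2\}$ is therefore not a structurally ``balanced'' or defect-heavy family at all, and no argument that tries to force $|\bar\Lam|\geq N/n^2$ pointwise on this event can succeed. In particular the inequality you want in Step 4, $\hat\mu_\lam(c(S)=2)\leq\tfrac12\nu_\cL(|\bar\Lam|\geq N/n^2)+\tfrac12\nu_\cU(|\bar\Lam|\geq N/n^2)$, does not follow: it would require every doubly captured $S$ to have a large polymer part under \emph{both} decompositions, whereas Step 3 only asserts ``at least one'', and the counterexample gives neither.

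What makes the lemma true is a statement about measure rather than structure, and the paper's proof works entirely at that level. By \eqref{eqcapture0}, every doubly captured $S$ receives the \emph{same} conditional probability $\lam^{|S|}/((1+\lam)^{N}\Xi)$ under $\cD=\cL$ as under $\cD=\cU$; hence, writing $\cE=\{c(S)=2,\ |S\cap\cL|\geq|S\cap\cU|\}$, one gets $\hat\mu_\lam(\cE)=\hat\mu_\lam(\cE\mid\cD=\cL)$, and by the complementation symmetry of $B_n$ it suffices to bound this one quantity. Conditioned on $\cD=\cL$, \Cref{smalldev} gives $|S\cap\cL|\leq|\bar\Lam|\leq N/n^2$ outside probability $e^{-\Omega(N/n^5)}$, while $|S\cap\cU|$ stochastically dominates a $\Bin\bigl((1-1/n)N,\lam/(1+\lam)\bigr)$ variable and so exceeds $N/n^2$ except with probability $e^{-\Omega(\lam N)}$ by Chernoff; together these give $\hat\mu_\lam(\cE\mid\cD=\cL)=e^{-\Omega(N/n^5)}$. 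Your Step 4 correctly identifies \Cref{smalldev} as the engine, but the transfer-of-measure identity \eqref{eqcapture0} is exactly the missing ingredient: it lets one estimate the doubly captured mass in the branch where those configurations look atypical (far more elements in $\cL$ than in $\cU$), instead of attempting --- hopelessly, given the counterexample --- to show that doubly captured antichains are structurally forced to carry large defect sets.
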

\begin{proof}
Let $\cE$ denote the event that $c(S)=2$ \emph{and} $|S\cap \cL|\geq |S\cap \cU|$. By symmetry it is enough to show that $\hat \mu_\lam(\cE)= e^{-\Omega(N/n^5)}$. Let $\cD$ denote the state chosen at Step~\ref{hat1} in the definition of $\hat \mu_\lam$. Note that, by \eqref{eqcapture0}, we have $\hat \mu_\lam(\cE \mid \cD=\cL)=\hat \mu_\lam(\cE \mid \cD=\cU)$ and thus $\hat \mu_\lam(\cE)=\hat \mu_\lam(\cE \mid \cD=\cL)$. Let $\Lambda$ denote the configuration chosen at Step~\ref{hat2}. By Lemma~\ref{smalldev},
\[
\hat \mu_\lam(\cE \mid \cD=\cL) = \hat \mu_\lam(\cE \mid \cD=\cL, |\bar\Lambda|\leq N/n^2)+e^{-\Omega(N/n^5)}\, .
\]
Next note that conditioned on the event $\cD=\cL$ and on $\Lambda$, $S\cap \cU$ is a random subset of $\cU\backslash N(\Lambda)$ where each element is selected independently with probability $\lam/(1+\lam)$ (Step~\ref{hat4} in the definition of $\hat \mu_\lam$). Moreover if $|\bar\Lambda|\leq N/n^2$ then $|N(\bar\Lambda)|\leq N/n$. Letting $X\sim \Bin((1-1/n)N,\lam/(1+\lam))$, it follows from Chernoff's bound (recall that $\gl=\tilde \gO(1/\sqrt n))$ that
\[
\hat \mu_\lam\left( |S\cap \cL|\geq |S\cap \cU| \mid \cD=\cL, |\bar\Lambda|\leq N/n^2 \right) \leq \P(X\leq N/n^2)=e^{-\Omega(N\lam)}.
\] The result now follows.
\end{proof}

We finally turn to the proof of Lemma~\ref{defapprox}.

\begin{proof}[Proof of Lemma~\ref{defapprox}.]
By \Cref{lem:cap},
\begin{align}\label{eq:ZleqXi}
Z(C_n,\lam)=\sum_{S\subseteq C_n, S\in \cA}\lam^{|S|}\leq \sum_{S\subseteq C_n, S\in \cA}c(S)\lam^{|S|}=2(1+\lam)^N\Xi\, .
\end{align}
On the other hand, by \Cref{lem:cap} and then \Cref{lem:cap2},
\[
Z(C_n,\lam)\geq 2(1+\lam)^N\Xi - \sum_{S:c(S)=2}c(S)\lam^{|S|}=(1-e^{-\Omega(N/n^5)}) 2(1+\lam)^N\Xi\, .
\]
The first claim of Lemma~\ref{defapprox} follows. For the second, note that if $\hat \mu_\lam(S)>\mu_{C_n,\lam}(S)$ then by \Cref{lem:cap} and~\eqref{eq:ZleqXi}, we must have that $c(S)=2$. It follows that
\begin{align*}
    \|\hat \mu_\lam-\mu_{C_n,\lam}\|_{TV}
    = \sum_{S: \hat \mu_\lam(S)>\mu_{C_n,\lam}(S)}\hat \mu_\lam(S)-\mu_{C_n,\lam}(S)\leq
    \sum_{S:c(S)=2}\hat \mu_\lam(S)\, .
\end{align*}
The result now follows from~\Cref{lem:cap2}.
\end{proof}

\subsection{A final truncation step}\label{secfintrunc}

In the previous section we approximated $Z(C_n,\lam)$ by (a scaling of) the partition function $\Xi$. In this section we show that $\Xi$ is well-approximated by a simplified polymer model partition function which encodes antichains contained in three consecutive layers (rather than four).  

 Consider the polymer model $(\cP_\cL,\sim,w_\cL')$\index{$(\cP_\cL,\sim,w_\cL')$} where $w_\cL'$ is the simplified weight 
\[
w'_\cL(A)=
\begin{cases}
\lambda^{|A|}(1+\lambda)^{-|N^-(A)|}
& \text{ if $A \subseteq L_{k+1}$,} \\
\lambda^{|A|}(1+\lambda)^{-|N^+(A)|} & \text{if $A \subseteq \cL$ .} 
\end{cases}
\]
Observe that $(\cP_\cL,\sim,w_\cL')$ is precisely the polymer model $(\cP_C,\sim,w_C)$ from \Cref{secthreelayer}. It will be useful however to keep the notation $(\cP_\cL,\sim,w_\cL')$, as we will define $(\cP_\cU,\sim,w_\cU')$\index{$(\cP_\cU,\sim,w_\cU')$} analogously. We note that $\Xi(\cP_\cU,\sim,w_\cU')=\Xi(\cP_\cL,\sim,w_\cL')$ and we denote this partition function by $\Xi_C$ as in \Cref{secthreelayer}. For $\cD\in\{\cL,\cU\}$, let $\nu'_\cD$\index{$\nu'_\cD$ on $\Omega_\cD$} denote the measure on $\Omega_\cD$ given by
\begin{equation*}
    \nu'_\cD(\Lam)=\frac{\prod_{A\in \Lam}w'_\cD(A)}{\Xi_C} \text{\,  for\,  } \Lam\in \Omega_\cD.
\end{equation*}

\begin{lemma} \label{lem:XiXiprime}
\begin{align}\label{eq:XiXiprime}
\left|\ln \Xi - \ln \Xi_C \right|=o\left(\frac{1}{\sqrt{N}}\right)
\end{align}
and for $\cD\in\{\cL,\cU\}$,
\begin{align}\label{eq:XiXiprime2}
\|\nu_\cD-\nu'_\cD\|_{TV}=o\left(\frac{1}{\sqrt{N}}\right)\, .
\end{align}
 Moreover, when $\lam=1$ we can tighten both $o( 1/\sqrt{N} )$ error terms above to $e^{-\Omega(n^2)}.$
\end{lemma}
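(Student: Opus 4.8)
The plan is to exploit that $w_\cL$ and $w'_\cL$ (equivalently $w_\cU$ and $w'_\cU$, by the complementation symmetry $B_n\to B_n$, so we argue only for $\cD=\cL$) differ only on polymers with nonempty interior, and that such polymers must be large. Indeed, for $A\subseteq\cL$ we have $w_\cL(A)/w'_\cL(A)=\sum_{B\subseteq\Int(A)}\lam^{|B|-|N^+(B)|}\geq 1$, with equality when $\Int(A)=\emptyset$, while $w_\cL(A)=w'_\cL(A)$ for every $A\subseteq L_{k+1}$; in particular $0<w'_\cL(A)\leq w_\cL(A)$ always, exactly as in Remark~\ref{remark}. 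Since $n$ is odd, any $v\in\Int(A)$ satisfies $N^+(v)\subseteq A$ with $|N^+(v)|=n-k+2=(n+3)/2$, so $\Int(A)\neq\emptyset$ forces $|A|\geq(n+3)/2>n/2$. Hence $w_\cL$ and $w'_\cL$ agree on all polymers of size at most $n/2$.

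For~\eqref{eq:XiXiprime} I would argue at the level of the cluster expansion, mirroring the proof of~\Cref{trunclemma2} around~\eqref{kptail} and of~\Cref{saporefined} in Section~\ref{saporefpf}. By~\Cref{central.lemodd} and~\Cref{central.lem}, $\ln\Xi$ and $\ln\Xi_C$ are both convergent cluster expansions over the common cluster set $\cC=\cC(\cP_\cL,\sim)$. Since every polymer occurring in a cluster $\Gamma$ has size at most $\|\Gamma\|$, the previous paragraph gives $w_\cL(\Gamma)=w'_\cL(\Gamma)$ whenever $\|\Gamma\|<n/2$; combined with $|w'_\cL(\Gamma)|\leq|w_\cL(\Gamma)|$ this yields $|\ln\Xi-\ln\Xi_C|\leq 2\sum_{\Gamma:\|\Gamma\|\geq n/2}|w_\cL(\Gamma)|$. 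As $g(\Gamma)\geq n\ln n$ for all such $\Gamma$, the tail bound~\eqref{tailboundscentralodd} gives $\sum_{\|\Gamma\|\geq n/2}|w_\cL(\Gamma)|\leq e^{-n\ln n}\,2^n/n^3=o(1/\sqrt N)$; when $\lam=1$ one instead has $g(\Gamma)=\Omega(n^2)$ for $\|\Gamma\|\geq n/2$ (cf. Section~\ref{saporefpf}), improving this to $e^{-\Omega(n^2)}$.

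For~\eqref{eq:XiXiprime2} I would argue at the level of the measures. Since $w'_\cL\leq w_\cL$ termwise we have $\Xi\geq\Xi_C$, so $\nu_\cL(\Lam)>\nu'_\cL(\Lam)$ can occur only when some $A\in\Lam$ has $w_\cL(A)>w'_\cL(A)$, i.e. $A\subseteq\cL$ with $|A|\geq(n+3)/2$. A union bound over such polymers, together with the fact that the polymer partition function restricted to polymers compatible with a fixed $A$ is at most $\Xi$, gives $\|\nu_\cL-\nu'_\cL\|_{TV}\leq\sum_{A\subseteq\cL,\,|A|\geq(n+3)/2}w_\cL(A)$. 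For $A\subseteq\cL$ of size at least $(n+3)/2$ we lie in the second or third regime of~\eqref{eq:fgdef}, so $g(A)\geq\tfrac{1}{200}n^2\ln(1+\lam)$; recalling that $w_\cL(A)=w(A)$ there and that $\sum_{A\ni v}w(A)e^{g(A)}\leq 1/(2n^5)$ for each $v$ (cf.~\eqref{eq:case1-3}), summing over $v\in\cL$ gives $\|\nu_\cL-\nu'_\cL\|_{TV}\leq\binom{n}{k-1}(1+\lam)^{-n^2/200}/(2n^5)$, which is $o(1/\sqrt N)$ for $\lam\geq C\log^2 n/\sqrt n$ and $e^{-\Omega(n^2)}$ when $\lam=1$.

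This is essentially a one-layer-lighter repeat of the truncation already performed in Section~\ref{truncation}, so no genuinely new difficulty arises. The only points requiring care are checking the regime thresholds in~\eqref{eq:fgdef} to confirm $g(A)=\Omega(n^2\ln(1+\lam))$ for all polymers $A\subseteq\cL$ with $|A|\geq(n+3)/2$, and tracking the sharper $e^{-\Omega(n^2)}$ estimate in the case $\lam=1$.
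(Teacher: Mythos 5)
Your proposal is correct, but it takes a genuinely different route from the paper's proof. For~\eqref{eq:XiXiprime} the paper does not compare the two cluster expansions directly: it first proves the exact identity $(1+\lam)^{N}\Xi= \sum_{X\subseteq L_{k+1}} \lam^{|X|}Z_{k}(L^X_{[k-2, k]}, \lam)$ (Claim~\ref{claim:polypart}, obtained by splitting each $\Lam\in\Omega_\cL$ according to its polymers inside $L_{k+1}$ and invoking \Cref{lem:polymer-antichain}), together with the analogous identity for $\Xi_C$ with $L^X_{[k-1,k]}$, and then applies the three-layer truncation \Cref{trunclemma2} for every $X$. You instead observe that $w_\cL$ and $w'_\cL$ agree on all polymers of size below $(n+3)/2$ (empty interior), so the two convergent expansions differ only on clusters with $\|\Gamma\|\ge n/2$, and you kill the tail with \eqref{tailboundscentralodd} exactly as in \eqref{kptail}; this is legitimate given \Cref{central.lemodd} and \Cref{central.lem}, and it is essentially the global version of the per-$X$ comparison that the paper performs inside the proof of \Cref{trunclemma2}, so it avoids re-deriving the polymer--antichain correspondence of Claim~\ref{claim:polypart}. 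For~\eqref{eq:XiXiprime2} the paper argues abstractly that $\|\nu_\cL-\nu'_\cL\|_{TV}\le 1-\Xi_C/\Xi$ and then recycles \eqref{eq:XiXiprime}, whereas you run a union bound over the (necessarily large) polymers with nonempty interior and control $\sum_{A\ni v}w(A)e^{g(A)}$ via \eqref{eq:case1-3}; your route is slightly longer but yields a superpolynomially small bound of order $2^{n}(1+\lam)^{-n^2/200}$ rather than just $o(1/\sqrt N)$, and both routes specialise correctly to $e^{-\Omega(n^2)}$ at $\lam=1$. All the inputs you cite (\Cref{central.lemodd}, \eqref{tailboundscentralodd}, \eqref{eq:case1-3}, the identification of the $\cL$-side polymers and weights with $\cP_{\emptyset,k}$ and $w$) are available at this point in the paper, so I see no gap.
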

\begin{proof}
Recall from Section~\ref{truncation} that for $X, Y\subseteq B_n$, we let $Y^X=\{v\in Y  \mid v \not\subset w \ \forall w\in X \}$. 
Recall also that $\cA_{k}$ denotes the set of all antichains $S\subseteq B_n$ such that each $2$-linked component $A$ of $S\cap L_{k-1}$ satisfies $|N^{+}(A)|\geq (1+1/n)|A|$, and for $U\subseteq B_n$, we let 
\[
Z_{k}(U, \lam)= \sum_{A \in \cA_{k}, \ A \subseteq U}\lam^{|A|}\, .
\]

\begin{claim}\label{claim:polypart}
\[
(1+\lam)^{N}\Xi= \sum_{X\subseteq L_{k+1}} \lam^{|X|}Z_{k}(L^X_{[k-2, k]}, \lam)\, .
\]
\end{claim}
\begin{subproof}
Given $X\subseteq L_{k+1}$, let $(\cP_{X,{k}},\sim,w)$ be the polymer model defined in Section~\ref{secOutlineTrunc}. In particular $\cP_{X,{k}}$ is  the set of all $2$-linked
 subsets $A\subseteq L^X_{k-1}$ such that $N^{+}(A)\geq (1+1/n)|A|$ and $w(A)=w_{\cL}(A)$ for all $A\in \cP_{X,{k}}$. Letting $\Xi_X=\Xi(\cP_{X,{k}},\sim,w)$, Lemma~\ref{lem:polymer-antichain} shows that 
\beq{eq:polypartclaim}
(1+\lam)^{M}\cdot \Xi_X=  Z_{k}(L^X_{[k-2,k]},\lam)\, ,
\enq
where $M=|L^X_{k}|=N-|N^{-}(X)|$\, . 

Recall that $\Omega_\cL$ denotes the set of all collections of compatible polymers in $\cP_\cL$ and let $\Omega'_\cL\subseteq \Omega_\cL$\index{$\Omega'_\cL$} denote the set of $\Lambda\in \Omega_\cL$ where each polymer $A\in \Lambda$ is contained in $L_{k+1}$. Given $\Lambda\in \Omega'_\cL$, let $\Omega_\Lambda\subseteq \Omega_\cL$ denote the set of $\Lambda'\in\Omega_\cL$ where each polymer $A\in \Lambda'$ is contained in $\cL$ and $\Lambda\cup\Lambda'\in \Omega_\cL$. Crucially, if $\Lambda\in\Omega'_\cL$ where $X=\bigcup_{A\in \Lambda}A$, then
\[\text{$\Omega_\Lambda$ is in one-to-one correspondence with collections of compatible polymers in $\cP_{X,k}$.}\]
We then have 
\begin{align*}
\Xi= \sum_{\Lambda\in \Omega_\cL}\prod_{A\in\Lambda}w_\cL(A)
&=
\sum_{\Lambda\in \Omega'_\cL}\prod_{A\in\Lambda'}w_\cL(A)\sum_{\Lambda'\in \Omega_{\Lambda}}\prod_{A\in\Lambda'}w_\cL(A)\\
&=
\sum_{\Lambda\in \Omega'_\cL}\prod_{A\in\Lambda'}\lam^{|A|}(1+\lam)^{-|N^{-}(A)|}\sum_{\Lambda'\in \Omega_{\Lambda}}\prod_{A\in\Lambda'}w_\cL(A)\\
&=
\sum_{X\subseteq L_{k+2}}\lam^{|X|}(1+\lam)^{-|N^{-}(X)|} \Xi_X\, ,
\end{align*}
where for the final equality we used the one-to-one correspondence between $\Omega'_\cL$ and subsets $X\subseteq L_{k+1}$. The claim follows by combining the identity above with~\eqref{eq:polypartclaim}.
\end{subproof}
By an argument identical to the proof of Claim~\ref{claim:polypart} (replacing the weight $w$ with the simplified weight $w'(A)=\lam^{|A|}(1+\lam)^{-|N^{+}(A)|}$) we also have
\[
(1+\lam)^{N}\Xi_C= \sum_{X\subseteq L_{k+1}} \lam^{|X|}Z_{k}(L^X_{[k-1, k]}, \lam)\, .
\]
\Cref{eq:XiXiprime} now follows from Lemma~\ref{trunclemma2}.

For \cref{eq:XiXiprime2} note that  $w_\cL(A)\geq w'_\cL(A)$ for all $A \in \cP_{\cL}$ and so $\Xi\geq \Xi_C$. Moreover, $w_\cL(A)> w'_\cL(A)$ if and only if $A\subseteq \cL$ and $A$ has non-empty interior. It follows that
\[
\Omega_1:=\{\Lambda: \nu_\cL(\Lambda)> \nu'_\cL(\Lambda) \} \subseteq \{\Lambda: A\subseteq \cL, \Int(A)\neq \emptyset \text{ for some }A\in\Lambda \}=:\Omega_2\, .
\]
Moreover, \[
\Xi-\Xi_C= \sum_{\Lambda\in\Omega_2}
\left(\prod_{A\in \Lam}w_\cL(A)-\prod_{A\in \Lam} w'_\cL(A)\right)\, .
\]
 We then have
\begin{align*}
\|\nu_\cL-\nu'_\cL\|_{TV}& =
\sum_{\Lambda\in\Omega_1}\nu_\cL(\Lambda)-\nu'_\cL(\Lambda)\\
&\leq \sum_{\Lambda\in\Omega_1}
\left(\frac{\prod_{A\in \Lam}w_\cL(A)}{\Xi}-\frac{\prod_{A\in \Lam}w'_\cL(A)}{\Xi} \right)
\\
&\leq 
\frac{1}{\Xi}  \sum_{\Lambda\in\Omega_2}
\left(\prod_{A\in \Lam}w_\cL(A)-\prod_{A\in \Lam} w'_\cL(A) \right)\\
&=1-\Xi_C/\Xi\, .
\end{align*}
Statement~\eqref{eq:XiXiprime2} now follows from~\eqref{eq:XiXiprime}.
\end{proof}

Finally, it will be useful to consider the following (simpler) analogue of the measure $\hat \mu_\lam$: let $\mu'_\lam$\index{$\mu'_\lam$} denote the distribution of the random antichain produced by the following process:

\begin{enumerate}
\item Select $\cD\in\{\cL,\ \cU\}$ uniformly at random. \label{hatp1}
\item Select $\Lambda$ according to $\nu'_\cD$ and let $\bar\Lam=\bigcup_{A\in \Lambda}A$.\label{hatp2}
\item If $\cD=\cL$ let $U$ be a $\frac{\lam}{1+\lam}$-random subset of $\cU\backslash N(\bar\Lam)$. If $\cD=\cU$ let $U$ be a $\frac{\lam}{1+\lam}$-random subset of $\cL\backslash N(\bar\Lam)$.\label{hatp4}
\item Output the antichain $S=\bar\Lam\cup U$.
\end{enumerate}

 Note that $\mu_\lam'$ conditioned on the event that $\cL$ is chosen at Step~\ref{hatp1} is precisely the measure $\mu_\lam^C$ introduced in \Cref{givensize}.

 We say that an antichain $S$ is \emph{captured} by $\mu'_\lam$ under $\cL$ (resp. $\cU$) if there is a positive probability that $S$ is selected by the above process given that $\cL$ (resp. $\cU$) is selected in Step~\ref{hat1}. Moreover we let $c'(S)$ denote the number of $\cD\in\{\cL,\ \cU\}$ (i.e. $0,1$ or $2$) such that $S$ is captured by $\mu'_\lam$ under $\cD$. We record the following analogues of Lemma~\ref{lem:cap} and~\ref{lem:cap2}. The proofs are analogous (and simpler) and so we omit them. 

 \begin{lemma}\label{lem:capp}
If $S\subseteq C_n$ is an antichain then
\begin{align}
     \mu'_\lam(S)= c'(S) \frac{\lam^{|S|}}{2 (1+\lam)^{N} \Xi_C}\, .
\end{align}
\end{lemma}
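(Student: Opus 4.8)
The plan is to mimic the proof of \Cref{lem:cap} almost verbatim; the one structural simplification is that the definition of $\mu'_\lam$ contains no analogue of the ``interior'' step (Step~\ref{hat3} in the definition of $\hat\mu_\lam$), so no auxiliary sum over subsets of $\Int(A)$ will appear anywhere.

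First I would condition on the state $\cD\in\{\cL,\cU\}$ chosen at Step~\ref{hatp1}, so that $\mu'_\lam(S)=\tfrac12\mu'_\lam(S\mid\cD=\cL)+\tfrac12\mu'_\lam(S\mid\cD=\cU)$; it then suffices to evaluate $\mu'_\lam(S\mid\cD=\cL)$, since the complementation symmetry $A\mapsto A^{c}$ of $B_n$ interchanges $\cL$ and $\cU$ and carries $\mu'_\lam(\,\cdot\mid\cD=\cL)$ to $\mu'_\lam(\,\cdot\mid\cD=\cU)$. Because the process producing $\mu'_\lam$ under $\cL$ places vertices only in $L_{k-1}\cup L_k\cup L_{k+1}$, we may assume $S\subseteq L_{[k-1,k+1]}$ (otherwise this conditional probability vanishes); write $S_i=S\cap L_i$. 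Let $\Lambda$ be the collection consisting of the $2$-linked components of $S_{k-1}$ in the graph $L_{k-1}\cup L_k$ together with the $2$-linked components of $S_{k+1}$ in the graph $L_k\cup L_{k+1}$. I would then check that, provided each such component lies in $\cP_\cL$ (i.e.\ satisfies $|\partial(A)|\ge(1+1/n)|A|$), $\Lambda$ is a set of pairwise compatible polymers: two components in the same layer have disjoint shadows by maximality, while a component in $L_{k-1}$ and a component in $L_{k+1}$ have disjoint shadows because a shared element of $L_k$ would force a strictly comparable pair inside the antichain $S$. Since the map $(\Lambda,U)\mapsto\bar\Lambda\cup U$ realised by the process (with $\cD=\cL$) is injective---$\bar\Lambda=S_{k-1}\cup S_{k+1}$ and $U=S_k$ are read off from $S$---we obtain $\mu'_\lam(S\mid\cD=\cL)=\nu'_\cL(\Lambda)\cdot\mathbb{P}(U=S_k)$ when $\Lambda\in\Omega_\cL$ (equivalently, $S$ is captured under $\cL$), and $\mu'_\lam(S\mid\cD=\cL)=0$ otherwise.

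The remainder is a short telescoping. As $S$ is an antichain, $N(\bar\Lambda)\cap L_k=\partial(\bar\Lambda)$ and $S_k\cap\partial(\bar\Lambda)=\emptyset$, so $S_k$ is a legitimate outcome of the subset-selection step and $\mathbb{P}(U=S_k)=\lam^{|S_k|}(1+\lam)^{-(N-|\partial(\bar\Lambda)|)}$. On the other hand $\nu'_\cL(\Lambda)=\Xi_C^{-1}\prod_{A\in\Lambda}w'_\cL(A)$, and since $\partial(A)=N^+(A)$ for $A\subseteq\cL$, $\partial(A)=N^-(A)$ for $A\subseteq L_{k+1}$, and the shadows $\partial(A)$ ($A\in\Lambda$) are pairwise disjoint, we get $\prod_{A\in\Lambda}w'_\cL(A)=\lam^{|S_{k-1}|+|S_{k+1}|}(1+\lam)^{-|\partial(\bar\Lambda)|}$. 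Multiplying, the factors $(1+\lam)^{\pm|\partial(\bar\Lambda)|}$ cancel, so $\mu'_\lam(S\mid\cD=\cL)=\lam^{|S|}(1+\lam)^{-N}\Xi_C^{-1}$ when $S$ is captured under $\cL$ and $0$ otherwise; the same holds with $\cU$ in place of $\cL$. Since $c'(S)$ is by definition the number of states $\cD$ under which $S$ is captured, averaging over $\cD$ yields $\mu'_\lam(S)=c'(S)\frac{\lam^{|S|}}{2(1+\lam)^N\Xi_C}$, as required.

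The one mildly delicate point is the bookkeeping in the middle step---confirming that the $2$-linked decomposition of $S\cap(L_{k-1}\cup L_{k+1})$ consists of pairwise \emph{compatible} polymers and that the product of their weights collapses to a single factor $(1+\lam)^{-|\partial(\bar\Lambda)|}$. In contrast with \Cref{lem:cap} there is no interior step to follow and hence no sum $\sum_{B\subseteq\Int(A)}$ to handle, so nothing harder arises; in particular no appeal to the isoperimetric estimate in \Cref{isoper} is needed, and---unlike in \Cref{lem:cap}---we make no claim that $c'(S)\ge1$ (indeed an antichain in $C_n$ meeting both $L_{k-2}$ and $L_{k+1}$ is captured under neither state).
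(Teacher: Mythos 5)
Your proof is correct and is exactly the ``analogous (and simpler)'' adaptation of the proof of Lemma~\ref{lem:cap} that the paper has in mind when it omits the argument: condition on $\cD$, identify the unique compatible decomposition $\Lambda$ of $S\cap(L_{k-1}\cup L_{k+1})$ into $2$-linked components, and multiply $\nu'_\cL(\Lambda)$ by the binomial probability of $S_k$, with the $(1+\lam)^{\pm|\partial(\bar\Lambda)|}$ factors cancelling. Your closing remarks (no interior sum, no appeal to Proposition~\ref{isoper}, and no claim that $c'(S)\ge 1$) are all accurate.
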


\begin{lemma}\label{lem:capp2} We have
\[
 \mu'_\lam\left(\{ S: c'(S)=2\}\right)=e^{-\Omega(N/n^5)}\, .
\]
\end{lemma}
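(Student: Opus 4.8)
The plan is to follow the proof of Lemma~\ref{lem:cap2} essentially verbatim, using Lemma~\ref{lem:capp} in place of Lemma~\ref{lem:cap} and Lemma~\ref{largedeviation} (applied to $\nu'_\cL$, which coincides with the measure $\nu$ of \Cref{givensize} since $(\cP_\cL,\sim,w'_\cL)=(\cP_C,\sim,w_C)$) in place of Lemma~\ref{smalldev}. The first step is to record, as part of the omitted proof of Lemma~\ref{lem:capp}, the sharper \emph{pointwise} identity
\[
\mu'_\lam(S \mid \cD=\cL)= \frac{\lam^{|S|}}{(1+\lam)^{N}\Xi_C}\,\mathbf{1}_{\{S\text{ captured under }\cL\}}
\]
(and the analogue with $\cL$ replaced by $\cU$), where $\cD$ is the ground state chosen at Step~\ref{hatp1}; this is the exact analogue of~\eqref{eqcapture0} and is proved the same way: if $S$ is captured under $\cL$ then the configuration $\mathbf{\bar\Lam}$ and the independent subset $U$ produced at Steps~\ref{hatp2}--\ref{hatp4} are uniquely read off from $S$, and the product of polymer weights telescopes exactly as in Lemma~\ref{lem:polymer-antichain}. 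In particular, whenever $c'(S)=2$ the two conditional laws agree at $S$, which is precisely what makes the $\cL\leftrightarrow\cU$ symmetry usable.

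Next I would set $\cE=\{c'(S)=2\}\cap\{|S\cap\cL|\ge|S\cap\cU|\}$. The complementation involution $A\mapsto A^c$ of $B_n$ swaps the layers $\cL=L_{k-1}$ and $\cU=L_k$, hence swaps the polymer models $(\cP_\cL,\sim,w'_\cL)$ and $(\cP_\cU,\sim,w'_\cU)$ and the roles of $\cL,\cU$ in the definition of $\mu'_\lam$; so it suffices to show $\mu'_\lam(\cE)=e^{-\Omega(N/n^5)}$, since then $\mu'_\lam(c'(S)=2)\le 2\mu'_\lam(\cE)$. Summing the pointwise identity over $S\in\cE$ (all of which have $c'(S)=2$) gives $\mu'_\lam(\cE\mid\cD=\cL)=\mu'_\lam(\cE\mid\cD=\cU)$, and therefore $\mu'_\lam(\cE)=\mu'_\lam(\cE\mid\cD=\cL)$. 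It remains to bound the latter. Conditioning on the event $|\mathbf{\bar\Lam}|\le N/n^2$, whose $\nu'_\cL$-complement has probability $e^{-\Omega(N/n^5)}$ by Lemma~\ref{largedeviation}, we have $|S\cap\cL|\le|\mathbf{\bar\Lam}|\le N/n^2$, while $\cU\setminus N(\mathbf{\bar\Lam})$ has at least $(1-1/n)N$ elements because each vertex of $\mathbf{\bar\Lam}$ has at most $n$ neighbours in $\cU$. Given $\cD=\cL$ and $\mathbf{\bar\Lam}$, the set $S\cap\cU$ is a $\tfrac{\lam}{1+\lam}$-random subset of $\cU\setminus N(\mathbf{\bar\Lam})$, so with $X\sim\Bin((1-1/n)N,\tfrac{\lam}{1+\lam})$ and using $\lam\ge C\log^2 n/\sqrt n$, Chernoff's inequality yields
\[
\mu'_\lam\bigl(|S\cap\cL|\ge|S\cap\cU| \,\bigm|\, \cD=\cL,\ |\mathbf{\bar\Lam}|\le N/n^2\bigr)\le\P(X\le N/n^2)=e^{-\Omega(\lam N)}=e^{-\Omega(N/n^5)}.
\]
Combining these estimates gives $\mu'_\lam(\cE)=e^{-\Omega(N/n^5)}$ and hence the lemma.

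There is no genuine obstacle: the argument is a routine transcription of the $\hat\mu_\lam$ case, and all three inputs (the pointwise capture identity, the large-deviation bound of Lemma~\ref{largedeviation}, and the Chernoff tail) are already available. The only two points that need a moment's care are (i) checking that $\nu'_\cL$ really is the measure to which Lemma~\ref{largedeviation} applies, i.e.\ that $(\cP_\cL,\sim,w'_\cL)$ is literally the polymer model $(\cP_C,\sim,w_C)$, and (ii) observing that the hypothesis $\lam\ge C\log^2 n/\sqrt n$ makes the Chernoff tail $e^{-\Omega(\lam N)}$ comfortably smaller than the target $e^{-\Omega(N/n^5)}$.
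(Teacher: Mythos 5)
Your proof is correct and follows exactly the route the paper intends: the paper omits the proof of Lemma~\ref{lem:capp2}, stating it is analogous to (and simpler than) Lemma~\ref{lem:cap2}, and your argument is precisely that transcription, with the pointwise capture identity, Lemma~\ref{largedeviation} applied to $\nu'_\cL=\nu$ (valid since $(\cP_\cL,\sim,w'_\cL)=(\cP_C,\sim,w_C)$), and the Chernoff bound all used as in the $\hat\mu_\lam$ case.
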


We record the following corollary to \Cref{defapprox,lem:XiXiprime,lem:capp,lem:capp2}. Recall that we let $Z(\lam)=Z(B_n,\lam)$ and let $\mu_\lam=\mu_{B_n,\lam}$ denote the hardcore measure on antichains in $B_n$. 

\begin{corollary}
 \label{defapprox2}
    \[ \left| \ln  Z(\lam) - \ln \left[ 2(1+\lambda)^{N} \Xi_C \right] \right| = o\left(\frac{1}{\sqrt{N}}\right)\, , \]
    and 
    \[
    \|\mu'_\lam-\mu_{\lam}\|_{TV}=  o\left(\frac{1}{\sqrt{N}}\right)\, .
    \]
     Moreover, when $\lam=1$ we can tighten both $o( 1/\sqrt{N} )$ error terms above to $e^{-\Omega(n^2)}.$
\end{corollary}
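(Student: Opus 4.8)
The plan is to deduce both statements from the approximations already in hand, using the triangle inequality for $\ln$ of the relevant partition functions and for total variation distance. Since $\lam\ge C\log^2 n/\sqrt n$ throughout this section, \Cref{trunc} gives $|\ln Z(\lam)-\ln Z(C_n,\lam)|=o(1/\sqrt N)$, \Cref{defapprox} gives $|\ln Z(C_n,\lam)-\ln[2(1+\lam)^N\Xi]|=e^{-\Omega(N/n^5)}$, and \Cref{lem:XiXiprime} gives $|\ln\Xi-\ln\Xi_C|=o(1/\sqrt N)$; adding these three yields the partition function estimate. When $\lam=1$ one replaces the first step by \Cref{saporefined} (error $e^{-\Omega(n^2)}$), uses $e^{-\Omega(N/n^5)}\le e^{-\Omega(n^2)}$ in the second, and the sharpened form of \Cref{lem:XiXiprime} in the third.

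For the total variation estimate I would interpolate through $\mu_{C_n,\lam}$ and $\hat\mu_\lam$, writing $\|\mu'_\lam-\mu_\lam\|_{TV}\le\|\mu'_\lam-\hat\mu_\lam\|_{TV}+\|\hat\mu_\lam-\mu_{C_n,\lam}\|_{TV}+\|\mu_{C_n,\lam}-\mu_\lam\|_{TV}$. The last term is $o(1/\sqrt N)$ by the first part of \Cref{transference}, applied with $\cF_1$ the antichains contained in $C_n$, $\cF_2$ all antichains in $B_n$, and $f(n)=o(1/\sqrt N)$ supplied by \Cref{trunc} (or $f(n)=e^{-\Omega(n^2)}$ from \Cref{saporefined} when $\lam=1$). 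The middle term is exactly the second conclusion of \Cref{defapprox}, hence $e^{-\Omega(N/n^5)}$. So it remains to bound $\|\mu'_\lam-\hat\mu_\lam\|_{TV}$, which is the only step that requires genuine work.

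Here I would use a coupling. Both $\mu'_\lam$ and $\hat\mu_\lam$ begin by choosing $\cD\in\{\cL,\cU\}$ uniformly; conditioning on $\cD$, run a maximal coupling of $\nu_\cD$ (used by $\hat\mu_\lam$) and $\nu'_\cD$ (used by $\mu'_\lam$), which agree except with probability $\|\nu_\cD-\nu'_\cD\|_{TV}=o(1/\sqrt N)$ by \Cref{lem:XiXiprime}. On the event that $\bar\Lam$ agrees \emph{and} the interior set $W$ chosen in Step~\ref{hat3} of the definition of $\hat\mu_\lam$ is empty, both processes output the same antichain $\bar\Lam\cup U$ with the same $\tfrac{\lam}{1+\lam}$-random $U$; hence $\|\mu'_\lam-\hat\mu_\lam\|_{TV}\le\max_\cD\big(\|\nu_\cD-\nu'_\cD\|_{TV}+\hat\mu_\lam(W\ne\emptyset\mid\cD)\big)$. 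To see $\hat\mu_\lam(W\ne\emptyset)$ is negligible, note $W\subseteq\Int(\bar\Lam\cap\cD)$, and a nonempty interior contains some $v$ whose $(n+3)/2$ interior-defining neighbours in $\cD$ form a clique in the square graph and therefore lie inside a single polymer $A\in\Lam$ with $|A|\ge(n+3)/2$. Such large polymers are super-exponentially unlikely: for $|A|\ge n/2$ one has $g(A)\ge\tfrac{n^2}{200}\ln(1+\lam)$ with $g$ as in~\eqref{eq:fgdef}, so summing the Kotecký--Preiss bound established in the proof of \Cref{central.lemodd} over all vertices gives $\sum_{|A|\ge n/2}w_\cD(A)\le 2^{n+1}(1+\lam)^{-n^2/200}=o(1/\sqrt N)$, which is $2^{-\Omega(n^2)}$ at $\lam=1$; and $\hat\mu_\lam(W\ne\emptyset\mid\cD)\le\P_{\nu_\cD}\big(\exists A\in\Lam,\ |A|\ge n/2\big)\le\sum_{|A|\ge n/2}w_\cD(A)$.

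The main obstacle is precisely this last point: controlling $\hat\mu_\lam(W\ne\emptyset)$ — equivalently, showing that a sample from $\hat\mu_\lam$ is, outside an $o(1/\sqrt N)$-probability event, also captured by $\mu'_\lam$, so that the discrepancy between the two density formulas of \Cref{lem:cap} and \Cref{lem:capp} collapses. This does not follow formally from \Cref{lem:capp,lem:capp2} alone; it needs the observation that a polymer must be large (size $\gtrsim n/2$) before any $L_{\lceil n/2\rceil\pm2}$-defect can appear in an output of $\hat\mu_\lam$, together with the tail bounds for the polymer models $(\cP_\cD,\sim,w_\cD)$ from \Cref{central.lemodd}. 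Everything else is bookkeeping with the triangle inequality, and the $\lam=1$ sharpenings are inherited from the stronger $e^{-\Omega(n^2)}$-type inputs available in that case (\Cref{saporefined}, \Cref{lem:XiXiprime}) and the fact that $(1+\lam)^{-n^2/200}=2^{-n^2/200}$ is already $2^{-\Omega(n^2)}$.
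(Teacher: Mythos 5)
Your proposal is correct, and its first part (the partition-function estimate and the reduction of the total-variation bound, via the triangle inequality, to the three distances $\|\mu_\lam-\mu_{C_n,\lam}\|_{TV}$, $\|\mu_{C_n,\lam}-\hat\mu_\lam\|_{TV}$ and $\|\hat\mu_\lam-\mu'_\lam\|_{TV}$) is exactly the paper's proof. Where you genuinely diverge is the one nontrivial term, $\|\hat\mu_\lam-\mu'_\lam\|_{TV}$. The paper handles it by comparing the explicit densities of the two measures: by Lemmas~\ref{lem:cap} and~\ref{lem:capp}, $\hat\mu_\lam(S)=c(S)\lam^{|S|}/(2(1+\lam)^N\Xi)$ and $\mu'_\lam(S)=c'(S)\lam^{|S|}/(2(1+\lam)^N\Xi_C)$, and since $\Xi=(1+o(1))\Xi_C$ (Lemma~\ref{lem:XiXiprime}), any $S$ with $\mu'_\lam(S)>\hat\mu_\lam(S)$ must have $c(S)=c'(S)=1$ or $c'(S)=2$; the first type contributes at most $|1-\Xi_C/\Xi|$ in total and the second is killed by Lemma~\ref{lem:capp2}. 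You instead couple the two generative processes (same $\cD$, maximal coupling of $\nu_\cD$ and $\nu'_\cD$, same $U$) and bound the probability that the interior set $W$ of Step~3 is nonempty, observing that a nonempty interior forces $N^{\pm}(v)$, a $2$-linked set of size $(n+3)/2$, into a single polymer (here you implicitly use that compatibility means $\partial$-disjointness, so a $2$-linked subset of $\bar\Lam\cap\cD$ cannot be split between polymers), and then applying the union bound $\P_{\nu_\cD}(A\in\Lam)\le w_\cD(A)$ together with the Koteck\'y--Preiss tail from Lemma~\ref{central.lemodd} and the lower bound $g(A)\ge \tfrac{n^2}{200}\ln(1+\lam)$ for $|A|\ge n/2$. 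All of these steps check out (including the $\lam=1$ sharpenings), so your route is a valid alternative: it buys a more probabilistic, self-contained argument that makes the source of the discrepancy (layer-$L_{k\pm2}$ defects, i.e.\ large polymers) explicit and in fact gives a much smaller bound on that contribution, at the price of redoing a tail estimate that the paper avoids by simply reusing the already-proved capture-number lemmas (Lemmas~\ref{lem:cap}, \ref{lem:capp}, \ref{lem:capp2}) and the ratio $\Xi/\Xi_C$; contrary to your closing remark, the paper's bookkeeping with those lemmas does suffice without any separate large-polymer argument.
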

\begin{proof}
The first claim follows by combining \Cref{trunc} and Lemmas~\ref{defapprox} and \ref{lem:XiXiprime} via the triangle inequality. 

Next note that if $S$ is an antichain such that $\mu'_\lam(S)>\hat \mu_\lam(S)$ then by Lemmas~\ref{lem:cap} and~\ref{lem:capp} and the fact that $\Xi=(1+o(1))\Xi_C$ (Lemma~\ref{lem:XiXiprime}) it must be the case that either $c(S)=c'(S)=1$ or $c'(S)=2$. Thus,
\begin{align*}
   \|\hat\mu_\lam-\mu'_\lam\|_{TV}&=\sum_{S: \mu'_\lam(S)>\hat \mu_\lam(S)} \mu'_\lam(S)-\hat \mu_\lam(S)\\
   &\leq \sum_{S: c(S)=c'(S)=1}\left| \frac{\lam^{|S|}}{2 (1+\lam)^{N} \Xi_C}- \frac{\lam^{|S|}}{2 (1+\lam)^{N} \Xi} \right|
   + \mu'_\lam\left(\{ S: c'(S)=2\}\right)\\
   &=o\left(\frac{1}{\sqrt{N}}\right)\, ,
\end{align*}
where for the first inequlaity we used Lemmas~\ref{lem:cap} and~\ref{lem:capp} and for the final equality we used Lemma~\ref{lem:XiXiprime} and~\ref{lem:capp2}. 

Moreover $\|\hat\mu_\lam-\mu_{C_n,\lam}\|_{TV}=o(1/\sqrt{N})$ by \Cref{defapprox} and $\|\mu_\lam-\mu_{C_n,\lam}\|_{TV}=o(1/\sqrt{N})$ by \Cref{trunc}. The second claim now follows by the triangle inequality. 

 Finally we note that when $\lam=1$, the $o(1/\sqrt{N})$ errors can be improved to $e^{-\Omega(n^2)}$ (when $\lam=1$ we apply \Cref{saporefined} in place of \Cref{trunc}).
\end{proof}

We are now in a position to prove our main results in the case of $n$ odd.

\subsection{Theorem~\ref{saporefined2}}
Theorem~\ref{saporefined2} follows by combining \Cref{clusterexpansion}, \Cref{defapprox2} (both with $\lam=1$) and setting 
\[P_j^1(n)=2^{-j}S_j^1(n,1)\]
and
\[P_j^2(n)=2^{-j}S_j^2(n,1).\]

\subsection{Theorem \ref{sparsesperner3} in the case of $\beta$ bounded away from $1$.}

As in Sections~\ref{sparsespernersection} we first assume that $\beta\leq 9/10$. 
Throughout this section we assume that $\lam\leq 10$ and $\lam\geq C\log^2 n/ \sqrt{n}$ for $C$ sufficiently large.

We follow a similar strategy to the case of $n$ even. In the case where $n$ is even we approximated $i_m(B_n)$ by $i_m(C_n)$ and obtained asymptotics for the latter by studying a polymer model representation of the hard-core model $\mu_{C_n,\lam}$. The main difference in the case where $n$ is odd is that we now approximate $i_m(B_n)$ by the number of antichains of size $m$ in the support of $\mu'_\lam$ (which is a close approximation to $\mu_{C_n,\lam}$). We let $\cI^\cL_m$ denote the set of antichains $S$ of size $m$ that are captured by $\mu'_\lam$ under $\cL$. We define $\cI^\cU_m$ analogously. We note that $\cI^\cL_m$ is precisely the set of antichains of size $m$ in the support of the measure $\mu_\lam^C$ defined in \Cref{givensize} and so $|\cI^\cL_m|=i_m^C$. Note also that $|\cI^\cU_m|=i_m^C$ by symmetry. 

We now proceed by proving the following result which will immediately implies \Cref{sparsesperner3} (for $\beta\leq 9/10$).

\begin{lemma} 
\label{directtransf}
    For any $m\in [N]$ with $2C\log^2 n/\sqrt{n}<\frac{m}{N}\leq 9/10$, we have
	\[ i_m( B_n)=(2+o(1))i_m^C\]
    as $n\rightarrow \infty$ and $n$ is odd. Moreover, almost all antichains in $B_n$ of size $m$ are contained in $L_{[k-2,k]}$ or $L_{[k-1,k+1]}$.
\end{lemma}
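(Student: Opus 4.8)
\textbf{Proof proposal for Lemma~\ref{directtransf}.}

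The plan is to apply the transference lemma (\Cref{transference}) with $\cF_1$ the collection of antichains captured by $\mu'_\lam$ (i.e.\ antichains contained in $L_{[k-2,k]}\cup L_{[k-1,k+1]}$ and captured under $\cL$ or $\cU$) and $\cF_2=\cA$ the set of all antichains in $B_n$, for a suitably chosen $\lam$. First I would fix $\lam$: since $\mathbf A$ sampled from $\mu^C_\lam$ satisfies $\E|\mathbf A|=(1+o(1))\lam N/(1+\lam)$ by \Cref{cor14}, and this is continuous in $\lam$ on the interval $(C\log^2 n/\sqrt n,10)$, there is a choice of $\lam$ in this range with $\E|\mathbf A|=m$ (the same argument works for a sample from $\mu'_\lam$ by symmetry between $\cL$ and $\cU$). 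For this $\lam$, \Cref{defapprox2} gives $Z(\lam)=(1+o(1/\sqrt N))\cdot 2(1+\lam)^N\Xi_C$, and I need the analogous statement $Z_1:=\sum_{A\in\cF_1}\lam^{|A|}=(1+o(1/\sqrt N))\cdot 2(1+\lam)^N\Xi_C$ as well, which follows from \Cref{lem:capp} and \Cref{lem:capp2}: $\sum_{A\in\cF_1}c'(A)\lam^{|A|}=2(1+\lam)^N\Xi_C$, and the discrepancy between $\sum c'(A)\lam^{|A|}$ and $\sum \lam^{|A|}$ is controlled by $\mu'_\lam(\{c'=2\})=e^{-\Omega(N/n^5)}$. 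Hence $Z_2=Z(\lam)=(1+o(1/\sqrt N))Z_1$, so the hypothesis of \Cref{transference} holds with $f(n)=o(1/\sqrt N)$.

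Next I would verify the condition~\eqref{lambda.cond}, i.e.\ that $i_m(\cF_1)\lam^m/Z_1\gg 1/\sqrt N$. Since $i_m(\cF_1)=|\cI^\cL_m\cup\cI^\cU_m|\geq i_m^C$ and $Z_1=(1+o(1))\cdot 2(1+\lam)^N\Xi_C$, it suffices to show $i_m^C\lam^m/((1+\lam)^N\Xi_C)=\Omega(1/\sqrt{\lam N})$, which is exactly \Cref{lem16} (applicable because our choice of $\lam$ gives $\E|\mathbf A|=m$, so~\eqref{exp} holds trivially). Then \Cref{transference} yields $i_m(\cF_2)=(1+o(1))i_m(\cF_1)$, that is $i_m(B_n)=(1+o(1))i_m(\cF_1)$.

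It remains to pin down $i_m(\cF_1)$ in terms of $i_m^C$ and to deduce the structural statement. By inclusion--exclusion, $i_m(\cF_1)=|\cI^\cL_m|+|\cI^\cU_m|-|\cI^\cL_m\cap\cI^\cU_m|=2i_m^C-|\cI^\cL_m\cap\cI^\cU_m|$, and $\cI^\cL_m\cap\cI^\cU_m\subseteq\{S:c'(S)=2\}$, so $|\cI^\cL_m\cap\cI^\cU_m|\lam^m\leq \mu'_\lam(\{c'=2\})\cdot 2(1+\lam)^N\Xi_C=e^{-\Omega(N/n^5)}\cdot 2(1+\lam)^N\Xi_C$; dividing by $\lam^m$ and comparing with $i_m^C\lam^m\gg (1+\lam)^N\Xi_C/\sqrt{\lam N}$ from \Cref{lem16} shows $|\cI^\cL_m\cap\cI^\cU_m|=o(i_m^C)$. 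Hence $i_m(\cF_1)=(2+o(1))i_m^C$ and therefore $i_m(B_n)=(2+o(1))i_m^C$. For the `moreover' claim: $\cF_1$ consists entirely of antichains contained in $L_{[k-2,k]}$ or $L_{[k-1,k+1]}$ (three central layers each), and $i_m(\cF_1)=(1+o(1))i_m(B_n)$, so all but an $o(1)$-proportion of size-$m$ antichains lie in one of these two triples of layers. The main obstacle I anticipate is purely bookkeeping: making sure the $e^{-\Omega(N/n^5)}$ error from the $c'=2$ event is genuinely negligible against $i_m^C\lam^m/Z_1 = \Omega(1/\sqrt{\lam N})$ for the full range $m/N\in(2C\log^2 n/\sqrt n,9/10]$ (in particular that $\lam$ does not drift so small that $\sqrt{\lam N}$ swamps $e^{\Omega(N/n^5)}$ — but $\lam\geq C\log^2 n/\sqrt n$ keeps $\sqrt{\lam N}$ polynomially bounded, so this is fine), and in confirming that the chosen $\lam$ indeed lies in the admissible window $(C\log^2 n/\sqrt n,10)$ throughout this range of $m$.
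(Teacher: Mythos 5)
Your proposal is correct and follows essentially the same route as the paper: choose $\lam$ with $\E|\mathbf A|=m$, use \Cref{defapprox2} to compare the full partition function with $2(1+\lam)^N\Xi_C$, use \Cref{lem16} to lower-bound the probability mass of size-$m$ antichains, and use \Cref{lem:capp,lem:capp2} to show the doubly-captured ($c'=2$) antichains are negligible, giving the factor $2$ and the structural claim. The only cosmetic difference is that you route the comparison through the general \Cref{transference} applied to the captured family (computing its partition function $Z_1$ explicitly), whereas the paper applies the total-variation bound of \Cref{defapprox2} directly to the event $\cI_m$; the two arguments are equivalent in substance.
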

\begin{proof}
Let $\mathbf A$ denote a sample from the measure $\mu^C_\lam$. As in \Cref{sparsespernersection} there exists $\lam{\in \left(\frac{C \log^2 n}{ \sqrt{n}}, 10\right)}$ such that $\mathbb{E}| \mathbf{A}|=m$. By~\Cref{lem16}, $i_m^C=\Theta(1/\sqrt{\lambda N})\frac{(1+\gl)^N\Xi_C}{\gl^m}.$

Let $\cI_m$ denote the set of all antichains of size $m$ in $B_n$. Note that by~\Cref{lem:capp},
    \[ |\mu_\lam(\cI_m) - \mu'_\lam(\cI_m)| = \left| \frac{i_m(B_n)\lam^m}{Z(\lam)}-\frac{(|\cI_m^\cL|+|\cI_m^\cU|)\lam^m}{2(1+\lam)^{N}\Xi_C} \right|=\left| \frac{i_m(B_n)\lam^m}{Z(\lam)}-\frac{i_m^C\lam^m}{(1+\lam)^{N}\Xi_C} \right|.\]
On the other hand, the above quantity is  $o(1/\sqrt N)$ by \Cref{defapprox2}.
Again, by \Cref{defapprox2}, it follows that
    \[ |i_m(B_n)-(2+o(1))i_m^C|=o(1/\sqrt{N})\times \frac{(1+\lam)^{N}\Xi_C}{\lam^m}=o(i_m^C). \]
    For the final claim, we note that if $S\in \cI_m^\cL \cap \cI_m^\cU$ then $c'(S)=2$ where $c'(S)$ is as in \Cref{lem:capp}. By \Cref{lem:capp2} and the fact that $\|\hat\mu_\lam-\mu'_\lam\|_{TV}=o(1/\sqrt{N})$ we have
    \[
    \mu'_\lam(\cI_m^\cL \cap \cI_m^\cU)=\frac{|\cI_m^\cL \cap \cI_m^\cU|\lam^m}{(1+\lam)^{N}\Xi_C} =e^{-\Omega(N/n^5)}\, .
    \]
    We conclude that $|\cI_m^\cL \cap \cI_m^\cU|=o(i_m^C)$ and so 
    \[
    |\cI_m^\cL \cup \cI_m^\cU|=(1+o(1))(|\cI_m^\cL|+|\cI_m^\cU|)=(2+o(1))i_m^C=(1+o(1))i_m(B_n).
    \]
    The result follows by noting that $\cI_m^\cL\subseteq L_{[k,k+2]}$ and $\cI_m^\cU\subseteq L_{[k-1,k+1]}$.
\end{proof}

\subsection{Theorem \ref{thm5} in the case of $\beta$ bounded away from $1$.}
As in the previous section, we assume that $\beta\leq 9/10$. 

Fix $t\geq 1$, $\eps>0$ and suppose $m\in[N]$ is such that $\beta=m/N\geq 1-4^{-1/t}+\eps$. By \Cref{directtransf},  $\psi(n,m)=i_m(B_n)=(2+o(1))i_m^C$. Theorem \ref{thm5} (for $\beta\leq 9/10$) now follows from \Cref{thm5pre}.

\subsection{Theorems~\ref{thm5} and \ref{sparsesperner3} in the case of $\beta$ close to $1$}\label{closeto1'}
As in the case of $n$ even, we give a direct proof when $\beta$ is close to $1$. The argument is a minor modification of the one given in~\Cref{closeto1}.

Let $\cA'\subseteq\cA$ denote the set of antichains that are a subset of either $\cL$ or $\cU$. 
\Cref{clusterexpansion} (applied with $t=1$) and \Cref{defapprox2}  show that for $\lam\geq 9$,
\begin{align}\label{eqBnLU}
Z(B_n, \lam)=(1+o(1/\sqrt{N}))2(1+\lam)^N =(1+o(1/\sqrt{N}))\sum_{A\in \cA'}\lam^{|A|}\, .
\end{align}
Set $\beta=m/N$ and suppose that $\beta\geq 9/10 $ and set $\gl:=\frac{\beta}{1-\beta}$ (so that $\lam\geq 9$).
Let $f_m$ denote the number of antichains in $\cA\backslash \cA'$ of size $m$. Then by~\eqref{eqBnLU} (recalling that $Z(B_n,\lam)=\sum_{A\in\cA}\lam^{|A|}$)
\[
f_m \lam^m \leq \sum_{A\in \cA\backslash\cA'}\lam^{|A|}=o(1/\sqrt{N})(1+\lam)^N.
\]
As we showed in \Cref{closeto1}, we have $\binom{N}{m} \lam^m(1+\lam)^{-N} =\Omega( 1/\sqrt{N})$ and so $f_m=o(1)\binom{N}{m}$. Since the number of antichains in $\cA'$ of size $m$ is equal to $2\binom{N}{m}$, we conclude that 
\[
2\binom{N}{m}\leq \psi(n,m) = 2\binom{N}{m}+f_m=(2+o(1))\binom{N}{m}\, ,
\]
which is the content of Theorem~\ref{thm5} in this regime. We have shown that almost all antichains of size $m$ are contained in either $\cL$ or $\cU$ and so Theorem~\ref{sparsesperner3} trivially holds in this regime.

\subsection{Theorem \ref{dualthm1}}
\subsubsection{1-statement and behaviour inside the scaling window}
We proceed in much the same way as in the case of $n$ even (\Cref{1statement}).
Let $\beta=m/N=\frac{3}{4}-\frac{\ln(n)}{4n} + \frac{h(n)}{n}$. Theorem \ref{thm5} shows that if $h(n)=\Omega(n)>0$ then $\psi(n,m)=i_m(B_n)=(2+o(1))\binom{N}{m}$. For the 1-statement and behaviour inside the scaling window, we may therefore assume that $h(n)=o(n)$. Let $\lam$ be as in Lemma~\ref{lem17} applied with $t=2$ i.e. $\lam=\beta/(1-\beta)$. 
Combining~\eqref{eqs0s1},~\eqref{eq:psit1},~\eqref{eq:psit2} (again with $t=2$), and \Cref{directtransf} gives
\begin{align}\label{eq:psinm3odd}
\psi(n, m)= (2+o(1)) \binom{N}{m} \exp\left( \sum_{\Gamma\in \mathcal{C}, \|\Gamma\|=1} w_C(\Gamma) \right)\, ,
\end{align}
where $\cC=\cC(\cP_C,\sim)$.
We now turn our attention to computing the sum on the RHS.

Recall that $k=(n+1)/2$. Let $\Gamma\in \cC$ be a cluster of size 1. Then $\Gamma$ consists of precisely one polymer $A$ which has size 1, i.e. it is an element of $L_{k-1}\cup L_{k+1}$. In particular, the incompatibility graph $G_\Gamma$ consists of a single vertex and so $\phi(G_\Gamma)=1$ and  $w_C(\Gamma)=w_C(A)=\lambda (1+\lambda)^{-(k+1)}$ if $A\in L_{k+1}$, $w_C(\Gamma)=w_C(A)=\lambda (1+\lambda)^{-k}$ if $A\in L_{k-1}$. Therefore, recalling that $N=\binom{n}{k}=\binom{n}{k-1}$,
\begin{align}\label{1clusters3}
    \sum\limits_{\|\Gamma \|=1} w_C(\Gamma) 
    &= \binom{n}{k-1} \lambda (1+\lambda)^{-k} + \binom{n}{k+1} \lambda (1+\lambda)^{-(k+1)}\nonumber\\
    &=N\frac{\beta}{\sqrt{1-\beta}} \left[1 +\frac{n-1}{n+3}(1-\beta)\right] (1-\beta)^{n/2}  \\
     &=(1+o(1))\frac{2^n}{\sqrt{\pi n/2}}\cdot \frac{3}{2}\cdot \frac{5}{4}\left(\frac{1+\frac{\ln n}{n}-\frac{4h(n)}{n}}{4}\right)^{n/2}\nonumber\\
 &= (1+o(1))\frac{15}{4\sqrt{2\pi n}} 
 \exp\left\{\frac{\ln n}{2}-2h(n)+O\left(\frac{(\ln n)^2+(h(n))^2}{n}\right)\right\}\nonumber\\
 &=(1+o(1))\frac{15}{4\sqrt{2\pi }} 
 \exp\left\{-2h(n)(1+o(1))\right\}\, ,\nonumber
\end{align}
where for last two equalities we used the assumption that $h(n)=o(n)$.
Considering the cases $h(n)\to\infty$ and $h(n)\to c\in\mathbb R$ recovers the first two statements of Theorem \ref{dualthm1} in the case of $n$ odd.

\begin{remark}\label{rem:R11}
Theorem \ref{thm5} shows that for $\beta=m/N\geq 1/2+\eps$
\[
    \psi(n,m)=(2+o(1))\binom{N}{m}\exp \left( N R_1^{1}\left(n, \beta \right) \cdot \left(1-\beta\right)^{\frac{n+1}{2}} \right)\, .
    \]
    Comparing this to~\eqref{eq:psinm3odd} and~\eqref{1clusters3} reveals that $R_1^{1}\left(n, \beta \right)=\frac{\beta}{1-\beta}+\frac{n-1}{n+3}\beta$.
\end{remark}

\subsubsection{$0$-statement}
The proof of the $0$-statement goes through exactly as in \Cref{0statement}. This completes the proof of \Cref{dualthm1}.

\section{Proof of Theorem~\ref{MT}}
\label{secproofcont} In this section, the \textit{cost} (of a choice) means the logarithm (in base two) of the number of its possibilities. Recall that $\log=\log_2$. 

Recall that \Cref{MT} is key to proving \Cref{trunclemma2} which asserts that the ($\gl$-weighted) number of antichains in three consecutive layers of $B_n$ is closely approximated by the number contained solely in the largest two of the three layers. In his work on Dedekind's problem, Sapozhenko~\cite{sapo} proves a similar result in the special of $\lam=1$. His approach is an early application of the graph container method. Trying to follow this approach naively, however, fails immediately for $\lam< 1$.

It turns out that the natural way to overcome this difficulty is to consider a \emph{pair} of containers (that approximates antichains in three layers) collectively, and analyze them according to their interplay. This enables us to reduce the cost for both constructing containers and recovering antichains from them. The construction of the pairs of containers partially adapts a recent improvement on the graph container method by the authors \cite{lambda} to increase the valid range of $\lambda$ to $\lam=\tilde \Omega(n^{-1/2})$ (it would have been $\tilde \Omega(n^{-1/3})$ without the new idea in \cite{lambda}). Our analysis for the reconstruction steps is inspired by Galvin \cite{Galvin_Zhom}, who also considered a pair of containers in bipartite graphs (as opposed to a poset).

For future reference, we record the basic binomial estimate 
\beq{binom} {n \choose \le k} \le \exp\left(k \log \frac{en}{k}\right) \enq
where $\binom{n}{\leq k}:=\sum_{i \le k}\binom{n}{i}.$ 

\subsection{Preliminaries}\label{sec.container.prelim} This section quickly collects preliminaries on graph containers from the literature. We refer readers to \cite{Galvin_independent} for a less hurried introduction to the topic. Say a bipartite graph $\Sigma$ with bipartition $X \cup Y$ is \textit{$(d_X, d_Y)$-regular} if $d(v)=d_X~ \forall v \in X$ and $d(v)=d_Y~ \forall v \in Y$.  For $A\subseteq X$, we define the \textit{closure} of $A$ to be 
$[A]\coloneqq \{ v\in X: N(v)\subseteq N(A)\}$. For integers $a,g\geq 1$, let
    \begin{align}\label{eq:cGdef}
    \cG=\cG(a,g):=\{A \sub X: A \mbox{ is $2$-linked}, |[A]|=a, |N(A)|=g\}.
    \end{align}
Classical graph container methods can be used to bound the size of $\cG$. As the first step, one constructs a collection of `containers' (\Cref{def.psi.approx}) so that each member of $\cG$ is contained in at least one of the containers. In order to provide a good upper bound on $|\cG|$, the construction of containers needs to meet the following two conditions: i) each container is not too large (\Cref{SF.size}); and ii) there are not too many containers (Lemmas~\ref{lem1} and \ref{lem2}).

Throughout this section we fix a bipartite graph $\Sigma$ with bipartition $X \cup Y$ that is $(d_X, d_Y)$-regular with $d_Y \le d_X$. We also fix $a,g\in\mathbb N$ and set $t=g-a$.

\begin{defn}\label{def.v.approx} For $A\subseteq X$ and $\varphi>0$, let
\[
N(A)^{\varphi}\coloneqq \{y\in N(A) : |N_{[A]}(y)|>\varphi\} .
\]
A $\varphi$-approximation for $A\subseteq X$ is a set $F\subseteq Y$ satisfying
\[
N(A)^{\varphi}\subseteq F\subseteq N(A) \text{ and }N(F)\supseteq [A]\, .
\]
\end{defn}

\begin{defn}\label{def.psi.approx}
For $1 \le \psi_X\leq d_X-1$ and $1\leq \psi_Y \le d_Y-1$, a \textit{$(\psi_X, \psi_Y)$-approximating pair} for $A \sub X$ is a pair $(F,S) \in 2^Y \times 2^X$ satisfying $F \sub N(A)$, $S \supseteq [A]$,
    \beq{S.degree} d_F(v) \ge d_X-\psi_X \quad \forall v \in S, \text{ and}\enq
    \beq{F.degree} d_{X \setminus S}(v) \ge d_Y-\psi_Y \quad \forall v \in Y \setminus F.\enq
\end{defn}

\begin{proposition}[\!\!{\cite[Proposition 2.3 and equation (22)]{lambda}}]\label{SF.size} If $(F,S)$ is a $(\psi_X, \psi_Y)$-approximating pair for $A \in \cG(a,g)$, then, with $s:=|S|$ and $f:=|F|,$ we have
    \beq{eq.SF} s \le f+\frac{1}{d_X}[(g-f)\psi_Y+(s-a)\psi_X].\enq
    In particular, $s\leq 2g.$

\end{proposition}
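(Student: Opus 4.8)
The plan is to deduce \eqref{eq.SF} from a single double count of the edges of $\Sigma$ incident to $S$: by $(d_X,d_Y)$-regularity there are exactly $d_X s$ of them. Writing $e(\cdot,\cdot)$ for the number of edges of $\Sigma$ between two vertex sets, and using that $[A]\subseteq S$ and $F\subseteq N(A)\subseteq Y$, I would split
\[
d_X s \;=\; e(S,Y) \;=\; e(S,F)\;+\;e\bigl([A],\,Y\setminus F\bigr)\;+\;e\bigl(S\setminus[A],\,Y\setminus F\bigr)
\]
and bound the three terms separately. For the first, every $v\in F$ has $\Sigma$-degree exactly $d_Y\le d_X$, so $e(S,F)\le d_Y f\le d_X f$. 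For the third, \eqref{S.degree} gives $d_{Y\setminus F}(v)=d_X-d_F(v)\le\psi_X$ for each $v\in S$, hence $e\bigl(S\setminus[A],Y\setminus F\bigr)\le\psi_X(s-a)$. The middle term is where the closure enters: by definition $[A]=\{v\in X: N(v)\subseteq N(A)\}$, so no vertex of $[A]$ has a neighbour outside $N(A)$ and therefore $e\bigl([A],Y\setminus F\bigr)=e\bigl([A],N(A)\setminus F\bigr)$; and for $v\in N(A)\setminus F\subseteq Y\setminus F$, \eqref{F.degree} gives $d_S(v)=d_Y-d_{X\setminus S}(v)\le\psi_Y$, so $d_{[A]}(v)\le d_S(v)\le\psi_Y$ since $[A]\subseteq S$. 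As $|N(A)\setminus F|=g-f$, this yields $e\bigl([A],N(A)\setminus F\bigr)\le\psi_Y(g-f)$. Adding the three bounds gives $d_X s\le d_X f+\psi_Y(g-f)+\psi_X(s-a)$, which is \eqref{eq.SF} after dividing by $d_X$.

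For the ``in particular'' clause I would instead compare the two evaluations of $e(S,F)$ alone: from the $F$-side $e(S,F)\le d_Y f\le d_X g$ (using $f\le g$ and $d_Y\le d_X$), while \eqref{S.degree} gives $e(S,F)\ge(d_X-\psi_X)\,s$ from the $S$-side, so $s\le d_X g/(d_X-\psi_X)\le 2g$ in the relevant regime where $\psi_X$ is bounded away from $d_X$ (e.g. $\psi_X\le d_X/2$). Since this statement is quoted verbatim from \cite[Proposition 2.3, eq.~(22)]{lambda}, in the write-up I would simply cite it and include the short edge-counting argument above for completeness.

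The computations are all elementary, so the only point requiring care is the validity of the edge decomposition: that the three sets $F$, $Y\setminus F$ restricted to $[A]$, and $Y\setminus F$ restricted to $S\setminus[A]$ are handled without overcounting, and above all the observation that a vertex of $[A]$ cannot send an edge into $Y\setminus N(A)$ — this is precisely what allows the $\psi_Y$-bound coming from \eqref{F.degree} to be applied to the ``$[A]$-part'' of the count. Everything else is bookkeeping with regularity and the two approximating-pair inequalities \eqref{S.degree} and \eqref{F.degree}.
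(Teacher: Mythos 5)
Your double count is correct, and it settles \eqref{eq.SF} completely: the decomposition $d_Xs=e(S,F)+e([A],Y\setminus F)+e(S\setminus[A],Y\setminus F)$ is valid because $[A]\subseteq S$, the bound $e(S,F)\le d_Yf\le d_Xf$ uses only regularity and $d_Y\le d_X$, the bound $e(S\setminus[A],Y\setminus F)\le\psi_X(s-a)$ is exactly \eqref{S.degree} applied to the $s-a$ vertices of $S\setminus[A]$, and the key middle bound correctly exploits that vertices of $[A]$ only see $N(A)$, so the $\psi_Y$-bound from \eqref{F.degree} applies to the $g-f$ vertices of $N(A)\setminus F$. Note that the paper itself offers no argument here -- the proposition is imported verbatim from the cited reference -- so there is nothing to compare against beyond the citation; your proof is a clean self-contained substitute and is the natural edge-counting argument one would expect behind the quoted statement.

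One caveat, which you already flagged: the ``in particular'' clause $s\le 2g$ does not follow from \eqref{eq.SF} (or from your direct comparison $({d_X-\psi_X})s\le e(S,F)\le d_Xg$) for arbitrary $\psi_X\le d_X-1$ as allowed by Definition~\ref{def.psi.approx}; both routes give only $s\le gd_X/(d_X-\psi_X)$, so one needs $\psi_X\le d_X/2$. This is harmless in context, since every approximating pair used in this paper comes from Lemma~\ref{lem2} with $\psi_X=d_X/(2^i\gamma)$ and $\gamma\ge 2$, hence $\psi_X\le d_X/2$; under that hypothesis you can also read $s\le 2g$ directly off \eqref{eq.SF}, since $\psi_Y\le d_Y\le d_X$ gives $s\le f+(g-f)+(s-a)/2\le g+s/2$. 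It would be worth stating that standing assumption explicitly if you include your argument rather than the bare citation.
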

Let
\beq{m.varphi} m_\varphi=\min\{|N(K)|:v \in Y, K \sub N(v), |K|>\varphi\}.\enq
The following lemma provides a collection of $\varphi$-approximations for the set $\cG(a,g)$.

\begin{lemma}[\!\!{\cite[Lemma A.3]{BalGLW21}}]\label{lem1}
Let $1 \le \varphi \le d_Y-1$ and set $w=gd_Y-ad_X$. 
For any $C>0$ that satisfies $p=C\ln d_X/(\varphi d_X)<1$, the following holds: there exists a family $\cV =\cV(a,g,\varphi) \sub 2^Y$ of size at most
\beq{eq.first.approx} |Y|\exp\left(54g \ln(d_Xd_Y)p+\frac{54 g\ln(d_Xd_Y)}{\exp(p m_\varphi)}+\frac{54w\ln d_Y \ln (d_Xd_Y)}{d_X(d_Y-\varphi)}\right)\binom{3gd_Yp}{\le 3wp}\enq
such that each $A \in \cG(a, g)$ has a $\varphi$-approximation in $\cV$.    
\end{lemma}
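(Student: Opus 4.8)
\medskip
\noindent\emph{Proof sketch.} The statement is a graph-container lemma in the spirit of Sapozhenko~\cite{sapozhenko1987number}, and the plan is to run a container algorithm — in the refined form developed in \cite{lambda} and \cite{BalGLW21} — which attaches to each $A\in\cG(a,g)$ a $\varphi$-approximation $F=F(A)$ (Definition~\ref{def.v.approx}) that is reconstructible from a bounded amount of combinatorial data about $A$. One then takes $\cV$ to be the set of all possible outputs $F$, so that \eqref{eq.first.approx} reduces to counting the data.

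Fix $A\in\cG(a,g)$ and build a \emph{seed} by random sparsification: let $T\subseteq N(A)$ include each vertex of $N(A)$ independently with probability $p$. A first-moment computation, using the $(d_X,d_Y)$-regularity of $\Sigma$ and the fact that $N([A])=N(A)$, shows that with positive probability $T$ has the following three properties simultaneously. First, $|T|\le 2gp$ and the number of edges from $T$ to $X\setminus[A]$ is at most $2wp$ (recall $w=gd_Y-ad_X$ is exactly the number of edges from $N(A)$ to $X\setminus[A]$). Second, the set $D:=N(A)^{\varphi}\setminus N(N(T))$ of ``uncaptured heavy vertices'' is small: for $y\in N(A)^{\varphi}$ the set $K:=N_{[A]}(y)\subseteq N(y)$ satisfies $|K|>\varphi$, so $|N(K)|\ge m_{\varphi}$ by \eqref{m.varphi} and $N(K)\subseteq N([A])=N(A)$, whence $\pr(y\notin N(N(T)))\le\pr(T\cap N(K)=\emptyset)=(1-p)^{|N(K)|}\le e^{-pm_{\varphi}}$ and therefore $\E|D|\le g e^{-pm_{\varphi}}$. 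Third, the set $Q:=\{v\in[A]:N(v)\cap N(N(T))=\emptyset\}$ of ``uncaptured closure vertices'' is small: one checks that each $v\in Q$ has all $d_X$ of its neighbours ``light'' (in $N(A)\setminus N(A)^{\varphi}$), and since each light vertex sends $\ge d_Y-\varphi$ edges to $X\setminus[A]$, an edge count gives $|Q|=\tilde O\big(w/(d_X(d_Y-\varphi))\big)$. Fix one such $T$.

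Now set $F:=\{\,y\in N(A):N(y)\cap N(T)\neq\emptyset\,\}\cup D\cup R$, where $R$ consists of one neighbour (in $N(A)$) of each $v\in Q$. This is a $\varphi$-approximation for $A$: clearly $F\subseteq N(A)$; every $y\in N(A)^{\varphi}$ either has a neighbour in $N(T)$, so lies in the first part of $F$, or lies in $D\subseteq F$, so $N(A)^{\varphi}\subseteq F$; and $N(F)\supseteq[A]$ because every closure vertex outside $Q$ already has a neighbour in the first part of $F$ while the vertices of $Q$ are dominated by $R$. The point is that $F$ is reconstructible from the following data, whose counts match \eqref{eq.first.approx} term by term: a \emph{root} $y_0\in N(A)$ (at most $|Y|$ choices); the seed $T$, encoded by growing the relevant $2$-linked structure out of $y_0$ in the manner of Lemma~\ref{2linked} — which, because $A$ is $2$-linked, costs only $\exp\!\big(O(gp\ln(d_Xd_Y))\big)$ (the first exponential factor); the exceptional set $D$, of size $O(ge^{-pm_{\varphi}})$, encoded within $\exp\!\big(O(ge^{-pm_{\varphi}}\ln(d_Xd_Y))\big)$ (the second exponential factor); the data $(Q,R)$, encoded within $\exp\!\big(O(|Q|\ln d_Y\ln(d_Xd_Y))\big)$ with $|Q|=\tilde O(w/(d_X(d_Y-\varphi)))$ (the third exponential factor); and the ``ghost'' discrepancy between $F$ and the set $\{y:N(y)\cap N(T)\neq\emptyset\}$ that is determined once $N(T)$ is known, which is supported on the $\le 3gd_Yp$ sampled incidences at $T$ and has size $\le 3wp$ (the binomial factor). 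Multiplying yields \eqref{eq.first.approx}.

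The step I expect to be the main obstacle is the simultaneous bookkeeping — choosing the sparsification rate $p=C\ln d_X/(\varphi d_X)$ and the absorbed constants so that all of these costs hold at once, and in particular so that the seed $T$ is encoded at the cheap rate $\exp(O(gp\ln(d_Xd_Y)))$ rather than the prohibitive $\exp(O(g\ln(d_Xd_Y)))$; this forces the $2$-linked growth to be carried out on the \emph{sparsified} structure and is the delicate point inherited from \cite{lambda}. A secondary difficulty is checking that the reconstructed $F$ really satisfies all the defining inequalities of a $\varphi$-approximation throughout the parameter range $1\le\varphi\le d_Y-1$, the tightest case being $\varphi$ close to $d_Y$, where closure vertices are hardest to capture and the $w$-term dominates.
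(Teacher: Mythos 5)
Your overall architecture is the right one, and it matches how this lemma is actually proved (the paper itself only quotes Lemma~\ref{lem1} from \cite{BalGLW21}, but its proof of the variant Lemma~\ref{lem7} exhibits the intended mechanism): a $p$-random seed $T\subseteq N(A)$ with $|T|=O(gp)$ and $e(T,X\setminus[A])=O(wp)$, an exceptional set of heavy vertices of expected size $ge^{-pm_\varphi}$, the edge data $\nabla(T,X\setminus[A])$ to recover $N_{[A]}(T)$ (your ``ghost discrepancy'', which is what makes the reconstruction legitimate even though $N(A)$ is unknown), and a term-by-term count against \eqref{eq.first.approx}. The constant-$2$ Markov thresholds should be $3$ so that the three events hold simultaneously, and your claim that every $v\in Q$ has all neighbours light ignores neighbours in $D$ (fix $Q$ to be $[A]\setminus N(L)$ with $L=D\cup N(N_{[A]}(T))$, as in the proof of Lemma~\ref{lem7}); these are cosmetic.

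The genuine gap is the treatment of the uncaptured closure vertices. You replace the covering step by ``pick one neighbour $R$ per $v\in Q$'' and assert $|Q|=\tilde O\bigl(w/(d_X(d_Y-\varphi))\bigr)$, but the edge count you sketch only gives $|Q|\,d_X\le \varphi\cdot|N(A)\setminus L|\le \varphi w/(d_Y-\varphi)$, i.e.\ $|Q|\le \varphi w/(d_X(d_Y-\varphi))$ --- weaker by a factor of $\varphi$, which in the regime $\varphi=\Theta(d_Y)$ relevant here is a factor of order $n$. This is exactly why the proof (cf.\ the proof of Lemma~\ref{lem7}) does \emph{not} list the uncovered vertices but instead applies the Lov\'asz--Stein covering lemma (Lemma~\ref{lem.LS}) to produce a set $T_1\subseteq N(A)\setminus L$ of size at most $\frac{|N(A)\setminus L|}{d_X}(1+\ln d_Y)\le \frac{3w\ln d_Y}{d_X(d_Y-\varphi)}$ that dominates all of $[A]\setminus N(L)$ at once; the gain of $\varphi/\ln d_Y$ is what produces the third exponent $\frac{54w\ln d_Y\ln(d_Xd_Y)}{d_X(d_Y-\varphi)}$ in \eqref{eq.first.approx}. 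With your bound the third factor degrades to $\exp\bigl(\Theta(w\ln(d_Xd_Y)/d_X)\bigr)$, which not only fails to give the stated lemma but is fatal downstream: in the proof of Theorem~\ref{MT} (with $\varphi=u/2$, $w\le tu$) it turns a cost of $O(t\log^2 n/n)$ into $\Omega(t\log n)$, which is no longer absorbed by the saving $(1+\lambda)^{-\gamma(t+t')}$ when $\lambda=\Theta(\log^2 n/\sqrt n)$. Relatedly, your per-element encoding cost $O(\ln d_Y\ln(d_Xd_Y))$ for $R$ is asserted without a mechanism (the elements of $R$ are not a priori anchored to already-specified vertices the way $T_1\subseteq N(A)\setminus L$ is handled); once you insert the covering lemma the size and encoding issues are resolved simultaneously, and the rest of your sketch goes through.
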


The following lemma allows us to construct $(\psi_X,\psi_Y)$-approximations from $\varphi$-approximations. 

\begin{lemma}[\!\!{\cite[Lemma 2.4]{lambda}}] \label{lem2}
There exists an absolute constant $\gamma\geq 2$ such that the following holds. Set $\varphi=d_Y/2$. Given a set $F'\subseteq Y$, write $\cG(a,g,F')$ for the collection of $A \in \cG(a,g)$ for which $F'$ is a $\varphi$-approximation. Then there exists a family $\mathcal{F}=\mathcal{F}(a, g, F')\subseteq 2^Y \times 2^X$ that satisfies the following properties: with
\beq{kap} \kappa:=\max\{i:2^i \le \sqrt{d_X}\},\enq
we have that
\begin{enumerate}
    \item\label{bdcfi} $\cF=\bigcup_{i \in [0, \kappa]} \cF_i \left(=\bigcup_{i \in [0,\kappa]}\cF_i(a,g,F')\right)$ where for each $i \in [0,\kappa]$,
    \[|\cF_i|\le {d_Yg \choose \le 2\gamma w/(d_Xd_Y)}{ d_X^2 d_Y^2 g \choose \le 2\gamma w/(d_Xd_Y)} \exp\left[ O\left(\frac{it\gamma \log(gd_X^2/t)}{d_X}\right)\right];\]
    \item if $(F,S)\in\cF_i$, then $(F,S)$ is a $(d_X/(2^i\gamma), d_Y/\gamma)$-approximating pair for some $A \in \cG(a,g,F')$; in addition, for $i \in [0, \kappa-1]$, every $(F, S) \in \cF_i$ satisfies $|F|<g-t/2^i$; and
    \item every $A \in \cG(a,g,F')$ has a  $(d_X/(2^i\gamma), d_Y/\gamma)$-approximating pair in $\cF_i$ for some $i \in [0, \kappa]$.
\end{enumerate}
    
\end{lemma}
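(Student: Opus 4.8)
The plan is to prove this by the standard graph-container method of iteratively refining an initial $\varphi$-approximation, in the spirit of Sapozhenko's construction and its sharpening in \cite{lambda}. I would define $\cF=\cF(a,g,F')$ as the set of all pairs producible by a deterministic refinement \emph{algorithm} run over the elements of $\cG(a,g,F')$, graded by the number $i$ of refinement rounds used, with $\cF_i$ collecting the outputs after exactly $i$ rounds. With this definition property (3) is immediate, while properties (1) and (2) become, respectively, a cost count for the algorithm and an invariant maintained by it; so the work splits into (a) designing the algorithm and checking its invariants and (b) bounding the number of outputs at each level. The constant $\gamma\ge2$ is chosen large at the end to absorb the $O(1)$ slack in the degree-threshold steps below.

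\textbf{The algorithm.} Fix $A\in\cG(a,g,F')$ and put $w=gd_Y-ad_X$, the number of $\Sigma$-edges from $N(A)$ into $X\setminus[A]$; this quantity controls every cost below. First I would build an initial $(d_X/\gamma,d_Y/\gamma)$-approximating pair $(F_0,S_0)$ with $F_0\subseteq N(A)$ and $[A]\subseteq S_0$: take $F_0$ to be $F'$ augmented by a small set of ``repair'' vertices (needed because $F'\supseteq N(A)^\varphi$ only controls the high-codegree part of $N(A)$), and take $S_0$ to be the vertices of $X$ of degree at least $d_X-d_X/\gamma$ into $F_0$; the repair set has size $O(w/(d_Xd_Y))$, drawn from a pool of size $O(d_X^2d_Y^2g)$, by the codegree/expansion count behind \cite[Lemma A.3]{BalGLW21} and \cite[Lemma 2.4]{lambda}, and \eqref{F.degree} with $\psi_Y=d_Y/\gamma$ is checked directly from $(d_X,d_Y)$-regularity. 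Then I would run refinement rounds: entering round $i$ with a $(d_X/(2^i\gamma),d_Y/\gamma)$-approximating pair $(F_i,S_i)$, if $|F_i|<g-t/2^i$ (in particular once $i=\kappa$) I stop and record $(F_i,S_i)\in\cF_i$; otherwise $|N(A)\setminus F_i|<t/2^i$, so little of $N(A)$ remains to be found, and I guess a set of only $O(t/d_X)$ ``seed'' vertices from which $N(A)\setminus F_i$ can be recovered (using regularity to turn $\sim t$ unknown vertices into $\sim t/d_X$ guesses), set $F_{i+1}$ accordingly, let $S_{i+1}$ be the vertices of $X$ of degree at least $d_X-d_X/(2^{i+1}\gamma)$ into $F_{i+1}$, verify $[A]\subseteq S_{i+1}$ and that \eqref{F.degree} survives, and increment $i$. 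Halving $\psi_X$ each round forces termination by round $\kappa=\max\{i:2^i\le\sqrt{d_X}\}$, and these invariants are exactly property (2).

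\textbf{Cost accounting.} The initialization has at most $\binom{d_Yg}{\le 2\gamma w/(d_Xd_Y)}\binom{d_X^2d_Y^2g}{\le 2\gamma w/(d_Xd_Y)}$ outputs: the first binomial bounds the choices of $F_0\subseteq Y$ (its symmetric difference from $N(A)$ has $O(w/(d_Xd_Y))$ elements among $\le gd_Y$ edge-endpoints) and the second bounds the choices of $S_0$. Each refinement round multiplies the count by the number of ways to choose $O(t/d_X)$ seed vertices from a pool of size $O(gd_X^2/t)$, i.e. by $\exp[O((t/d_X)\gamma\log(gd_X^2/t))]$, so after $i$ rounds the multiplier is $\exp[O(it\gamma\log(gd_X^2/t)/d_X)]$ — this is the stated bound on $|\cF_i|$.

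\textbf{Main obstacle.} The crux is the refinement step together with its cost. One must show that when $|F_i|\ge g-t/2^i$ the leftover $N(A)\setminus F_i$ is genuinely reconstructible from $O(t/d_X)$ guesses while \eqref{S.degree}--\eqref{F.degree} survive with $\psi_X$ halved, and that the threshold-defined set $S_{i+1}$ still contains $[A]$ and is not too large (Proposition~\ref{SF.size} gives $|S|\le 2g$ a posteriori, but one needs the right intermediate control to run the guessing). This is precisely the ``pair of containers analyzed jointly'' idea of \cite{lambda}: the seed set lives in $X$ and is localized through $S_i$, but its effect is read off in $Y$. Arranging the bookkeeping so that each round costs only $\exp[O((t/d_X)\gamma\log(gd_X^2/t))]$ — carrying the essential $1/d_X$ factor, rather than a naive $\binom{\cdot}{\le t/2^i}$, which would be far too large — is the heart of the argument and the source of the improved admissible range of $\lambda$.
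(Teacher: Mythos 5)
First, a point of order: this paper does not prove Lemma~\ref{lem2} at all — it is imported verbatim as \cite[Lemma 2.4]{lambda}, so there is no in-paper proof to compare against. What you have written is an outline of the argument one would expect to find in that reference, and its architecture (define $\cF_i$ algorithmically as the outputs after $i$ refinement rounds, so that property (3) is automatic and property (2) is an invariant; bound $|\cF_i|$ by multiplying an initialization cost of two binomial factors by a per-round cost of $\exp[O(t\gamma\log(gd_X^2/t)/d_X)]$; stop as soon as $|F_i|<g-t/2^i$) is the right shape and consistent with how the statement is organized and with how the present paper uses it in Section~\ref{sec.Thm3.5pf}.

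As a proof, however, the proposal has a genuine gap, and you have in effect labelled it yourself: the entire content of the lemma is in the two steps you defer. Concretely, (i) the refinement round — showing that when $|F_i|\ge g-t/2^i$ the set $N(A)\setminus F_i$ can be reconstructed from $O(t/d_X)$ guessed vertices drawn from an identifiable pool of size $O(gd_X^2/t)$, \emph{and} that the threshold-redefined $S_{i+1}$ still contains $[A]$ and still satisfies \eqref{S.degree} with $\psi_X$ halved and \eqref{F.degree} with $\psi_Y$ unchanged — is asserted, not argued; this is precisely where the $1/d_X$ saving (and hence the improved range of $\lambda$) has to be earned, and "using regularity to turn $\sim t$ unknown vertices into $\sim t/d_X$ guesses" is not yet an argument. (ii) The initialization as literally written does not work: defining $S_0$ as the set of $x\in X$ with $d_{F_0}(x)\ge d_X-d_X/\gamma$ need not contain $[A]$, because an individual $v\in[A]$ can have up to $\min(d_X,\,|N(A)\setminus N(A)^{\varphi}|)$ neighbours outside $F'$, and $|N(A)\setminus N(A)^{\varphi}|$ can be of order $w/d_Y$, far larger than the size $O(w/(d_Xd_Y))$ of the repair set you allow yourself on the $Y$-side. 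The known constructions handle this by controlling the \emph{total} number of edges from $[A]$ into $N(A)\setminus F'$ (which is at most $\varphi\cdot w/(d_Y-\varphi)=w$ for $\varphi=d_Y/2$) and then explicitly specifying the $O(w/(d_Xd_Y))$ exceptional vertices of $[A]$ on the $X$-side — this is what the second binomial factor $\binom{d_X^2d_Y^2g}{\le 2\gamma w/(d_Xd_Y)}$ is accounting for — rather than repairing $F_0$ until a uniform degree threshold captures all of $[A]$. Until both (i) and (ii) are carried out, the proposal is a plan for a proof rather than a proof.
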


In fact, for our applications of \Cref{lem2} in what follows, instead of the bounds on the individual $|\cF_i|$ given in \cref{bdcfi} above, we will only make use of the following immediate corollary that bounds $|\cF|$: if $t=\Omega(d_X)$ then since $\kappa \le \log d_X$, we have
\beq{bdcf} |\cF|\leq \sum_{i \in [0, \kappa]} |\cF_i| \le {d_Yg \choose \le 2\gamma w/(d_Xd_Y)}{ d_X^2 d_Y^2 g \choose \le 2\gamma w/(d_Xd_Y)} \exp\left[ O\left(\frac{t\gamma \log d_X \log(gd_X^2/t)}{d_X}\right)\right].\enq

We will also need the following variant of \Cref{lem1}. For $b,h\in \mathbb N$ and $S \sub Y$ with $|S|=s$, let
\[\cG_{S}(b,h):=\{B \sub X: |[B]|=b, |N(B)|=h, N(B) \sub S\}.\]

\begin{lemma}\label{lem7}
Let $S\subseteq Y$, $1 \le \varphi \le d_Y-1$ and $b, h\in\mathbb N$. Set $w=hd_Y-bd_X$. 
For any $C>0$ that satisfies $p:=C\ln d_X/(\varphi d_X)<1$, the following holds: there exists a family $\cV = \cV(b,h,S) \sub 2^Y$ of size at most
\begin{equation*}
    \binom{s}{\le 3hp}\binom{s}{\le 3h\exp(-pm_\varphi)}\binom{s}{\le \frac{3w\ln d_Y}{d_X(d_Y-\varphi)}}\binom{3hd_Yp}{\le 3wp}
\end{equation*}
such that each $B \in \cG_{S}(b,h)$ has a $\varphi$-approximation in $\cV$.   
\end{lemma}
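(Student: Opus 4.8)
The plan is to deduce \Cref{lem7} from \Cref{lem1} by restricting attention to the induced bipartite subgraph of $\Sigma$ between $X':=N(S)$ and $S$, rather than running a fresh container argument from scratch. Concretely, I would first observe that for any $B\in\cG_S(b,h)$, all of the relevant structure of $B$ — its closure $[B]$, its neighbourhood $N(B)$, and any $\varphi$-approximation of $B$ — lives entirely inside $S$ on the $Y$-side and inside $N(S)$ on the $X$-side. So the natural move is to work in $\Sigma':=\Sigma[N(S)\cup S]$. The catch is that $\Sigma'$ is in general not biregular, so I cannot literally invoke \Cref{lem1}, which is stated for a $(d_X,d_Y)$-regular bipartite graph. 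The fix is that the proof of \Cref{lem1} (from \cite{BalGLW21}) only uses regularity through degree \emph{upper} bounds on the $Y$-side (to control how the container is built vertex-by-vertex) and through the quantity $m_\varphi$; all degrees of vertices of $S\subseteq Y$ in $\Sigma'$ are still exactly $d_Y$ since $N(v)\subseteq N(S)=X'$ for $v\in S$. The $X$-side degrees in $\Sigma'$ are at most $d_X$, which is all that is needed. Thus I would either (i) state \Cref{lem7} as a black-box consequence by citing that the proof of \Cref{lem1} goes through verbatim in $\Sigma'$, or (ii) reprove the few lines where regularity is used.

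The key arithmetic change is the replacement of the leading factor $|Y|$ in \eqref{eq.first.approx} by $\binom{s}{\le 3hp}$ (roughly), and the disappearance of the separate $\ln d_Y$-type exponential factors in favour of binomial coefficients over the ground set $S$ of size $s$. The heuristic is: in \Cref{lem1} the first vertex of the approximation and the `extra' vertices are each chosen from all of $Y$ (hence the $|Y|$ and the various $\exp$ factors bounding how many such choices there are), whereas here every vertex of the approximation lies in $S$, so each such choice costs only $\log s$ and a collection of $\le 3hp$ (resp.\ $\le 3h\exp(-pm_\varphi)$, resp.\ $\le 3w\ln d_Y/(d_X(d_Y-\varphi))$) of them costs $\log\binom{s}{\le \cdot}$. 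The final $\binom{3hd_Yp}{\le 3wp}$ factor is unchanged because it counts, within the already-chosen neighbourhood, which $X$-side vertices to pin down, and that bound depends only on $h$, $d_Y$, $p$, $w$. I would walk through the proof of \Cref{lem1} and at each place where a union bound over $Y$ (or over subsets of $Y$) appears, substitute the union bound over $S$ (or subsets of $S$), carrying the parameters $b,h$ in place of $a,g$ and keeping $w=hd_Y-bd_X$.

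The main obstacle I anticipate is purely bookkeeping: making sure that every step of the cited proof of \Cref{lem1} that implicitly uses "$\Sigma$ is $(d_X,d_Y)$-regular on all of $X\cup Y$" is legitimately downgraded to "$S$-side vertices have degree exactly $d_Y$ in $\Sigma'$ and $X'$-side vertices have degree at most $d_X$." In particular one must check that the definition of $m_\varphi$ in \eqref{m.varphi} — which quantifies over $v\in Y$ and $K\subseteq N(v)$ — can still be used with the original $m_\varphi$ of $\Sigma$: since $N(v)$ for $v\in S$ is the same in $\Sigma$ and $\Sigma'$, the value $m_\varphi$ only gets larger when computed in $\Sigma'$, so using the $\Sigma$-value is a safe (if slightly lossy) choice and the bound as stated holds. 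Similarly $p=C\ln d_X/(\varphi d_X)<1$ is imposed with the original $d_X$, which dominates all $X'$-side degrees, so the probabilistic step in the container construction still goes through. Once these checks are in place, \Cref{lem7} follows; I would present it as: "The proof is identical to that of \cite[Lemma A.3]{BalGLW21} (i.e.\ \Cref{lem1}), applied to the bipartite graph $\Sigma[N(S)\cup S]$, using that each $v\in S$ has $\Sigma$-degree $d_Y$ and replacing each union bound over $Y$ by a union bound over $S$."
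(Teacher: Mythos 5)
Your end goal and the final count you describe are exactly the paper's, and at a high level the route (rerun the argument behind \Cref{lem1}, exploiting $N(B)\subseteq S$ at the counting stage) is the same. But there is a genuine gap in your justification: you treat \Cref{lem7} as ``\Cref{lem1} relativized to $S$'' with biregularity and $m_\varphi$ as the only issues, whereas the real obstruction to reusing \Cref{lem1} is that members of $\cG_S(b,h)$ need not be $2$-linked, while $\cG(a,g)$ consists of $2$-linked sets and the proof of \Cref{lem1} uses this essentially. The $\exp(\cdot)$ factors in \eqref{eq.first.approx} do not come from union bounds over $Y$ at cost $\log|Y|$ per vertex, as your heuristic assumes; they come from enumerating the auxiliary sets $T_0, T_0', T_1$ by growing them inside a $2$-linked structure at cost $O(\log(d_Xd_Y))$ per vertex. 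Consequently your option (i) (``the proof of \Cref{lem1} goes through verbatim in $\Sigma'$'') is unjustified for non-$2$-linked $B$, and would in any case not yield the binomial-over-$s$ bound in the statement. Your substitute counting (choose $T_0,T_0',T_1$ as arbitrary subsets of $S$) is valid precisely because it needs no $2$-linkedness, and it is what the paper does; but to turn this into a proof one must isolate and verify the $2$-linkedness-free ingredients that remain. The paper does this explicitly: the probabilistic existence of $T_0$ with $|T_0|\le 3hp$, $e(T_0,X\setminus[B])\le 3wp$ and $|N(B)^\varphi\setminus N(N_{[B]}(T_0))|\le 3he^{-pm_\varphi}$ (quoted as \cite[Claim 1]{BalGLW21}, which holds for arbitrary $B$), and the Lov\'asz--Stein covering lemma (\Cref{lem.LS}) producing $T_1$ with $|T_1|\le 3w\ln d_Y/(d_X(d_Y-\varphi))$; then $F'=T_0'\cup N(N_{[B]}(T_0))\cup T_1$ is determined by $(T_0,\Omega,T_0',T_1)$ with $\Omega\subseteq\nabla(T_0)$ and $|\nabla(T_0)|\le 3hpd_Y$, which gives the four binomial factors.

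A second, smaller problem is the detour through $\Sigma'=\Sigma[N(S)\cup S]$, which the paper never takes and which you do not fully patch. The closure is graph-dependent: computed in $\Sigma'$, $[B]$ can strictly grow (vertices all of whose $S$-neighbours lie in $N(B)$ but which have further neighbours outside $S$), which changes $N(B)^\varphi$, the parameter $b$, and $w=hd_Y-bd_X$. More importantly, the proof uses the exact $X$-degree $d_X$ as a \emph{lower} bound in two places -- the identity $e(N(B),X\setminus[B])=hd_Y-bd_X$ and the degree hypothesis on $[B]\setminus N(L)$ in the Lov\'asz--Stein step -- so ``$X$-side degrees at most $d_X$ is all that is needed'' is not accurate; these facts hold for the $\Sigma$-closure but not for the $\Sigma'$-closure. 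All of this is avoided by staying in $\Sigma$ throughout and using $N(B)\subseteq S$ only when counting the quadruples $(T_0,\Omega,T_0',T_1)$, which is how the paper's proof is organized.
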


\subsubsection{Proof of Lemma \ref{lem7}}\label{pflem7}

For the proof of~\Cref{pflem7} we require the following two results.

For a bipartite graph with bipartition $U \cup W$ we say that $W'\sub W$ \emph{covers} $U$ if each $u \in U$ has a neighbour in $W'$.

\begin{lemma}[Lov\'asz \cite{Lov75}, Stein \cite{Ste74}]\label{lem.LS} Let $ \Sigma$ be a bipartite graph with bipartition $U \cup W$, where $|N(u)|\ge x$ for each $u \in U$ and $|N(w)|\le y$ for each $w \in W$. Then there exists some $W' \sub W$ that covers $U$ and satisfies
\[|W'|\le\frac{|W|}{ x}\cdot(1+\ln y).\]
\end{lemma}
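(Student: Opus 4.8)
The statement is the classical Lov\'asz--Stein covering bound, and the plan is to prove it by running the greedy covering algorithm and analysing it with a charging argument; the naive ``geometric decay'' analysis only yields a $\ln|U|$ factor, so some care is needed to get $\ln y$. (We may assume $x\ge1$, as otherwise a vertex of $U$ of degree $0$ cannot be covered and the statement is vacuous.)

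First I would run greedy: starting from $W'=\emptyset$, repeatedly add to $W'$ a vertex $w\in W$ that covers the maximum number of currently uncovered elements of $U$, breaking ties by a fixed rule. Since $N(u)\subseteq W$ and $|N(u)|\ge x>0$ for every $u$, whenever the uncovered set $U'$ is nonempty there are at least $x|U'|>0$ edges between $U'$ and $W$, so some $w$ has a nonzero (hence $\ge1$) number of uncovered neighbours; thus each step covers at least one new element and the process terminates with $U$ covered. Write $W'=\{w_1,\dots,w_m\}$ in order of selection, let $c_i\ge1$ be the number of elements newly covered at step $i$, and for $u\in U$ set $p_u:=1/c_i$, where $i$ is the step that covers $u$. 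Then $m=\sum_{u\in U}p_u$, since the elements covered at step $i$ contribute $c_i\cdot(1/c_i)=1$ to the sum.

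The key estimate is a bound on $\sum_{v\in N(w)}p_v$ for each fixed $w\in W$. List $N(w)=\{v_1,\dots,v_\ell\}$ with $\ell=|N(w)|\le y$, ordered by the step at which each is covered ($v_1$ first). Just before the step that covers $v_s$, the vertices $v_s,v_{s+1},\dots,v_\ell$ are all still uncovered and lie in $N(w)$, so $w$ then has at least $\ell-s+1$ uncovered neighbours; since greedy picks a vertex covering at least as many new elements as $w$ would, the step covering $v_s$ has $c\ge\ell-s+1$, i.e.\ $p_{v_s}\le 1/(\ell-s+1)$. Hence $\sum_{v\in N(w)}p_v\le\sum_{s=1}^{\ell}\frac{1}{\ell-s+1}=H_\ell\le 1+\ln y$, where $H_\ell$ is the $\ell$-th harmonic number.

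Finally I would double count the incidences $(u,w)$ with $u\in N(w)$: each $u$ lies in $|N(u)|\ge x$ of the sets $N(w)$, so
\[
x\sum_{u\in U}p_u\;\le\;\sum_{u\in U}|N(u)|\,p_u\;=\;\sum_{w\in W}\sum_{v\in N(w)}p_v\;\le\;|W|\,(1+\ln y),
\]
and dividing by $x$ gives $|W'|=\sum_{u\in U}p_u\le\frac{|W|}{x}(1+\ln y)$, as desired. There is no genuine obstacle here; the one point that needs attention is the ordering argument of the previous paragraph, which is exactly what replaces the crude recursion $|U'|\mapsto|U'|(1-x/|W|)$ (yielding only a $\ln|U|$ factor) by the sharper per-neighbourhood harmonic bound, and is the standard device behind the analysis of greedy set cover.
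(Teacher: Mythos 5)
Your proof is correct. Note that the paper itself gives no proof of this lemma: it is quoted as a classical result of Lov\'asz and Stein and used as a black box inside the proof of Lemma 7, so there is no in-paper argument to compare against. What you have written is the standard greedy analysis of the fractional/set-cover bound: run the greedy cover, charge each covered $u$ the reciprocal $1/c_i$ of the number of elements newly covered at its step, show by the ordering argument that each $w\in W$ receives total charge at most $H_{|N(w)|}\le 1+\ln y$ from its neighbourhood, and double count incidences using $|N(u)|\ge x$ to get $|W'|=\sum_u p_u\le \frac{|W|}{x}(1+\ln y)$. The one delicate point, handling ties so that just before the step covering $v_s$ all of $v_s,\dots,v_\ell$ are still uncovered and hence $c\ge \ell-s+1$, is dealt with correctly, and the degenerate cases ($x\ge 1$, vertices of $W$ with empty neighbourhood) are harmless as you note. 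This matches the argument in the cited sources, so there is nothing to fix.
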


\begin{lemma}[\!\!{\cite[Claim 1]{BalGLW21}}] For any $B \in \cG_S(b, h)$, there is a set $T_0 \sub N(B)$ such that
\[|T_0|\le 3hp,\]
\[e(T_0, X \setminus [B]) \le 3wp,\]
\[|N(B)^\varphi \setminus N(N_{[B]}(T_0))| \le 3h\exp(-pm_\varphi).\]

\end{lemma}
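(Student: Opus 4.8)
The plan is to prove this by a single first-moment argument: randomly sparsify the neighbourhood $N(B)$ and show that the resulting set is, with positive probability, a valid \emph{fingerprint} $T_0$. Note first that the set $S$ plays no role in the conclusions, so this is really a statement about a $2$-linked-ish set $B\subseteq X$ and its closure $[B]$ with $|[B]|=b$, $|N(B)|=h$. Before anything else I would record the edge-counting identity that underlies the second conclusion: since $[B]=\{v\in X:N(v)\subseteq N(B)\}$ we have $N([B])=N(B)$, so each $v\in[B]$ contributes exactly $d_X$ edges into $N(B)$; consequently
\[
e(N(B),X\setminus[B])=\sum_{y\in N(B)}d_Y-\sum_{v\in[B]}d_X=hd_Y-bd_X=w .
\]
(In particular $w\ge 0$, since $hd_Y=e(N(B),X)\ge e(N(B),[B])=bd_X$.)

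Next I would let $T$ be the random subset of $N(B)$ obtained by keeping each vertex independently with probability $p$, and check each of the three conclusions in expectation. For the first, $\mathbb{E}|T|=p|N(B)|=ph$, so $|T|>3ph$ has probability $<1/3$. For the second, $e(T,X\setminus[B])=\sum_{y\in T}d_{X\setminus[B]}(y)$ is a sum of independent contributions with mean $p\cdot e(N(B),X\setminus[B])=pw$ by the identity above, so $e(T,X\setminus[B])>3pw$ has probability $<1/3$. For the third, I would argue that for $y\in N(B)^\varphi$ the set $K:=N_{[B]}(y)$ satisfies $K\subseteq N(y)$ and $|K|>\varphi$, hence $|N(K)|\ge m_\varphi$ by the definition~\eqref{m.varphi} of $m_\varphi$; moreover $N(K)=N(N_{[B]}(y))\subseteq N(B)$, and if $z\in T\cap N(K)$ then $z$ has a neighbour $v\in K\subseteq[B]$, so $v\in N_{[B]}(T)$, and $v$ is adjacent to $y$, giving $y\in N(N_{[B]}(T))$. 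Therefore $y$ is left uncovered only if $T\cap N(K)=\emptyset$, an event of probability $(1-p)^{|N(K)|}\le(1-p)^{m_\varphi}\le e^{-pm_\varphi}$; summing over $y\in N(B)^\varphi$ gives $\mathbb{E}\,|N(B)^\varphi\setminus N(N_{[B]}(T))|\le he^{-pm_\varphi}$, so exceeding $3he^{-pm_\varphi}$ has probability $\le 1/3$.

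The main obstacle — really the only delicate point — is making a union bound over three events, each of probability about $1/3$, come out strictly below $1$; this is exactly why the conclusions carry the factor $3$ rather than $2$. I would handle it by exploiting that each bad event concerns a nonnegative quantity: when the relevant threshold ($3ph$, $3pw$, or $3he^{-pm_\varphi}$) is at most $1$, the event ``exceed the threshold'' has probability at most its mean, which is $<1/3$; and when the threshold exceeds $1$, the underlying variable is a sum of independent bounded contributions concentrated around a mean of size $\Omega(1)$, so a Chernoff bound gives a tail far below $1/3$. Either way the three failure probabilities sum to less than $1$, so some realization $T_0$ of $T$ satisfies all three inequalities simultaneously, and $T_0\subseteq N(B)$ by construction, as required. (As an alternative to the random choice in the third step one could invoke the deterministic covering of the Lov\'asz--Stein lemma, but the purely probabilistic route has the advantage of treating all three conclusions uniformly, and I would go with it.)
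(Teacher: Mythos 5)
The paper does not actually prove this lemma --- it is imported verbatim from \cite[Claim 1]{BalGLW21} --- so there is no in-paper proof to compare against. Your argument is the standard (and, as far as I know, the original) proof of such fingerprint lemmas: $p$-random sparsification of $N(B)$, three first-moment computations, and a union bound. The exact identity $e(N(B),X\setminus[B])=hd_Y-bd_X=w$, the use of $N([B])=N(B)$, and the covering argument via $K=N_{[B]}(y)$ and $|N(K)|\ge m_\varphi$ are all correct.

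The one step I would not accept as written is your resolution of the ``three times $1/3$'' issue. For large thresholds you invoke Chernoff, but (i) $e(T,X\setminus[B])$ is a sum of terms $d_{X\setminus[B]}(y)\mathbf{1}_{y\in T}$ that can individually be as large as $d_Y$, so it is not a sum of Bernoullis and the multiplicative Chernoff bound you have in mind does not directly apply, and (ii) even for $|T|\sim\mathrm{Bin}(h,p)$, the bound $\pr(Z\ge 3\mu)\le (e^2/27)^{\mu}$ is \emph{weaker} than $1/3$ once $\mu$ is only slightly above $1/3$ (e.g.\ $(e^2/27)^{1/3}\approx 0.65$). Fortunately none of this is needed: for any nonnegative $Z$ with $\mu=\E Z>0$ one has the strict bound $\pr(Z>3\mu)<1/3$, since $\mu\ge\E[Z\mathbf{1}_{Z>3\mu}]>3\mu\,\pr(Z>3\mu)$ whenever $\pr(Z>3\mu)>0$ (the integrand exceeds $3\mu$ pointwise on an event of positive measure), while if $\mu=0$ the corresponding event has probability $0$. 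Applying this to all three quantities (noting that for the third the threshold is $3$ times an \emph{upper bound} on the mean, which only helps) makes the union bound come out strictly below $1$ with no case analysis. With that replacement your proof is complete and matches the approach of the cited source.
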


\begin{proof}[Proof of Lemma \ref{lem7}]
Given $B \in \cG_S(b, h)$, take $T_0$ as in the above lemma.

Define
\[T_0'=N(B)^\varphi \setminus N(N_{[B]}(T_0)), \quad L=T_0' \cup N(N_{[B]}(T_0)), \quad \Omega=\nabla(T_0, X \setminus [B]).\]
Let $T_1 \sub N(B) \setminus L$ be a minimal set that covers $[B] \setminus N(L)$ in the graph $\overline \Sigma$ induced on $[B] \setminus N(L) \cup N(B) \setminus L$. Let $F':=L \cup T_1$. Then $F'$ is a $\varphi$-approximation of $B$. Also, since $N(N_{[B]}(T_0))$ is determined by $T_0$ and $\Omega$, $F'$ is determined by $T_0$, $\Omega$, $T_0'$, and $T_1$. Note that
\[|N(B) \setminus L|(d_Y-\varphi) \le e(N(B), X \setminus [B])=w,\]
so by Lemma \ref{lem.LS} (with $U=[B] \setminus N(L)$, $x= d_X$, $W=N(B) \setminus L$ and $ y = d_Y$), we have
\[|T_1| \le \frac{|N(B) \setminus L|}{d_X}(1+\ln d_Y) \le \frac{3w\ln d_Y}{d_X(d_Y-\varphi)}.\]
Finally, using the fact that $T_0, T_0', T_1$ are subsets of $N(A) \sub S$ and $\Omega$ is a subset of $\nabla(T_0)$, the number of choices for $T_0, T_0', T_1$ and $\Omega$ is at most
\begin{equation*}
    \binom{s}{\le 3hp}\binom{s}{\le 3h\exp(-pm_\varphi)}\binom{s}{\le \frac{3w\ln d_Y}{d_X(d_Y-\varphi)}}\binom{3hpd_Y}{\le 3wp}. \qedhere
\end{equation*}
\end{proof}

\subsection{Proof of Theorem \ref{MT}}\label{sec.Thm3.5pf}
 Throughout this section we fix $r\in\{\lfloor n/2 \rfloor, \lceil n/2\rceil\}$ and $a,g,b,$ and $h$ as in the statement of \Cref{MT}. Recall our standing assumption that $g\geq(1+1/n)a$.

We write $l=n-r +1, \ l'=n-r +2, \ u=r$ and $u'=r-1$. Then $ L_{r-2} \cup L_{r-1}$  is $(l',u')$ -regular, and $ L_{r-1} \cup L_{r}$ is $(l,u)$-regular. Recall that we write
\begin{center}
$\cH_r(a,b,g,h)=$ \\ $\{(A,B)\in 2^{L_{r-1}} \times 2^{L_{r-2}}: A \text{ $2$-linked}, |[A]|=a, |[B]|=b, |N^+(A)|=g, |N^+(B)|=h, N^+(B) \subseteq A\}$
\end{center}
and set $t=g-a$ and $t'=h-b$. Our goal is to bound $\sum_{(A,B) \in \cH_r(a,b,g,h)} \lambda^{|A|+|B|-h}$ from above using the container tools from~\Cref{sec.container.prelim}. In \Cref{subsubsec.containers}, we build containers for the elements of $\cH_r(a,b,g,h)$ and we give upper bounds for the cost of these containers using \Cref{lem1} ($\varphi$-approximation), \Cref{lem2} ($(\psi_X,\psi_Y)$-approximation) and \Cref{lem7}. In \Cref{subsubsec.reconstruction}, we present our `reconstruction strategy' which bounds the contribution to the sum $\sum_{(A,B) \in \cH_r(a,b,g,h)} \lambda^{|A|+|B|-h}$ from elements $(A,B)$ with a given container. Combining these estimates will give the desired upper bound.

\subsubsection{Containers}\label{subsubsec.containers} We will always use $(A,B)$ for a member of $\cH_r(a,b,g,h)$. Let $\cA:=\{A: (A,B) \in \cH_r(a,b,g,h)\}$ and $\cB:=\{B: (A,B) \in \cH_r(a,b,g,h)\}$. We construct containers for the members of $\cA$ and $\cB$ separately. Note that if we let $\Sigma$ denote the bipartite subgraph of $B_n$ induced by the consecutive layers $X=L_{r-1}$ and $Y=L_{r}$, then 
$\cA \sub \cG(a,g)$ where $\cG(a,g)$ is as in~\eqref{eq:cGdef}. In particular, the results in \Cref{sec.container.prelim} are applicable to $\cA$. On the other hand, the elements of $\cB$ are not necessarily $2$-linked and so the results in \Cref{sec.container.prelim} are not immediately applicable. To get around this issue, we crucially take advantage of the fact that $N^+(B) \sub A$ which allows us to apply \Cref{lem7}. There are four steps for constructing containers: $\varphi$-approximations and $(\psi_X,\psi_Y)$-approximations for $\cA$ and $\cB$ (for appropriately chosen $\varphi, \psi_X$ and $\psi_Y$).

\nin \textbf{Step 1.} Bounding the cost for $\varphi$-approximations of elements of $\cA$.

We first apply Lemma \ref{lem1} with $\Sigma$ the bipartite subgraph of $B_n$ induced by the  layers $L_{r-1}$ and $L_{r}$ (with $L_{r-1}$ playing the role of $X$ and $L_r$ playing the role of $Y$, respectively) in order to obtain a collection $\cV_\cA \subseteq 2^{L_r}$ such that each $A \in \cA$ has a $\varphi$-approximation in $\cV_\cA$. Note that with this choice of $\Sigma$ we have $d_X=l, d_Y=u$.

We take $\varphi=u/2$ and let $m_\varphi$ be as in \eqref{m.varphi}. By \Cref{isoper}, we have 
\begin{equation}\label{m_varphi}
    m_\varphi\ge n\varphi/50 \ge \varphi l/50\, .
\end{equation}

 The cost of $\varphi$-approximation ($\log |\cV_\cA|$) is at most (using the fact that $l > u$, \eqref{binom} and \eqref{m_varphi})
\beq{A.varphi1} \log | L_{\lfloor n/2 \rfloor}| +O(g \log^2 l/(u l))+O(g\log l/u^{C'})+O(w\log u \log l/(u l))+O(w\log l \log(gu/w)/(u l)),\enq
where we can set (e.g.) $C'=C/50$, where $C$ is in \Cref{lem1}, using \eqref{m_varphi}.

Note that by our assumption that $a\geq n^2$ and Proposition \ref{isoper}, it suffices to consider the case $g=\Omega( n^3)$ (as otherwise $\cH_r(a,b,g,h)$ is empty), and hence $\log | L_{\lfloor n/2 \rfloor}|~(=O(n)) = o(g\log^2 l/(ul))$. Also note that by setting $C=150$ in \Cref{lem1} so that $C'=3$, the third term of~\eqref{A.varphi1} satisfies $g\log l/u^{C'} = o(g\log^2 l/(ul))$ since $u=\Theta(n)$. Therefore, \eqref{A.varphi1} simplifies to 
\beq{A.varphi2}O(g \log^2 n/n^2)+O(w\log^2 n/n^2)+O(w\log n \log(gn/w)/n^2).\enq

We claim that the above expression is $O(t\log^2 n/n)$: recall that $t=\gO(g/n)$ (by assumption), and also note that from $w=gu-al=(a+t)u-al \le tu,$ we have $t\ge w/u$. Hence we have $g\log^2n/n^2 = O(t\log^2n/n)$ and $w\log^2n/n^2 = O(t\log^2n/n).$ Now, we show that $w \log (gn/w)/n=O(t\log n)$, from which it follows that the last term of~\eqref{A.varphi2} is also $O(t\log^2 n/n)$. We have
\[\frac{w}{n}\log(gn/w) \le \frac{w}{n}\log(gn^2/w) = gn \cdot \frac{w}{gn^2}\log(gn^2/w) \stackrel{(\dagger)}{\le} gn \cdot \frac{t}{gn}\log(gn/t)=O(t\log n),\]
where $(\dagger)$ uses the facts that $x\log\left(\frac{1}{x}\right)$ is strictly increasing in $(0, 1/e)$, $t \ge w/n$, and $t/(gn) \le 1/e$ for large enough $n$. In conclusion 
\beq{cost1} \log |\cV_\cA| = O(t\log^2n/n)\, . \enq

\nin \textbf{Step 2.} Bounding the cost for $(\psi_X,\psi_Y)$-approximations of $\cA$.

Given $F' \in \cV_\cA$, by \Cref{lem2} (again with $X=L_{r-1}$, $d_X=l$, $Y=L_r$ and $d_Y=u$), there is a family $\cF_\cA=\cF_\cA(F')\subseteq 2^{L_r} \times 2^{L_{r-1}}$ that satisfies the conclusions of \Cref{lem2}, namely: 
\begin{enumerate}
    \item $\cF_\cA =\cup_{i\in[0, \kappa]} (\cF_\cA)_i,$ where by \eqref{bdcf} we have
    \[ |\cF_\cA|\leq {ug \choose \le 2\gamma w/(ul)}{ u^2 l^2 g \choose \le 2\gamma w/(ul)} \exp\left[ O\left(\frac{t\gamma \log l \log(gl^2/t)}{l}\right)\right];\]
   \item if $(F,S)\in(\cF_\cA)_i$, then $(F,S)$ is a $(l/(2^i\gamma), u/\gamma)$-approximating pair for some $A \in \cG(a,g,F')$; in addition, for $i \in [0, \kappa-1]$, every $(F, S) \in (\cF_\cA)_i$ satisfies $|F|<g-t/2^i$; and
    \item every $A \in \cG(a,g,F')$ has a  $(l/(2^i\gamma), u/\gamma)$-approximating pair in $\cF_i$ for some $i \in [0, \kappa]$.
\end{enumerate}

Therefore,
\[\log |\cF_\cA|= O(w\log (glu^2/w)/(ul)) + O(w\log(gul/w)/(ul))+ O(t\log l \log (gl^2 /t)/l). \]
Similarly to the final paragraph of Step 1, the above sum is
\beq{cost2} O(t\log^2n/n).\enq

\nin \textbf{Step 3.} Bounding the cost for $\varphi$-approximations of $\cB$

In what follows, given $F' \in \cV_\cA$ and $(F,S)\in\mathcal{F}_\cA(F')$, we construct a collection $\cV_B(S)$ such that any $B \in \cB$ with $N^+(B) \sub S$ has a $\varphi$-approximation in $\cV_B(S)$. So performing the process below for all $F' \in \cV_\cA$ and $(F,S) \in \cF_\cA(F')$ will produce $\varphi$-approximations for all $B \in \cB$.

Recall the definitions of $u'$ and $l'$ from the beginning of \Cref{sec.Thm3.5pf}. Let $w'=hu'-bl'$. To bound the cost for $\varphi$-approximations of $\cB$, we apply \Cref{lem7} with $\Sigma$ the bipartite subgraph of $B_n$ induced by the  layers $L_{r-2}$ and $L_{r-1}$ (with $L_{r-1}$ now playing the role of $Y$) and $S\subseteq L_{r-1}$ given by Step 2 above. 

Let $(F,S) \in \cF_\cA$ be given and set $\varphi=u'/2$. By Lemma \ref{lem7}, there exists a collection $\cV_B=\cV_B(S)\subseteq 2^{L_{r-1}}$ such that each $B \in \cB$ with $B \sub S$ has a $\varphi$-approximation in $\cV_B$; moreover, using the fact that $|S| \le 2g$,
\[\begin{split} \log |\cV_B| &\le  O\left(\frac{h\log l' \log (gu' l'/h)}{u' l'}\right)+O\left(\frac{w'\log u'}{l'u'}\log\left(\frac{hl'u'}{w'\log u'}\right)\right)+O\left(\frac{w'\log l'\log(hu'/w')}{u' l'}\right) \\
&= O\left(\frac{g\log^2 n}{n^2}\right)+O\left(\frac{w'\log n}{n^2}\log\left(\frac{gn^2}{w'}\right)\right)+O\left(\frac{w'\log n}{n^2}\log\left( \frac{hn}{w'}\right)\right).\end{split}\]

Recall that $t=g-a$ and $t'=h-b$. Write $t^{max}=\max\{t, t'\}$. We claim that the expression above is $O(t^{max}\log^2 n/n)$. By a similar analysis to that in Step 1, using the facts that $t'\geq w'/u'$ and $t'=\Omega(h/n)$ (by \Cref{isoper}), we see that the first and third terms are both $O(t^{max}\log^2n/n)$. For the second term, we have
\[ \frac{w' \log n}{n^2} \log\left( \frac{gn^2}{w'} \right) \leq \frac{t^{max}\log n}{n} \log \left( \frac{gn}{t^{max}} \right) \leq \frac{t^{max}\log n}{n} \log \left( \frac{gn}{t} \right) = O(t^{max}\log^2 n/n).\]
In conclusion,
\beq{cost3} \log|\cV_B|=O(t^{max}\log^2n/n). \enq

\nin \textbf{Step 4.} Bounding the cost for $(\psi_X,\psi_Y)$-approximations of $\cB$.

Let $P' \in \cV_\cB$ be given. As in Step 2, there is a family $\cF_\cB=\cF_\cB(b, h, P')\subseteq 2^{L_{r-1}}\times 2^{L_{r-2}}$ that satisfies the conclusions of \Cref{lem2} with the appropriate choice of parameters. Namely, writing 
\[\kappa':=\max\{j:2^j \le \sqrt{l'}\}:\]
\begin{enumerate}
    \item the size of $\cF_\cB$ satisfies (cf. \eqref{cost2})
    \beq{cost4}\log |\cF_\cB|=O\left({t' \log^2 n}/{n}\right); \enq

    \item if $(P,Q)\in(\cF_\cB)_j$, then $(P,Q)$ is a $(l'/(2^j\gamma), u'/\gamma)$-approximating pair for some $B \in \cG(b,h,P')$; in addition, for $j \in [0, \kappa'-1]$, every $(P, Q) \in (\cF_\cB)_j$ satisfies $|P|<h-t'/2^j$; and
    \item every $B \in \cG(b, h, P')$ has a  $(l'/(2^j\gamma), u'/\gamma)$-approximating pair in $(\cF_\cB)_j$ for some $j \in [0, \kappa']$.
\end{enumerate}

In sum, putting together the costs \eqref{cost1}, \eqref{cost2}, \eqref{cost3} and \eqref{cost4} bounds the total cost of our approximations for elements $(A,B) \in \cH_r(a,b,g,h)$ by
\beq{total.cost} O(t^{max}\log^2n/n).\enq

\subsubsection{Bounding $\sum_{(A,B) \in \cH_r(a,b,g,h)} \gl^{|A|+|B|-h}$ using containers}\label{subsubsec.reconstruction}

We always write $(F,S)$ for an approximating pair for $A\in\cA$  and $(P, Q)$ for an approximating pair for $B\in\cB$. Let $\gamma$ be the absolute constant from \Cref{lem2}. Recall that we write (\textit{cf.} \eqref{kap}) $\kappa=\max\{i:2^i \le \sqrt l\}$ and $\kappa'=\max\{j:2^j \le \sqrt{l'}\}$. 

As we sketched at the beginning of \Cref{secproofcont}, we apply different reconstruction strategies depending on the interplay of the quadruple $(F, S, P, Q)$. To this end, we define $F$ to be \textit{tight} if $|F| \ge g-t/2^\kappa~ (\ge g-2t/\sqrt l)$, and \textit{slack} otherwise.
Similarly, say $P$ is \textit{tight} if $|P|\ge h-t'/2^{\kappa'}~(\ge h-2t'/\sqrt{l'})$, and \textit{slack} otherwise.

We note that if $(F,S)\in \cF_\cA(F')$ for some $F'\in \cV_\cA$ then 
\beq{s.t.bd} |S|-a\leq 2t. \enq
Indeed, by definition $(F,S)$ is a $(l/(2^i\gamma), u/\gamma)$-approximating pair for some $A\in \cG(a,g)$ and some $i\in[0,\kappa] $. By \Cref{SF.size} and the inequality $u\leq l$, we have (writing $f=|F|$ and $s=|S|$) that $s \le f+(t-f+s)/\gamma,$ which yields $s \le f+t/(\gamma-1)$. So (recalling that $\gamma\geq 2$), we have $s-a \le f-a+t \le g-a+t=2t.$ 

Similarly, if $(P,Q)\in \cF_\cB(P')$ for some $P'\in \cV_\cB$ then
\beq{P.t.bd} |Q|-b\leq 2t'. \enq

We write $(A,B) \sim_{i,j}(F, S, P, Q)$ if $(F,S)\in 2^{L_r} \times 2^{L_{r-1}}$ is a $(l/(2^i\gamma), u/\gamma)$-approximating pair for $A$ and $(P,Q)\in 2^{L_r-1} \times 2^{L_{r-2}}$ is a $(l'/(2^j\gamma), u'/\gamma)$-approximating pair for $B$.

Our goal is to establish the inequality below for any given $(i,j) \in [0, \kappa] \times [0, \kappa']$ and $(F,S,P,Q)$:
\begin{align}\label{eq:FSPQgoal}
\sum_{(A,B) \sim_{i,j}(F, S, P, Q)} \gl^{|A|+|B|-h} \le (1+\gl)^{g-\gO(t^{max}/\sqrt n)}.
\end{align}
 This, combined with \eqref{total.cost} (the total cost for $(F,S,P,Q)$ and $i,j \in [0,\kappa] \times [0, \kappa']$) and the assumption that $\gl\ge C\log^2 n/\sqrt n$ (and that we can choose a large enough $C$), will yield the conclusion of \Cref{MT}. 

We consider four cases to bound the sum in~\eqref{eq:FSPQgoal}. In what follows, we use $G$ for $N^+(A)$ and $H$ for $N^+(B)$ for simplicity. We also use lower case letters to denote the cardinality of sets denoted by upper case letters, e.g. $p:=|P|, q:=|Q|$, etc.

\nin \textbf{Case 1.} $F, P$ slack

In this case, we have 
\beq{F.slack} g-f>t/2^\kappa \ge t/\sqrt l,\enq
and
\beq{Q.slack} h-p>t'/2^{\kappa'} \ge t'/\sqrt{l'}.\enq

\nin By \Cref{SF.size} and the fact that $u\leq l$, we have that
\[s \leq f+\frac{1}{l}[(g-f)l/\gamma+(s-a)2^{-i}l/\gamma],\]
so
\beq{g.s.diff}\begin{split} g-s &\geq g-f-(g-f)/\gamma-(s-a)2^{-i}/\gamma \stackrel{\eqref{F.slack}}{\geq} (g-f)(1-1/\gamma)-(s-a)(g-f)/(t\gamma) \\ &\stackrel{\eqref{s.t.bd}}{\geq}(g-f)(1-1/\gamma-2/\gamma)=\gO(g-f)\stackrel{\eqref{F.slack}}{=} \Omega(t/\sqrt{n}).\end{split}
\enq
Similarly, we also have 
\beq{h.s.diff} h-q= \Omega( t'/\sqrt{n}).\enq

Now, we specify $B$ as a subset of $Q$. This specifies $H=N^{+}(B)$, and then we specify $A \setminus H$ as a subset of $S \setminus H$. Therefore,
\begin{align*}
\sum_{(A,B) \sim_{i,j}(F, S, P, Q)} \lambda^{|A|+|B|-h}
\le \sum_{B\subseteq Q}\lam^{|B|}\sum_{A\backslash H \subseteq S\backslash H} \lam^{|A\backslash H|}
\le (1+\lambda)^{q}(1+\lambda)^{s-h}~ &=(1+\lambda)^g (1+\lam)^{-(g-s)-(h-q)}\\&\leq (1+\lam)^{g-\gO(t^{max}/\sqrt n)}.
\end{align*}

\nin \textbf{Case 2.} $F$ tight, $P$ slack

We first specify $G$ by specifying $G \setminus F$ as subset of $N(S)$. Note that, by \Cref{SF.size}, $|N(S)|\le |S|l \leq 2gl$. Also, by the assumption that $F$ is tight, $|G\setminus F|\leq t/2^\kappa \le  2t/\sqrt{l}$, so the cost for $G$ is
\beq{cost.G}\log\binom{2gl}{\leq 2t/\sqrt{l}} = O(t\log n/\sqrt{n}). \enq
The specification of $G$ gives $[A]$ for free (but not $A$).

Next, we specify $B$ as a subset of $Q$, then specify $A \setminus H$ as a subset of $[A] \setminus H$. Noting that $|Q|=q$ and $[A] \setminus H=a-h$ we therefore have
\begin{align*}
\sum_{(A,B) \sim_{i,j}(F, S, P, Q)} \lambda^{|A|+|B|-h} &\le \exp\left[O(t\log n/\sqrt{n})\right](1+\lambda)^{q}(1+\lambda)^{a-h}~ \\
& = \exp\left[O(t\log n/\sqrt{n})\right] (1+\lambda)^g (1+\lam)^{-t-(h-q)} \\ &\leq (1+\lam)^{g-\gO(t^{max}/\sqrt n)},
\end{align*}
where the last inequality uses the facts that $t\log n/\sqrt n \ll t\gl$ and \eqref{h.s.diff}.

\nin \textbf{Case 3.} $F$ slack, $P$ tight

We first specify $H$ by specifying $H \setminus P$ as a subset of $N(Q)$. Note that again by \Cref{SF.size}, we have $|N(Q)|\le |Q|l'\leq 2hl'$ and $|H\setminus P| \leq 2t'/\sqrt{l'}$ (since $P$ is tight), and so the cost for $H$ is
\beq{cost.H}\log\binom{2hl'}{2t'/\sqrt{l'}} = O(t'\log n/\sqrt{n}). \enq

Then (with $[B]$ given by $H$) specify $B$ as a subset of $[B]$. Finally, specify $A \setminus H$ as a subset of $S \setminus H$. Therefore,
\begin{align*}
\sum_{(A,B) \sim_{i,j}(F, S, P, Q)} \lambda^{|A|+|B|-h} &\le \exp\left[O(t'\log n/\sqrt{n})\right] (1+\lambda)^{b}(1+\lambda)^{s-h}\\
& \leq \exp\left[O(t'\log n/\sqrt{n})\right] (1+\lambda)^g (1+\lam)^{-(g-s)-t'}\\ &\leq (1+\lam)^{g-\gO(t^{max}/\sqrt n)},
\end{align*}
where the last inequality uses the facts that $t'\log n/\sqrt n \ll t'\gl$ and \eqref{g.s.diff}.

\nin \textbf{Case 4.} $F, P$ tight

We first specify $H$ by specifying $H \setminus P$ as a subset of $N(Q)$ whose cost is as in \eqref{cost.H}, then specify $B$ as a subset of $[B]$. Next, we specify $G$ by specifying $G \setminus F$ as a subset of $N(S)$ which costs \eqref{cost.G}, then specify $A \setminus H$ as a subset of $[A]\setminus H$. Thus,
\begin{align*}
\sum_{(A,B) \sim_{i,j}(F, S, P, Q)} \lambda^{|A|+|B|-h} & \le  \exp\left[O((t+t')\log n/\sqrt{n})\right](1+\lambda)^{b+a-h} \\
& \le (1+\gl)^{g-\gO(t^{max})},
\end{align*}
where the last inequality uses that $b+a-h=(g-t)-(h-b)=g-(t+t')$ and $\gl t^{max} \gg t\log n/\sqrt n$. This establishes~\eqref{eq:FSPQgoal} as desired. 

\section{Concluding remarks}
\label{conclusion}
In this paper we proved several results on the number and typical structure of antichains in $B_n$. 
Some of our results (e.g. \Cref{sparsesperner3}) are limited by our application of the container tools in the previous section. We believe that the following strengthening of \Cref{sparsesperner3} holds: 

\begin{conjecture}\label{conj1}
There exists a function $\beta^*(n)=\tilde \Theta(1/n)$ such that if 
$\beta\geq \beta^*(n)$, then almost all antichains of size $\beta N$ in $B_n$ are contained in three central layers.    
\end{conjecture}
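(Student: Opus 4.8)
The machinery developed in this paper already proves a statement of the form of \Cref{conj1} for every $\lam$ (equivalently $\beta$) for which our container estimate \Cref{MT} is available; the sole bottleneck is that \Cref{MT} is established only for $\lam\ge C\log^2 n/\sqrt n$, which in turn limits \Cref{trunclemma2}, \Cref{trunc}, and the verification of the Koteck\'y--Preiss condition in \Cref{KPsection}. The plan is therefore to prove a sharper version of \Cref{MT} valid for all $\lam\ge C\log^2 n/n$ (i.e. $\beta=\tilde\Omega(1/n)$) and then re-run the arguments of \Cref{truncation,secthreelayer,givensize,seceven,secodd} with essentially no other change. Concretely, the target is to replace the bound $(1+\lam)^{g-\gamma(t+t')}$ in \eqref{container.ineq}, currently proved with $\gamma=C'/\sqrt n$, by the same bound with $\gamma=\tilde\Omega(1)$ (say $\gamma=1/\log^{C''}n$).

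The $\sqrt n$ loss in the current proof comes from truncating the iterated approximating-pair construction of \Cref{lem2} at $\kappa=\max\{i:2^i\le\sqrt{d_X}\}\approx\tfrac12\log n$ levels, leaving a defect degree $\psi_X\approx\sqrt{d_X}$ at the finest level; in the reconstruction step of \Cref{subsubsec.reconstruction} this forces the gaps $g-s$ and $g-q$ to be only $\Omega(t/\sqrt n)$ and $\Omega(t'/\sqrt n)$ in the slack cases. The main step is thus to push the refinement all the way to $\kappa\approx\log d_X$ levels, so that the finest level has defect degree $\psi_X=\tilde O(1)$, while keeping the total number of containers below $\exp\bigl(\tilde O((t+t')/n)\bigr)$. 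The key observation to make this work is that each successive refinement level should be made almost free: the level-by-level bound on $|\cF_i|$ in \Cref{lem2}\,(1) already decays geometrically in the relevant parameter, and one needs to verify that summing over all $\log d_X$ levels (rather than $\tfrac12\log d_X$) changes \eqref{bdcf} only by a polylogarithmic factor, and similarly for the analogue of \Cref{lem7} applied to $\cB$ (here one uses crucially that $N^+(B)\subseteq A$). Repeating the four-case reconstruction analysis of \Cref{subsubsec.reconstruction} with defect degree $\tilde O(1)$ then yields $g-s=\tilde\Omega(t)$, $g-q=\tilde\Omega(t')$ in the slack cases and container costs that drop from $O(t^{max}\log n/\sqrt n)$ to $\tilde O(t^{max}/n)$; balancing the cost $\tilde O(t^{max}/n)$ against the gain $\gamma\lam\,t^{max}$ in \eqref{eq:FSPQgoal} requires only $\lam=\tilde\Omega(1/n)$.

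Given the sharpened \Cref{MT} with $\gamma=\tilde\Omega(1)$, the second step is propagation. In the Koteck\'y--Preiss verification of \Cref{KPsection} one may now use a more generous weight function $g$ in \eqref{eq:fgdef} (with $\gamma=\tilde\Omega(1)$ in place of $C'/\sqrt n$ in the regime $|A|>n^4$), which gives \Cref{trunclemma2}, hence \Cref{trunc}, for all $\lam\ge C\log^2 n/n$. The cluster-expansion estimates of \Cref{secthreelayer,givensize} (\Cref{clusterexpansion,largedeviation,clt,lem16,lem17}) must then be re-examined for $\lam$ as small as $\tilde\Omega(1/n)$; here one should carry the $\|\Gamma\|$-dependent tail factors of \Cref{lem:tildetail,clbounds} throughout, since for tiny $\lam$ the polymer contributions to $\mathbb E|\mathbf A|$ and $\text{var}(|\mathbf A|)$ are no longer negligible relative to $\sqrt{\lam N}$. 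Nevertheless the cluster expansion still converges geometrically, so all cumulants of $|\mathbf{\bar\Lam}|$ and $|\partial(\mathbf{\bar\Lam})|$ remain controlled, the local central limit argument of \Cref{lem16,lem18} still applies because $\lam N\to\infty$ and $\text{var}(|\mathbf A|)=\Theta(\lam N)$, and \Cref{lem17} still produces a usable $\lam=\Theta(\beta/(1-\beta))$. The transference step (\Cref{directtransf} for $n$ odd, \Cref{sparsespernersection} for $n$ even) then goes through verbatim, yielding \Cref{conj1} with $\beta^*(n)=\tilde\Theta(1/n)$.

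The hard part is the first step: carrying the iterated container construction down to polylogarithmic defect degree while controlling the container count. This is delicate because the isoperimetric input \Cref{isoper} only gives linear expansion $|N^+(S)|\ge n|S|/50$ up to $|S|\le n^4$ and merely a $(1+1/n)$-factor expansion beyond that, so extracting a $\gamma=\tilde\Omega(1)$ gain from such weak expansion forces the container argument to be essentially tight at every scale; in particular one must confirm that the refinement levels beyond the $\tfrac12\log n$ used in \cite{lambda} genuinely cost only $\tilde O((t+t')/n)$ \emph{in total} rather than some larger amount per level. We expect that a finer bookkeeping along the lines of \cite{lambda} and the pair-container analysis of \cite{Galvin_Zhom} — tracking the pair $(A,B)$, its approximations, and the tight/slack interplay across many more levels — can be pushed through, but it will require genuinely more careful accounting than in \Cref{secproofcont}.
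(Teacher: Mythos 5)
The statement you are addressing is not a theorem of the paper but an open problem: the authors state \Cref{conj1} in the concluding remarks precisely because their method does not reach $\beta=\tilde\Omega(1/n)$, and they explicitly reduce it to the missing ingredient you also identify, namely a version of \Cref{MT} valid for $\lam=\tilde\Omega(1/n)$. Your proposal is therefore a research programme rather than a proof, and its critical first step is exactly the open problem. The propagation part of your plan (re-running \Cref{truncation}, the Koteck\'y--Preiss verification, the cluster expansion, the local CLT and the transference step with a stronger container input) is plausible and matches how the authors themselves envisage the deduction, but it is conditional on a lemma you have not established.

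The gap is not mere bookkeeping. Your claim that the level-by-level bound on $|\cF_i|$ in \Cref{lem2} ``already decays geometrically in the relevant parameter'' is not correct as stated: the factor $\exp[O(it\gamma\log(gd_X^2/t)/d_X)]$ \emph{grows} with $i$, and the binomial factors are independent of $i$; the cap $\kappa\approx\tfrac12\log d_X$ in \cite{lambda} is not an artefact of summing over levels but reflects the cost--gain balance of the method. Pushing the defect degree from $\sqrt{d_X}$ down to $\tilde O(1)$ makes the containers far more expensive to specify, and the only isoperimetric input available at the relevant scales (\Cref{isoper}, which gives merely $(1+1/n)$-expansion for large sets) is too weak to pay for them by the Sapozhenko/\cite{lambda}-type accounting; this is precisely why the authors' improvement stopped at $\lam=\tilde\Omega(n^{-1/2})$ rather than $\tilde\Omega(n^{-1})$. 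Until you can prove the sharpened \Cref{MT} with $\gamma=\tilde\Omega(1)$ (or some substitute container statement at activity $\tilde\Omega(1/n)$), the argument does not close, and asserting that ``a finer bookkeeping can be pushed through'' is a conjecture layered on a conjecture rather than a proof of \Cref{conj1}.
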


 Conjecture~\ref{conj1} would follow if one could prove a version of \Cref{MT} that holds for $\lam=\tilde \Omega(1/n)$.  Moreover for $\beta$ in the relevant range, a heuristic calculation suggests that the probability a vertex $v\in L_{n/2-2}\cup L_{n/2+2}$ (assuming $n$ is even for simplicity) is contained in a uniformly random antichain of size $\beta N$ is approximately $\beta(1-\beta)^{n^2/4}$. The factor of $(1-\beta)^{n^2/4}$ comes from the fact that $v$ blocks approximately $n^2/4$ vertices in the middle layer $L_{n/2}$. $\beta=\Theta(1/n)$ is therefore the threshold at which one might expect to see vertices in layers $L_{n/2-2}\cup L_{n/2+2}$ in a uniformly random antichain of size $\beta N$.

In fact, we conjecture the following sequence of thresholds generalising \Cref{conj1}.

\begin{conjecture}
    \label{conj2}
    Given $k\in \mathbb{N}$, there exists a function $\beta^*(n)=\tilde \Theta(1/n^k)$ such that if 
$\beta\geq \beta^*(n)$, then almost all antichains of size $\beta N$ in $B_n$ are contained in $2k+1$ central layers.
\end{conjecture}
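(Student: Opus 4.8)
The plan is to follow the roadmap this paper develops for $k=1$ — truncate the partition function to the relevant central layers, run a cluster expansion on the truncated model, and transfer the resulting size-localized estimate to the uniform measure via \Cref{transference} — but carried out at the much smaller activity $\lambda=\tilde\Theta(1/n^k)$ and with $2k+1$ layers (and, for odd $n$, the two-ground-state variant from \Cref{secodd}) playing the role of $C_n$. Write $C_n^{(k)}$ for the union of the $2k+1$ most central layers. The three milestones are: (i) a truncation statement $Z(\lambda)=(1+o(1/\sqrt N))\,Z(C_n^{(k)},\lambda)$ for $\lambda\geq\tilde\Omega(1/n^k)$; (ii) the polymer-model analysis of $Z(C_n^{(k)},\lambda)$ and of the number $i_m(C_n^{(k)})$ of antichains of size $m$ in $C_n^{(k)}$, whose ground state is the middle layer(s); and (iii) an application of \Cref{transference} with $\lambda=\beta/(1-\beta)$, using a local central limit theorem as in \Cref{lem16} to verify $i_m(C_n^{(k)})\lambda^m/Z(C_n^{(k)},\lambda)=\Omega(1/\sqrt{\lambda N})$. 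As with \Cref{sparsesperner3}, once (i) is available with the refined error $1+o(1/\sqrt N)$, ``almost all antichains of size $\beta N$ are contained in $2k+1$ central layers'' is essentially immediate: $\cA(C_n^{(k)})$ already consists of antichains inside the $2k+1$ central layers, so it suffices to see that the truncation loses almost none of the mass and that the size-$m$ slice is not atypically thin.

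For milestone (ii) the key point is a Kruskal--Katona estimate: a $2$-linked defect set $A$ sitting at distance $j\le k$ from the middle layer $L_{\lfloor n/2\rfloor}$ has two-sided shadow $\partial(A)$ onto the middle of size $|\partial(A)|\gtrsim (n^j/j!)\,|A|$, so its weight $\lambda^{|A|}(1+\lambda)^{-|\partial(A)|}$ is summable as soon as $(n^j/j!)\lambda\gg\log(1/\lambda)$; for $j=k$ this is precisely the regime $\lambda=\tilde\Omega(1/n^k)$, and for $j<k$ there is slack to spare. With the choice $\lambda\geq C\log^2 n/n^k$ one gets $(1+\lambda)^{-\Theta(n^k)}=n^{-\Theta(C\log n)}$, which comfortably dominates the $(en^2)^{|A|-1}$ count of polymers through a fixed vertex, so the Koteck\'y--Preiss condition holds with $g$ chosen proportional to $|\partial(A)|\log(1+\lambda)$; the cumulant/concentration estimates (\Cref{clbounds}, \Cref{clt}) and the efficient-expansion arguments of \Cref{secthreelayer,givensize} then go through, with extra bookkeeping because a defect may now live in any of the $2k$ non-central layers and carry a nontrivial ``interior'' running towards the middle, and with the same Tutte-polynomial evaluation \cite{ursellcomp} keeping the coefficients computable. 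The structural payoff, combined with (i), is that the expected total weight of polymers reaching distance $k+1$ or more — equivalently, antichains with an element outside $C_n^{(k)}$ — is at most $N\lambda(1+\lambda)^{-\Theta(n^{k+1})}=2^{n}\lambda\,e^{-\tilde\Theta(n)}=o(1)$ once the implied constant in $\beta^*(n)=\tilde\Theta(1/n^k)$ is taken large enough, which matches the heuristic in the paragraph following \Cref{conj1} and is what forces concentration into $2k+1$ layers.

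The hard part — and the reason this remains a conjecture — is milestone (i); already its $k=1$ instance is \Cref{conj1}, which the paper leaves open because the available container tools stop at $\lambda=\tilde\Omega(1/\sqrt n)$. One would need a version of \Cref{MT} valid down to $\lambda=\tilde\Omega(1/n^k)$ and, more seriously, for container pairs describing antichains that straddle $k+2$ consecutive layers rather than three. The naive layer-by-layer peeling of \Cref{truncstep1} stalls at $\lambda=\tilde\Omega(1/n)$ regardless of $k$: when $L_r$ is deleted, a removed vertex directly blocks only the $\Theta(n)$ vertices of $L_{r+1}$, so the cost of retaining it is merely $\Theta(n\lambda)$. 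To beat this one must iterate the approximating-pair construction of \cite{lambda} (Lemmas~\ref{lem1}--\ref{lem7}) in depth, tracking not the up-shadow of a defect into the adjacent layer but its closure onto the central layer, where the true cost $\Theta(n^k\lambda)=\tilde\Theta(1)$ per distance-$k$ defect vertex resides; making the reconstruction step absorb the $\exp(O(\mathrm{poly}(\log n)/n))$-type container overheads against a gain of only $(1+\lambda)^{-\tilde\Theta(1)}$ per excess vertex, and doing so with the sharp error $1+o(1/\sqrt N)$ needed for \Cref{transference} to apply to the fixed-size problem exactly as in \Cref{sparsespernersection}, is where the real difficulty lies. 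For odd $n$ there is the further (purely bookkeeping) complication of the two ground states and the mixture-of-measures approximation of \Cref{secodd}. I expect (i) to be the crux: given a container lemma with these parameters, milestones (ii) and (iii) should follow by the methods already developed here.
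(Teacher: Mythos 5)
The statement you are addressing is \Cref{conj2}, which the paper states as a \emph{conjecture} in the concluding remarks and does not prove; the only support offered there is the heuristic (given after \Cref{conj1}) that a vertex at distance $k+1$ from the middle layer blocks roughly $n^{k+1}/(k+1)!$ middle-layer vertices, so $\beta=\tilde\Theta(1/n^k)$ should be the threshold for seeing such vertices. Your proposal does not close this gap: as you yourself concede, milestone (i) — a truncation statement $Z(\lambda)=(1+o(1/\sqrt N))Z(C_n^{(k)},\lambda)$ for $\lambda=\tilde\Omega(1/n^k)$, resting on a container theorem in the spirit of \Cref{MT} but valid far below $\lambda=\tilde\Omega(1/\sqrt n)$ and for defect structures straddling $k+2$ consecutive layers — is exactly the missing ingredient, and the paper explicitly flags that even its $k=1$ instance (\Cref{conj1}) is out of reach of the current container technology. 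A plan whose central step is ``one would need a version of \Cref{MT} with these parameters'' is a research programme, not a proof, so the conjecture remains unproven by your argument just as it does in the paper.

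Two smaller caveats on the conditional part of your outline. First, milestone (ii) is not as automatic as you suggest: the polymer analysis of \Cref{secthreelayer,givensize} (Kotecký--Preiss verification, the cumulant bounds of \Cref{clbounds}, the local CLT route through \Cref{clt} and \Cref{lem16}) is carried out under the standing assumptions $\lambda\geq C\log^2 n/\sqrt n$ and $\lambda=O(1)$, with defects confined to the two layers adjacent to the middle; at $\lambda=\tilde\Theta(1/n^k)$ with defects in all $2k$ non-central layers and nested interiors, the choice of the functions $f,g$ in the Kotecký--Preiss condition, the variance lower bound used in \Cref{clt}, and the transfer via \Cref{transference} with error $o(1/\sqrt N)$ all need to be redone, not merely cited. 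Second, your Kruskal--Katona claim $|\partial(A)|\gtrsim (n^j/j!)|A|$ for $2$-linked sets at distance $j$ is only proved in the paper for $j\leq 2$ (\Cref{isoper}) and would itself require an argument for general $j$, including the expansion-factor caveats that force the paper to introduce the classes $\cA_r$ when the set is large. None of this is fatal to the plan, but it means that even granting a hypothetical container lemma, the remaining milestones are sketched rather than established.
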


A somewhat different direction of research would be to study the number and typical structure of antichains of size $\beta N$ in the (vertex) percolated lattice $B_{n,p}$ for some $p\in(0, 1)$. 
 Such questions might help to elucidate the connection between the sparsification of Dedekind's problem/ Sperner's theorem considered here and those considered in  \cite{BalK22, collares2016maximum, embed, kohayakawa2002width, BalMT14, osthus2000maximum, renyi1961random}. In a similar spirit, Kronenberg and Spinka \cite{kronenberg2022independent} study the hard-core model on the (edge) percolated hypercube $Q_d$ and it would be interesting to see if parts of their approach could be adapted to this context.

\section*{Acknowledgements}
The authors would like to thank Mauricio Collares for writing the computer code discussed in the Appendix and for spotting mistakes in a previous draft of this paper. MJ would like to thank Peter Keevash for many valuable discussions. MJ is supported by a UK Research and Innovation Future Leaders Fellowship MR/W007320/2. JP is supported by NSF grant DMS-2324978 and a Sloan Fellowship.

\newpage
\bibliographystyle{plain}
\bibliography{bibliography}

\appendix
\setcounter{secnumdepth}{0}
\section{Appendix. Computing the functions $P_j$ and $R_j$}\label{code}

As mentioned in the statements of \Cref{thm5,saporefined2}, the coefficients of each function $P_j^r$ and $R_j^r$ ($r\in\{0,1,2\}$) can be computed in time $e^{O(j\log j)}$, independently of $n$. Computer code written by Mauricio Collares which implements these computations (and the computations of the rational functions $S_j^r(n, \lam)$ from \Cref{clusterexpansion}, $F_j^r(n, \lam)$ from \Cref{cor14} and $B_j^r(n, \beta)$) from \Cref{lem17}) can be found in the public GitHub repository \url{https://github.com/collares/cluster-coefficients}. Further details on the above implementation will be provided in \cite{mauricio}.

Here we give just a few more values obtained by running the above code. We have

\[ P_3^0 = \frac{1}{512} \, n^{4} - \frac{1}{384} \, n^{3} - \frac{5}{128} \, n^{2} + \frac{7}{96} \, n + \frac{1}{3}, \]
\[     P^1_3 = \frac{11}{512} \, n^{4} - \frac{1}{192} \, n^{3} - \frac{181}{768} \, n^{2} + \frac{1}{192} \, n + \frac{841}{1536},\]

\[ P^2_3 = -\frac{1}{256} \, n^{4} - \frac{1}{96} \, n^{3} + \frac{17}{384} \, n^{2} + \frac{9}{32} \, n + \frac{401}{768},\]

\[  P_4^0 = \frac{1}{6144} \, n^{6} + \frac{1}{3072} \, n^{4} - \frac{25}{1536} \, n^{3} + \frac{3}{128} \, n^{2} + \frac{1}{96} \, n - \frac{1}{4},\]

\[P^1_4 = \frac{225}{32768} \, n^{6} - \frac{39}{4096} \, n^{5} - \frac{9113}{98304} \, n^{4} - \frac{803}{6144} \, n^{3} + \frac{62537}{98304} \, n^{2} + \frac{1723}{12288} \, n - \frac{26225}{32768}\]
and

\[P^2_4 = -\frac{45}{32768} \, n^{6} + \frac{9}{8192} \, n^{5} + \frac{4153}{98304} \, n^{4} + \frac{233}{12288} \, n^{3} - \frac{27709}{98304} \, n^{2} - \frac{10477}{24576} \, n - \frac{13095}{32768}.\]

For the $R_j$'s, we give

\[R_2^0 = -\frac{n^2(n+2)(n+6) \beta^{3} + n(n+2)(n^2-14n+8) \beta^{2} + 16 \, n^{2} \beta}{8 (n+2)^2 (1-\beta)},\]

\begin{align*}
R_2^1 = \frac{1}{8(n+3)^2(\beta-1)^3} \times \bigg[ 2 ((n-1)(n+1)(n+3) \beta^{5} - (n-1)(n+3)(n^2+12n-1) \beta^{4} + \\ 4(n-1)(5n^2+18n+5) \beta^{3} +(n+1)(n^3-11n^2-53n+31)\beta^{2} + 16(n+1)^2 \beta\bigg],
\end{align*}

\begin{align*}
R_3^0 = \frac{1}{192 (n+2)^3(\beta-1)^3} \times \bigg[ n^2(n+2)^2(3n^3+28n^2+132n+112) \beta^{6} + 3 n^2(n+2)^2(n^3-28n^2-212n+16) \beta^{5} - \\ n(n+2)^2(3n^4+36n^3-1308n^2+656n-128) \beta^{4} -  n{\left(3 n^{6} - 72 \, n^{5} + 384 \, n^{4} + 3504 \, n^{3} + 2640 \, n^{2} - 3136 \, n^{1} + 512\right)} \beta^{3} -\\ 96 \, n^2(n+2)(n^2-14n+8) \beta^{2}-512 \, n^{3} \beta  \bigg],
\end{align*}
and

\begin{align*}
    R_3^1 = -\frac{1}{384 (n+3)^3(\beta-1)^6} \times \bigg[8 (n-1)(n+1)(n+3)^2(n^2-6n-19) \beta^{9} + \\ 3 (n-1)(n+1)(n+3)^2(n^3+3n^2+167n+117) \beta^{8} - \\ 2 (n-1)(n+3)^2(3n^4+90n^3+1068n^2+566n+225) \beta^{7} - \\ 2 (n-1)(3n^6-118n^5-3227n^4-16452n^3-24739n^2-4166n-2501) \beta^{6} + \\ 12 (n-1)(n^6+10n^5-497n^4-3564n^3-6409n^2-1310n-7) \beta^{5} + \\ {\left(3 \, n^{7} - 309 \, n^{6} + 2499 \, n^{5} + 30275 \, n^{4} + 51089 \, n^{3} - 37383 \, n^{2} - 39255 \, n + 1273\right)} \beta^{4} -\\ 2 (n+1)(3n^6-66n^5+81n^4+6180n^3+13269n^2-10018n-3305) \beta^{3} -\\  192 (n+1)^2(n^3-11n^2-45n+23)\beta^{2} - 1024 (n+1)^3 \beta \bigg].
\end{align*}
\printindex

\end{document}